\newcommand{\ad}{{\mathrm{ad}}}
\newcommand{\cris}{{\mathrm{cris}}}
\newcommand{\dR}{{\mathrm{dR}}}
\newcommand{\Fil}{{\mathrm{Fil}}}
\newcommand{\Frob}{{\mathrm{Frob}}}
\newcommand{\Gal}{{\mathrm{Gal}}}
\newcommand{\GL}{{\mathrm{GL}}}
\newcommand{\Hom}{{\mathrm{Hom}}}
\newcommand{\id}{{\mathrm{id}}}
\renewcommand{\Im}{{\mathrm{Im}}}
\newcommand{\Isom}{{\mathrm{Isom}}}
\newcommand{\Lie}{{\mathrm{Lie}}}
\renewcommand{\mod}{\ \mathrm{mod}\ }
\newcommand{\Res}{{\mathrm{Res}}}
\newcommand{\Sh}{{\mathrm{Sh}}}
\newcommand{\Spec}{{\mathrm{Spec}}}
\newcommand{\HH}{{\mathrm{H}}}
\theoremstyle{remark}
\newtheorem{theorem}{\rm{\textbf{Theorem}}}[subsection]
\newtheorem{corollary}[theorem]{\rm{\textbf{\textbf{Corollary}}}}
\newtheorem{lemma}[theorem]{\rm{\textbf{Lemma}}}
\newtheorem{proposition}[theorem]{\rm{\textbf{\textbf{Proposition}}}}
\newtheorem{definition}[theorem]{\rm{\textbf{Definition}}}
\newtheorem{example}[theorem]{\rm{\textbf{Example}}}
\newtheorem{remark}[theorem]{\rm{\textbf{Remark}}}
\DeclareMathAlphabet{\pazocal}{OMS}{zplm}{m}{n}
\numberwithin{equation}{subsection}
\author{Qijun Yan}
\title{Ekedahl-Oort stratifications of Shimura varieties via Breuil-Kisin windows}
\date{}
\begin{document}
	\maketitle
	\section {Abstract}
	Let $ S $ be the special fibre of the good reduction of a Shimura variety of Hodge type.  
	By constructing adapted deformations for the associated $p$-divisible groups of $ S $, we manage to construct a morphism from $S$ to some quotient sheaf of the loop group associated with $S$. We show that the geometric fibres of this morphism give back the Ekedahl-Oort strata of $S$.  For any geometric point $ x $ of $ S $, we give a deformation over $ W(k(x)) $ of the $ p $-divisible group associated with $ x $ by (non-canonically) constructing a  Breuil-Kisin window (which corresponds to a $ p $-divisible group over $ W(k(x)) $ by the work of Kisin). This map in a sense gives a conceptual interpretation of Viehmann's new invariants `` truncations of level one of elements in the loop group''.
\setlength{\parindent}{0in}

	\section{Introduction}
We fix a prime number $ p>2 $ throughout this article.
\subsection{Ekedahl-Oort stratification}
We start by giving a short review of the development of the Ekedahl-Oort stratification of Shimura varieties, following \cite{GoldringKoskivirtaStrataHassinvariants}. 

The Ekedahl-Oort stratification was initially defined and studied by Ekedahl and Oort for $ \mathcal{A}_{g,\mathbb{F}_p} $, the special fibre of the moduli space $ \mathcal{A}_g $ of principally polarized abelian varieties of dimension of $ g $, which can be seen as the Shimura variety associated to the group $ \mathbf{GSp}_{2g} $. In \cite{OortAstratificationofModuliSpaces} Oort defined a stratification for 
$ \mathcal{A}_{g,\mathbb{F}_p} $ by isomorphism
classes of the BT1's (BT1 stands for truncated Barsotti Tate group of level 1). The strata are listed in loc. cit. by the so called ``elementary sequences". Moonen later on gave a group-theoretical classification of BT1's with PEL-structures in \cite{MoonenGroupschemeswithadditionalstructure}, where he describes the isomorphism classes as a certain Weyl cosets $ {}^{J}W $ of the group $ G $ of the Shimura datum associated to the PEL-structure. In a series of papers, Moonen, Wedhorn, Pink and Ziegler showed that BT1's with $G$-structures of certain type $ \chi $ give rise to an algebraic stack $ G\textsf{-Zip}^{\chi} $, which  is isomorphic to a quotient stack $[E_{\chi}\backslash G]$, where $E_{\chi}$ is some ``zip group" (see \cite{Moonen&WedhornDiscreteinvariants}, \cite{WedhornDimensionofOortStrata}, \cite{PinkWedhornZigler1}, \cite{PinkWedhornZiegler2}). 

In \cite{ViehmannWedhornEOPELtype} Viehmann and Wedhorn generalized and studied the Ekedahl-Oort stratification for the special fibre of a PEL-type Shimura variety.
In his thesis \cite{ChaoZhangEOStratification}, C. Zhang constructed  a universal $G\textsf{-Zip}$ for the special fibre  $ S $ of a general Hodge-type Shimura variety, which induces a morphism of algebraic stacks \footnote{Wortmann slightly modified Zhang's constructions in his thesis (\cite{Wortmann}) and we are here following his notations. We will also review his constructions in this article.} 
\begin{align}\label{Chao'szeta}
	\zeta: S\longrightarrow G\textsf{-Zip}^{\chi}\cong [E_{\chi}\backslash G_{\kappa}],
\end{align}
and showed that the morphism is smooth. A detailed review of the construction is given in Section \ref{SectionDefinitonofEO}. 
\begin{definition}
	An\textbf{ Ekedahl-Oort stratum} of $ S $ is defined to be a geometric fibre of the  morphism $ \zeta $ in \eqref{Chao'szeta}. 
\end{definition}

In the case of PEL type, the strata thus defined coincide with the Ekedahl-Oort strata defined in \cite{ViehmannWedhornEOPELtype}. 
The smoothness (hence openness) of $ \zeta $ provides a description of the closure relation of the strata, 
in terms of the underlying topological space of the stack $G\textsf{-Zip}^{\chi}$.

\subsection{Motivations}\label{Motivation}

Let $ G $ be a reductive group over $ \mathbb{F}_p $ and fix a Borel pair $ (B, T) $ of $ G $ (namely a maximal torus $ T $ and a Borel subgroup $ B $ of $ G $ with $ B\supset T $), all defined over $ \mathbb{F}_p $. Let  $ \mathcal{L} G$ and $ \mathcal{K}:=\mathcal{L}^{+}G $ be the loop group and the strict loop group of $ G $ respectively (Section \ref{sectionLoopSchemes}), and $ \mathcal{K}_1 $ the kernel of the reduction map $ \mathcal{K} \to G$.   Let $ k $ be an algebraically closed field of characteristic $ p $ and denote by $ W(k) $ the ring of Witt vectors of $ k $. Denote by $ \sigma: k\to k $  the absolute Frobenius of $ k $ and use the same notation for the ring endomorphism $ k[[u]]\to k[[u]] $, which is $ \sigma $ on $ k $ and fixes $ u\in k[[u]] $. 
Write $W= N_{G}T(k)/T(k) $ for the Weyl group of $ G $, with respect to $ T $. 

In \cite[Theorem 1.1]{ViehmannTrucation1} Viehmann gives a classification of the $ \mathcal{K}$-$\sigma$ conjugacy classes in $  \mathcal{K}_1(k)\backslash \mathcal{L}G(k) /\mathcal{K}_1(k) $ in terms of certain Weyl groups.  For a dominant cocharacter $ \mu: \mathbb{G}_{m,k} \to T_k $ over $ k $ (with respect to $ B_k $),  if we denote by $ C(G, \mu) $ the set of $ \mathcal{K}$-$\sigma$ conjugacy classes in $$  \mathcal{K}_1(k)\backslash \mathcal{K}(k)\mu(u)\mathcal{K}(k) /\mathcal{K}_1 (k),$$ then the result in loc. cit. implies that there is a 1-1 correspondence between $ C(G, \mu) $  and the subset $ {}^{J}W $ of the Weyl group $ W $,  where $ J$ is the type of the cocharacter $ \sigma^{-1}(\mu): \mathbb{G}_{m,k}\to T_k $.  The precise definitions of $ J $ and ${}^{J}W $  are reviewed in Section \ref{CombinatoryWeylgroups}. 

We are interested in the situation where $ G $ and $ \mu$ come from  a Shimura variety $ S $ of characteristic $ p $ (namely, the reduction modulo $ p $ of a Shimura variety). In this case,  as discussed in the previous subsection, the finite set $ {}^{J}W $ can be seen as the index set of the Ekedahl-Oort strata of this Shimura variety (see for example \cite[Section 8.2]{ViehmannWedhornEOPELtype}), and hence so can be  $ C(G, \mu)$. But note that such a bijection between $ {}^{J}W $ and $ C(G, \mu)$ is purely group-theoretic and a priory there is no direct connection from the Shimura variety in question to $ C(G, \mu)$. The main goal of this paper is to construct a map from $ S $ to a subquotient (stack, preferably) of $ \mathcal{L}G $ whose geometric points can be identified with $ C(G, \mu) $\footnote{Our cocharacter $ \mu $ is in fact a variant of $ \mu $ in \cite{ViehmannTrucation1}, but as explained in Remark \ref{Comparisionofmus} the resulting finite sets $ C(G, \mu) $ are the same.}, such that the geometric fibres of this map give back the Ekedahl-Oort strata of $ S $. This goal is basically achieved except that the target of the map we construct fails to be a stack (see Section \ref{Mainresults}). 

What makes us believe that such a connection is possible are the following observations: (1). $ p $-divisible groups over rings like $ W(k) $ can be classified in term of Breuil-Kisin modules, or equivalently Breuil-Kisin windows in our term (we give a short review of these in Section \ref{SectionreviewBKwindows}), which are modules over the power series ring $ W(k)[[u]] $ equipped with additional data; (2). Shimura varieties naturally carry 
families $p$-divisible groups (with extra structures) which determine the stratifications we want to define. This belief is also supported by other evidence; see Theorem \ref{Kisin's Classification} and the paragraph after that. Our philosophy in short is: the Frobenii of Breuil-Kisin windows will give $ C(G, \mu) $, and this turns out to be correct (see Remark \ref{RemarkIntroduction}.(1)).

In \cite{ViehmannWedhornEOPELtype} the authors determined the closure relation of Ekedahl-Oort strata via that of some truncation strata in $ \mathcal{L}G $ (\cite[Corollary 7.2]{ViehmannWedhornEOPELtype}, \cite[Corollary 4.7]{ViehmannTrucation1}). Such a criteria again depends on the bijection between $ {}^{J}W $ and $ C(G, \mu)$. One of our goals is to remedy this situation by deducing this criteria in a similar manner as the proof of \cite[Proposition 3.1.6]{ChaoZhangEOStratification}. So far we have not fully achieved this but we are confident that it can be done (see Remark \ref{RemarkIntroduction}.(2)).

\subsection{Special fibre of a Shimura variety (of Hodge type)}  
\label{SpecialfibreofShimuravarieties}
To state our main results, we need to fix some notations on the reduction of Shimura varieties (of Hodge type). All the technical terminologies and results used here are discussed or reviewed in Section \ref{GoodreductionofShimuravariety} and Section \ref{Sectionofcocharacters}.

Let ($ \textbf{G}, \textbf{X} $) be a Shimura data of Hodge type, i.e., it can be embedded into a Siegel-type Shimura data $(\textbf{GSp}, \textbf{S}^{\pm})$.  Assume that $ \textbf{G} $ has good reduction at a prime number $ p>2 $, i.e., $ \textbf{G} $  has a reductive model $\mathcal{G}$ over $\mathbb{Z}_p $, whose special fibre is denoted by $ G $. Let $ \mathsf{K}=\mathsf{K}_p\mathsf{K}^{p}\subset \textbf{G}(\mathbb{A}_f) $ be an open compact subgroup, with $ \mathsf{K}_p=\mathcal{G}(\mathbb{Z}_{p})$ and  $ \mathsf{K}^{p} \subset \mathbf{G}(\mathbb{A}_{f}^{p})$ sufficiently small, and let $\Sh_{\mathsf{K}}(\textbf{G}, \textbf{X})$ be the associated Shimura variety over the reflex field $ E $ of $(\textbf{G}, \textbf{X}) $.

Fix a place $ v $ of $ E $ above $ p $.  Denote by $ \mathcal{O}_E $ the ring of integers of $ E $. Write $ \mathcal{O}_{E,v} =W(\kappa)$  for the completion of $ \mathcal{O}_E $ at $ v $.  Kisin (\cite{KisinIntegralModels}) and Vasiu (\cite{VasiuIntegralCanonicalmodels}) have showed the existence of the integral canonical model $ \mathcal{S}_{\mathsf{K}}(\textbf{G}, \textbf{X}) $ of $\Sh_{\mathsf{K}}(\textbf{G}, \textbf{X})$ over $ \mathcal{O}_{E,v} $. Denote by $ S $ the special fibre of $ \mathcal{S}_{\mathsf{K}}(\textbf{G}, \textbf{X}) $, which is a quasi-projective and smooth scheme over $ \kappa$. 

Denote by $ [\chi]_{\mathbb{C}} $ the unique $ \textbf{G}(\mathbb{C}) $-conjugacy class of the inverse of any of the Hodge cocharacters $ \lambda: \mathbb{G}_{m}\to \textbf{G}_{\mathbb{C}} $ determined by $ (\textbf{G}, \textbf{X}) $. Denote by $ [\chi]_{\kappa} $ the reduction over $ \kappa $ of $ [\chi]_{\mathbb{C}} $ (Section \ref{Sectionofcocharacters}), which is the $ G(\kappa) $-conjugacy class of some cocharacter $ \chi: \mathbb{G}_{m,\kappa}\to G_{\kappa} $ of $ G_{\kappa} $ over $\kappa$. We fix such a cocharacter $ \chi $ and define the cocharacter
\begin{align} \label{Defofmu}
	\mu: = \Frob_{G_{\kappa}/\kappa} \circ \chi: \mathbb{G}_{m,\kappa}\to G_{\kappa}, 
\end{align}
where $\Frob_{G_{\kappa}/\kappa}: G_{\kappa}\to G_{\kappa}^{(p)}= G_{\kappa}$ is the relative Frobenius of $ G_{\kappa} $ over $ \kappa $ (see (9) in Section \ref{GeneralNotations}). The cocharacter $ \chi $ defines a parabolic subgroup $ P_{+} $ of $ G_{\kappa} $, defined over $ \kappa $ (cf. Section \ref{Sectionofcocharacters}) We can and do lift $ \chi $ to be a cocharacter $ \tilde{\chi}: \mathbb{G}_{m, W(\kappa)}\to \mathcal{G}_{W(\kappa)} $ of $ \mathcal{G}_{W(\kappa)} $ over $ W(\kappa) $, which in turn defines a parabolic subgroup $ \mathcal{P}_{+}$ of $\mathcal{G}_{W(\kappa)} $, defined over $ W(\kappa) $. Note that our ``$ \mu $'' is different from the ``$ \mu $'' in \cite{Wortmann}:
we remind the reader to be careful on this point.

\subsection{Review of Breuil-Kisin windows}\label{SectionreviewBKwindows}
Let $ k $ be an algebraically closed field extension of $ \kappa $. Breuil-Kisin modules are typically used to classify $ p $-divisible groups over a totally ramified extension $ R $ of $ W(k) $ (of arbitrary ramification index). But for our application, $ R $ is simply $ W(k) $ itself and hence no nontrivial ramification happens. 

We give below the definition of Breuil-Kisin modules and Breuil-Kisin windows, together with a classification result. In the main body of the paper we also use relative Breuil-Kisin modules developed by W. Kim (we call them Kim-Kisin modules, see Section \ref{Kisin windows and Kism-Kisin modules}) but we will not need them now since the purpose here is to provide some background knowledge so that we can present our main results in the next subsection.

Let $ \mathfrak{S} = W(k)[[u]] $ and let $ \varphi =\varphi_{\mathfrak{S}}: \mathfrak{S} \to \mathfrak{S}$ be the ring endomorphism of $ \mathfrak{S} $ which is the unique Frobenius lift $ \varphi $ on $ W(k) $ and sends $ u $ to $ u^{p} $. We write $ E(u)=u+p $, which generates the kernel of the projection $ \mathfrak{S} \to W(k)$ (as $ W(k) $-algebras) sending $ u $ to $ -p \in W(k)$. 
\begin{definition}\label{DefofBreuilKisinmodules}
	\begin{enumerate}[(1)]
		\item A \textbf{Breuil-Kisin $ \mathfrak{S} $-module} is a pair $ (\mathfrak{M}, \varphi_{\mathfrak{M}}) $, where $\mathfrak{M}  $  is a finite free $ \mathfrak{S} $-module and $\varphi_{\mathfrak{M}}:  \mathfrak{M}\to \mathfrak{M} $ is a 	$ \varphi $-linear map, such that the cokernel of the linearization 
		$$ \varphi_{\mathfrak{M}}\otimes_{\mathfrak{S}, \varphi}\id: \varphi^{*}\mathfrak{M}:= \mathfrak{M}\otimes_{\mathfrak{S}, \varphi}\mathfrak{S}\to \mathfrak{M}$$ is killed by $ E(u)$. 
		
		\item 	A \textbf{Breuil-Kisin $\mathfrak{S}$ window} is a triple $\underline{\mathcal{M}}:=(\mathcal{M}, \mathrm{Fil}\mathcal{M}, \varphi_{\mathcal{M}})$, where $ \mathcal{M} $  is a free $\mathfrak{S}$-module and $\mathrm{Fil}\mathcal{M}\subset \mathcal{M}$ a $\mathfrak{S}$-submodule of $ \mathcal{M} $ of finite type,  and where $\varphi_{\mathcal{M}}: \mathrm{Fil}\mathcal{M}\to \mathcal{M}$ is a $\varphi$-linear map 
		such that 	\begin{enumerate}[(a)]
			\item $E\cdot \mathcal{M}\subset \mathrm{Fil}\mathcal{M}$;
			\item $\mathcal{M}/\mathrm{Fil}\mathcal{M}$ is a free $W(k)$-module;
			\item the subset $\varphi_{\mathcal{M}}(\mathrm{Fil}\mathcal{M})$  generates $\mathcal{M}$ as an $\mathfrak{S}$-module. 
		\end{enumerate}
	\end{enumerate}
	
\end{definition}

A Breuil-Kisin window is also called a filtered Breuil-Kisin module in the literature, but we prefer the current terminology as it fits well with the windows theory developed by Lau and Zink.

Denote by $ \textbf{BT}(W(k)) $ the category of $ p $-divisible groups over $ W(k) $. 
\begin{theorem}[Kisin]\label{Kisin's Classification}
	There are the following equivalence  of categories:
	$$	\begin{array}{ccccc}
	\textbf{BT}(W(k)) & 	\xleftrightarrow{ \ \cong\  }	&\left\{	\begin{matrix}
	\text{Breuil-Kisin $\mathfrak{S}$-modules}
	\end{matrix} \right\}&	\xleftrightarrow{ \ \cong\  }& \left\{	\begin{matrix}
	\text{Breuil-Kisin $\mathfrak{S}$-windows}\end{matrix} \right\} \\
	& &	(\mathfrak{M},  \ \ \varphi_{\mathfrak{M}})&\longmapsto& (\varphi^*\mathfrak{M},\ \Fil\varphi^*\mathfrak{M},  \  \varphi_{\mathfrak{M}}\otimes 1), \end{array}$$
	where $\Fil\varphi^*\mathfrak{M}$ is defined in the proof of Proposition \ref{EquivalenceofModules&Windows} (we will not need it explicitly in this Introduction). The essential part of the Theorem is the first equivalence which is proved quite indirectly and hence we omit its description here. 
	
	Moreover, if $ (\mathcal{M}, \mathrm{Fil}{\mathcal{M}}, \varphi_{\mathcal{M}})$ is the associated Breuil-Kisin window of a $ p $-divisible group $ H $ over $ W(k) $, then $ (\mathcal{M}, \varphi_\mathcal{M}) \otimes \mathfrak{S}/ (u) $ is canonically identified with the (contravariant) Dieudonn\'e module  $\mathbb{D}^{*}(H)(W(k))$ of $ H $, together with its Frobenius map, and $ (\mathcal{M}, \mathrm{Fil}{\mathcal{M}}) \otimes \mathfrak{S}/ (E(u)) $ is canonically identified with $ \mathbb{D}^{*}(H)(W(k)) $ together with its Hodge filtration (cf. \cite[Corollaire 3.3.5]{BBMTheoriedeDieudonnecristalline}). 
\end{theorem}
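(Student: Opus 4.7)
The first of the two claimed equivalences is the heart of the theorem and is due to Kisin; I would simply cite \cite[Theorem~A or \S 1.2]{KisinIntegralModels} (or Kisin's earlier work on crystalline representations), which proves it by constructing, for any $p$-divisible group $H$ over $W(k)$, its Breuil module from the crystalline Dieudonn\'e theory applied to $H$ and then descending along the faithfully flat map $\mathfrak{S}\to W(k)$ in a way compatible with the Frobenius structure; since the excerpt itself declines to reproduce this argument, I would not attempt to reprove it.

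The second equivalence, between Breuil-Kisin modules and Breuil-Kisin windows, I would establish directly. Set $\mathcal{M}:=\varphi^{*}\mathfrak{M}$ and $F:=\varphi_{\mathfrak{M}}\otimes 1:\mathcal{M}\to\mathfrak{M}$, and define $\Fil\varphi^{*}\mathfrak{M}:=F^{-1}(E(u)\mathfrak{M})$. Since $\mathfrak{M}$ is finite free and its cokernel under $F$ is killed by $E(u)$, which is a nonzerodivisor on $\mathfrak{M}$, the map $F$ is injective and restricts to an isomorphism $\Fil\mathcal{M}\xrightarrow{\cong}E(u)\mathfrak{M}$. The $\varphi$-linear map $\varphi_{\mathcal{M}}:\Fil\mathcal{M}\to\mathcal{M}$ is then the composite $\Fil\mathcal{M}\xrightarrow{F}E(u)\mathfrak{M}\xrightarrow{/E(u)}\mathfrak{M}\xrightarrow{m\mapsto m\otimes 1}\varphi^{*}\mathfrak{M}$, whose $\varphi$-semilinearity in the $\mathfrak{S}$-action is immediate from the right factor of the tensor product. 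Axiom (a) follows from $F(E(u)x)=E(u)F(x)$; axiom (c) follows since $\{m\otimes 1\mid m\in\mathfrak{M}\}$ already generates $\varphi^{*}\mathfrak{M}$; and axiom (b) follows from the short exact sequence $0\to\Fil\mathcal{M}\to\mathcal{M}\to\mathfrak{M}/E(u)\mathfrak{M}\to 0$ induced by $F$ (using $F(\mathcal{M})+E(u)\mathfrak{M}=\mathfrak{M}$ from the BK-module cokernel condition), whose right term is free over $W(k)=\mathfrak{S}/(E(u))$.

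To see the functor is an equivalence, I would argue essential surjectivity by noting that every finite free $\mathfrak{S}$-module has the form $\varphi^{*}\mathfrak{M}$ for some finite free $\mathfrak{M}$ of the same rank, and reconstruct $(\mathfrak{M},\varphi_{\mathfrak{M}})$ from a window $(\mathcal{M},\Fil\mathcal{M},\varphi_{\mathcal{M}})$ by reversing the above recipe: take $\mathfrak{M}$ to be a Frobenius descent of $\mathcal{M}$, and recover $F$ as the composite $\mathcal{M}\xrightarrow{E(u)\cdot}\Fil\mathcal{M}\xrightarrow{\varphi_{\mathcal{M}}}\mathcal{M}\cong\varphi^{*}\mathfrak{M}$ followed by the canonical isomorphism to $\mathfrak{M}$ given by division by $E(u)$ in $\mathfrak{M}$ using axioms (a) and (c). Full faithfulness then reduces to faithful flatness of $\varphi:\mathfrak{S}\to\mathfrak{S}$ (a free map of rank $p$), together with an easy check that a morphism of windows preserves the data $(F,\Fil)$ in precisely the way required to come from a morphism of BK-modules. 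The expected main obstacle is carefully showing that the choice-dependent descent along $\varphi$ in the inverse functor is canonical once the Frobenius structure is fixed; I would handle this by identifying the inverse functor entirely in terms of the pair $(F,\Fil\mathcal{M})$ without ever splitting $\mathcal{M}=\varphi^{*}\mathfrak{M}$ by hand.

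Finally, the moreover clause follows by base change of the window $(\mathcal{M},\Fil\mathcal{M},\varphi_{\mathcal{M}})$ along the two projections $\mathfrak{S}\twoheadrightarrow W(k)$: the map $u\mapsto 0$ identifies $(\mathcal{M},\varphi_{\mathcal{M}})\otimes\mathfrak{S}/(u)$ with $\mathbb{D}^{*}(H)(W(k))$ and its Frobenius, essentially because $\varphi^{*}$ commutes with reduction modulo $u$ and Kisin's comparison with crystalline Dieudonn\'e theory is realized on the fibre at $u=0$; the map $u\mapsto -p$ identifies $(\mathcal{M},\Fil\mathcal{M})\otimes\mathfrak{S}/(E(u))$ with $\mathbb{D}^{*}(H)(W(k))$ and its Hodge filtration, as $\mathfrak{S}/(E(u))=W(k)$ is the ring over which $H$ itself is defined and the cokernel of $F$ reduces to the Hodge filtration by construction. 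Both identifications are part of Kisin's statement in loc.~cit., which I would invoke rather than reprove from scratch.
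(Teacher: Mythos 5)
Your proposal correctly reproduces the statement of the second equivalence and verifies axioms (a)--(c) of Definition \ref{DefofBreuilKisinmodules}(2) in a way that agrees with the paper's Proposition \ref{EquivalenceofModules&Windows}: your $\Fil\varphi^{*}\mathfrak{M}=F^{-1}(E\mathfrak{M})$ coincides with $\psi(\mathfrak{M})$ there (since $F\circ\psi=E\cdot\mathrm{id}$ and $F$ is injective), and your $\varphi$-linear map $\Fil\mathcal{M}\to\mathcal{M}$ is precisely the paper's $\varphi_{\mathcal{M},1}(\psi(x))=x\otimes 1$. The treatment of the first equivalence (deferred to Kisin) and of the ``moreover'' clause (deferred to loc.\ cit., resp.\ \cite{BBMTheoriedeDieudonnecristalline}) matches the paper's stance, since neither is actually proved in the text.

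Two points diverge from the paper. First, your gloss on Kisin's proof is inaccurate: the map $\mathfrak{S}\to W(k)$ is a surjection (quotient by $u$ or by $E(u)$), not faithfully flat, so there is no ``descent along $\mathfrak{S}\to W(k)$''; Kisin's argument is indirect and proceeds through the generic fibre and Galois representations (you may have been thinking of the relation with Breuil's ring $S$ via $\mathfrak{S}\hookrightarrow S$). Since you cite Kisin rather than reprove, this is only a misdescription. Second, and more substantively, your quasi-inverse via ``Frobenius descent of $\mathcal{M}$'' is roundabout compared to the paper's, and the worry you flag (canonicity of the descent) is real if phrased that way. The clean route --- which is what Proposition \ref{EquivalenceofModules&Windows} does --- is to take $\mathfrak{M}:=\Fil\mathcal{M}$ with $\varphi_{\mathfrak{M}}:=E\cdot\varphi_{\mathcal{M}}$: axiom (c) makes the linearization $\varphi^{*}\Fil\mathcal{M}\to\mathcal{M}$ a surjection of free modules of equal rank, hence an isomorphism, so $\Fil\mathcal{M}$ \emph{is} the Frobenius descent of $\mathcal{M}$, canonically and without any auxiliary choice. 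Your observation that ``every finite free $\mathfrak{S}$-module is some $\varphi^{*}\mathfrak{M}$'' is true but vacuous for essential surjectivity (it ignores the Frobenius and filtration data); the identification $\mathfrak{M}=\Fil\mathcal{M}$ is the non-vacuous form of that remark and makes full faithfulness immediate without invoking faithful flatness of $\varphi$.
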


There is also a notion of torsion Breuil-Kisin modules, and hence torsion Breuil-Kisin windows. If $\underline{\mathcal{M}} $ is the associated Breuil-Kisin window of $ H $, then the reduction modulo $ p $ of $ \underline{\mathcal{M}} $ is the associated torsion Breuil-Kisin window associated to the BT1, $ H[p] $. 	

\subsection{Main results} \label{Mainresults}

\begin{enumerate}[(1)]
	
	\item  We define a subquotient scheme $ \mathcal{D}_1 $ of $ \mathcal{L}G $  (Section \ref{specification of notations}) and construct a morphism of schemes over $ \kappa $ (Section \ref{Maintheorem1})
	\begin{align*}
		\theta: \mathrm{I}_{+}\to \mathcal{D}_{1}. 	
	\end{align*}
	More details on this morphism will be given in the next subsection.
	\item We define an fpqc quotient sheaf $ \mathcal{D}_{1}/\mathcal{K}^{\diamond} $ of $ \mathcal{D}_1 $ (obtained from an action of some group scheme $ \mathcal{K}^{\diamond} $ on $ \mathcal{D}_1 $, Section \ref{specification of notations}) such that the geometric points $ \mathcal{D}_{1}(k)/\mathcal{K}^{\diamond}(k) $ are identified with $ C(G, \mu) $, and we show that $ \theta $ induces a morphism of fpqc sheaves (Theorem \ref{MainTheorem2})
	\begin{align*}
		\eta: S\to  \mathcal{D}_{1}/\mathcal{K}^{\diamond}.
	\end{align*}
	\item We show that the geometric fibres of $  \eta$ are exactly the Ekedahl-Oort strata of $ S $ by establishing for each algebraically closed field extension $ k $ of $ \kappa $ a bijection 
	$$ \omega: \mathcal{D}_{1}(k)/\mathcal{K}^{\diamond}(k) \to E_{\chi}(k)\backslash G(k),$$ fitting into the commutative diagram
	(Proposition \ref{PropositionofEO}),
	\begin{align*}
		\xymatrix{&&\mathcal{D}_{1}(k)/\mathcal{K}^{\diamond}(k)\ar[dd]^{\omega}\\
			S(k)\ar[urr]^{\eta}\ar[drr]_{\zeta}&&\\
			&&E_{\chi}(k)\backslash G(k)}
	\end{align*}
	
\end{enumerate}

\begin{remark}\label{RemarkIntroduction}
	\begin{enumerate}[(1)]
		\item To construct the global $ \theta $, we proceed by constructing local maps from Zariski open affines of $ \mathrm{I}_+ $ to $ \mathcal{D}_1 $, and then gluing the local maps together (Section \ref{Maintheorem1}). For the local maps we use our constructions of Kim-Kisin windows (we also call them ``adapted deformations'') in Section \ref{ConstructionofKisinWindows}. But we found afterwards that if one only concerns the map $ \theta $ itself, the constructions of Kim-Kisin windows can be totally avoided (the relative crystalline Dieudonn\'e theory developed in \cite{deJongCrystalineDieudonneRigidgeometry} is still needed): this point will be reflected in the formula \eqref{defofthetaongeometricpoints} below. But what we think is more important is the conceptual interpretation of these maps: $ \theta $ is given by the Frobenius of Kim-Kisin windows and the Ekedahl-Oort stratification of $ S $ can be defined through Breuil-Kisin windows via $ \eta $.  The maps $ \theta $ and $ \eta $ conceptually explain why Shimura varieties can be related to loop groups directly and why the new invariants $ C(G, \mu) $ in \cite{ViehmannTrucation1} can be seen as the index set of Ekedahl-Oort strata of $ S $. 	
		\item We don't know yet whether $ \eta $ is smooth but it is expected to be so. Once this is proven, we can determine the closure relations of the Ekedahl-Oort strata of $ S $ as in \cite[Corollary 7.2]{ViehmannWedhornEOPELtype} for free (in fact there is a slight difference since we are using a different ``$ \mu $''). 
		
		\item The morphism $ \eta $ cannot be refined to a morphism from $ S $ to the quotient stack $[\mathcal{D}_{1}/\mathcal{K}^{\diamond}] $. Although the quotient stacks $ [\mathcal{D}_{1}/\mathcal{K}^{\diamond}]  $ and $ [E_{\chi}\backslash G] $ have the same topological spaces, they are not isomorphic as stacks for obvious reasons: the latter is algebraic while the former is not. 
	\end{enumerate}
\end{remark}


\subsection{Strategy}

We now describe the main idea of the construction of $ \theta $ and the bijection $ \omega $.  To do this, we need to be more precise and hence the text below will be slightly technical. 	

Let $ \mathcal{D} $ be the fpqc sheafification of the subfunctor of $ \mathcal{L}G $ which sends a $ \mathbb{F}_{p} $-algebra $ R $ to  $$ \mathcal{K}^{+}(R)\mu(u)\mathcal{K}^{+}(R) \subset \mathcal{L}G(R).$$ It can be shown that $ \mathcal{D} $ is represented by an infinite dimensional formally smooth scheme over $ \kappa$ (Proposition \ref{Representability of D}).
Then $ \mathcal{D}_{1} $ is defined to be the quotient of $ \mathcal{D} $ by the free action of $ \mathcal{K}_{1} $ by right multiplication. We show in Section \ref{DefinitionofK^+} that $ \mathcal{D}_{1} $ is represented by a smooth $ \kappa $-scheme of finite type.

We refer to Section \ref{TorsorsoverSandS} for the $ \mathcal{P}_{+} $-torsor $ \mathbb{I}_{+} $ over $ \mathcal{S} $ and its reduction $ \mathrm{I}_{+} $ over $ S $.  Given an element $ x\in \mathrm{I}_{+} (k)$, a lift $ \tilde{x}\in \mathbb{I}_{+}(W(k)) $ gives an isomorphism 
\begin{align*}
	\beta_{\tilde{x}}: \Lambda^{*}_{W(k)}\cong M:=\mathbb{D}^{*}(\mathcal{A}_{x}[p^{\infty}])\cong \HH^{1}_{\dR}(\mathcal{A}_{\tilde{x}}/W(k))
\end{align*}
compatible with filtrations and sending tensors $ s_{W(k)} $ to tensors $ s_{\dR} $. By transport of structure via $ \beta_{\tilde{x}} $, we obtain a (linearized) Frobenius on $ \Lambda^{*}_{W(k)} $, denoted by $ F_{\tilde{x}}^{\mathrm{lin}} : \varphi^*\Lambda^*_{W(k)}\to \Lambda^*_{W(k)}$. We define 
\begin{align*}
	\Gamma_{\tilde{x}}^{\mathrm{lin}}: \Lambda^{*}_{W(k)}\cong\varphi^*\Lambda^{*}_{W(k)}= \varphi^*\Lambda^{*}_{W(k),1}\oplus \varphi^*\Lambda^{*}_{W(k),0}\xrightarrow{1/pF_{\tilde{x}}^{\mathrm{lin}} \oplus F_{\tilde{x}}^{\mathrm{lin}} } \Lambda^{*}_{W(k)},
\end{align*} 
where $\Lambda^{*}_{W(k)}\cong\varphi^*\Lambda^{*}_{W(k)}$ is the canonical isomorphism since $ \Lambda $ is defined over $ \mathbb{Z}_p $. 
This is an isomorphism preserving tensors $ s_{W(k)} $, and hence we obtain an element in $ G(k) $, namely
\begin{align*}
	\alpha(\tilde{x}): = (\Gamma_{\tilde{x}}^{\mathrm{lin}}\mod p)^{\vee}\in G(k).
\end{align*}
Here we use the contragredient representation $(\cdot)^{\vee}: \GL(\Lambda_{\kappa}^*)\cong \GL(\Lambda_{\kappa}) $ introduced in Section \ref{Tensorsandcontragredient}. 
Finally, $ \theta(x) $ is defined to the the image in $ \mathcal{D}_1(k) $ of
\begin{align}\label{defofthetaongeometricpoints}
	\Phi_{0}(\varphi, \tilde{x}):=\alpha(\tilde{x})\mu(u) \in \mathcal{D}_1(k).
\end{align}
It turns out that $ \theta(x) $ is independent of the choice of $ \tilde{x} $. 
The unique Frobenius lift $ \varphi$ of $ W(k) $ appears in $ 	\Phi_0(\varphi, \tilde{x}) $  to keep coherence of notations in the relative setting where the Frobenius lifts are not unique.

The element $\Phi_0(\varphi, \tilde{x})$ in $ \mathcal{D}(k) $ has a realization as the reduction modulo $ p $ of the Frobenius of some Breuil-Kisin window isomorphic to the one associated to $ \mathcal{A}_{\tilde{x}} [p^{\infty}]$. In other words, $\Phi_0(\varphi, \tilde{x})$ can be realized as the Frobenius of some torsion Breuil-Kisin window isomorphic to the one associated to the BT1, $ \mathcal{A}_{\tilde{x}} [p]$. The ambiguity ``isomorphic to" here (for Frobenius maps it is the same as ``$ \mathcal{K}$-$ \sigma $ conjugate to") still exists in $ \mathcal{D}_1 (k)$, but will be killed after passing to the quotient $ \mathcal{D}_1(k)/\mathcal{K}^{\diamond}(k) $. Such a realization is the hard part of this paper. 

The  element $ \mu(u) \in \mathcal{L}G(k)$, instead of $ \chi^{(p)}(u) $ ($\chi^{(p)}$ is the ``$ \mu $'' in \cite{Wortmann}, see (9) in Section \ref{GeneralNotations} for its definition), appears in $\Phi_0(\varphi, \tilde{x})$ is due to the fact that the Frobenius lift $ \varphi: \mathfrak{S}\to \mathfrak{S} $ sends the formal variable $ u $ to $ u^p $. This also explains why the cocharacter ``$ \mu $'' in this thesis is different from the ``$ \mu $''  in \cite{Wortmann}.

For an algebraically closed field extension $ k $ of $ \kappa $, the map $ \omega $ is defined as follows.
\begin{align*}
	\begin{array}{rcl}
		\omega: \mathcal{D}_{1}(k)/\mathcal{K}^{\diamond}(k)& \longrightarrow & E_{\chi}(k)\backslash G_{\kappa}(k)\\
		h_1\mu(u)h_2& \longmapsto & E_{\chi}(k)\cdot (\sigma^{-1}(\bar{h}_2)\bar{h}_1),
	\end{array}
\end{align*}	
where for an element $h\in \mathcal{K}(k)$, we write $\bar{h}= h\mod u$.


The map $ \omega $ above (Proposition \ref{PropositionofEO}) is an analogue of the map $ \bar{\zeta} $ in \cite[Proposition 6.7]{Wortmann}. The fact that $ \omega $ is a bijection seems to be implied by the work in \cite{ViehmannTrucation1} (or combined with \cite[Proposition 6.7]{Wortmann}), but this is not completely clear to us. Due to the difference of characters in question (our $ \mu: \mathbb{G}_{m, \kappa}\to G_{\kappa} $ is not minuscule), the proof of \cite[Proposition 6.7]{Wortmann} cannot be adapted directly to our setting.

\subsection{Acknowledgement}
This paper comes out of my PhD thesis. I would like to first thank F. Andreatta (my supervisor in Milan) for initiating this work and for introducing me into this field of mathematics. I would like to specially thank U. G\"ortz for many useful discussions and helpful suggestions during my several visits to the University of Duisburg-Essen. I would like to thank T. Wedhorn for pointing out several mistakes in the thesis, as well as giving suggestions on how to fix them. Furthermore, I would like to thank B. Edixhoven, B. de Smit (my supervisor in Leiden) and C. Zhang for numerous discussions over the years.

\section{Notations and conventions}	
\subsection{General notions}\label{GeneralNotations}
\begin{enumerate}[(1)]
	\item As mentioned earlier, for a prime number $p$ in this paper, we assume that $p\geq 3$.
	
	\item All Dieudonn\'e crystals and Diudonn\'e modules are contravariant.
	
	\item If $X$ is a scheme over a ring $R$ (resp. over a scheme $ S $) and $R'$ is an $R$-algebra (resp. $S'$ is an $S$-scheme), we often denote by $X_{R'}$ (resp. $X_{S'}$) the pullback of $X$ along the ring homomorphism $R\to R'$ (resp. along the structure morphism $ S'\to S $). Sometimes we suppress the subscripts $R'$ (resp. $ S' $) if the base ring (resp. base scheme)  is clear from the context.
	
	\item Similar convention as in (3) applies for modules, $ p $-divisible groups,  stacks, etc.
	\item If $R$ is a ring of characteristic $p$ or a $\mathbb{Z}_p$-flat $p$-adic ring and $\varphi_R$ a Frobenius lift of $R$ (i.e., the reduction mod $p$ of $\varphi_R$ is the absolute Frobenius of $R/pR$), we often suppress the subscript $R$ when it is clear from the context. Here and everywhere in the sequel a $ p $-adic ring is $ p $-adically complete and separated.
	\item If $ \varphi: R \to R$ is an endomorphism of rings and $ M $ is an $ R $-module, we use the following notations interchangeably
	\begin{align*}
		M^{(\varphi)}:=\varphi^{*}M:=M\otimes_{R, \varphi}R. 
	\end{align*}	
	\item  If $R$ is a  ring with $r\in R$, and $M$ an  $R$-module, we sometimes simply denote by $M[\frac{1}{r}]$ the module $M\otimes_R R[\frac{1}{r}]$ over $R[\frac{1}{r}]$ and $f[\frac{1}{r}]: M[\frac{1}{r}]\to N[\frac{1}{r}]$ the induced homomorphism of a homomorphism $f: M\to N$ of $R$-modules.
	
	\item By a linear algebraic group over a ring $ R $ we mean an affine group scheme of finite presentation over $ R $. By a reductive group over $ R $ we mean a connected smooth affine group scheme over $ R $ such that for each geometric point $ s $ of $ \Spec R $, the pullback $ G_s $ is a connected reductive group over $ s $.  
	\item Let $ k $ be a field of characteristic $p$. For any $ k $-scheme $ S $, we denote by $\Frob_S: S\to S$ the absolute Frobenius endomorphism of $S$, and by $(\ )^{(p)}$ the pull-back along $ \Frob_S $ of a scheme or a sheaf or a morphism over $S$. For example, if $G$ is a reductive group over $k$, we denote by $G^{(p)}$ the pull-back of $G$ along $\Frob_{\Spec k}$; if $P$ is a $G$-torsor over $S$, then $P^{(p)}$ is a $G^{(p)}$-torsor over $ S $; the pull back $\lambda^{(p)}: \mathbb{G}_{m,k}\cong \mathbb{G}^{(p)}_m\to G^{(p)}$ along $\Frob_{\Spec k}$ of a cocharacter $\lambda: \mathbb{G}_{m,k}\to G$ (over $k$) is a cocharacter  of $ G^{(p)} $ (over $ k $). In particular, if $ G $ is defined over $ \mathbb{F}_p $, then $ \lambda^{(p)} $ is again a cocharacter of $ G $ since we then have canonical isomorphism $ G^{(p)} \cong G$.  When $k$ is perfect, we have an automorphism of  $X_*(G):=\Hom_k(\mathbb{G}_{m,k}, G) $, given by
	\begin{align} \label{Actionofsigmaoncocharacters}
		\sigma: X_*(G)\longrightarrow X_*(G), \ \ \lambda\longmapsto \lambda^{(p)}.
	\end{align}
	Let $ \mathrm{G} $ be reductive model over $ \mathbb{F}_p $ of $ G $, i.e., $ \mathrm{G}\otimes k= G $, and $ \sigma: k\to k(x\mapsto x^p) $ the Frobenius element in the Galois group $ \Gal(k/\mathbb{F}_p) $. Then the action $ \sigma $ in \eqref{Actionofsigmaoncocharacters} is the same as the Galois action of $\sigma\in  \Gal(k/\mathbb{F}_p)$ on  $X_*(G)$, i.e., we have 
	\begin{align}
		\lambda^{(p)}= \big(\mathbb{G}_{m,k}\xrightarrow{\id\otimes \sigma}\mathbb{G}_{m,k}\xrightarrow{\lambda} \mathrm{G}\otimes k\xrightarrow{\id\otimes \sigma^{-1}}\mathrm{G}\otimes k=G\big).
	\end{align}
	
	\item 	If $ G $ is a group scheme over a field of characteristic $ p $,  and $ R $ is a $ k $-algebra, then for any $ x\in G(R) $, by $ \sigma(x) $ we mean the image of $ x $ in $ G^{(p)}(R) $ under the relative Frobenius $ G\to G^{(p)} $. Note that there are two Frobenii $ \sigma $ and $ \varphi $ on $ \mathcal{L}G $ and on $ \mathcal{L}^{+}G $ (Section \ref{sectionLoopSchemes}). The difference between them is discussed in Section \ref{Section2Frobeni}.
	
\end{enumerate}

\subsection{Tensors and contragredient representations}\label{Tensorsandcontragredient}

We introduce below  the ``contragredient representations" following Wortmann's thesis (\cite{Wortmann}) but note that it is  used slightly differently in this paper.

Let $R$ be a ring and $M$ a finite locally free $R$-module. Denote by $ M^{*} $ the dual $ R $-module of $ M $ and by $ M^{\otimes} $ the direct sum of all $ R $-modules that arise from $ M $ by applying the operation of taking duals, tensor products, symmetric powers and exterior powers a finite number of times. An element of $ M^{\otimes} $ is called a \textbf{tensor} over $ M $. For an isomorphism $ f: M_1\cong M_2$ of finite locally free $ R $-modules, we have an induced isomorphism $ (f^{-1})^{*}: M_1^{*}\to M_2^{*} $, and hence $ f^{\otimes}: M_1^{\otimes}\to  M_2^{\otimes} $.

For any $ M $ as above, there is a canonical isomorphism of group schemes, called the\textbf{ contragredient representation } of $\GL(M)$, defined as  $$(\cdot)^{\vee}: \GL(M)\cong \GL(M^*), \ \ \ g\longmapsto g^{\vee}:=(g^{-1})^{*}.$$
Through the contragredient representation of $M $, $M^{\otimes} $ is natually identified with $ (M^{*})^{\otimes} $. 
If $s\subset M^{\otimes}$ is a set of tensors over $M$, which defines a subgroup $G\subset \GL(M)$, then they also defines a subgroup $\{g^{\vee}|g\in G\}\subset \GL(M^{*}),$ when considered as tensors over $M^{*}$. Since $ M $ is canonically identified with $ (M^{*})^{*} $, we also have the contragredient of $ \GL(M^{*}) $,
\begin{align*}
	(\cdot)^{\vee}: \GL(M^{*})\to \GL(M),
\end{align*}
which we shall use frequently in the sequel.

\section{Loop groups}\label{sectionLoopSchemes}
\subsection{Representability of loop groups}

\begin{definition}\label{DefinitionofLoopGroups}
	Let $k$ be a field and $G$ a linear algebraic group over $k$. The \textbf{(algebraic) loop group} of $G$, denoted by $\mathcal{L}G$, is the fpqc sheaf of groups whose $A$-valued points for a $k$- algebra $A$ is given by $\mathcal{L}G(A)=G(A((u)))$, where $A((u))$ is the ring of Laurent series with coefficients in $A$.
	Let $\mathcal{K}:=\mathcal{L}^+G\subset \mathcal{L}G$ be the subgroup of $\mathcal{L}G$ with $\mathcal{K}(A)=G(A[[u]])$, where $A[[u]]$ is the ring of formal power series with coefficients in $A$. It is called the \textbf{(strict) positive loop group } of $G$. We set $\mathcal{K}_1$ to be the kernel of the reduction map $\mathcal{K}\to G$.
\end{definition}

In fact, the functors $\mathcal{L}^+ (\cdot)$ and $\mathcal{L}(\cdot)$ can be defined more generally. For example, if $X$ is a scheme over $k[[u]]$, and $\mathcal{X}$ a scheme over $k((u))$, we can define the \textbf{positive loop functor} $\mathcal{L}^+X$ and the \textbf{loop functor} $\mathcal{L}\mathcal{X}$ as follows: for any $k$-algebra $A$,
$$\mathcal{L}^+X(A):=X(A[[u]]), \; \; \;\;\mathcal{L}\mathcal{X}(A):=X(A((u))).$$
These are the so called the \textbf{twisted} versions in \cite{TwistedLoopGroupsRapoport&Pappas}) to distinguish the cases where $ X $ and $\mathcal{X}$ come from $ k $-schemes. Note that if $ X $ does come from a $ k $-scheme, i.e., $ X=X_0\otimes_{k} k[[u]] $ for some $ k $-scheme $ X_0 $, then we have $ \mathcal{L}^{+}G(X)= \mathcal{L}^{+}G(X_0)$; the similar assertion holds for the functor $ \mathcal{L}(\cdot) $ as well. We shall not need such generalities in the sequel. 

In what follows in this section, we let $ G $ be a reductive group over $ k $. 

\begin{lemma}[]\label{RepStrictLoopGroups}
	The positive loop group  $\mathcal{L}^+G $ is represented by an affine group scheme (of infinite type) over $k$.  
\end{lemma}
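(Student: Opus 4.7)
The strategy is to realize $\mathcal{L}^{+}G$ as a closed subscheme of a positive loop version of affine space, which is manifestly affine (albeit of infinite type) over $k$.

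Since $G$ is reductive, it is in particular affine and of finite type over $k$, so we may fix a closed immersion $G \hookrightarrow \mathbb{A}^{m}_{k}$, or equivalently a presentation
\[
\mathcal{O}(G) = k[x_{1},\ldots,x_{m}]/(f_{1},\ldots,f_{r}).
\]
For any $k$-algebra $A$, a point of $\mathcal{L}^{+}G(A) = G(A[[u]])$ is then the same as an $m$-tuple $(g_{1},\ldots,g_{m}) \in A[[u]]^{m}$ satisfying $f_{\ell}(g_{1},\ldots,g_{m}) = 0$ in $A[[u]]$ for $\ell = 1,\ldots,r$. In particular $\mathcal{L}^{+}G$ is naturally a subfunctor of $\mathcal{L}^{+}\mathbb{A}^{m}$.

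The functor $\mathcal{L}^{+}\mathbb{A}^{m}$ is representable by the affine scheme
\[
\Spec k[a_{j,i}\,:\,1 \leq j \leq m,\; i \in \mathbb{Z}_{\geq 0}],
\]
with universal tuple $g_{j} = \sum_{i \geq 0} a_{j,i}\,u^{i}$. Expanding
\[
f_{\ell}(g_{1},\ldots,g_{m}) \;=\; \sum_{n \geq 0} P_{\ell,n}\bigl((a_{j,i})_{j,i}\bigr)\, u^{n},
\]
each coefficient $P_{\ell,n}$ is a polynomial in \emph{finitely} many of the variables $a_{j,i}$, because $f_{\ell}$ is itself a polynomial and only the $a_{j,i}$ with $i \leq n$ can contribute to the $u^{n}$-coefficient. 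Hence the subfunctor $\mathcal{L}^{+}G \subset \mathcal{L}^{+}\mathbb{A}^{m}$ is cut out by the ideal generated by all the $P_{\ell,n}$, and is therefore represented by a closed subscheme of $\mathcal{L}^{+}\mathbb{A}^{m}$; this subscheme is automatically affine.

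The group scheme structure is transported from $G$ via the evident naturality of the assignment $A \mapsto A[[u]]$: the multiplication, inversion, and identity of $G \subset \mathbb{A}^{m}$ are encoded by polynomial formulas that extend coefficient-wise to power-series inputs. The only point that needs verification along the way is that $\mathcal{L}^{+}(\cdot)$ carries closed immersions to closed immersions and commutes with finite products, both of which are immediate from the finite-level description of the coefficients $P_{\ell,n}$ above. I do not anticipate any serious obstacle: the entire content of the lemma is that a polynomial identity among power series unfolds into a sequence of polynomial identities among their coefficients, each involving only finitely many of them, so representability reduces to the representability of an infinite product of affine lines.
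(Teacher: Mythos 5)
Your argument is correct, and it implements the same core mechanism as the paper's cited reference (Levin's Proposition 3.2.1): realize $\mathcal{L}^{+}G$ as a closed subscheme of the positive loop space of a simple ambient object, exploiting the fact that extracting the $u^{n}$-coefficient of a polynomial identity among power series yields a polynomial in finitely many variables. The paper's route embeds $G$ in $\GL_{n,k}$, establishes that $\mathcal{L}^{+}\GL_{n,k}$ is an affine group scheme, and then shows $\mathcal{L}^{+}G \hookrightarrow \mathcal{L}^{+}\GL_{n,k}$ is closed; your route embeds $G$ directly in $\mathbb{A}^{m}_{k}$ and cuts $\mathcal{L}^{+}G$ out of $\mathcal{L}^{+}\mathbb{A}^{m}_{k} = \Spec k[a_{j,i}]$. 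Your version is slightly more economical, since $\mathcal{L}^{+}\mathbb{A}^{m}$ is affine by inspection, whereas the paper's route needs the (minor) preliminary that $\mathcal{L}^{+}\GL_{n}$ is itself affine — which one would ultimately establish by the same coefficient-extraction trick applied to the closed embedding $\GL_{n} \hookrightarrow \mathbb{A}^{n^{2}+1}$ given by $(M,t) \mapsto$ with $t\det M = 1$. The advantage of passing through $\GL_{n}$, as the paper does, is that this embedding is then reused later (see \eqref{Embedding of loop group}) to define the exhaustion $G^{(N)}$ of the full loop group, so the affine representability of $\mathcal{L}^{+}\GL_{n}$ is not wasted effort in that context; as a standalone proof of this lemma, however, your direct argument is cleaner.
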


For the proof the preceding lemma, one may see Proposition 3.2.1 in B. Levin's thesis \cite{LevinThesisGvaluedKisinModules}. The basic idea is to embed $ G $ into $ \GL_{n, k} $, and then reduce to show that $ \mathcal{L}^+\GL_{n, k}  $ is represented by an affine group scheme and that $ \mathcal{L}^+{G} \hookrightarrow  \mathcal{L}^+\GL_{n, k}  $
is a closed embedding. We refer to the discussion before Definition 1.1 in \cite{TwistedLoopGroupsRapoport&Pappas} for the equations defining this embedding.

Following the conventions in \cite{TwistedLoopGroupsRapoport&Pappas}, we call an fpqc sheaf $ Y $ over $ k $ an \textbf{ind-scheme} if it is the inductive limit of the functors associated to a directed system $\{Y_i\}$ of $k$-schemes. We say $ Y $ is \textbf{strict} if in addition, the transition morphisms are closed embeddings. A \textbf{group ind-scheme} is an ind-scheme which is a group object in the category of ind-schemes. We say $Y$ is \textbf{ind-affine} (resp. \textbf{ind-finite type, ind-projective}, etc) if each $Y_i$ can be taken to be affine (resp. ind-finite type,  projective, etc).

\begin{proposition}[{\cite[Proposition 3.2.4.]{LevinThesisGvaluedKisinModules}}]
	The loop group  $\mathcal{L}G$ is represented by a strict ind-affine  group ind-scheme over $k$.
\end{proposition}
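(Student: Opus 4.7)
The strategy is to first exhibit $\mathcal{L}\GL_{n,k}$ explicitly as a strict ind-affine group ind-scheme by a pole-order filtration, and then to transfer the structure to $\mathcal{L}G$ via a closed embedding $G \hookrightarrow \GL_{n,k}$. The group-theoretic data on $G$ will be carried across by the functoriality of $\mathcal{L}(\cdot)$.

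First I would handle the affine ``matrix loop'' $\mathcal{L}M_n$, where $M_n$ denotes the affine $n \times n$ matrix scheme over $k$. Since $\mathcal{L}^{+}M_n(A) = M_n(A[[u]])$ is visibly representable by the affine $k$-scheme $\Spec k[a_{ij,\ell} : 1 \leq i,j \leq n,\ \ell \geq 0]$, writing $A((u)) = \bigcup_{N \geq 0} u^{-N}A[[u]]$ gives $\mathcal{L}M_n = \varinjlim_N u^{-N}\mathcal{L}^{+}M_n$ as a strict ind-affine ind-scheme (the transitions $u^{-N}\mathcal{L}^{+}M_n \hookrightarrow u^{-N-1}\mathcal{L}^{+}M_n$ are closed immersions, each corresponding to setting a single coefficient to zero). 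Then $\mathcal{L}\GL_{n,k} \hookrightarrow \mathcal{L}M_n \times \mathcal{L}M_n$ via $g \mapsto (g, g^{-1})$, and the image is the closed subfunctor cut out by the two matrix equations $MM' = I$ and $M'M = I$. Letting $Y_N \subset \mathcal{L}M_n \times \mathcal{L}M_n$ be the closed subscheme where both entries have pole order $\leq N$ and both relations hold, each $Y_N$ is an affine $k$-scheme, the transitions $Y_N \hookrightarrow Y_{N+1}$ are closed immersions, and $\mathcal{L}\GL_{n,k} = \varinjlim_N Y_N$ is strict ind-affine.

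Next, for an arbitrary reductive $G$ over $k$, fix a closed embedding $\iota : G \hookrightarrow \GL_{n,k}$ as in the proof of Lemma \ref{RepStrictLoopGroups}, cut out by a finitely generated ideal $I \subset \mathcal{O}(\GL_{n,k})$. For any $k$-algebra $A$ and any $g \in \GL_n(A((u)))$, the condition $g \in G(A((u)))$ is the vanishing of the finitely many polynomial generators of $I$ at $g$, a Zariski closed condition on the coefficients of the Laurent series entries of $g$. Hence $\mathcal{L}G \hookrightarrow \mathcal{L}\GL_{n,k}$ is a closed sub-ind-scheme, and the fibre products $Z_N := Y_N \times_{\mathcal{L}\GL_{n,k}} \mathcal{L}G$ are closed affine subschemes of $Y_N$ whose transitions remain closed immersions; therefore $\mathcal{L}G = \varinjlim_N Z_N$ is represented by a strict ind-affine ind-scheme over $k$.

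Finally, the multiplication $m : G \times G \to G$, inversion $i : G \to G$, and identity $e : \Spec k \to G$ induce natural transformations on the $A$-points of $\mathcal{L}G$ in the evident way, making $\mathcal{L}G$ a group object in the category of fpqc sheaves. Compatibility with the ind-structure is immediate on the matrix level: multiplication sends $Y_N \times Y_M$ into $Y_{N+M}$ (the pole order of a product is at most the sum of the pole orders), inversion on $Y_N$ is the involution swapping the two matrix factors, and the identity section lands in $Y_0$; these restrict to morphisms among the $Z_N$ by the closed-subfunctor property. The only place any real content is used is the representability of $\mathcal{L}^{+}M_n$ as an affine scheme of infinite type together with the observation that the defining ideal of $G$ inside $\GL_{n,k}$ is finitely generated, both of which are elementary; consequently no step is expected to present a substantial obstacle.
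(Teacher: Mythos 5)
Your argument follows the paper's sketch (and the standard references) almost verbatim: the pole-order filtration $Y_N$ on $\mathcal{L}\GL_{n,k}$ is the paper's $\GL_{n,k}^{(N)}$ repackaged as the closed locus $\{(M, M') : MM' = M'M = I\}$ in $\mathcal{L}M_n \times \mathcal{L}M_n$, and passage to a general $G$ proceeds by intersecting with the closed condition coming from the finitely generated ideal of $G \hookrightarrow \GL_{n,k}$, exactly as the paper describes. The only slip is cosmetic: the transition closed immersion $u^{-N}\mathcal{L}^{+}M_n \hookrightarrow u^{-N-1}\mathcal{L}^{+}M_n$ imposes the vanishing of $n^2$ coefficients (one per matrix entry), not a single one.
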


We only explain here the basic idea of the proof. In the case $ G=\GL_{n, k} $, this is well-known (see \cite{Beauville1994}).  For each $N\geq 0$, denote by $\GL_{n,k}^{(N)}$ the subfunctor of $\mathcal{L}\GL_{n,k}$ given by
$$\GL_{n,k}^{(N)}(R)=\{M\in \GL_n(R((u)))| \text{\ both } M \text{ and } M^{-1} \text{ have at most } N  \text{ poles} \}.$$ One shows that each $\GL_{n,k}^{(N)}$ is represented by an affine scheme (not group scheme, unless $ N =0$) and the transition morphisms are closed immersions.  Clearly one has then $$\mathcal{L}\GL_{n,k}=\bigcup\limits_{N=0}^{\infty} \GL_{n,k}.$$
For the general case, again we take an embedding  $ G\hookrightarrow \GL_{n, k} $ and 
set \begin{align}\label{Embedding of loop group}
	G^{(N)}=\mathcal{L}G\cap \GL_{n,k}^{(N)}.
\end{align} Then one finishes the proof by showing that each $ G^{(N)} $ is a closed subscheme of $ \GL_{n,k}^{(N)}$.

\subsection{A subfunctor $\mathcal{D}(G, x)$ of the loop group $\mathcal{L}G$}\label{Sectionsubfunctor}
Take $x\in \mathcal{L}G(k)$ and let $\mathcal{C}(G, x)$ be the subfunctor of $\mathcal{L}G$ which associates to a $k$-algebra $R$ the subset $\mathcal{K}(R)x\mathcal{K}(R)$ of $\mathcal{L}G(R)$. Denote by $\mathcal{D}'(G, x)$  the fpqc sheafification of $\mathcal{C}(G,x)$. We show in this subsection that $\mathcal{D}'(G, x)$ is represented by a subscheme of $\mathcal{L}G$. 

Let $ G^{(i)} $ be as in \eqref{Embedding of loop group}, the we have $\mathcal{L}G=\lim\limits_{\longrightarrow}G^{(i)}$. Since the point $x$ lies in some $G^{(i)}(k)$, a natural idea is to consider the group action of schemes
\begin{align}\label{Actionof2K}
	\rho: \mathcal{K}\times \mathcal{K}\times G^{(i)}\to G^{(i)}, (g, h, z)\mapsto gzh^{-1}
\end{align}
and the orbit map $$\rho_x: \mathcal{K}\times \mathcal{K}\to G^{(i)}, (g, h)\to gxh^{-1}.$$ Indeed, if the orbit map $\rho_x$ has locally closed image, one would like to hope that the sheaf $\mathcal{D}'(G, x)$ is represented by a subscheme (maybe not reduced) of $G^{(i)}$ with underlying topological space $|\Im(\rho_x)|$. This idea turns out to be correct. However, since both $\mathcal{K}$ and $G^{(i)}$ are infinitely dimensional and it is not clear whether the morphisms $\rho, \rho_x$ are of finite type or not, many of the techniques and results of algebraic group actions on algebraic schemes (among them is Chevalley's theorem on constructible sets) do not apply directly. For this reason we need to pass to the affine Grassmannians of $G$ 

The \textbf{affine Grassmannian} of $G$, denoted by $\mathrm{Gr}$, is the fpqc sheafification of the presheaf which, to every $k$-algebra $R$, associates the quotient $\mathcal{L}G(R)/\mathcal{K}(R)$. Let $\mathcal{L}G=\lim\limits_{\longrightarrow}G^{(i)}$ be as above.
If we denote by $(G^{(i)}/\mathcal{K})^{\circ}$ the presheaf  $R\mapsto G^{(i)}(R)/\mathcal{K}(R)$ and $\mathrm{Gr}^{(i)}$ the fpqc sheafification of $(G^{(i)}/\mathcal{K})^{\circ}$, then each $\mathrm{Gr}^{(i)}$ is represented by a finite type $k$-scheme and we have $\mathrm{Gr}=\lim\limits_{\longrightarrow} \mathrm{Gr}^{(i)}$. In other words, the affine Grassmannian $\mathrm{Gr}$ is an ind-scheme of ind-finite type.  Moreover,  since $ G $ is reductive, each $\mathrm{Gr}^{(i)}$ is projective and hence $\mathrm{Gr}$ is ind-projective. In the case of $G=\GL_n$, each $\mathrm{Gr}^{(i)}$ is a closed subscheme of a finite disjoint union of usual Grassmannians. 

Consider the left action $$\theta: \mathcal{K}\times (G^{(i)}/\mathcal{K})^{\circ}\to (G^{(i)}/\mathcal{K})^{\circ}, \ (g, x\mathcal{K}(R))\mapsto gx\mathcal{K}(R)$$
of $\mathcal{K}$ on $(G^{(i)}/\mathcal{K})^{\circ}$ as functors.  Then $\theta$ extends naturally to a left action of $\mathcal{K}$ on $\mathrm{Gr}^{(i)}$, which we denote again by $\theta$. Now we have a commutative diagram of morphisms of presheaves 
\begin{equation*}
	\xymatrix{\mathcal{K}\times \mathcal{K} \times G^{(i)}\ar[d]^{\mathrm{pr}_1\times p}\ar[rr]^{\  \ \ \ \ \rho}\ar[d]^{p}&& G^{(i)}\ar[d]^{p}\\
		\mathcal{K}\times (G^{(i)}/\mathcal{K})^{\circ}\ar[d]^{\id \times i}\ar[rr]^{\ \ \ \theta}&&(G^{(i)}/\mathcal{K})^{\circ}\ar[d]^{i}\\
		\mathcal{K}\times \mathrm{Gr}^{(i)}\ar[rr]^{\ \ \ \theta}&&\mathrm{Gr}^{(i)}}
\end{equation*}
where $p: G^{(i)}\to (G^{(i)}/\mathcal{K})^{\circ}$ is the natural projection and $i: (G^{(i)}/\mathcal{K})^{\circ}\hookrightarrow \mathrm{Gr}^{(i)}$ is the natural inclusion (note that the presheaf $(G^{(i)}/\mathcal{K})^{\circ}$ is separated). Denote by  $\bar{x}$ the image of $x$ in $(G^{(i)}/\mathcal{K})^{\circ}(k)\subset \mathrm{Gr}^{(i)}(k)$ and by $\theta_{\bar{x}}: \mathcal{K}\to \mathrm{Gr}^{(i)}$ the orbit map of $\theta$ at $\bar{x}$.
\begin{lemma}\label{Locally Closed lemma}
	The topological image $|\theta_{\bar{x}}(\mathcal{K})|$ is a locally closed subset of $\mathrm{Gr}^{(i)}$.  
\end{lemma}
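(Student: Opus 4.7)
The main obstacle is that $\mathcal{K}$ is an affine group scheme of infinite type over $k$, so the classical theorem that orbits of algebraic group actions on finite-type schemes are locally closed does not apply directly to $\theta_{\bar x}$. The plan is to reduce to a finite-type situation by factoring $\theta_{\bar x}$ through a jet quotient of $\mathcal{K}$. For each integer $N\geq 1$, let $\mathcal{L}_N G$ denote the $N$-th jet group, defined on $k$-algebras by $\mathcal{L}_N G(R) := G(R[u]/u^N)$, and let $\mathcal{K}_N \subset \mathcal{K}$ be the kernel of the natural surjection $\mathcal{K} \twoheadrightarrow \mathcal{L}_N G$ induced by $R[[u]] \twoheadrightarrow R[u]/u^N$. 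Since $G$ is smooth affine of finite type over $k$, each $\mathcal{L}_N G$ is a smooth affine algebraic group of finite type, and $\mathcal{K}/\mathcal{K}_N \cong \mathcal{L}_N G$ as fpqc sheaves of groups.

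The key claim is that for $N$ sufficiently large (depending on $i$), the congruence subgroup $\mathcal{K}_N$ is contained in the stabilizer of $\bar x$ in $\mathcal{K}$. Unwinding the definition of the action $\theta$ on $\mathrm{Gr}^{(i)}$, the condition $g \cdot \bar x = \bar x$ for $g\in \mathcal{K}_N(R)$ is equivalent to $x^{-1}gx \in \mathcal{K}(R)$, i.e.~to the subsheaf containment $x^{-1}\mathcal{K}_N x \subset \mathcal{K}$ inside $\mathcal{L}G$. Using the closed embedding $G \hookrightarrow \GL_{n,k}$ of (\ref{Embedding of loop group}), the hypothesis $x \in G^{(i)}(k)$ means that both $x$ and $x^{-1}$ have matrix entries with poles of order at most $i$. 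Writing $g = 1 + h$ with $h$ having entries in $u^N R[[u]]$, the matrix $x^{-1}gx = 1 + x^{-1}hx$ has entries with poles of order at most $2i - N$. Thus for $N \geq 2i$ we have $x^{-1}gx \in \GL_n(R[[u]])$; combined with $x^{-1}gx \in G(R((u)))$ and the fact that $G$ is closed in $\GL_{n,k}$, this gives $x^{-1}gx \in G(R[[u]]) = \mathcal{K}(R)$, as required.

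Consequently $\theta_{\bar x}$ factors as
\[
\theta_{\bar x} : \mathcal{K} \twoheadrightarrow \mathcal{L}_N G \xrightarrow{\bar\theta_{\bar x}} \mathrm{Gr}^{(i)},
\]
and in particular $|\theta_{\bar x}(\mathcal{K})| = |\bar\theta_{\bar x}(\mathcal{L}_N G)|$. Now $\bar\theta_{\bar x}$ is a morphism of finite-type $k$-schemes, with source a smooth algebraic group and target a separated (in fact projective) scheme on which $\mathcal{L}_N G$ acts through the induced action. The classical argument then applies: Chevalley's theorem shows the image is constructible and hence contains a dense open subset $U$ of its closure, and by homogeneity of the group action the orbit coincides with the union of translates $g\cdot U$, which is open in its closure. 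This yields the desired locally closed subset of $\mathrm{Gr}^{(i)}$. The only nontrivial step is the factorization, whose verification is the elementary pole-order computation above.
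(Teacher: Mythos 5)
Your proof takes essentially the same route as the paper's: both reduce to a finite-type situation by observing that the map factors through the jet quotient $G_{2i}=\mathcal{K}/\mathcal{K}_{2i}$ (your $\mathcal{L}_{2i}G$ is exactly the paper's Weil restriction $G_{2i}=\Res_{(k[u]/u^{2i})/k}G_{k[u]/u^{2i}}$), and then invoke the standard Chevalley/homogeneity argument for orbits of smooth algebraic groups. You have made explicit the pole-order computation that the paper dismisses with ``one may check''. One small wrinkle: you only verify that $\mathcal{K}_{2i}$ fixes $\bar{x}$, so strictly speaking you have the factorization of the orbit map but not yet that the full $\mathcal{K}$-action on $\mathrm{Gr}^{(i)}$ descends to $\mathcal{L}_{2i}G$ as you later assert; but applying your identical pole bound to an arbitrary $y\in G^{(i)}(R)$ in place of $x$ gives the stronger statement (this is what the paper claims), and in any case homogeneity of the orbit under $\mathcal{K}(\bar{k})$ already suffices, so there is no real gap.
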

\begin{proof} 
	Note that we have 
	$\mathcal{K}=\lim\limits_{\longleftarrow}G_j, $ with 
	$$G_j:= \Res_{(k[u]/u^j)/k}G_{k[u]/u^j},$$
	the Weil restriction of $G_{k[u]/u^j}$ over $k$. One may check that the action of  $ \mathcal{K}$ on  $(G^{(i)}/\mathcal{K})^{\circ}$ factors through $G_{2i}$ (first do the case of $G=\GL_n$ and for the general case, embed $G$ into some $\GL_n$ as in the definition of $G^{(i)}$). Hence the action of $ \mathcal{K}$ on  $\mathrm{Gr}^{(i)}$ also factors through $G_{2i}$. Since $G_{2i}$ is again an algebraic group over $k$, one gets the result by applying Proposition 1.52. (b) in \cite{MilneAG} for the induced orbit map $G_{2i}\to \mathrm{Gr}^{(2i)}$ in question.
\end{proof}
We define $O_{\bar{x}}$ to be the unique reduced subscheme of $\mathrm{Gr}^{(i)}$ with underlying topological space $|\theta_{\bar{x}}(\mathcal{K})|$. The subscheme $O_{\bar{x}}$ is usually called the \textbf{Schubert cell} of $\bar{x}$ in $\mathrm{Gr}$. It is clear that the orbit map $\theta_{\bar{x}}$ factors through $O_{\bar{x}}$ because the Weil restriction $G_{2i}$ in the proof of Lemma \ref{Locally Closed lemma} is again a smooth linear algebraic group.
\begin{lemma}\label{Fpqc surjective}
	The orbit map $\theta_{\bar{x}}: \mathcal{K}\to O_{\bar{x}}$ is a surjective morphism of fpqc sheaves. 
\end{lemma}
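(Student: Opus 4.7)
The plan is to exploit the factorization of $\theta_{\bar{x}}$ through the finite-dimensional truncation already used in the proof of Lemma \ref{Locally Closed lemma}. Since the action of $\mathcal{K}$ on $\mathrm{Gr}^{(i)}$ factors through $G_{2i}=\Res_{(k[u]/u^{2i})/k}G_{k[u]/u^{2i}}$, the orbit map decomposes as
\begin{equation*}
\theta_{\bar{x}}\colon \mathcal{K}\xrightarrow{\pi} G_{2i}\xrightarrow{\bar\theta_{\bar{x}}} O_{\bar{x}},
\end{equation*}
where $\pi$ is the canonical projection and $\bar\theta_{\bar{x}}$ is the orbit map of the induced $G_{2i}$-action. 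Since the composition of fpqc surjective morphisms is again fpqc surjective, it suffices to treat each factor separately.

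For $\bar\theta_{\bar{x}}$, observe that $G_{2i}$ is smooth of finite type over $k$ (being the Weil restriction of the smooth group $G_{k[u]/u^{2i}}$ along the finite flat map $k\hookrightarrow k[u]/u^{2i}$), while $O_{\bar{x}}$ is by construction the reduced orbit at $\bar{x}$. The standard structure theorem for orbits of smooth linear algebraic groups (see, e.g., Milne's \emph{Algebraic Groups}) then identifies $O_{\bar{x}}$ with the fppf quotient $G_{2i}/\Stab_{G_{2i}}(\bar{x})$ and exhibits $G_{2i}\to G_{2i}/\Stab_{G_{2i}}(\bar{x})$ as an fppf torsor under the stabilizer; in particular $\bar\theta_{\bar{x}}$ is fpqc surjective.

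For $\pi\colon \mathcal{K}\to G_{2i}$, I would write $\mathcal{K}=\lim\limits_{\longleftarrow}G_j$ as in the proof of Lemma \ref{Locally Closed lemma}. Smoothness of $G$ implies that each transition morphism $G_{j+1}\to G_j$ is affine, smooth and surjective, with geometric fibres torsors under the vector group attached to $\mathrm{Lie}(G)$. Given $y\in G_{2i}(R)$, I would inductively lift $y$ along these transition maps after a tower of affine faithfully flat base changes $R\to R_{2i}\to R_{2i+1}\to\cdots$, and then pass to the filtered colimit $R_{\infty}=\colim_j R_j$, which remains an affine faithfully flat $R$-algebra and therefore defines an fpqc cover of $R$ over which $y$ admits a preimage in $\mathcal{K}$. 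Composing the two fpqc surjections yields the desired surjectivity of $\theta_{\bar{x}}$. The main obstacle is the bookkeeping at this limit step: one must verify that filtered colimits of affine faithfully flat extensions remain faithfully flat and quasi-compact, which is a standard but delicate point compared with the purely finite-type algebraic-group argument for $\bar\theta_{\bar{x}}$.
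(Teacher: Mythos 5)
Your factorization of $\theta_{\bar{x}}$ through $G_{2i}$ and the treatment of $\bar\theta_{\bar{x}}\colon G_{2i}\to O_{\bar{x}}$ via the standard orbit theorem for smooth linear algebraic groups agree with the paper (which cites Lemma 9.27 of Milne's \emph{Algebraic Groups} for exactly this point). The divergence is in the second factor $\pi\colon\mathcal{K}\to G_{2i}$, where the paper's observation is simpler and stronger than what you attempt: since $G$ is smooth over $k$, the map $\mathcal{K}(R)=G(R[[u]])\to G(R[u]/u^{2i})=G_{2i}(R)$ is already surjective for \emph{every} $k$-algebra $R$, with no fpqc cover needed. Indeed $G$ is affine, so $G(R[[u]])=\varprojlim_j G(R[u]/u^j)$, and each $R[u]/u^{j+1}\twoheadrightarrow R[u]/u^j$ is a square-zero extension of affine rings along which formal smoothness of $G$ produces a lift; choosing lifts inductively gives an element of the limit. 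Your route instead lifts after a tower of faithfully flat base changes $R\to R_{2i}\to R_{2i+1}\to\cdots$ and then passes to the filtered colimit $R_\infty$, and you correctly flag that checking $R\to R_\infty$ remains faithfully flat is a delicate point — but the whole detour is avoidable. In fact your own remark that the fibres of $G_{j+1}\to G_j$ are torsors under the vector group $\Lie(G)\otimes u^j/u^{j+1}$ already implies triviality of those torsors over any affine base (quasi-coherent $H^1$ vanishes on affines), so the stepwise lift exists over $R$ itself. In short: you prove only fpqc surjectivity of $\pi$ by a harder argument, while the cleaner fact, used in the paper, is that $\pi$ is surjective on $R$-points outright; both suffice to conclude, but the paper's version eliminates the limit bookkeeping you were worried about.
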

\begin{proof}
	It follows from Lemma 9.27 in \cite{MilneAG} that the induced orbit map $G_{2i}\to O_{\bar{x}}$ is surjective as fpqc sheaves. But since $G$ is smooth, the projection map $\mathcal{K}(R)\to G_{2i}(R)$ is surjective for every $k$-algebra $R$. Hence $\theta_{\bar{x}}$ is a surjective map of fpqc sheaves.
\end{proof}

Let $\mathcal{D}(G, x)$ be the pull-back of $O_{\bar{x}}\subset \mathrm{Gr}^{(i)}$ along the projection map $\pi:= i\circ p: G^{(i)}\to \mathrm{Gr}^{(i)}.$ Then the orbit map $\rho_x$ factors through $\mathcal{D}(G, x)$. Our aim now is show that $\mathcal{D}'(G, x)$ is represented by the scheme $\mathcal{D}(G, x)$.

\begin{proposition}\label{Representability of D}
	The fpqc sheaf $\mathcal{D}'(G, x)$ is represented by the formally smooth subscheme $\mathcal{D}(G, x)$ of $\mathcal{L}G$. Moreover, the equation  $\mathcal{D}(G, x)(l)=\mathcal{K}(l)x\mathcal{K}(l)$ holds for any algebraically closed field extension $ l $ of $ k $. In particular, the subscheme $\mathcal{D}(G, x)\subset G^{(i)}$ has  underlying topological space $|\Im(\rho_x)|$.
\end{proposition}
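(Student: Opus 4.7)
The plan is to realize $\mathcal{D}(G, x)$ as the fpqc image of the orbit map $\rho_{x}: \mathcal{K}\times \mathcal{K}\to G^{(i)}$, $(g,h)\mapsto gxh^{-1}$, and to deduce everything from Lemma \ref{Fpqc surjective} together with the fact that $\pi: \mathcal{L}G\to \mathrm{Gr}$ is a $\mathcal{K}$-torsor for the fpqc topology (by the very construction of $\mathrm{Gr}$ as the fpqc sheafification of $\mathcal{L}G/\mathcal{K}$). Since $\mathcal{K}$ preserves each $G^{(i)}$ under left and right multiplication (immediate from the pole-bound description following \eqref{Embedding of loop group}), the restriction $\pi: G^{(i)}\to \mathrm{Gr}^{(i)}$ is again an fpqc $\mathcal{K}$-torsor.

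First I would check the inclusion $\mathcal{C}(G,x)\subset \mathcal{D}(G,x)$ of subfunctors of $G^{(i)}$: for $y = gxh\in \mathcal{K}(R)x\mathcal{K}(R)$ one has $\pi(y)=g\bar{x}=\theta_{\bar x}(g)$, which lies in $O_{\bar x}(R)$. For the reverse inclusion after fpqc sheafification, take $y\in \mathcal{D}(G,x)(R)$; then $\pi(y)\in O_{\bar x}(R)$, and Lemma \ref{Fpqc surjective} furnishes an fpqc cover $R\to R'$ together with $g\in \mathcal{K}(R')$ satisfying $\theta_{\bar x}(g)=\pi(y_{R'})$. Hence $\pi(g^{-1}y_{R'})=\bar x=\pi(x)$ in $\mathrm{Gr}(R')$, and the $\mathcal{K}$-torsor property of $\pi$ produces a further fpqc cover $R'\to R''$ and $h\in \mathcal{K}(R'')$ such that $g^{-1}y_{R''}=xh$, i.e. $y_{R''}=gxh\in \mathcal{C}(G,x)(R'')$. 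This identifies $\mathcal{D}'(G,x)$ with the scheme $\mathcal{D}(G,x)$.

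For an algebraically closed extension $l/k$ the same argument specialises without any fpqc refinement: by the proof of Lemma \ref{Fpqc surjective} the orbit map factors through the surjective smooth morphism $G_{2i}\to O_{\bar x}$ of finite type $k$-varieties, so it is surjective on $l$-points, while $\mathcal{L}G(l)\twoheadrightarrow \mathrm{Gr}(l)$ for algebraically closed $l$ is classical. Therefore $\mathcal{D}(G,x)(l)=\mathcal{K}(l)x\mathcal{K}(l)$, and in particular the underlying topological space of $\mathcal{D}(G,x)$ equals $|\mathrm{Im}(\rho_{x})|$. Formal smoothness follows from the Cartesian square $\mathcal{D}(G,x)=G^{(i)}\times_{\mathrm{Gr}^{(i)}}O_{\bar x}$: the Schubert cell $O_{\bar x}$ is smooth over $k$ (it is homogeneous under the smooth group scheme $G_{2i}$, hence smooth everywhere once it is smooth at the generic point), and $\mathcal{D}(G,x)\to O_{\bar x}$, being pulled back from the $\mathcal{K}$-torsor $\pi$, is itself an fpqc $\mathcal{K}$-torsor and therefore formally smooth, as $\mathcal{K}=\mathcal{L}^{+}G$ is formally smooth over $k$.

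The main obstacle I anticipate is making fpqc descent precise in the ind-scheme setting — in particular, establishing that $\pi: \mathcal{L}G\to \mathrm{Gr}$ really is a $\mathcal{K}$-torsor in a sense strong enough that the arguments above about restrictions to the bounded pieces $G^{(i)}\to \mathrm{Gr}^{(i)}$ are valid. Once this torsor property and the preservation of $G^{(i)}$ by the two-sided $\mathcal{K}$-action are firmly in place, the remainder of the proof is a routine combination of Lemma \ref{Fpqc surjective} with the transitivity of torsor actions.
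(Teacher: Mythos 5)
Your proof is correct and takes essentially the same approach as the paper: pull back the Schubert cell $O_{\bar{x}}$ along $\pi$, use Lemma \ref{Fpqc surjective} to recover a double-coset representative fpqc-locally, and specialise to algebraically closed $l$ via the factorisation through the finite-type morphism $G_{2i}\to O_{\bar x}$. The only added content is your sketch of formal smoothness via the torsor structure over $O_{\bar x}$, which the proposition asserts but the paper's proof does not actually supply; and one simplification you could make is that the second fpqc refinement $R'\to R''$ is unnecessary — since $\mathcal{K}$ acts freely on $G^{(i)}$ the presheaf $(G^{(i)}/\mathcal{K})^{\circ}$ is separated, so the equality $\pi(g^{-1}y_{R'})=\bar x$ in $\mathrm{Gr}^{(i)}(R')$ already holds in $(G^{(i)}/\mathcal{K})^{\circ}(R')$ and produces $h\in\mathcal{K}(R')$ directly, which is how the paper argues.
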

\begin{proof}
	As the induced map of sheaves $\mathcal{D}'(G, x)\to \mathcal{D}(G,x)$ from the inclusion $\mathcal{C}(G,x)\subset \mathcal{D}(G, x)$ is injective, for the first part of the claim it suffices to show that the orbit map $\rho_x: \mathcal{K}\times \mathcal{K}\to \mathcal{D}(G, x)$ is surjective as fpqc sheaves.  Indeed, for any $k$-algebra $R$ and any point $y\in \mathcal{D}(G, x)(R)$, by Lemma \ref{Fpqc surjective} fpqc locally  its image $$\pi(y)=y\mathcal{K}(R)\in (G^{(i)}/\mathcal{K})^{\circ}(R)\subset \mathrm{Gr}^{(i)}$$ comes from a translation of $\bar{x}$ by an element of $\mathcal{K}(R)$. In other words, there exists a faithfully flat $R$-algebra $R'$ such that $gx\mathcal{K}(R')=y\mathcal{K}(R')\in(G^{(i)}/\mathcal{K})^{\circ}(R').$
	This implies that the orbit map $\rho_x: \mathcal{K}\times \mathcal{K}\to \mathcal{D}(G, x)$ is a surjective map of fpqc sheaves. In the case of $R=l$ for an algebraically closed field extension $l$ of $k$, the same argument shows that $\mathcal{D}(G, x)(l)=\mathcal{K}(l)x\mathcal{K}(l)$ (here we use the fact that $O_{\bar{x}}(l)=\mathcal{K}(l)\cdot \bar{x}$).
\end{proof}

\subsection{Two Frobenii on the loop group}\label{Section2Frobeni}
From now on, we let the field $ k $ be of characteristic $ p $ till the end of this section. For each $ i $, we still let $ G^{(i)} $ be as in \eqref{Embedding of loop group} and denote by $ \sigma_{G^{(i)}}: G^{(i)}\to (G^{(i)})^{(p)} $ the relative Frobenius of $ G^{(i)} $ over $ k $. Then all these $ \sigma_{G^{(i)}} $ induce a homomorphism 
$$ \sigma=\sigma_{\mathcal{L}G} : \mathcal{L}G\to (\mathcal{L}G)^{(p)}:=\varinjlim_{i}(G^{(i)})^{(p)}.$$

On the other hand, for each $ k $-algebra $ A $,  and each $ y\in \mathcal{L}G(A) $, if we see $ y $ as a morphism $ y: \Spec A((u)) \to G$, then the composition $ \sigma_{G}\circ y $  gives an element in $\mathcal{L}G^{(p)}(A)$. This induces a homomorphism $ \varphi(A):  \mathcal{L}G(A)\to \mathcal{L}G^{(p)}(A)$. Since $ \varphi(A) $ is functorial in $ A $, we obtain another homomorphism 
\begin{align*}
	\varphi:	\mathcal{L}G\to \mathcal{L}G^{(p)}\cong (\mathcal{L}G)^{(p)}. 
\end{align*}

In particular, if $ G $ is defined over $ \mathbb{F}_p $, then for each $ i $, $ (G^{(i)})^{(p)} $ is canonically isomorphic to $ G^{(i)} $.  And hence we have canonical isomorphisms
\begin{align*}
	\mathcal{L}G^{(p)}\cong (\mathcal{L}G)^{(p)}\cong  \mathcal{L}G.
\end{align*}
This is the case which we will mostly concern in the sequel. Note that there is a difference between $\varphi$ and $ \sigma $ as illustrated as follows: if we take $G=\mathbb{G}_m$ and let $R$ be a $k$-algebra, then
for any $f=\sum a_iu^i\in \mathcal{L}\mathbb{G}_m(R)$ we have
\begin{align}\label{EqtwoFrobs}
	\varphi(f)=\sum a_i^pu^{pi}, \ \ \ \ \sigma (f)=\sum a_i^pu^i.
\end{align}

In fact, given a perfect $ k $-algebra $ R $ (i.e., the absolute Frobenius of $ R $ is bijective), the homomorphism $ \sigma(R): \mathcal{L}G(R)\to (\mathcal{L}G)^{(p)}(R) $ is an isomorphism since for each $ i $, the homomorphism $G^{(i)}(R)\to (G^{(i)})^{(p)}(R)$ is an isomorphism. But this is almost never the case for  $ \varphi(R) $. 	
\begin{lemma} \label{TwoFrobniiLemma1}
	Suppose that $ G $ is defined over $ \mathbb{F}_p $ and $ R $ a perfect $ k $-algebra. Take $x\in \mathcal{L}G(R)  $, which corresponds to a morphism $ \Spec R((u)) \xrightarrow{x} G$. Then under the homomorphism $ \sigma^{-1}\circ \varphi: \mathcal{L}G(R)\to \mathcal{L}G(R) $, the image $ (\sigma^{-1}\circ \varphi)(x) $ corresponds to the morphism 
	\begin{align*}
		\Spec R((u))\xrightarrow{\pi} \Spec R((u)) \xrightarrow{x} G,
	\end{align*}
	where $ \pi:  \Spec R((u))\rightarrow\Spec R((u))$ corresponds to the $ R $-endomorphism of $ R((u)) $ which sends $ u $ to $ u^p $. 
\end{lemma}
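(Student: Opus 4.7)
The plan is to unwind both $\sigma$ and $\varphi$ on $R$-points of $\mathcal{L}G$ in terms of the underlying ring homomorphisms, and to identify the composite $\sigma^{-1}\circ\varphi$ with pre-composition by $\pi$. Since $G$ is a linear algebraic group defined over $\mathbb{F}_p$, write $G = G_0\times_{\mathbb{F}_p}k$ with $G_0 = \Spec A_0$, and set $A = A_0\otimes_{\mathbb{F}_p}k$. For any $k$-algebra $R$, an $R$-point $x\in\mathcal{L}G(R) = G(R((u)))$ is uniquely determined by the $\mathbb{F}_p$-algebra homomorphism $x_0^{\#}\colon A_0\to R((u))$ obtained as the restriction of the corresponding $k$-algebra map $x^{\#}\colon A\to R((u))$.

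Now compute $\varphi$ on $R$-points. By definition $\varphi(x) = \sigma_G\circ x$, and the canonical iso $G^{(p)}\cong G$ coming from the $\mathbb{F}_p$-structure turns $\sigma_G$ into the endomorphism of $G$ whose ring homomorphism is $A\to A$, $a_0\otimes\lambda\mapsto a_0^p\otimes\lambda$ (a routine check using that the relative Frobenius of $A^{(p)}\to A$, $a\otimes b\mapsto a^p b$, descends along this identification). Hence $\varphi(x)_0^{\#}(a_0) = x_0^{\#}(a_0)^p$, i.e., $\varphi(x)_0^{\#} = \Frob_{R((u))}\circ x_0^{\#}$, where $\Frob_{R((u))}$ is the absolute ($p$-th power) Frobenius of $R((u))$, which on a Laurent series acts by $\sum a_iu^i\mapsto\sum a_i^p u^{pi}$. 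For $\sigma$, observe that $\mathcal{L}G$ has an $\mathbb{F}_p$-form $\mathcal{L}G_0$ with $\mathcal{L}G_0(R') = G_0(R'((u)))$, and the relative Frobenius $\sigma_{\mathcal{L}G/k}$, identified with an endomorphism of $\mathcal{L}G$ via $(\mathcal{L}G)^{(p)}\cong\mathcal{L}G$, is obtained on $R$-points by applying the functor $\mathcal{L}G_0$ to the absolute Frobenius $\Frob_R\colon R\to R$; this induces on $R((u))$ the endomorphism $F\colon \sum a_i u^i\mapsto\sum a_i^p u^i$ (the $p$-th power applied to the coefficients only, leaving $u$ fixed). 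Thus $\sigma(x)_0^{\#} = F\circ x_0^{\#}$.

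To finish, note the factorization $\Frob_{R((u))} = F\circ\pi^{\#}$, where $\pi^{\#}$ is the $R$-algebra endomorphism $u\mapsto u^p$ of $R((u))$, and observe that $F$ and $\pi^{\#}$ commute. Since $R$ is perfect, $F$ is an automorphism of $R((u))$, and we compute
$$(\sigma^{-1}\circ\varphi)(x)_0^{\#} = F^{-1}\circ\Frob_{R((u))}\circ x_0^{\#} = \pi^{\#}\circ x_0^{\#},$$
which is exactly the ring homomorphism corresponding to the scheme composite $\Spec R((u))\xrightarrow{\pi}\Spec R((u))\xrightarrow{x}G$, as claimed. The main obstacle is keeping the two Frobenii $\sigma$ and $\varphi$ carefully distinguished and verifying that on $R$-points they amount to post-composition of $x_0^{\#}$ with the two distinct endomorphisms $F$ and $\Frob_{R((u))} = F\circ\pi^{\#}$ of $R((u))$; once this identification is in hand, the perfectness of $R$ (used precisely to invert $F$) cleanly isolates the $\pi^{\#}$ that appears in the statement.
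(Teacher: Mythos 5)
Your proof is correct, and it takes a genuinely different route from the paper's. The paper verifies the statement for $\GL_n$ by inspecting the explicit formulas of \eqref{EqtwoFrobs}, then chooses a closed embedding $\iota\colon G\hookrightarrow\GL_n$ over $\mathbb{F}_p$ and reduces the general case to $\GL_n$ via two commutative diagrams of loop groups, using injectivity of $\mathcal{L}\iota$. You instead work intrinsically with the coordinate ring $A_0$ of the $\mathbb{F}_p$-form $G_0$: you identify $\varphi$ and $\sigma$ on $R$-points with post-composition of $x_0^{\#}\colon A_0\to R((u))$ by the absolute Frobenius $\Frob_{R((u))}$ and the coefficient-only Frobenius $F$ respectively, then read the result off from the factorization $\Frob_{R((u))}=F\circ\pi^{\#}$ and the invertibility of $F$ (which is exactly where perfectness of $R$ enters). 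Your approach avoids the auxiliary embedding into $\GL_n$ and makes the origin of the two endomorphisms $F$ and $\Frob_{R((u))}=F\circ\pi^{\#}$ of $R((u))$ transparent; the paper's approach trades this transparency for the ability to piggyback on the already-verified $\GL_n$ case. Both are sound; the one nontrivial identification in your version --- that the relative Frobenius of $G$ over $k$, viewed as an endomorphism of $G$ via $G^{(p)}\cong G$, has comodule map $a_0\otimes\lambda\mapsto a_0^p\otimes\lambda$ on $A=A_0\otimes_{\mathbb{F}_p}k$ --- is indeed a routine check and plays the role that the $\GL_n$ base case plays in the paper.
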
 
\begin{proof}
	The claim is clear for $ G=\GL_n $; see \eqref{EqtwoFrobs}.  For the general case, we let $ \iota: G\subset \GL_{n} $ be a closed embedding of group schemes over $ \mathbb{F}_p $.  
	
	Denote by $ \sigma_{G}: G\to G^{(p)}\cong G $ the relative Frobenius of $ G $ over $ k $.  Then we have the following commutative diagram
	\begin{align*}
		\xymatrixcolsep{3pc}\xymatrix{G\ar[d]^{\iota}\ar[r]^{\sigma_{G}}& G\ar[d]^{\iota}\\
			\GL_{n}\ar[r]^{\sigma_{\GL_{n}}} & \GL_{n}}
	\end{align*}
	Recall that $ \sigma_{G} \circ x$ corresponds to $ \varphi(x) $ in $\mathcal{L}G(R)$. Similarly, $ \iota\circ \sigma_{G}\circ x= \sigma_{\GL_{n}}\circ \iota \circ x $ corresponds to the image of $ \varphi(x) $ in $ \mathcal{L}\GL_{n}(R) $. 
	
	On the other hand, we also have commutative diagram 
	\begin{align*}
		\xymatrixcolsep{3pc}\xymatrix{\mathcal{L}G(R) \ar[r]^{\sigma_{\mathcal{L}G}^{-1}}\ar[d]_{\mathcal{L}\iota}& \mathcal{L}G(R)\ar[d]^{\mathcal{L}\iota}\\
			\mathcal{L}\GL_{n}(R)\ar[r]^{\sigma_{\mathcal{L}\GL_{n}}^{-1}} & \mathcal{L}\GL_{n}(R)}
	\end{align*}
	where $ \mathcal{L}\iota $ is the homomorphism induced from $ \iota $. Since we know that the statement is true for $ G=\GL_{n} $, $$ \mathcal{L}\iota(\sigma_{\mathcal{L}G}^{-1}\circ \varphi(x)) = \sigma_{\mathcal{L}\GL_{n}}^{-1}(\mathcal{L}\iota (\varphi(x)))$$ corresponds to the morphism 
	\begin{align*}
		\Spec R((u))\xrightarrow{\pi} \Spec R((u)) \xrightarrow{x} G\xrightarrow{\iota} \mathcal{L}\GL_{n}.
	\end{align*}
	If $  (\sigma^{-1}\circ \varphi) (x) = (\sigma_{\mathcal{L}G}^{-1}\circ \varphi) (x) $ corresponds to a morphism $ y: \Spec R((u))\to G $. Then we have $ \iota \circ y= \iota \circ (x\circ \pi) $ and hence $ y=x\circ \pi$. 
\end{proof}

\begin{lemma}\label{TwoFrobenii}
	Let $G$ be a linear algebraic group over a field $ k $ of characteristic $p>0$ and let $\mathcal{K}$ and $\mathcal{K}_1$ be as in Definition \ref{DefinitionofLoopGroups}. Then for any $k$-algebra $R$ and any element $g\in \mathcal{K}(R)$, we have $\varphi(g)\mathcal{K}_1(R)=\sigma(g)\mathcal{K}_1(R)$.
\end{lemma}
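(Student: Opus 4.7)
The plan is to show that $\varphi(g)$ and $\sigma(g)$, both viewed as elements of $\mathcal{L}G^{(p)}(R)$, have the same image under the reduction map $\mathcal{L}^+G^{(p)}\to G^{(p)}$ corresponding to $u=0$; this forces the quotient $\sigma(g)^{-1}\varphi(g)$ to lie in the kernel of this reduction, which is what $\mathcal{K}_1(R)$ stands for in the statement (the $\mathcal{K}_1$ of $G^{(p)}$, since both $\sigma(g)$ and $\varphi(g)$ naturally land in $\mathcal{L}G^{(p)}$).

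Let $\rho:\mathcal{K}=\mathcal{L}^+G\to G$ denote reduction modulo $u$, so that $\mathcal{K}_1=\ker\rho$ by definition. First I would use that $\rho$ is a morphism of group schemes over $k$, so by functoriality of the relative Frobenius the square with top row $\sigma_{\mathcal{K}}:\mathcal{K}\to\mathcal{K}^{(p)}$, bottom row $\sigma_G:G\to G^{(p)}$, and vertical maps $\rho$ and $\rho^{(p)}$ commutes. Evaluated at $g$, this gives $\rho^{(p)}(\sigma(g))=\sigma_G(\rho(g))$.

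Next I would unwind the definition of $\varphi$: viewing $g\in\mathcal{K}(R)=G(R[[u]])$ as a morphism $g:\Spec R[[u]]\to G$, the element $\varphi(g)\in\mathcal{L}G^{(p)}(R)$ is by construction $\sigma_G\circ g$. Reduction modulo $u$ amounts to restriction along the closed immersion $\Spec R\hookrightarrow\Spec R[[u]]$ cut out by $u=0$, so $\rho^{(p)}(\varphi(g))=\sigma_G\circ(g|_{u=0})=\sigma_G(\rho(g))$. Combining the two computations yields $\rho^{(p)}(\sigma(g))=\rho^{(p)}(\varphi(g))$, which is precisely the desired equality $\varphi(g)\mathcal{K}_1(R)=\sigma(g)\mathcal{K}_1(R)$.

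I anticipate no real obstacle: the lemma is essentially a compatibility between the two notions of Frobenius and the canonical structure map $\mathcal{K}\to G$. As a sanity check via a closed embedding $G\hookrightarrow\GL_n$, writing $g=(g_{ab}(u))_{ab}$ with $g_{ab}(u)=\sum_k g_{ab,k}u^k$, one has $\varphi(g)=(g_{ab}(u)^p)_{ab}=(\sum_k g_{ab,k}^p u^{pk})_{ab}$ and $\sigma(g)=(\sum_k g_{ab,k}^p u^k)_{ab}$, both of which specialize at $u=0$ to the matrix $(g_{ab,0}^p)_{ab}$, in agreement with the abstract argument.
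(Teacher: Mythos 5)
Your proposal is correct, and it is a cleaner, coordinate-free version of the paper's argument. The paper reduces to $G=\GL_n$ (via a closed embedding), writes $g=g_0(1+uM)$ with $g_0\in G(R)$ and $1+uM\in\mathcal{K}_1(R)$, and then observes directly that $\varphi(g)\mathcal{K}_1(R)$ and $\sigma(g)\mathcal{K}_1(R)$ both equal $\sigma(g_0)\mathcal{K}_1(R)$. Your version isolates the real content of that computation: the constant term $g_0$ is $\rho(g)$, and the claim is simply that the reduction map $\rho^{(p)}$ intertwines both Frobenii with $\sigma_G$. You establish $\rho^{(p)}\circ\sigma_{\mathcal{K}}=\sigma_G\circ\rho$ by functoriality of the relative Frobenius (since $\rho$ is a morphism of $k$-group schemes), and $\rho^{(p)}(\varphi(g))=\sigma_G(\rho(g))$ by unwinding the definition of $\varphi$ as $g\mapsto\sigma_G\circ g$ and noting that restriction to $u=0$ commutes with postcomposition. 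This avoids the embedding into $\GL_n$ entirely and makes the lemma transparently a statement about compatibility of the two Frobenii with the projection $\mathcal{K}\to G$, which is arguably the conceptually right way to see it. The only point to be explicit about (which you do flag) is that $\mathcal{K}_1$ in the statement must be read as the $\mathcal{K}_1$ attached to $G^{(p)}$, since both $\varphi(g)$ and $\sigma(g)$ naturally live in $\mathcal{L}G^{(p)}(R)$; when $G$ is defined over $\mathbb{F}_p$ (the case used later in the paper) the canonical identification $G\cong G^{(p)}$ makes this harmless.
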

\begin{proof} We may assume $G=\GL_n$. For any $g\in \mathcal{K}(R)=G(R[[u]])$, write $g=g_0(1+uM)$ with $g_0\in G(R)$ and $M\in \mathrm{M}_n(R)$. It is easy to see that in fact  $1+uM$ lies in $\mathcal{K}_1(R)$. Hence we have $\varphi(g) \mathcal{K}_1=\sigma(g) \mathcal{K}_1(R)$ since they are both equal to $\sigma(g_0) \mathcal{K}_1(R)$.
\end{proof}

\subsection{Fpqc sheaves $\mathcal{D}(G, x)/\mathcal{K}^+$ and $\mathcal{D}_1(G, x)/\mathcal{K}^{\diamond}$} 
\label{DefinitionofK^+}Let $G, k$ be as in the preceding lemma. We have a semidirect product of affine group schemes $\mathcal{K}^{+}:=(\mathcal{K}_1\times_k \mathcal{K}_1)\rtimes_k\mathcal{K}$ induced by the right action defined for each $k$-algebra $R$ by
$$\begin{array}{ccc}
\mathcal{K}_1(R)\times \mathcal{K}_1(R)\times \mathcal{K}(R)&\longrightarrow & \mathcal{K}_1(R)\times \mathcal{K}_1(R)\\
\big((\alpha, \beta), \gamma\big)& \longmapsto & (\gamma^{-1}\alpha\gamma,  \varphi(\gamma)^{-1}\beta\varphi(\gamma) )
\end{array}$$
More explicitly, for any two elements $(\alpha, \beta, \gamma), (\alpha', \beta',  \gamma')\in  \mathcal{K}^{+}(R)$, the multiplication is given by
$$\big(\alpha',  \beta', \gamma'\big)\cdot \big(\alpha, \beta,  \gamma\big)=\big( \gamma^{-1}\alpha'\gamma\alpha, \varphi(\gamma)^{-1}\beta'\varphi(\gamma)\beta, \ \ \gamma'\gamma\big).$$
We refer to Section \ref{Sectionsubfunctor} for the notations $\mathcal{D}(G, x)$ and  $\mathcal{D}'(G, x)$ and for what follows we simply write 
\begin{align}
	\mathcal{D}:=\mathcal{D}(G, x)=\mathcal{D}'(G, x).
\end{align}
There is an action of the (infinite-dimensional) affine group scheme $\mathcal{K}^{+}$ on $\mathcal{D}$  given on local sections by
$$\begin{array}{rcl}
\mathcal{D} \times  \mathcal{K}^+ & \longrightarrow& \mathcal{D}\\
(t, (\alpha, \beta,  \gamma)) &\longmapsto & \alpha^{-1}\gamma^{-1}t \varphi(\gamma)\beta
\end{array}$$

Let $ \mathcal{D}_{1}:=\mathcal{D}/\mathcal{K}_{1}, $ (resp. $ G^{(i)}/\mathcal{K}_{1} $) be the quotient of $ \mathcal{D} $ (resp. $ G^{(i)} $) by $ \mathcal{K}_{1} $, which by definition is the fpqc sheafification of the presheaf 
\begin{align*}
	R\mapsto \mathcal{K}(R)z\mathcal{K}(R)/\mathcal{K}_{1}(R), \ \ \ 
	\big( \text{resp.}  \ R\mapsto G^{(i)}(R)/\mathcal{K}_{1}(R)\big)
\end{align*}
Since $ G^{(i)}/\mathcal{K}_{1} $ is represented by a proper $ k $-scheme of finite type, by a similar argument as in the proof of Lemma \ref{Locally Closed lemma} and Lemma \ref{Fpqc surjective}, one sees that $ \mathcal{D}_1 $ as a $ \mathcal{K}\times_k\mathcal{K} $-orbit of  $\bar{x}$ is represented by a smooth $ k $-scheme of finite type, where $ \bar{x} $ is the  $ k $-point of   $ G^{(i)}/\mathcal{K}_1$ induced by $ x $. In fact, $ \mathcal{D}_1 $ is a $ G=\mathcal{K}/\mathcal{K}_1 $-torsor in the \'etale topology over $ O_{\bar{x}'} \subset \mathrm{Gr}^{(i)}$ (here $\bar{x}'$ denotes the image of $ x $ in $\mathrm{Gr}^{(i)}(k) $), as it is the pull-back of the $ G $-torsor $ G^{(i)}/\mathcal{K}_1 $ over $ \mathrm{Gr}^{(i)} $. In particular, $\mathcal{D}_1$ is a smooth $ k $-scheme of finite type.

Write $ \mathcal{K}^{\diamond}:=\mathcal{K}_1 \rtimes_k \mathcal{K}$, seen as a quotient group of $\mathcal{K}^{+}$ modulo the second direct summand $ \mathcal{K}_1 $.
Recall that $\mathcal{K}=\lim\limits_{\longleftarrow}G_j, $ with $G_j$ the restriction of $G_{k[u]/u^j}$ over $k$. If we denote by $ \mathcal{H}_i $ the kernel of the natural reduction modulo $ u $ map $ G_j\to G $, then we have \begin{align*}
	\mathcal{K}^{\diamond}=\lim\limits_{\longleftarrow}\mathcal{K}^{\diamond}_i, \  \text{with} \  \mathcal{K}^{\diamond}_i:=\mathcal{H}_i\times_k G_i. 
\end{align*} 
Let us consider the right action induced by  $ \rho $ in \eqref{Actionof2K} 
\begin{align*}
	\rho_1: \mathcal{D}_1\times_k \mathcal{K}^{\diamond}\longrightarrow  \mathcal{D}_1, \ \ \ 
	\big(t\mathcal{K}_1, (\alpha, \gamma)\big)\longmapsto\alpha^{-1}\gamma^{-1}t \varphi(\gamma)\mathcal{K}_1.
\end{align*} 
One may check that the action of $ \mathcal{K}^{\diamond} $ on $ \mathcal{D}_1 $ factors through $ \mathcal{K}_{2i+1}^{\diamond} $ (see the hint in the proof of Lemma \ref{Locally Closed lemma}), which is an affine smooth $ k $-scheme. 

Again by a similar argument as in Lemma \ref{Locally Closed lemma}, Lemma \ref{Fpqc surjective} and Proposition \ref{Representability of D} we have the following lemma.
\begin{lemma}\begin{enumerate}[(1)]
		\item The $ \mathcal{K}^{\diamond} $-orbit $ O_{\bar{y}} $ of a point $ \bar{y}\in \mathcal{D}_{1}(k) $ is represented by a smooth $ k $-scheme of finite type.
		\item  If $ \bar{y} $ comes from an element $ y\in \mathcal{D}(k) $ the pull-back of $ O_{\bar{y}} $ under the natural projection $ \mathcal{D}\to \mathcal{D}_1 $ is the $ \mathcal{K}^{+} $-orbit $ O_y $ of $ y $ in $ \mathcal{D}$. 
		\item For any algebraically closed field extension $ l $ of $ k $, we have $ O_y(l)=y\cdot \mathcal{K}^{+}(l) $.
		
		\item For any algebraically closed field extension $ l $ of $ k $, there is a commutative diagram of bijective maps
		\begin{align*}
			\xymatrixcolsep{3pc}\xymatrix{\{\mathcal{K}^{\diamond}\text{-orbits of\ } \mathcal{D}_{1,l} \}\ar@{<->}[r]^{1-1}\ar@{<->}[d]^{1-1}&\{\mathcal{K}^{+}\text{-orbits of }\mathcal{D}_l \}\ar@{<->}[d]^{1-1}\\
				\mathcal{D}_1(l)/\mathcal{K}^{\diamond}(l)\ar@{=}[r]&\mathcal{D}(l)/\mathcal{K}^{+}(l),}
		\end{align*}
		where the top horizontal map commutes with the operation of taking closures.
	\end{enumerate}
\end{lemma}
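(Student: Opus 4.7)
The plan is to follow, one-for-one, the arguments of Lemmas \ref{Locally Closed lemma} and \ref{Fpqc surjective} and of Proposition \ref{Representability of D}, now with the action of $\mathcal{K}^{\diamond}$ on $\mathcal{D}_1$ (respectively $\mathcal{K}^+$ on $\mathcal{D}$) in place of the earlier action of $\mathcal{K}\times \mathcal{K}$ on $\mathcal{D}$. The key reduction, already recorded in the paragraph preceding the statement, is that the $\mathcal{K}^{\diamond}$-action on $\mathcal{D}_1$ factors through the smooth affine $k$-group of finite type $\mathcal{K}^{\diamond}_{2i+1}=\mathcal{H}_{2i+1}\times_k G_{2i+1}$. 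The orbit morphism $\mathcal{K}^{\diamond}_{2i+1}\to \mathcal{D}_1$ at $\bar{y}$ then has locally closed image by Proposition 1.52(b) of \cite{MilneAG}, and endowing this image with its reduced subscheme structure defines $O_{\bar y}$; smoothness and finite-typeness follow from smoothness of $\mathcal{K}^{\diamond}_{2i+1}$ combined with fpqc-surjectivity of the orbit map (cf.~Lemma \ref{Fpqc surjective}), giving (1).

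For (2), the quotient homomorphism $\mathcal{K}^+\twoheadrightarrow \mathcal{K}^{\diamond}$ has kernel the second $\mathcal{K}_1$-factor, which acts on $\mathcal{D}$ by right multiplication and whose quotient is precisely $\mathcal{D}_1$; the projection $\pi\colon \mathcal{D}\to \mathcal{D}_1$ is therefore equivariant, so $\pi^{-1}(O_{\bar y})$ is a $\mathcal{K}^+$-stable reduced locally closed subscheme of $\mathcal{D}$ containing $y$. A short calculation with the action formula $t\mapsto \alpha^{-1}\gamma^{-1}t\varphi(\gamma)\beta$ shows that it is exactly the $\mathcal{K}^+$-orbit $O_y$. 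For (3), the analogue of Lemma \ref{Fpqc surjective} (again via the factorization through a smooth finite-type quotient of $\mathcal{K}^+$) yields fpqc-surjectivity of the orbit map $\mathcal{K}^+\to O_y$, $g\mapsto y\cdot g$; evaluated on $l$-points for an algebraically closed extension $l\supset k$ this becomes set-theoretic surjectivity, so $O_y(l)=y\cdot \mathcal{K}^+(l)$.

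Part (4) is then essentially formal. The two vertical maps are the tautological bijections ``orbit $\longleftrightarrow$ its class,'' well-defined by (3) and its $\mathcal{K}^{\diamond}$-analogue; the top arrow is $O\mapsto \pi^{-1}(O)$ with inverse $\tilde O\mapsto \pi(\tilde O)$, bijective by (2), and commutativity of the square is immediate from the definitions. For the closure statement, $\pi\colon \mathcal{D}\to \mathcal{D}_1$ is a $\mathcal{K}_1$-torsor, hence faithfully flat with geometrically integral fibres (as $\mathcal{K}_1$ is a projective limit of iterated extensions of vector groups, hence pro-unipotent); in particular $\pi$ is open and $\pi^{-1}(\overline{U})=\overline{\pi^{-1}(U)}$ for every locally closed $U\subseteq \mathcal{D}_1$, so the bijection preserves the closure partial order on orbits. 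The main obstacle I foresee is verifying the factorization through $\mathcal{K}^{\diamond}_{2i+1}$ used in (1) and (3): this requires adapting the embedding-into-$\GL_n$ argument sketched in the proof of Lemma \ref{Locally Closed lemma}, bounding the $u$-adic accuracy needed to act on the quotient $\mathcal{D}_1$ (as opposed to $\mathcal{D}$ itself); once that reduction is in place, the remaining parts are a clean transcription of the arguments already carried out for $\mathcal{D}$.
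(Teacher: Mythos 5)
Your parts (1)--(3) are a faithful transcription of Lemma~\ref{Locally Closed lemma}, Lemma~\ref{Fpqc surjective}, and Proposition~\ref{Representability of D} to the new group actions, which is exactly what the paper signals ("by a similar argument as in\ldots"). The worry you flag at the end --- whether the $\mathcal{K}^{\diamond}$-action on $\mathcal{D}_1$ factors through a finite-type quotient --- is in fact already settled in the paragraph immediately preceding the lemma, where the paper states the action factors through $\mathcal{K}^{\diamond}_{2i+1}=\mathcal{H}_{2i+1}\times_k G_{2i+1}$; so there is no gap there. Your argument for (2), reducing to the $\mathcal{K}_1$-subaction and a lift from $\mathcal{K}^{\diamond}$ to $\mathcal{K}^+$, is also sound.

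For (4), your route differs from the paper's. The paper only proves (4), and does so by a short base-change argument: the set of $\mathcal{K}^{\diamond}$-orbits of $\mathcal{D}_{1,l}$ is insensitive to the algebraically closed field extension $l/k$ (a finite-type, geometrically determined quantity), whence via (2) every algebraic $\mathcal{K}^{+}$-orbit of $\mathcal{D}_l$ is the orbit of a $k$-point; the paper leaves the closure-compatibility statement unargued. Your approach --- defining the top arrow as $O\mapsto\pi^{-1}(O)$ and deducing closure-compatibility from openness of $\pi\colon\mathcal{D}\to\mathcal{D}_1$ --- is more explicit and actually addresses the closure claim, which the paper does not. However, the step "faithfully flat with geometrically integral fibres, in particular open" is not a valid inference: $\pi$ is a $\mathcal{K}_1$-torsor, and $\mathcal{K}_1$ is not of finite presentation over $k$, so flatness alone does not give openness (that criterion requires locally of finite presentation). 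The statement is still true, but it needs a further argument: for instance, that the $\mathcal{K}_1$-torsor $\pi$ is Zariski-locally trivial (using that $\mathcal{K}_1=\varprojlim\mathcal{H}_j$ is pro--(split unipotent) and $H^1_{\mathrm{Zar}}(\mathrm{affine},\mathbb{G}_a)=0$), and that for a product $U\times\mathcal{K}_1\to U$ with $U$ affine, openness follows because any regular function on $U\times\mathcal{K}_1$ involves only finitely many coordinates of $\mathcal{K}_1$, reducing to the finite-type projections $U\times\mathcal{H}_j\to U$. With that repair your proof of (4) is complete and, for the closure assertion, more informative than the paper's.
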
 
\begin{proof}
	Here we only show (4). Indeed there is a 1-1 correspondence 
	\begin{align}\label{Orbits correspondence}
		\xymatrixcolsep{3pc}\xymatrix{\{\mathcal{K}^{\diamond}\text{-orbits of elements in } \mathcal{D}_{1,l}(k) \}\ar@{<->}[r]^{1-1}&\{\mathcal{K}^{+}\text{-orbits of elements in }\mathcal{D}_l(k) \}.}
	\end{align}
	But the left hand side of \eqref{Orbits correspondence} is insensitive to algebraically closed field extensions. Hence every algebraic $ \mathcal{K}^{+} $-orbit of $ \mathcal{D} $ is a $ \mathcal{K}^{+} $-orbit of some $ k $-point of $ \mathcal{D} $. 
\end{proof}

	\section{Classifications of $ p $-divisible groups}

\subsection{General notions of frames and windows}
Frames and windows were introduced by Zink in \cite{ZinkWindowsforP-divisiblegroups} and greatly generalized by Lau in \cite{LauFramesandFiniteGroupSchemes}. Below we introduce these notions mainly following \cite{DieudonneCrystalsandWachModules}. 	
\begin{definition}\label{DefinitionofGeneralFrame}
	A \textbf{Frame }$\underline{S}=(S, \mathrm{Fil}S, R, \varphi, \varphi_1, \varpi)$ consists of the following data:
	\begin{enumerate}[$-$]
		\item a ring $S$ and an ideal $\mathrm{Fil}S$ of $S$ such that  $\mathrm{Fil}S+pS$ is contained in the Jacobson radical of $S$.
		\item the quotient ring $R$= $S/\mathrm{Fil}S$.
		\item a ring endomorphism $\varphi: S\to S$ whose reduction modulo $p$ is the absolute Frobenius map $S/pS\to S/pS$ (in other words, the pair $ (S, \varphi) $ is a simple frame). 
		\item a $\varphi$-linear map $\varphi_1: \mathrm{Fil}S\to S$.
		\item $\varpi$ is an element in $S$ such that $\varphi=\varpi\varphi_1$ on $\mathrm{Fil}S$.
	\end{enumerate}
\end{definition}
We say the frame $\underline{S}$ satisfies the \textbf{surjectivity condition } if the image of $\varphi_1$ generates the unit ideal of $S$.

Let $\underline{S}'=(S', \mathrm{Fil}S', R', \varphi, \varphi_1, \varpi')$ be another frame.  A \textbf{homomorphism of frames} from $\underline{S}\to \underline{S}'$ is a homomorphism of rings $f: S\to S'$ compatible with $\varphi$ and $\varphi_1$. Note that a morphism of frames here is called a \textbf{strict} homomorphism in literatures; see for example, \cite{LauFramesandFiniteGroupSchemes}.

A frame $\underline{S}$ is called a \textbf{lifting frame} if every finite projective $R$-module lifts to a finite projective $S$-module. We shall only concern lifting frames in the sequel. 

\begin{definition}
	\label{DefinitionOfWindow}A\textbf{ window }$\underline{M}=(M, \mathrm{Fil}M, \varphi_M, \varphi_{M, 1})$ over a lifting frame $\underline{S}$ consists of a finite projective $S$-module $M$, an $S$-submodule $\mathrm{Fil}M\subset M$, and $\varphi$-linear maps $\varphi_M: M\to M$ and $\varphi_{M,1}: \mathrm{Fil}M\to M$ subject to the following constraints 
	\begin{enumerate}[(1)]
		\item there exists a decomposition of $S$-modules $M=N\oplus L$ with $\mathrm{Fil}M= N\oplus (\mathrm{Fil}S)L$;
		\item if $s\in \mathrm{Fil}S$ and $m\in M$ then $\varphi_{M,1}(sm)=\varphi_1(s)\varphi_M(m)$;
		\item  for all $m\in \mathrm{Fil}M,\ \ \varphi_M(m)=\varpi\varphi_{M,1}(m)$;
		\item $\varphi_{M, 1}(\mathrm{Fil}M)+ \varphi_M(M) $ generate $M$ as an $S$-module.
	\end{enumerate}
	
	A \textbf{homomorphism of windows} is an $ S $-linear map that preserves the filtration $ \mathrm{Fil}M $ and commutes with $ \varphi_{M} $ and $ \varphi_{M,1} $. 
	
	A decomposition in $(1)$ is called a \textbf{normal decomposition} of $\underline{M}$ (or of $M$).
\end{definition}	

Let $f: \underline{S}\to \underline{S}'$ be a homomorphism of frames. The \textbf{base change of a window} $\underline{M}$ over $\underline{S}$ is defined as $\underline{M}'=(M', \mathrm{Fil}M', \varphi_{M'}, \varphi_{M', 1})$, where \begin{enumerate}[$\bullet$]
	\item $M'=M\otimes_SS'$, $\varphi_{M'}=\varphi_M\otimes_S\varphi: M'\to M'$;
	\item $\mathrm{Fil}M'$ is the submodule of $M'$ generated by $\mathrm{Fil}S'\cdot M'$ and the image of $\mathrm{Fil}M$ in $M'$;
	\item $\varphi_{M',1}$ is determined by $$\varphi_{M',1}(m\otimes x)=\varphi(x)\varphi_{M, 1}(m), \ m\otimes x\in \mathrm{Fil}M\otimes S'\subset \mathrm{Fil}M',$$
	and condition $(2)$ in Definition \ref{DefinitionOfWindow}.
\end{enumerate}
For \textbf{the dual of a window} the reader may refer to Definition 2.1.7 in \cite{DieudonneCrystalsandWachModules} or Section 2.1 in \cite{LauRelations}, or Section 2 in \cite{LauFramesandFiniteGroupSchemes}. It will appear again but we shall not use it in detail.

\begin{remark}\label{RemarkOfWindows}
	We give several remarks on definitions aboves. \begin{enumerate}[$(a)$]
		\item Once we have a normal decomposition $M=N\oplus L$ of $\underline{M}$, we also have $$\mathrm{Fil}M=N+(\mathrm{Fil}S)M.$$ 
		It follows then that any decomposition $M=N\oplus L'$ of $S$-modules is a normal decomposition of $\underline{M}$.
		\item  If $ \underline{S} $ is a lifting frame, the requirement (1) is equivalent to
		
		$(1)'$  \  $(\mathrm{Fil}S)M\subset \mathrm{Fil}M$ and $M/\mathrm{Fil}M$ is a projective $R$-module.
		\item If $ \underline{S} $ satisfies the surjectivity condition, the condition (2) implies (3).
		
		Indeed, if $1=\sum a_i\varphi_1(b_i)\in S$ with all $b_i\in \mathrm{Fil}S$, then
		$$\varphi_M(m)=\sum a_i\varphi_1(b_i)\varphi_M(m)=\sum a_i\varphi_{M,1}(b_im)=\sum a_i\varphi(b_i)\varphi_{M,1}(m)=\varpi \varphi_{M,1}(m).$$ In this case, condition $ (4) $ means that $ \varphi_{M,1}(\Fil M) $ generates $ M $ and $ \varphi_M $ is determined by $ \varphi_{M,1} $. 
		In many cases condition $(2)$ can be replaced by condition $(3)$.
	\end{enumerate}
\end{remark}

\begin{lemma}[{\cite[Lemma 2.6]{LauFramesandFiniteGroupSchemes}}]\label{UsefulLemma1} Let $\underline{S}$ be a frame as in Definition \ref{DefinitionOfWindow}.   Suppose we are given a finite projective $S$-module $M$, an $S$-submodule $\mathrm{Fil}M\subset M$ and a normal decomposition $M=N\oplus L$. Then to give a pair $(\varphi_M, \varphi_{M,1})$ such that  $\underline{M}=(M, \mathrm{Fil}M, \varphi_M, \varphi_{M, 1})$ is an $\underline{S}$ window is equivalent to give a $\varphi$-linear isomorphism $\Psi: N\oplus L\to M$ by the assignment $$\Psi(n+l)=\varphi_{M,1}(n)+\varphi_M(l)$$ for $n\in N$ and $l\in L$.
\end{lemma}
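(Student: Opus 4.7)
The plan is to exhibit the two directions of the equivalence explicitly using the normal decomposition $M=N\oplus L$ and then verify that they are mutually inverse.

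For the forward direction, given a window structure $(\varphi_M,\varphi_{M,1})$, I define $\Psi:N\oplus L\to M$ by the stated formula $\Psi(n+l)=\varphi_{M,1}(n)+\varphi_M(l)$. Since $N\subset\Fil M$, both summands make sense, and $\varphi$-linearity of $\Psi$ follows from $\varphi$-linearity of $\varphi_{M,1}$ on $N$ and of $\varphi_M$ on $L$ (using that $rn\in N$ and $rl\in L$ for $r\in S$, $n\in N$, $l\in L$). To see that $\Psi$ is an isomorphism, I pass to the linearization $\Psi^\sharp:\varphi^*N\oplus\varphi^*L\to M$. Using the normal decomposition $\Fil M=N\oplus(\Fil S)L$ together with condition (2), every element of $\varphi_{M,1}(\Fil M)$ lies in $S\cdot\Psi(N\oplus L)$, and similarly for $\varphi_M(M)$ by condition (3) applied to $N$-components plus $\varphi$-linearity on $L$; hence condition (4) forces $\Psi^\sharp$ to be surjective. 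As a surjection between finite projective $S$-modules of equal rank, it is then an isomorphism.

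For the reverse direction, given a $\varphi$-linear isomorphism $\Psi$, I set $\varphi_M(n+l):=\varpi\Psi(n)+\Psi(l)$ on $M$, and, using the unique normal decomposition $\Fil M=N\oplus(\Fil S)L$, define
\begin{equation*}
\varphi_{M,1}\bigl(n+\textstyle\sum_i s_il_i\bigr):=\Psi(n)+\sum_i\varphi_1(s_i)\Psi(l_i),
\end{equation*}
which is well-defined after choosing (or locally picking) a basis of $L$, and independent of the choice by a standard patching argument since $(\Fil S)L$ is a direct summand of $\Fil M$. Then I verify the four window axioms in turn: (1) is built in; the key compatibility (2) reduces, for $m=n+l$ and $s\in\Fil S$, to the identity $\varphi_{M,1}(sm)=\varphi(s)\Psi(n)+\varphi_1(s)\Psi(l)=\varphi_1(s)\varphi_M(m)$, which uses the frame relation $\varphi=\varpi\varphi_1$ on $\Fil S$ together with $\varphi$-linearity of $\Psi$ on the $N$-summand; (3) holds by construction on $L$-components and again by $\varphi=\varpi\varphi_1$ on $(\Fil S)L$; (4) follows because $\Psi(N)\subset\varphi_{M,1}(\Fil M)$ and $\Psi(L)\subset\varphi_M(M)$, so the surjectivity of $\Psi$ implies that $\varphi_{M,1}(\Fil M)+\varphi_M(M)$ generates $M$.

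Finally, the two constructions are visibly inverse to one another: starting from $(\varphi_M,\varphi_{M,1})$ the associated $\Psi$ satisfies $\Psi|_N=\varphi_{M,1}|_N$ and $\Psi|_L=\varphi_M|_L$, while starting from $\Psi$ the associated window has $\varphi_{M,1}|_N=\Psi|_N$ and $\varphi_M|_L=\Psi|_L$, and the values of $\varphi_M$ on $N$ and of $\varphi_{M,1}$ on $(\Fil S)L$ are forced by axioms (3) and (2) respectively. The main technical subtlety I expect is the well-definedness of $\varphi_{M,1}$ in the reverse construction — ensuring that the formula depends only on the element of $\Fil M$ and not on its presentation as a sum — which is handled cleanly using that the decomposition $\Fil M=N\oplus(\Fil S)L$ is a direct sum of $S$-modules and that $(\Fil S)L$ is canonically identified with $\Fil S\otimes_S L$ when $L$ is locally free.
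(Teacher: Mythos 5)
Your proof is correct and, in the reverse direction, reproduces exactly the formulas the paper itself records, namely $\varphi_M(n+l)=\varpi\Psi(n)+\Psi(l)$ and $\varphi_{M,1}(n+al)=\Psi(n)+\varphi_1(a)\Psi(l)$, before deferring the remaining verifications to Lau's Lemma 2.6; you are simply supplying the details that the paper omits. One small streamlining: as you note at the end, the well-definedness of $\varphi_{M,1}$ on $(\Fil S)L$ is handled most cleanly by the canonical identification $(\Fil S)L\cong\Fil S\otimes_S L$ (valid because the projective module $L$ is flat, so $\Fil S\otimes_S L\to S\otimes_S L=L$ is injective with image $(\Fil S)L$), which makes the ``choose a basis and patch'' framing unnecessary.
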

\begin{proof}
	We refer to \cite[Lemma 2.6]{LauFramesandFiniteGroupSchemes} for the whole proof but only give here the inverse of this equivalence. Given a  $\varphi$-linear isomorphism $\Psi: N\oplus L\to M$, the corresponding $\varphi_M$ and $\varphi_{M,1}$ are given as follows \begin{equation}
	\varphi_M(n+l)=\varpi \Psi(n)+ \Psi(l),\ \
	\varphi_{M,1}(n+al)=\Psi(n)+\varphi_1(a)\Psi(l)
	\end{equation}
	for all $n\in N, l\in L$. 
\end{proof}

\subsection{Frobenius lifts and the frame \underline{$\mathfrak{S}$}} \label{SettingofRings}
For the convenience of future discussions, we devote one subsection to the setting of algebras. Till the end of this section we let $ k $ be a perfect field of characteristic $ p>0 $ and denote by $ \varphi: W(k) \to W(k)$ the unique ring automorphism of $ W(k) $ inducing the absolute Frobenius of $ k $.        
\subsubsection{Frobenius lifts}
\begin{lemma} \label{Frobenius lifts}
	Let $R_0$ be a $k$ algebra which (Zariski) locally admits a finite $p$-basis (\cite[Definition 1.1.1]{deJongCrystalineDieudonneRigidgeometry}, or  \cite[D\'efinition 1.1.1]{BWDieudonneCrystallineIII}). The following holds:
	\begin{enumerate}[(1)]
		\item There exists a $p$-adic flat $W(k)$-algebra $R$ lifting $R_0$ (i.e., $R/pR\cong R_0$), which is formally smooth over $W(k)$ with respect to the $p$-adic topology. Such an $R$ is unique up to (nonunique) isomorphisms and we call it a \textbf{lift of} $ R_0 $. 
		\item There is a ring endomorphism $\varphi=\varphi_R: R\to R $ lifting the absolute Frobenius of $R_0$, which is compatible with $ \varphi: W(k)\to W(k) $. 
		\item Let $ R_0, R $ and $ \varphi_{R} $ be as above and $ A_{0} $ an \'etale $ R_0 $ algebra. Then there exists an $ R $-algebra $ A $, unique up to unique isomorphism, such that $ A $ lifts $ A_0 $ and the structure ring homomorphism $ R \to A$ lifts the structure homomorphism $R_0\to A_0$. Moreover, $\varphi_R: R\to R$ extends uniquely to a ring endomorphism of $\varphi_A: A\to A$, lifting the absolute Frobenius of $A_0$.
		\item If $\mathfrak{m}$ is a maximal idea of $R$, then $\varphi$ extends uniquely to a ring endomorphism of the $\mathfrak{m}$-adic completion $ \widehat{R}_{\mathfrak{m}} $ of $R$, lifting the absolute Frobenius of the $\mathfrak{m}$-adic completion $ \widehat{R}_{0, \mathfrak{m}} $ of $R_0$. 
	\end{enumerate}
	
\end{lemma}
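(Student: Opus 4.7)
The main technical input is Grothendieck's lifting theorem for formally smooth algebras. Part (1) should follow from the fact that the hypothesis of having a finite $p$-basis Zariski-locally forces $R_0$ to be formally smooth over $k$, hence formally smooth (as a discrete ring) over $W(k)$. I would construct $R$ by building a compatible tower $\{R_n\}$ of flat $W_n(k)$-algebras with $R_n\otimes_{W_n(k)}k\cong R_0$, where at each step the obstruction to lifting from $R_n$ to $R_{n+1}$ lies in an Exalcomm/Ext group that vanishes by formal smoothness, and then setting $R:=\varprojlim R_n$. Uniqueness up to (non-canonical) isomorphism follows by using formal smoothness of $R$ to lift the identity $R_0\to R_0$ to a ring map $R\to R'$ between two such lifts; Nakayama promotes this to an isomorphism.

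Part (2) is a direct application of formal smoothness of $R$ over $W(k)$: view the target copy of $R$ as a $W(k)$-algebra via $W(k)\xrightarrow{\varphi}W(k)\to R$, and regard the absolute Frobenius of $R_0$ as a $W(k)$-algebra map into $R_0$ with respect to this twisted structure. Since $R$ is $p$-adically formally smooth, a lift $\varphi_R\colon R\to R$ compatible with $\varphi$ on $W(k)$ exists; it will in general not be unique, which is the content of the phrase ``non-canonical'' already present in (1).

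For (3), the general étale-lifting property (proved level-by-level over $W_n(k)$ and passed to the $p$-adic limit) furnishes a unique $R$-algebra $A$ with $A/pA\cong A_0$ over $R_0$. To construct $\varphi_A$, I would apply the same étale lifting to the composite $R\xrightarrow{\varphi_R}R\to A$, whose reduction modulo $p$ is $R_0\xrightarrow{\Frob}R_0\to A_0$: lifting the absolute Frobenius of $A_0$ relative to this base map yields $\varphi_A$. Uniqueness of $\varphi_A$ is then automatic, since any two such lifts agree modulo $p$ and are both morphisms of étale $R$-algebras.

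For (4), observe that $\mathfrak{m}$ must contain $p$ (so that $\widehat{R}_{0,\mathfrak{m}}$ is even defined) and that $\varphi_R(\mathfrak{m})\subset \mathfrak{m}$, because $\varphi_R\bmod p$ is the absolute Frobenius of $R_0$, which preserves every maximal ideal. Consequently $\varphi_R$ is continuous in the $\mathfrak{m}$-adic topology and extends uniquely to $\widehat{R}_{\mathfrak{m}}$ by density. The principal obstacle in the whole statement is part (1): once the existence and uniqueness of the formally smooth $p$-adic lift are in hand, parts (2)--(4) reduce to routine applications of formal smoothness, étale lifting, and continuity.
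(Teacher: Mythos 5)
Your proposal is correct and runs along essentially the same lines as the paper's proof: for (1)--(3) the paper simply defers to de Jong (Remarks 1.2.3), Kim (Lemmas 2.1 and 2.5), and Berthelot--Messing (1.1), all of which use exactly the deformation-theoretic/formal-smoothness and \'etale-lifting machinery you sketch; and for (4) the paper's explicit formula $(r_i)_i\mapsto(\varphi(r_i))_i$ on $\varprojlim_i R/\mathfrak m^i$, justified by $\mathfrak m\supset (p)$ and the topological identity property of the absolute Frobenius of $R_0$, is precisely the continuity argument you give.
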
	

\begin{proof}
	For the proof of (1) and (2), one may refer to \cite[Remarks 1.2.3]{deJongCrystalineDieudonneRigidgeometry}, or \cite[Lemma 2.1]{KimWansuRelative}, where deformation theory developed in \cite{IllusieCotangentI} is essentially used in the proof. Or one may see \cite[1.1]{BWDieudonneCrystallineIII} for an explicit construction of the lifts. The statement (3) is the first part of \cite[Lemma 2.5]{KimWansuRelative} (take $I=(p)$ and the $ R_0 $ in loc. cit. to be our $ R $ here). 
	For (4), note first that  $ \varphi (\mathfrak{m}) \subset  \mathfrak{m} $. This follows from the fact that $ \mathfrak{m} $ contains $ p $, and the fact that the morphism $ \Spec R_0 \to \Spec R_0$ induced by the absolute Frobenius of $ R_0 $ is identity on topological spaces. Now we define $ \varphi_{\widehat{R}_{\mathfrak{m}}}: \widehat{R}_{\mathfrak{m}}  \to \widehat{R}_{\mathfrak{m}} $ by sending an element $$ (r_i)_{i} \in \varprojlim\limits_{i}R/\mathfrak{m}^{i}=\widehat{R}_{\mathfrak{m}}$$ to $ (\varphi(r_i))_{i}\in \widehat{R}_{\mathfrak{m}} $.  The element $(\varphi(r_i))_{i}$ does lie in $ \widehat{R}_{\mathfrak{m}} $ since $ \varphi(\mathfrak{m}) \subset \mathfrak{m}$. One checks that $ \varphi_{\widehat{R}_{\mathfrak{m}}} $ is the desired ring endomorphism.
\end{proof}

\begin{definition}\label{DefinitionofSimpleFrame}
	\begin{enumerate}[(1)]
		\item	A ring endomorphism $\varphi$ of $R$ in Lemma \ref{Frobenius lifts}, (2) is usually called a \textbf{Frobenius lift} of $R$ over $ W(k) $. But note that such a lift $\varphi: R\to R$ is in general NOT unique.
		
		\item	For any $k$-algebra $R_0$ (not necessarily admitting  a finite $p$-basis), we call a pair $(R, \varphi)$ satisfying conditions (1) and (2) in Lemma \ref{Frobenius lifts} a \textbf{simple frame} of $R_0$ over $ W(k) $ (compare Definition \ref{DefinitionofGeneralFrame} and \cite[(A.3)]{KisinCrystalineRepresentations}).
		
		\item	A \textbf{homomorphism of simple frames} $(R, \varphi)\to (R', \varphi)$ is a ring homomorphism $f: R\to R'$ compatible with Frobenius lifts. 
	\end{enumerate}
\end{definition}

\begin{example}\begin{enumerate}[(1)]\label{ExampleofFrames}
		\item 	Let $\tilde{R}$ be a smooth integral $\mathbb{Z}_{(p)}$-algebra of finite type and $R$ the $p$-adic completion of $\tilde{R}$. Then $R$ is a formally smooth flat $\mathbb{Z}_p$-algebra which lifts $\tilde{R}/p\tilde{R}$.
		\item  The crystalline Dieudonn\'e functor $\mathbb{D}^*$ is compatible with change of simple frames. This will be used frequently in the sequel.
		
	\end{enumerate}
	
\end{example}		

\subsubsection{The frame \underline{$\mathfrak{S}$}}	\label{SectionOfKisinFrame}	
Let $R_0$ and $k$ be as in Lemma \ref{Frobenius lifts} and $(R, \varphi)$ a simple frame of $R_0$. We associate a lifting frame
$$\underline{\mathfrak{S}}(R):=(\mathfrak{S}(R), E\cdot \mathfrak{S}(R), R, \varphi, \varphi_1, \varphi(E))$$
to $(R, \varphi)$ by setting:\begin{enumerate}[$\bullet$]
	\item $\mathfrak{S}(R)=R[[u]], \ E=E(u)=u+p\in \mathfrak{S}(R)$;
	\item $\varphi=\varphi_{\mathfrak{S}(R)}: \mathfrak{S}(R)\to \mathfrak{S}(R)$ is an extension of $\varphi_R$ by sending $u$ to $u^p$ and $\varphi_1(Ex)=\varphi(x)$.
\end{enumerate}
Every morphism of simple frames $f: (R, \varphi) \to (R', \varphi)$ induces a morphism of lifting frames $\underline{\mathfrak{S}}(f): \underline{\mathfrak{S}}(R)\to \underline{\mathfrak{S}}(R')$.
\begin{example}\begin{enumerate}[(1)]
		\item For any \'etale $R_0$-algebra $A_0$, we have a morphism of lifting frames $\underline{\mathfrak{S}}(R)\to \underline{\mathfrak{S}}(A)$ (cf. (3) in Lemma \ref{Frobenius lifts}).
		\item  If $\mathfrak{m}$ is a maximal idea of $R$, then we have a morphism of lifting frames $\underline{\mathfrak{S}}(R)\to \underline{\mathfrak{S}}(\widehat{R}_{\mathfrak{m}})$, where $\widehat{R}_{\mathfrak{m}}$ is the completion of $R$ with respect to $\mathfrak{m}$ (cf. (4) in Lemma \ref{Frobenius lifts}).
	\end{enumerate}
	From now on we use the notation $\mathfrak{S}(R)$ for $R[[u]]$ for an arbitrary ring $R$, and we often just write $\mathfrak{S}$ instead of $\mathfrak{S}(R)$ when there is no risk of confusion. 
\end{example}

\subsection{Classification of $ p $-divisible groups over $ R_0 $}\label{SectionClassificationofBT/R_0}
We continue to let $R_0$ and $k$ be as in Lemma \ref{Frobenius lifts} and $(R, \varphi)$ a simple frame of $R_0$. Let $ \underline{R} $ be a lifting frame naturally associated to the simple frame $(R, \varphi)$, defined as follows 
$$ \underline{R}: = (R, \ pR, \ \varphi, \ \varphi_1, \  p), \ \ \text{with}\ \  \varphi_1 =\frac{1}{p}\varphi. $$

We denote by $\widehat{\Omega}_{R}$ the module of $ p $-adically continuous differentials of $ R $, i.e., $$ \widehat{\Omega}_{R}: = \varprojlim\limits_{n} \Omega_{(R/p^nR)/W(k)}^{1}.$$  
It is a projective $ R $-module of finite type due to the finite $ p $-basis assumption on $ R_0 $.    
We denote by $ \mathbf{Win}(R, \nabla) $ the category of tuples $ (M, \mathrm{Fil}M, \varphi_{M}, \varphi_{M,1}, \nabla_{M}) $, where a tuple $ (M, \mathrm{Fil}M, \varphi_{M}, \varphi_{M,1}) $ is a window over the frame $ \underline{R} $, and $ \nabla_{M}: M\to M\otimes_{R}\widehat{\Omega}_{R} $ is an integrable topologically quasi-nilpotent connection over the $ p $-adically continuous derivation $ d_{R}: R\to \widehat{\Omega}_{R} $ of $ R $, with respect to which $ \varphi_{M} $ is horizontal, i.e., $\nabla_{M} \circ \varphi_{M} = (\varphi_{M}\otimes d_{\varphi})\circ \nabla_{M}$.

For any $ p $-divisible group $ H_0 $ over $ R_0 $, we denote by $\mathbb{D}^*(H_0)$ the Dieudonn\'e crystal of $ H_0 $. Denote by $ \mathbb{D}^*(H_0)(R) $ its evaluation at the canonical PD-thickening $ R\twoheadrightarrow R_0 $ and by $ \mathbb{D}^*(H_0)(R_0) $ its evaluation at the trivial PD thickening $ \id_{R_0}: R_0\to R_0 $. Write 
$$ M= \mathbb{D}^*(H_0)(R), \  M_0=\mathbb{D}^*(H_0)(R_0). $$ 
Denote by $F_M: M\to M, \ \  F_{M_0}: M_0\to M_0$
the Frobenius endomorphism of $M$ and of $M_0$ respectively. We write $ \mathrm{Fil}M $ for the preimage of the Hodge filtration of $M_0$ under the canonical projection $M\twoheadrightarrow M_0. $ Denote by $\big(\mathbf{BT}/R_0\big)$ the category of $p$-divisible groups over $R_0$. The following classification result is known.

\begin{theorem}\label{ClassificationofBTs/R_0}
	For any $ p $-divisible group $ H_{0} $ over $ R_0 $, there exists a natural connection $ \nabla_{M}: M\to M\otimes_{R}\widehat{\Omega}_{R} $ such that 
	the tuple 
	\begin{align} \label{WindowofH_0}
		\underline{M}=(M,\  \mathrm{Fil}M, \ F_{M}, \ \ F_{M}/p,\  \ \nabla_{M}) 
	\end{align}
	
	is an object in $ \mathbf{Win}(R, \nabla) $. Moreover, such an assignment gives an equivalence of categories
	$$\big(\mathbf{BT}/R_0\big)\xlongrightarrow{\cong}\mathbf{Win}(R, \nabla).$$
\end{theorem}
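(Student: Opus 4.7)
The plan is to reduce the theorem to de Jong's crystalline classification of $p$-divisible groups over bases admitting a finite $p$-basis (\cite{deJongCrystalineDieudonneRigidgeometry}) and then to repackage the resulting filtered F-crystal as a window over $\underline{R}$ with connection. Since the crystalline Dieudonn\'e functor is, by that theorem, an equivalence between $(\mathbf{BT}/R_0)$ and a category of admissible filtered F-crystals on $R_0$ equipped with a topologically quasi-nilpotent integrable connection, it will suffice to match the latter category with $\mathbf{Win}(R,\nabla)$ by evaluation on the canonical PD-thickening $R\twoheadrightarrow R_0$.

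First I would construct the functor $(\mathbf{BT}/R_0)\to \mathbf{Win}(R,\nabla)$ explicitly. Given $H_0$, the evaluation $M=\mathbb{D}^*(H_0)(R)$ is a finite projective $R$-module equipped with a horizontal $\varphi$-linear Frobenius $F_M$ and a natural integrable, topologically quasi-nilpotent connection $\nabla_M$ from the crystal structure. Define $\mathrm{Fil}M$ as the preimage of $\mathrm{Fil}^1M_0$ under $M\twoheadrightarrow M_0$. Then $pM\subset\mathrm{Fil}M$, the quotient $M/\mathrm{Fil}M\cong M_0/\mathrm{Fil}^1M_0$ is finite projective over $R_0$, and by the lifting-frame property of $\underline{R}$ one can choose a normal decomposition $M=N\oplus L$ with $\mathrm{Fil}M=N\oplus pL$. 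The key technical input is that $F_M(\mathrm{Fil}M)\subset pM$, a consequence of the identity $V_M\circ F_M=p\cdot \id_M$ together with the standard characterization of $\mathrm{Fil}^1 M_0$ as the image of the Verschiebung $V_{M_0}$; this allows us to define $\varphi_{M,1}:=F_M/p:\mathrm{Fil}M\to M$. Axioms $(2)$ and $(3)$ of Definition \ref{DefinitionOfWindow} are then immediate, while the surjectivity axiom $(4)$ is, via Lemma \ref{UsefulLemma1}, equivalent to $\Psi=\varphi_{M,1}\oplus \varphi_M:N\oplus L\to M$ being a $\varphi$-linear isomorphism. This last statement is a reformulation of the admissibility of the filtered F-crystal and reduces modulo $p$ to the classical fact that the contravariant Dieudonn\'e module of a BT$_1$ is generated by the images of Frobenius and Verschiebung.

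The main obstacle is the essential surjectivity, i.e.\ reconstructing $H_0$ out of the window data. This is the heart of de Jong's theorem and ultimately rests on Grothendieck--Messing deformation theory: one lifts the special fiber inductively along the PD-thickenings $R/p^{n+1}R\twoheadrightarrow R/p^nR$, with each step controlled exactly by the Hodge filtration on $M$; the base case over $R_0/p$ combines the classical Berthelot--Messing classification over perfect bases with \'etale descent, for which the finite $p$-basis hypothesis is crucial. Full faithfulness reduces in a formal way to the crystalline rigidity of morphisms between $p$-divisible groups (\cite{BBMTheoriedeDieudonnecristalline}), and the fact that $\underline{R}$ satisfies the surjectivity condition (since $\varphi_1(p)=1$) ensures, via Remark \ref{RemarkOfWindows}.(c), that no data beyond $(\mathrm{Fil}M,\varphi_{M,1},\nabla_M)$ has been lost in the window description. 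Functoriality in morphisms of simple frames, in the sense of Example \ref{ExampleofFrames}.(2), then follows from the crystalline nature of $\mathbb{D}^*$.
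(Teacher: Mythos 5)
Your proposal mirrors the paper's proof, which is a two-line citation combining \cite[Theorem 4.1.1]{deJongCrystalineDieudonneRigidgeometry} (de Jong's crystalline classification of $p$-divisible groups over $R_0$) with \cite[Theorem 2.6.4]{DieudonneCrystalsandWachModules} (the repackaging of the resulting filtered Dieudonn\'e crystal as a window over $\underline{R}$); you have simply spelled out the translation step explicitly, and the details you fill in (in particular that $F_M(\mathrm{Fil}M)\subset pM$ so that $\varphi_{M,1}=F_M/p$ makes sense, and that axiom (4) reduces via Lemma \ref{UsefulLemma1} to the standard fact about Dieudonn\'e modules of BT$_1$'s) are correct. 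One caution about the aside: your picture of de Jong's essential surjectivity as an inductive Grothendieck--Messing lift along $R/p^{n+1}R\twoheadrightarrow R/p^nR$ is in fact the mechanism behind the next result in the paper, Theorem \ref{ClassificationbyRmodules}, not this one — here the target is $\big(\mathbf{BT}/R_0\big)$ and there is nothing to lift (note $R_0/p=R_0$); de Jong's proof over $R_0$ proceeds instead through formal and rigid analytic geometry and a descent to the perfect case. Since you cite de Jong's theorem rather than reprove it, this does not create a gap in your argument, but that parenthetical sketch would not go through if taken literally.
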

\begin{proof}
	This follows from the combination of  \cite[Theorem 4.1.1]{deJongCrystalineDieudonneRigidgeometry} and \cite[Theorem 2.6.4]{DieudonneCrystalsandWachModules}. Note that \cite[Theorem 2.6.4]{DieudonneCrystalsandWachModules} deals with filtered Dieudonn\'e crystals as defined in \cite[Definition 2.4.1]{DieudonneCrystalsandWachModules} but when the scheme $ T $ in loc. cit. is of characteristic $ p $, this notion is equivalent to the version without filtrations as defined in \cite[Definition 2.3.2]{deJongCrystalineDieudonneRigidgeometry}. 
\end{proof}

\subsection{Classification of $ p $-divisible groups over $ R $ (via Dieudonn\'e theory)}
We continue to let $R_0$ and $k$ be as in Lemma \ref{Frobenius lifts} and $(R, \varphi)$ a simple frame of $R_0$. Let $ \underline{R}^{0} $ be another lifting frame naturally associated to $(R, \varphi)$,  defined as follows
$$ \underline{R}^{0}: = (R,\  0, \ \varphi, \ 0, \  p). $$

The frame $ \underline{R}^{0} $ does not satisfy the surjectivity condition. We denote by $ \mathbf{Win}^{0}(R, \nabla) $ the category of tuples $ (M, \mathrm{Fil}M, \varphi_{M}, \varphi_{M,1}, \nabla_{M}) $, where a tuple $ (M, \mathrm{Fil}M, \varphi_{M}, \varphi_{M,1}) $ is a window over the frame $ \underline{R}^{0} $,  and $ \nabla_{M}: M\to M\otimes_{R}\widehat{\Omega}_{R} $ is an integrable topologically quasi-nilpotent connection over the $ p $-adically continuous derivation $ d_{R}: R\to \widehat{\Omega}_{R} $ of $ R $ such that
\begin{enumerate}[(1)]
	\item $\mathrm{Fil}M\subset M$ is a direct summand, lifting  $\ker(\varphi_{M}\otimes \varphi_{R_0})\subset M\otimes_{R}R_0 $.  
	\item  $ \varphi_{M} $ is horizontal with respect to $\nabla_{M}$ (i.e., $\nabla_{M} \circ \varphi_{M} = (\varphi_{M}\otimes d\varphi)\circ \nabla_{M}$). 
\end{enumerate} 
A morphism in $ \mathbf{Win}^{0}(R, \nabla) $ is a morphism of windows over $ \underline{R}^{0} $, which is compatible with connections. 

For each $ p $-divisible group $ H $ over $ R $, we write $ H_0=H\otimes_{R}R_0 $. Note that the Dieudonn\'e module $ M:=\mathbb{D}^*(H_0)(R) $ is equipped with the Hodge filtration $ \Fil M \subset M$ which lifts the (Hodge) filtration $\ker(\varphi_{M}\otimes \varphi_{R_0})\subset M\otimes_{R}R_0 $. If we let $ (F_M, \nabla_{M}) $ be as in Theorem \ref{ClassificationofBTs/R_0}, then we obtain a natural functor
\begin{align}\label{functorP}
	P:	\big(\mathbf{BT}/R\big)\longrightarrow\mathbf{Win}^{0}(R, \nabla).	
\end{align}
by sending a $ p $-divisible group $ H $ over $ R $ to the tuple $ (M:=\mathbb{D}^*(H_0)(R) , \mathrm{Fil}M, \varphi_{M}, 1/p\varphi_{M}, \nabla_{M}) $ described above.

A combination of Theorem \ref{ClassificationofBTs/R_0} and Grothendieck-Messing deformation theory (\cite[Chapter V, Theorem (1.6)]{MessingBT}) gives the following classification result on $ p $-divisible groups over $ R $. 
\begin{theorem}\label{ClassificationbyRmodules}
	The functor $ P $ in \eqref{functorP} is an equivalence of categories. 
\end{theorem}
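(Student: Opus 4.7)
The plan is to combine Theorem \ref{ClassificationofBTs/R_0}, which classifies $p$-divisible groups over $R_0$, with Grothendieck--Messing deformation theory, which classifies lifts of a $p$-divisible group along a PD-thickening. Since $p > 2$ and $R$ is $p$-adically flat over $W(k)$, the ideal $pR \subset R$ carries canonical divided powers, so the canonical surjection $R \twoheadrightarrow R_0$ is a PD-thickening and the Grothendieck--Messing criterion applies.

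First I would construct a candidate inverse functor $Q$ to $P$. Given an object $(M, \Fil M, \varphi_M, \varphi_{M,1}, \nabla_M)$ of $\mathbf{Win}^0(R, \nabla)$, passing from the frame $\underline{R}^0$ to $\underline{R}$---that is, enlarging the filtration to $\Fil M + pM$ and extending $\varphi_{M,1}$ by declaring $\varphi_{M,1}(pn) := \varphi_M(n)$ (which is forced, and consistent on the overlap $\Fil M \cap pM$, by the identity $\varphi_M(pn) = p\varphi_M(n) = p\varphi_{M,1}(pn)$ and $p$-torsion freeness of $M$)---yields an object of $\mathbf{Win}(R, \nabla)$. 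By Theorem \ref{ClassificationofBTs/R_0}, this enlarged datum corresponds to a unique $p$-divisible group $H_0$ over $R_0$ with $\mathbb{D}^*(H_0)(R) \cong M$, under which $\Fil M + pM$ is identified with the preimage of the Hodge filtration of $H_0$ along $M \twoheadrightarrow M_0$. The original $\Fil M$ is then a direct summand of $M$ lifting this Hodge filtration modulo $p$; by Grothendieck--Messing (\cite[Chapter V, Theorem (1.6)]{MessingBT}) such a direct-summand lift corresponds to a unique deformation $H$ of $H_0$ to $R$, and I set $Q(M, \Fil M, \ldots) := H$.

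Next I would verify that $P$ and $Q$ are mutually quasi-inverse. The composition $P \circ Q$ is the identity essentially by construction, since $\mathbb{D}^*(H)(R) = \mathbb{D}^*(H_0)(R)$ canonically and the Hodge filtration of $H$ inside this module is precisely the direct-summand lift produced by Grothendieck--Messing; the composition $Q \circ P$ is the identity by the uniqueness clause of Grothendieck--Messing. For full faithfulness of $P$, I would combine the fully faithful part of Theorem \ref{ClassificationofBTs/R_0} (morphisms over $R_0$ correspond bijectively to morphisms in $\mathbf{Win}(R, \nabla)$) with the Grothendieck--Messing criterion: a morphism $H_0 \to H_0'$ lifts uniquely to a morphism $H \to H'$ over $R$ exactly when the induced map of Dieudonn\'e modules carries the chosen Hodge-filtration lift of $H$ into that of $H'$, which is precisely the morphism-condition built into $\mathbf{Win}^0(R, \nabla)$.

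The main subtlety is transporting the connection $\nabla_M$ and the horizontality of $\varphi_M$ correctly across both equivalences, but this is in fact automatic: $\nabla_M$ is intrinsic to the crystal $\mathbb{D}^*(H_0)$ and therefore depends only on $H_0$, not on any particular lift $H$, and horizontality of $\varphi_M$ is part of the crystal structure. Hence the $\nabla$- and horizontality data are carried along by both Theorem \ref{ClassificationofBTs/R_0} and Grothendieck--Messing without additional verification, and the combined equivalence is exactly the statement of the theorem.
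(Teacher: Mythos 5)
Your proposal is correct and follows exactly the route the paper indicates: the paper's proof consists only of the remark that "a combination of Theorem \ref{ClassificationofBTs/R_0} and Grothendieck-Messing deformation theory gives the following classification result," and your argument fills in precisely those two ingredients (passage from $\underline{R}^0$-windows to $\underline{R}$-windows and then lifting direct-summand Hodge filtrations via Grothendieck--Messing). The extra verification of full faithfulness is redundant once $P$ and $Q$ are shown quasi-inverse, but it is harmless.
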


\textbf{Breuil's ring $ S $}

Breuil's ring $ S $ to be defined below is closely related to the Kim-Kisin windows to be discussed in the next subsection. We will give a classification of $ p $-divisible groups over $ R $ in term of $ S $-modules with extra structures and then discuss the relation between such a classification and that given by Theorem \ref{ClassificationbyRmodules}. 

Write $ \mathfrak{S}=\mathfrak{S}(R) $ and let $ \varpi_1: \mathfrak{S}\to R $ be the $ R $-algebra homomorphism sending the formal variable $ u $ to $ p $ (here we see $ \mathfrak{S} $ as an $ R $-algebra through the embedding $ R\hookrightarrow \mathfrak{S} $). 
Let $ S $ be the $ p $-adic completion of the divided power envelope of $ \mathfrak{S} $ with respect to the kernel of $ \varpi_1 $ (namely the ideal $ E=E(u)=u+p $).  

The ring $ S $ is $ \mathbb{Z}_p $-flat and is a subring of $ S[\frac{1}{p}] $ (see \cite[Section 3.3]{KimWansuRelative} for an explicit description of $ S$). Let $ \pi_1: S\to R $ be the natural projection of $ R $-algebras sending $ u $ to $ p $ and denote its kernel by $ \Fil S $. Then $ \Fil S $ is topologically generated by the divided powers $ \{\frac{E(u)^n}{n!}\}_{n\geq 1} $ of $ E(u) $.  The Frobenius lift $ \varphi: \mathfrak{S} \to \mathfrak{S}$ extends uniquely to a Frobenius lift $ \varphi=\varphi_S: S\to S $ of $ S $, and we have $\varphi(\Fil S)\subset pS$.   The tuple $$\underline{S}:= (S, \Fil S, \varphi, \frac{1}{p}\varphi, p) $$ is a frame satisfying the surjectivity condition (in fact, $ \frac{\varphi(E)}{p} $ is unit in $ S $). In addition to the natural projection $ \pi_1: S\to R $ which sends $ u $ to $ p $, there is also another natural projection $ \pi_2: S\to R $ of $ R $-algebras which sends $ u $ to $ 0 $. The kernel of $ \pi_2 $, denoted by $ \Fil' S $,  is topologically generated by $ u $ and all $ \{\frac{u^n}{n!} \}_{n\geq 1}$. Moreover, $ \Fil 'S  $ is $ \varphi $-stable and hence $ p_2 $ induces a homomorphism of simple frames $ (S, \varphi) \to (R, \varphi)$.  Note that $ \pi_1 $ and $ \pi_2 $ are two sections of the natural embedding $ R\hookrightarrow S $ and they induce two morphisms of PD thickenings
\begin{align}\label{PDsection}
	(S\twoheadrightarrow R_0)\xrightarrow{\pi_1} (R\twoheadrightarrow R_0),\ \ \ \  (S\twoheadrightarrow R_0)\xrightarrow{\pi_2} (R\twoheadrightarrow R_0), 
\end{align}
which are two sections of the PD morphism $  (R\twoheadrightarrow R_0)\hookrightarrow (S\twoheadrightarrow R_0)$. Here  we use ($S\twoheadrightarrow R_0)\xrightarrow{\pi_1} (R\twoheadrightarrow R_0$) to denote a commutative diagram
\begin{align*}
	\xymatrixcolsep{4pc}	\xymatrixcolsep{4pc}\xymatrix{S\ar[d]^{\pi_1} \ar@{->>}[r]&R_0\ar@{=}[d]\\R \ar@{->>}[r]&R_0.}
\end{align*}
The embedding $ R\to S $ induces homomorphisms of frames
\begin{align}\label{homsofframes}
	\begin{array}{rcl}
		\underline{R}=(R, pR, \varphi, \frac{1}{p}\varphi, p)& \longrightarrow & \underline{S}=(S, \Fil S, \varphi, \frac{1}{p}\varphi, p);\\
		\underline{R}^{0}=(R, 0, \varphi, 0, p)& \longrightarrow & \underline{S}.  
	\end{array}
\end{align}

The second map in \eqref{PDsection} induce a homomorphism of frames 
\begin{align}\label{Projectionofframes}
	\underline{S}\longrightarrow \underline{R}^{0}.
\end{align}

Denote by $ \mathbf{Win}(S, \nabla^{0}) $ the category of tuples $ (\mathsf{M}, \mathrm{Fil}\mathsf{M}, \varphi_{\mathsf{M}}, \varphi_{\mathsf{M},1}, \nabla_{\mathsf{M}\otimes_{S, \pi_2}R}) $, where a tuple $ (\mathsf{M}, \mathrm{Fil}\mathsf{M}, \varphi_{\mathsf{M}}, \varphi_{\mathsf{M},1}) $ is a window over $ \underline{S} $,  and $ \nabla_{\mathsf{M}\otimes_{S, \pi_2}R}: \mathsf{M}\otimes_{S, \pi_2}R\to \mathsf{M}\otimes_{S, \pi_2}R\otimes_{R}\widehat{\Omega}_{R} $ is an integrable topologically quasi-nilpotent connection over the $ p $-adically continuous derivation $ d_{R}: R\to \widehat{\Omega}_{R} $ of $ R $, with respect to which $ \varphi_{\mathsf{M}\otimes_{S, \pi_2}R}:=\varphi_{\mathsf{M}}\otimes \varphi_R $ is horizontal.

For each object $\underline{M}= (M, \mathrm{Fil}M, \varphi_{M}, \varphi_{M,1}, \nabla_{M}) $ in $ \mathbf{Win}^{0}(R, \nabla) $, we can associate to $ \underline{M} $ an object $ (\mathsf{M}, \mathrm{Fil}\mathsf{M}, \varphi_{\mathsf{M}}, \varphi_{\mathsf{M},1}, \nabla_{\mathsf{M}\otimes_{S, \pi_2}R}) $ in $ \mathbf{Win}(S, \nabla^{0}) $  by setting  $(\mathsf{M}, \mathrm{Fil}\mathsf{M}, \varphi_{\mathsf{M}}, \varphi_{\mathsf{M},1})$ to be the base change of the window $(M, \mathrm{Fil}M, \varphi_{M}, \varphi_{M,1})$ along the homomorphism of frames $\underline{R}^{0}\to \underline{S}$ in \eqref{homsofframes}, and $\nabla_{M}=\nabla_{\mathsf{M}\otimes_{S, \pi_2}R}$. Note that by definition $ \Fil \mathsf{M} = \Fil M\otimes_RS+\Fil S \mathsf{M}$ and hence $\Fil \mathsf{M}$ is also the preimage of $ \Fil M $ under the projection $ \mathsf{M}=M\otimes_RS\xrightarrow{\id_{M}\otimes \pi_1} M $.
Such associations are functorial and hence induces a functor 
\begin{align*}
	X:  \mathbf{Win}^{0}(R, \nabla) \longrightarrow \mathbf{Win}(S, \nabla^{0}).
\end{align*} 

Write $ Q=X \circ P:	\big(\mathbf{BT}/R\big)\longrightarrow\mathbf{Win}(S, \nabla^{0}).	 $
\begin{theorem}[{\cite[Theorem 3.17]{KimWansuRelative}}]\label{ClassificationbyBreuilmodules}
	The functor $ X:  \big(\mathbf{BT}/R\big)\longrightarrow\mathbf{Win}(S, \nabla^{0})$ is an equivalence of categories.
\end{theorem}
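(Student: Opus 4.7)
The strategy is to reduce to Theorem \ref{ClassificationbyRmodules}, which already provides an equivalence $P: \mathbf{BT}/R \xrightarrow{\cong} \mathbf{Win}^{0}(R, \nabla)$. Since $Q = X \circ P$, it suffices to show that the base-change functor $X: \mathbf{Win}^{0}(R, \nabla) \to \mathbf{Win}(S, \nabla^{0})$ is itself an equivalence of categories.

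The plan is to construct a quasi-inverse $Y: \mathbf{Win}(S, \nabla^{0}) \to \mathbf{Win}^{0}(R, \nabla)$ by base change along the frame homomorphism $\underline{S}\to \underline{R}^{0}$ of \eqref{Projectionofframes}, induced by the section $\pi_2: S \to R$ sending $u\mapsto 0$. The compatibility $\pi_2 \circ (R\hookrightarrow S) = \id_R$ immediately implies that $Y\circ X \cong \id$ on the level of underlying $R$-modules and Frobenius maps; the connection data is preserved by construction since $\mathbf{Win}(S, \nabla^{0})$ records the connection on $\mathsf{M}\otimes_{S,\pi_2}R$, which is exactly what $Y$ extracts.

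The hard part is to verify that $X \circ Y \cong \id$. Given an $\underline{S}$-window $(\mathsf{M}, \mathrm{Fil}\mathsf{M}, \varphi_{\mathsf{M}}, \varphi_{\mathsf{M},1})$, set $M := \mathsf{M}\otimes_{S,\pi_2}R$ and consider the natural $S$-linear map $M\otimes_{R}S \to \mathsf{M}$ coming from $R\hookrightarrow S$. This is an isomorphism of finite projective $S$-modules, since it reduces to the identity modulo $u$ and both sides are $u$-adically (hence $\Fil S$-adically, up to $p$-adic completion) complete of the same rank. The essential task is to match filtrations: one must show that $\mathrm{Fil}\mathsf{M}$ coincides with $\mathrm{Fil}M\otimes_{R}S + \mathrm{Fil}S\cdot \mathsf{M}$, where $\mathrm{Fil}M\subset M$ is the Hodge filtration recovered from $\varphi_{\mathsf{M}}$ modulo $(E(u),p)$. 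This rigidity should follow from the surjectivity condition for $\underline{S}$ together with Grothendieck-Messing-type uniqueness: any $S$-submodule of $\mathsf{M}$ that lifts $\mathrm{Fil}M$, contains $\mathrm{Fil}S\cdot \mathsf{M}$, is carried into $p\mathsf{M}$ by $\varphi_{\mathsf{M}}$, and whose image under $\varphi_{\mathsf{M},1}$ generates $\mathsf{M}$, is uniquely determined, because the PD structure on $\mathrm{Fil}S$ renders the lifts compatible with $\varphi$ unique.

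The main obstacle I expect is precisely this filtration comparison; it genuinely uses that $S$ carries divided powers compatible with the Frobenius lift, as this is what rigidifies the lift of the Hodge filtration from $M$ to $\mathsf{M}$. Once filtrations agree, both $\varphi_{\mathsf{M}}$ and $\varphi_{\mathsf{M},1}$ on the two windows are forced to coincide via the relation $\varphi_{\mathsf{M}} = p\cdot \varphi_{\mathsf{M},1}$ on $\mathrm{Fil}\mathsf{M}$ and the fact that $\varphi_{\mathsf{M},1}(\mathrm{Fil}\mathsf{M})$ generates $\mathsf{M}$. For a cleaner approach, one may instead invoke \cite[Theorem 3.17]{KimWansuRelative} directly, which packages exactly this comparison; the outline above indicates why such a comparison is natural and what its hard core is.
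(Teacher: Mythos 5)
The paper offers no proof of this statement at all: it is cited verbatim from Kim (\cite[Theorem 3.17]{KimWansuRelative}), and indeed the paragraph \emph{following} the theorem explains that the equivalence of $X$ is a \emph{consequence} of this theorem together with Theorem \ref{ClassificationbyRmodules}, not an ingredient in its proof. Your proposal inverts this logical order, attempting to prove Kim's theorem by first showing $X$ is an equivalence; that is a legitimate thing to try, but the specific construction you propose for a quasi-inverse $Y$ is incorrect, and the gap it hides is exactly where the real content of Kim's theorem lives.

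The concrete problem is in your definition of $Y$ as ``base change along $\pi_2$.'' Under $\pi_2: S\to R$, $u\mapsto 0$, one has $\pi_2(\Fil S)=pR$, not $0$, so $\pi_2$ is not a strict frame homomorphism $\underline{S}\to\underline{R}^{0}$; and on filtrations, the image of $\Fil\mathsf{M}=\Fil M\otimes_R S+\Fil S\cdot\mathsf{M}$ in $\mathsf{M}\otimes_{S,\pi_2}R$ is $\Fil M+pM$, which is \emph{not} a direct summand of $M$ and hence cannot serve as the filtration of an object in $\mathbf{Win}^{0}(R,\nabla)$. The filtration you actually want lives naturally in $\mathsf{M}\otimes_{S,\pi_1}R$ (since $\Fil S=\ker\pi_1$), and to transport it over to $M=\mathsf{M}\otimes_{S,\pi_2}R$ one needs the canonical isomorphism $\mathsf{M}\otimes_{S,\pi_1}R\cong\mathsf{M}\otimes_{S,\pi_2}R$ of \eqref{MysteriousIsomorphism}. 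That isomorphism is not a formal consequence of the window axioms: the paper justifies it by identifying $\mathsf{M}$ with $\mathbb{D}^{*}(H_0)(S)$ and both sides with $\mathbb{D}^{*}(H_0)(R)$ --- that is, by appealing to the very crystalline classification (Kim's Theorem 3.17 together with \cite{deJongCrystalineDieudonneRigidgeometry}) that you would be trying to prove. To make your strategy non-circular you would need to establish this parallel-transport isomorphism purely from the connection $\nabla_{\mathsf{M}\otimes_{S,\pi_2}R}$ and the PD structure on $\Fil S$, and then run a Grothendieck--Messing argument to see that the direct summand so produced is the right one; this is essentially the core of Kim's proof, so your concluding remark that ``one may instead invoke Kim directly'' is, in fact, what the paper does, and what you should do too.
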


\begin{remark} Note that we are in the simplest situation of \cite{KimWansuRelative} since in our situation here no ramification happens and hence we can identify $ R $ with $ \mathfrak{S}/(E(u)) $. To be precise, our  $ R $, $\mathfrak{S}/(E(u))$ play the role of $ R_0  $, $R$ in \cite{KimWansuRelative} respectively and $ p\subset \mathfrak{S}/(E(u)) $ plays the role of $ \varpi $ in loc. cit. Though in our case we can identify $ R $ with $ \mathfrak{S}/(E(u)) $, sometimes it is still necessary to distinguish them. In the general situation as in loc. cit.,  $ \pi_1 $ and $ \pi_2 $ have different targets.  
	
\end{remark}

It follows immediately from Theorem \ref{ClassificationbyRmodules} and Theorem \ref{ClassificationbyBreuilmodules} that the functor $X$ is also an equivalence of categories. To give an explicit inverse functor of $ X $, we need the following fact: for the $ S $-module $ \mathsf{M} $ in an object  $ \underline{\mathsf{M}}$ of $\mathbf{Win}(S, \nabla^{0})  $, we have a canonical isomorphism of $ R $-modules
\begin{align}\label{MysteriousIsomorphism}
	\mathsf{M}\otimes_{S, \pi_1}R \cong \mathsf{M}\otimes_{S, \pi_2}R.
\end{align} 
This is due to the crystalline interpretation of $ \mathsf{M} $ via Theorem \ref{ClassificationbyBreuilmodules}. Indeed, we may assume $ \mathsf{M}=\mathbb{D}^{*}(H_0)(S) $ for some $ p $-divisible group $ H_0 $ over $ R_0 $. Then we have a canonical isomorphism
$$\mathsf{M}\otimes_{S, \pi_1}R \cong \mathbb{D}^{*}(H_0)(R)\cong \mathsf{M}\otimes_{S, \pi_2}R.$$ 

Define a functor 
\begin{align}
	Y: \mathbf{Win}(S, \nabla^{0}) \longrightarrow \mathbf{Win}^{0}(R, \nabla)
\end{align}
$$(\mathsf{M}, \mathrm{Fil}\mathsf{M}, \varphi_{\mathsf{M}}, \varphi_{\mathsf{M},1}, \nabla_{\mathsf{M}\otimes_{S, \pi_2}R}) \longmapsto (M:=\mathsf{M}\otimes_{S, \pi_2}R, \Fil M, \varphi_{\mathsf{M}}\otimes \varphi_{R}, \frac{1}{p} (\varphi_{\mathsf{M}}\otimes \varphi_{R}), \nabla_{M} ),$$
where $ \nabla_{\mathsf{M}\otimes_{S, \pi_2}R}$ is sent to itself (since we set $M:=\mathsf{M}\otimes_{S, \pi_2}R$), and $ \Fil M $ is the image of $\Fil  \mathsf{M} $ under the projection $$ \mathsf{M} \rightarrow \mathsf{M}\otimes_{S, \pi_1}R \cong \mathsf{M}\otimes_{S, \pi_2}R=M. $$ 

It is trivial to check that $ X $ and $ Y $ are inverse to each other.

\subsection{Kim-Kisin  windows and Kim-Kisin modules}
\label{Kisin windows and Kism-Kisin modules}

Breuil-Kisin modules, as defined in \cite[(2.2.1)]{KisinCrystalineRepresentations}, play a vital rule in the development of integral $ p $-adic Hodge theory. They are typically used to classify $ p $-divisible groups over a totally ramified extension $ R $ of $ W(k) $ (of arbitrary finite ramification index). Such a classification was conjectured  in a precise form by Breuil (see \cite{BreuilConjecture}) and was first proved by Kisin in \cite{KisinCrystalineRepresentations}. A similar classification result was generalized by Brinon and Trihan (\cite{BrinonTrihanReprese}) to the case where $ R $ is a $ p $-adic discrete valuation ring with imperfect residue field admitting a finite $ p $-basis. The case where $ R $ is a regular local ring with perfect residue field is studied in a series of paper by Cais, Lau,Vasiu and Zink (\cite{VasiuZinkBreuil'scalssificationof}, \cite{LauRelations}, \cite{DieudonneCrystalsandWachModules}), using the theory of displays and windows. W. Kim in \cite{KimWansuRelative} generalized the classification results aforementioned to a relative setting (e.g., $ R $ is a $ p $-adic ring with $ R/(p) $ locally admitting a finite $ p $-basis), where he essentially used the method developed in \cite{CarusoLiuQuasi-Semistable}. We call the relative version of Breuil-Kisin modules Kim-Kisin modules (cf. \cite[Definition 6.1]{KimWansuRelative}). In the following we are dealing with the simplest relative situation in the sense that no nontrivial ramification occurs.

We retain the notations in Section \ref{SectionOfKisinFrame} and let $ \underline{\mathfrak{S}}:=\underline{\mathfrak{S}}(R) $ be the lifting frame associated to a simple frame $ (R, \varphi) $. 

\begin{definition}  A \textbf{Kim-Kisin $\mathfrak{S}$ window}, or simply a \textbf{Kim-Kisin window} $$\underline{\mathcal{M}}=(\mathcal{M}, \mathrm{Fil}\mathcal{M}, \varphi_{\mathcal{M}}, \varphi_{\mathcal{M},1}, \nabla_M)$$ is a 5-tuple where \begin{enumerate}[(1)]
		\item $(\mathcal{M}, \mathrm{Fil}\mathcal{M}, \varphi_{\mathcal{M}}, \varphi_{\mathcal{M},1})$ is an $\underline{\mathfrak{S}}$-window.
		\item $M$ is defined to be $\mathcal{M}\otimes_{\mathfrak{S}}\mathfrak{S}/(u)$ and $\nabla_{M}: M\to M\otimes_{R}\widehat{\Omega}_{R}$ is an integrable topologically quasi-nilpotent connection over the $ p $-adically continuous derivation $ d_{R}: R\to \widehat{\Omega}_{R} $, with respect to which the $\varphi$-linear endomorphism  $F_M:=\varphi_\mathcal{M}\otimes_{\mathfrak{S}}\varphi_R$ of $M$ is horizontal.
	\end{enumerate}
\end{definition}		
\begin{remark}The following remarks will be needed in the sequel.
	\label{RemarkFrobeniusDecomposition}\begin{enumerate}[$(a)$]
		\item For a Kim-Kisin $\mathfrak{S}$ window $\underline{\mathcal{M}}$, we have $\varphi_{\mathcal{M},1}=\frac{1}{\varphi(E)}\varphi_\mathcal{M}$ on $\mathrm{Fil}\mathcal{M}$ since $\varphi(E)\in \mathfrak{S}$ is not a zero divisor, and hence  the map $\varphi_{\mathcal{M}, 1}$ is determined by $\varphi_\mathcal{M}$; see (c) in Remark \ref{RemarkOfWindows}.
		
		\item For any normal decomposition $\mathcal{M}=\mathcal{N}\oplus \mathcal{L}$ of $\underline{\mathcal{M}}$, by definition the $\varphi$-linear map $\varphi_\mathcal{M}$ has the decomposition \begin{equation}\label{Decomposition of Frobenius}
		\varphi_\mathcal{M}=\big(\mathcal{N}\oplus \mathcal{L}\xrightarrow{E\cdot\id_\mathcal{N}\oplus \id_\mathcal{L}}\mathcal{N}\oplus \mathcal{L}\xrightarrow{\Gamma}\mathcal{M}\big),
		\end{equation}
		where $$\Gamma=\frac{1}{E}\varphi_\mathcal{M}|_\mathcal{M}\oplus \varphi_\mathcal{M}|_\mathcal{L}$$ is a $\varphi$-linear isomorphism (Lemma \ref{UsefulLemma1}).  The linearization of \eqref{Decomposition of Frobenius} is the following
		\begin{equation}\label{LinearizedDecomposition of Frobenius}
		\varphi_\mathcal{M}^{\mathrm{lin}}=\big(\mathcal{N}^{(\varphi)}\oplus \mathcal{L}^{(\varphi)}\xrightarrow{\varphi(E)\cdot\id\oplus \id}\mathcal{N}^{(\varphi)}\oplus \mathcal{L}^{(\varphi)}\xrightarrow{\Gamma^{\mathrm{lin}}}\mathcal{M}\big),
		\end{equation}
		where $\varphi_\mathcal{M}^{\mathrm{lin}}$, $\Gamma^{\mathrm{lin}}$ are the linearizations of $\varphi_\mathcal{M}$ and $\Gamma$ respectively, and where $\Gamma^{\mathrm{lin}}$ is an $\mathfrak{S}$-isomorphism.
	\end{enumerate}
\end{remark}

\begin{definition}
	A\textbf{ Kim-Kisin $\mathfrak{S}$ module} $\underline{\mathfrak{M}}$ is a triple $(\mathfrak{M}, \varphi_{\mathfrak{M}}, \nabla_{M})$ where  \begin{enumerate}[(1)]
		\item $\mathfrak{M}$ is a finite projective $\mathfrak{S}$-module;
		\item $\varphi_{\mathfrak{M}}: \mathfrak{M}\to \mathfrak{M}$ is a $\varphi$-linear map such that the cokernel of the linearization $(1\otimes\varphi_{\mathfrak{M}}): \varphi^*\mathfrak{M}\to \mathfrak{M}$ is annihilated by $E\in \mathfrak{S}$.
		\item $ M $ is defined as 
		$$M:=\mathfrak{M}\otimes_{\mathfrak{S}, \varphi}\mathfrak{S}/(u)=\varphi^*\mathfrak{M}\otimes_{\mathfrak{S}, \varpi_2}R$$ and $\nabla_{M}: M\to M\otimes_{R}\widehat{\Omega}_{R}$ is an integrable topologically quasi-nilpotent connection which commutes with the $\varphi $-linear endomorphism of $M$, $$F_{M}:=(\varphi_{\mathfrak{M}}\otimes1)\otimes_{\mathfrak{S}, \varpi_2}\varphi_{R}.$$
	\end{enumerate}
\end{definition}
Denote by $\mathbf{Win}(\mathfrak{S}, \nabla^0)$ the category of Kim-Kisin-windows and by $\mathbf{Mod}(\mathfrak{S}, \nabla^0)$ the category of Kim-Kisin $\mathfrak{S}$-modules, both with respect to $(R, \varphi_R)$. Here we emphasize the simple frame $(R, \varphi_R)$, especially the Frobenius lift $\varphi_R$, because later we are going to study the effect of different Frobeni on Kim-Kisin windows. 	


\begin{proposition}\label{EquivalenceofModules&Windows}
	There is an equivalence of categories
	\begin{align*}\mathbf{Win}(\mathfrak{S}, \nabla^0)&\longrightarrow \mathbf{Mod}(\mathfrak{S}, \nabla^0), \\
		(\mathcal{M}, \mathrm{Fil}\mathcal{M}, \varphi_{\mathcal{M}}, \varphi_{\mathcal{M},1}, \nabla_{M})&\longmapsto (\mathrm{Fil}\mathcal{M}, E\cdot\varphi_{\mathcal{M},1}, \nabla_{M}),
	\end{align*}  where the second  $\nabla_{M}$ is justified via the isomorphism $\varphi_{\mathcal{M},1}\otimes 1: \varphi^*\mathrm{Fil}\mathcal{M}\to \mathcal{M}$. This equivalence preserves exactness and duality.
\end{proposition}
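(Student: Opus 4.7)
The plan is to exhibit a quasi-inverse $G$ to the forward functor $F:(\mathcal{M},\mathrm{Fil}\mathcal{M},\varphi_\mathcal{M},\varphi_{\mathcal{M},1},\nabla_M)\mapsto (\mathrm{Fil}\mathcal{M}, E\varphi_{\mathcal{M},1},\nabla_M)$, and verify that $F$ is well-defined and that the natural transformations $FG\cong\mathrm{id}$ and $GF\cong\mathrm{id}$ hold.

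To check that $F$ lands in $\mathbf{Mod}(\mathfrak{S},\nabla^0)$, I would choose a normal decomposition $\mathcal{M}=\mathcal{N}\oplus\mathcal{L}$ so that $\mathrm{Fil}\mathcal{M}=\mathcal{N}\oplus E\mathcal{L}\cong\mathcal{N}\oplus\mathcal{L}$, using that $E$ is a non-zero-divisor in $\mathfrak{S}$ and $\mathcal{L}$ is finite projective; this gives finite projectivity of $\mathrm{Fil}\mathcal{M}$. The inclusion $E\mathcal{M}\subset\mathrm{Fil}\mathcal{M}$ (Remark \ref{RemarkOfWindows}(b)) together with axiom (3) of Definition \ref{DefinitionOfWindow} shows $E\varphi_{\mathcal{M},1}=\varphi_\mathcal{M}|_{\mathrm{Fil}\mathcal{M}}$ is a $\varphi$-linear endomorphism of $\mathrm{Fil}\mathcal{M}$. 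Its linearization factors as $\varphi^*\mathrm{Fil}\mathcal{M}\xrightarrow{\varphi_{\mathcal{M},1}^{\mathrm{lin}}}\mathcal{M}\xrightarrow{\cdot E}E\mathcal{M}\hookrightarrow\mathrm{Fil}\mathcal{M}$, where the first arrow is an isomorphism by Lemma \ref{UsefulLemma1} applied to the chosen normal decomposition (using $\varphi_1(E)=1$). Hence the cokernel is $\mathrm{Fil}\mathcal{M}/E\mathcal{M}\cong\mathcal{N}/E\mathcal{N}$, clearly killed by $E$. The same isomorphism modulo $u$ identifies $M$ on the two sides and transports both the Frobenius and the connection compatibly, so the horizontality condition is inherited from that of $\varphi_\mathcal{M}$.

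For $G$, given $(\mathfrak{M},\varphi_\mathfrak{M},\nabla_M)$ I would set $\mathcal{M}:=\varphi^*\mathfrak{M}$ and $\mathrm{Fil}\varphi^*\mathfrak{M}:=(1\otimes\varphi_\mathfrak{M})^{-1}(E\mathfrak{M})$. The cokernel condition yields $E\mathfrak{M}\subset\mathrm{Im}(1\otimes\varphi_\mathfrak{M})$; Cramer's rule together with $E$ being a non-zero-divisor (since $R$ is $p$-torsion-free) makes $1\otimes\varphi_\mathfrak{M}$ injective, so there is a unique $\mathfrak{S}$-linear isomorphism $\alpha:\mathrm{Fil}\mathcal{M}\xrightarrow{\sim}\mathfrak{M}$ characterized by $(1\otimes\varphi_\mathfrak{M})(x)=E\cdot\alpha(x)$. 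Define $\varphi_{\mathcal{M},1}:\mathrm{Fil}\mathcal{M}\to\mathcal{M}$ by $x\mapsto\alpha(x)\otimes 1$ (the canonical $\varphi$-linear map $\mathfrak{M}\to\varphi^*\mathfrak{M}$); then $\varphi_\mathcal{M}$ is determined via Lemma \ref{UsefulLemma1} once one exhibits a normal decomposition of $\mathcal{M}$, obtained by lifting a splitting of the quotient $\mathfrak{M}/(1\otimes\varphi_\mathfrak{M})(\varphi^*\mathfrak{M})$, which is a finitely generated projective $R$-module. The connection on $\mathcal{M}\otimes_\mathfrak{S}R$ is taken directly from the module datum.

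The natural isomorphisms $\alpha$ and $\varphi_{\mathcal{M},1}^{\mathrm{lin}}$ furnish the unit and counit of the equivalence, and they intertwine the Frobenius and connection data by construction. Exactness is preserved because $\mathrm{Fil}\mathcal{M}$ is a direct summand of $\mathcal{M}$ via any normal decomposition, and the formation of normal decompositions is compatible with short exact sequences; duality is preserved because the dual window and dual Kisin module are defined so as to correspond under this equivalence (compare \cite{LauFramesandFiniteGroupSchemes} and \cite{DieudonneCrystalsandWachModules}). I expect the main technical hurdle to be producing a canonical normal decomposition of $\mathcal{M}=\varphi^*\mathfrak{M}$ from $(\mathfrak{M},\varphi_\mathfrak{M})$ so that $\varphi_\mathcal{M}$ is well-defined and functorial; this ultimately reduces to showing the $R$-module $\mathfrak{M}/(1\otimes\varphi_\mathfrak{M})(\varphi^*\mathfrak{M})$ is finite projective, which follows from the finite projectivity of $\mathfrak{M}$ together with $E$ annihilating this quotient.
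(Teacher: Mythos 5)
Your construction of the quasi-inverse is essentially identical to the paper's sketch: you set $\mathcal{M}=\varphi^{*}\mathfrak{M}$ and $\mathrm{Fil}\,\mathcal{M}=(1\otimes\varphi_{\mathfrak{M}})^{-1}(E\mathfrak{M})$, which (by injectivity of $1\otimes\varphi_{\mathfrak{M}}$) is the same submodule as the paper's $\psi(\mathfrak{M})$, where $\psi$ is the unique map with $(1\otimes\varphi_{\mathfrak{M}})\circ\psi=E\cdot\id$ and $\psi\circ(1\otimes\varphi_{\mathfrak{M}})=E\cdot\id$; your $\alpha$ is just $\psi^{-1}$ on the filtration, and both the paper and you ultimately defer to Cais--Lau's Lemma~2.1.15 for the detailed verification. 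The one point where your write-up is thinner than it should be is the assertion that $\mathfrak{M}/(1\otimes\varphi_{\mathfrak{M}})(\varphi^{*}\mathfrak{M})$ ``follows from the finite projectivity of $\mathfrak{M}$ together with $E$ annihilating this quotient'': what you actually need is that this cokernel has $\mathfrak{S}$-projective dimension at most $1$ (being the cokernel of an injection of finite projective $\mathfrak{S}$-modules) and is killed by the non-zero-divisor $E$, so by the change-of-rings/projective-dimension lemma (Rees) it is projective over $R=\mathfrak{S}/(E)$ -- you should name this step rather than present it as immediate.
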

\begin{proof}
	This follows from an exactly same argument as in \cite[lemma　 2.1.15　]{DieudonneCrystalsandWachModules}, though connections are not concerned in loc. cit.  We sketch here how to obtain the inverse of the equivalence of categories. 
	
	For any Kim-Kisin module $(\mathfrak{M}, \varphi_{\mathfrak{M}}, \nabla_{M})$. There is a unique $ \mathfrak{S} $-linear map $ \psi: \mathfrak{M}\to \varphi^{*}\mathfrak{M} $ such that 
	\begin{align*}
		\varphi_{\mathfrak{M}}\circ \psi= E(u)\cdot \id_{\mathfrak{M}}, \ \ \  \psi\circ \varphi_{\mathfrak{M}}= E(u)\cdot \id_{\varphi*\mathfrak{M}}. 
	\end{align*}
	We obtain a Kim-Kisin window $\underline{\mathcal{M}}=(\mathcal{M}, \mathrm{Fil}\mathcal{M}, \varphi_{\mathcal{M}}, \varphi_{\mathcal{M},1}, \nabla_{M})$ by setting 
	\begin{align*}
		\mathcal{M}:=\varphi^{*}\mathfrak{M}, \ \ \  \mathrm{Fil}\mathcal{M}:&=\psi(\mathfrak{M}), \ \ \ 	\varphi_{\mathcal{M}}:=\varphi_{\mathfrak{M}}\otimes\id_{\mathfrak{S}}; \\
		\varphi_{\mathcal{M}, 1}(\psi(x))&=x\otimes 1, \ \ \ \text{ for any\ } x\in  \mathfrak{M}.
	\end{align*}
\end{proof}

\begin{theorem}[{\cite[Corollary 6.7]{ KimWansuRelative}}]\label{Kim's theorem}
	There is an equivalence of categories
	$$\big(\mathbf{BT}/R\big)\to\mathbf{Mod}(\mathfrak{S}, \nabla^0).$$
	It is compatible with duality. Moreover, if $(R, \varphi) \to (R', \varphi)$ is morphism of simple frames, then the equivalence commutes with base change of frames along $\underline{\mathfrak{S}}(R)\to \underline{\mathfrak{S}}(R')$.
\end{theorem}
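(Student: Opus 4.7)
The plan is to factor the sought equivalence through the Breuil-window classification of $p$-divisible groups over $R$. By Proposition \ref{EquivalenceofModules&Windows} the category $\mathbf{Mod}(\mathfrak{S}, \nabla^0)$ is equivalent to $\mathbf{Win}(\mathfrak{S}, \nabla^0)$, and by Theorem \ref{ClassificationbyBreuilmodules} the category $\mathbf{BT}/R$ is equivalent to $\mathbf{Win}(S, \nabla^0)$. Hence the theorem reduces to establishing a functorial equivalence
$$B:\ \mathbf{Win}(\mathfrak{S}, \nabla^0)\ \xrightarrow{\ \sim\ }\ \mathbf{Win}(S, \nabla^0),$$
after which one must verify that the resulting composite is compatible with duality and with base change of simple frames.

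In the forward direction I would construct $B$ as a base change along the natural inclusion $\mathfrak{S}\hookrightarrow S$. For a Kim-Kisin window $\underline{\mathcal{M}} = (\mathcal{M}, \Fil\mathcal{M}, \varphi_\mathcal{M}, \varphi_{\mathcal{M},1}, \nabla_M)$ one sets $\mathsf{M} := \mathcal{M}\otimes_{\mathfrak{S}} S$ with $\varphi_\mathsf{M} := \varphi_\mathcal{M}\otimes\varphi_S$, and takes $\Fil\mathsf{M}$ to be the preimage of $\Fil\mathcal{M}/E\mathcal{M}$ under the surjection $\mathsf{M}\twoheadrightarrow \mathcal{M}/E\mathcal{M}$. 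The map $\varphi_{\mathsf{M},1}$ on $\Fil\mathsf{M}$ is then forced by the axiom $\varphi_{\mathsf{M},1}(sx) = \varphi_{S,1}(s)\,\varphi_\mathsf{M}(x)$ for $s\in\Fil S$, together with the prescription on the generators coming from $\varphi_{\mathcal{M},1}$. The two frames $\underline{\mathfrak{S}}$ and $\underline{S}$ differ in their choice of $\varpi$ (namely $\varphi(E)$ versus $p$), but the quotient $\varphi_S(E)/p$ is a unit in $S$, and this makes the two prescriptions consistent. The connection on $\mathsf{M}\otimes_{S,\pi_2}R$ is transported from $\nabla_M$ via the canonical identification \eqref{MysteriousIsomorphism}. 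A routine check confirms the axioms of Definition \ref{DefinitionOfWindow}.

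The main obstacle is constructing the inverse, i.e., showing that every Breuil window on $S$ descends uniquely to a Kim-Kisin window on $\mathfrak{S}$. The plan is to adapt Kisin's descent argument from \cite{KisinCrystalineRepresentations} to the relative setting, as carried out in \cite{KimWansuRelative}. Given a Breuil window $\underline{\mathsf{M}}$, one recovers the underlying $\mathfrak{S}$-lattice $\mathfrak{M}$ as the intersection, inside a suitable localization $\mathsf{M}[1/E]$, of the $\varphi$-translates of a natural $\mathfrak{S}$-lattice extracted from the Frobenius of $\mathsf{M}$; the key inputs are (i) that the cokernel of the linearized Frobenius $\varphi_\mathfrak{M}^{\mathrm{lin}}$ is forced to be killed by $E$, (ii) that the iterative intersection converges to a finite projective $\mathfrak{S}$-module of the expected rank, which uses the finite $p$-basis hypothesis on $R_0$ together with the $\varphi$-nilpotence built into a window, and (iii) that the base change of the constructed $\mathfrak{M}$ back to $S$ recovers $\mathsf{M}$. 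Full faithfulness of $B$ follows from the same description, since any morphism of Breuil windows is $\varphi$-equivariant and therefore preserves the intersection defining $\mathfrak{M}$. The connection $\nabla_M$ is carried along automatically because it lives on the common quotient $\mathfrak{M}\otimes_{\mathfrak{S},\pi_2}R = \mathsf{M}\otimes_{S,\pi_2}R$ via \eqref{MysteriousIsomorphism}.

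Finally, duality and frame base change are verified stagewise. Proposition \ref{EquivalenceofModules&Windows} preserves duality by its statement; the base change $B$ preserves duality because $\mathfrak{S}\to S$ is flat and duals of windows commute with flat base change; and Theorem \ref{ClassificationbyBreuilmodules} preserves duality by functoriality of the Dieudonn\'e crystal. Compatibility with a morphism of simple frames $(R,\varphi)\to(R',\varphi)$ is built into each of the three functors, as they are all defined by flat base changes commuting with the scalar extension $R[[u]]\to R'[[u]]$. Thus the only genuinely nontrivial input is the descent step; once that is in hand, the remaining verifications amount to bookkeeping.
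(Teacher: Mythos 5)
The paper does not prove this statement: it is quoted verbatim from Kim's work (\cite[Corollary 6.7]{KimWansuRelative}), so there is no ``paper's own proof'' to compare against. Your factorization through Breuil windows does match the route Kim actually takes, as the paper confirms in the remark following the definition of $W$: Kim first establishes the equivalence $\big(\mathbf{BT}/R\big) \to \mathbf{Win}(S,\nabla^0)$ (Theorem~\ref{ClassificationbyBreuilmodules}) and then proves $\mathbf{Mod}(\mathfrak{S},\nabla^0) \to \mathbf{Win}(\mathfrak{S},\nabla^0) \xrightarrow{W} \mathbf{Win}(S,\nabla^0)$. Your forward functor $B$ coincides with the paper's $W$, including the observation that $\varphi_S(E)/p$ being a unit in $S$ reconciles the two choices of $\varpi$, and your description of $\Fil\mathsf{M}$ matches the paper's. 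So at the level of strategy you are aligned with what the paper reports.

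The substantive gap is that the entire content of the theorem lies in the inverse of $W$ — the descent from $S$-windows to $\mathfrak{S}$-modules — and your proposal only gestures at it, deferring to ``Kisin's descent argument \ldots as carried out in \cite{KimWansuRelative}.'' That is circular given that the statement being proved \emph{is} Kim's result. Moreover the attribution is slightly off: as the paper notes in Section~\ref{Kisin windows and Kism-Kisin modules}, Kim's relative classification follows the method of Caruso--Liu \cite{CarusoLiuQuasi-Semistable}, not Kisin's original $\mathcal{O}_{\mathcal{E}}$-descent, and the ``$\varphi$-nilpotence built into a window'' you invoke is not part of the window axioms (the connection $\nabla_M$ is required to be topologically quasi-nilpotent, but there is no Frobenius-nilpotence hypothesis). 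Your checks of duality and base-change compatibility are plausible bookkeeping, but until the descent step is genuinely supplied, the proposal is an outline of Kim's proof rather than a proof.
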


Then it follows immediately that:
\begin{corollary} \label{TheoremCategoryEquivalence}
	There is a category equivalence
	$$Z: \big(\mathbf{BT}/R\big)\to\mathbf{Win}(\mathfrak{S}, \nabla^0).$$
	It is compatible with duality. Moreover, if $(R, \varphi) \to (R', \varphi)$ is morphism of simple frames, then the equivalence commutes with base change of frames from $\underline{\mathfrak{S}}(R)\to \underline{\mathfrak{S}}(R')$. \label{CorollaryBaseChangeofKisinWindow}
	
\end{corollary}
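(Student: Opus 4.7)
The plan is to obtain $Z$ simply as the composition of the two equivalences already established in the excerpt. More precisely, Theorem \ref{Kim's theorem} gives an equivalence
\[
\big(\mathbf{BT}/R\big)\xrightarrow{\ \cong\ }\mathbf{Mod}(\mathfrak{S},\nabla^0),
\]
and Proposition \ref{EquivalenceofModules&Windows} gives an equivalence
\[
\mathbf{Win}(\mathfrak{S},\nabla^0)\xrightarrow{\ \cong\ }\mathbf{Mod}(\mathfrak{S},\nabla^0),
\]
whose quasi-inverse is described explicitly via the $\mathfrak{S}$-linear map $\psi:\mathfrak{M}\to\varphi^{*}\mathfrak{M}$ characterized by $\varphi_{\mathfrak{M}}\circ\psi = E(u)\cdot\id$ and $\psi\circ\varphi_{\mathfrak{M}}=E(u)\cdot\id$. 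So I would define $Z$ to be the first equivalence followed by the quasi-inverse of the second. This is the whole construction; there is no genuine obstacle.

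For the compatibility with duality: Proposition \ref{EquivalenceofModules&Windows} already states that its equivalence preserves duality, and Theorem \ref{Kim's theorem} is also compatible with duality. Since $Z$ is the composition of two duality-preserving equivalences, it inherits the same property.

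For the compatibility with base change along a morphism of simple frames $(R,\varphi)\to (R',\varphi)$: Theorem \ref{Kim's theorem} asserts that its equivalence commutes with the induced base change $\underline{\mathfrak{S}}(R)\to\underline{\mathfrak{S}}(R')$, and the equivalence of Proposition \ref{EquivalenceofModules&Windows} is given by functorial operations (taking $\Fil\mathcal{M}$, multiplying $\varphi_{\mathcal{M},1}$ by $E$, transporting the connection via $\varphi_{\mathcal{M},1}\otimes 1$) that manifestly commute with base change of frames. Hence $Z$ commutes with base change as well. This completes the outline; the only thing to write out in the actual proof is a sentence identifying the quasi-inverse and citing the two results.
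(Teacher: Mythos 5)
Your proposal is exactly the paper's own argument: the paper states that the corollary ``follows immediately'' by composing Theorem \ref{Kim's theorem} with the (quasi-inverse of the) equivalence in Proposition \ref{EquivalenceofModules&Windows}, and your remarks on duality and base change track the same reasoning. No gap.
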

\begin{remark}\label{RemarkKim'sClassification}	
	\begin{enumerate}[(1)]
		\item In Corollary (\ref{TheoremCategoryEquivalence}) there is no obvious map from either of the left and the right hand sides to the other.
		\item The classification of $ p $-divisible groups over $ R $ in terms of Kim-Kisin $ \mathfrak{S} $-modules gives rise to a classification of ($ p $-power order) finite flat group schemes over $ R $ in terms of torsion Kim-Kisin $ \mathfrak{S} $-modules (by no means trivial, see \cite[9.8]{KimWansuRelative}). We will not explicitly use it. But note that taking the reduction modulo $ p $ of the Kim-Kisin module of a $ p $-divisible group $ H $ over $ R $ one obtains exactly the Kim-Kisin module associated with the $ p $-kernel of $ H $. We will take such operations frequently in the future without further explanation.  
	\end{enumerate}
\end{remark}

The following lemma describes the basic relations between Kim-Kisin modules and its crystalline Dieudonn\'e modules.
\begin{lemma}\label{propertiesofKimKisinwindows}
	Let $(\mathcal{M}, \mathrm{Fil}\mathcal{M}, \varphi_{\mathcal{M}}, \varphi_{\mathcal{M},1}, \nabla_{M})$ be the Kim-Kisin window associated to a $ p $-divisible group $ H $ over $ R $. We have the following canonical identifications. \begin{enumerate}[(1)]
		\item	$ (\mathcal{M}, \varphi_\mathcal{M}) \otimes_{\mathfrak{S}, \varpi_2}R $ is canonically identified with the (contravariant) Dieudonn\'e module  $\mathbb{D}^{*}(H)$ of $ H $, together with its Frobenius map.
		\item  $ (\mathcal{M}, \mathrm{Fil}{\mathcal{M}}) \otimes_{\mathfrak{S}, \varpi_1} R$ is canonically identified with $ \mathbb{D}^{*}(H) $ together with its Hodge filtration (cf. \cite[Corollaire 3.3.5]{BBMTheoriedeDieudonnecristalline}).
	\end{enumerate}

	The combination of Proposition \ref{EquivalenceofModules&Windows}, Theorem \ref{Kim's theorem}, and Theorem \ref{ClassificationofBTs/R_0} gives below a commutative diagram.
	\begin{align}\label{DiagramofDeformations}
		\xymatrixcolsep{4pc}\xymatrix{\big(\textbf{BT}/R\big)\ar[r]^{\cong}\ar[d]^{\mod p}&\textbf{Mod}\big(\mathfrak{S}, \nabla^{0}\big)\ar[r]^{\cong}&\textbf{Win}\big(\mathfrak{S}, \nabla^{0}\big)\ar[d]^{\mod u}\\
			\big(\textbf{BT}/R_0\big)\ar[rr]^{\cong}&&\textbf{Win}\big(R, \nabla \big)}
	\end{align}
	where we simply use ``$\cong$'' to denote an equivalence of categories, and where the right vertical functor ``mod $u $'' denotes the base change of windows along the homomorphism of lifting frames
	$$ \underline{\mathfrak{S}}\xrightarrow{\mod u}\underline{R}. $$ 
\end{lemma}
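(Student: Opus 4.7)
My plan is to deduce the two identifications directly from the constructions underlying Theorem \ref{Kim's theorem} and Theorem \ref{ClassificationbyBreuilmodules}, and then to obtain the commutativity of the diagram by chasing these identifications through the equivalences already established.

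For (1), I would start from the Kim-Kisin module $\underline{\mathfrak{M}}=(\mathfrak{M},\varphi_{\mathfrak{M}},\nabla_{M})$ attached to $H$ by Kim's theorem. Under the equivalence of Proposition \ref{EquivalenceofModules&Windows}, the underlying $\mathfrak{S}$-module of the associated Kim-Kisin window is $\mathcal{M}=\varphi^{*}\mathfrak{M}$ with Frobenius $\varphi_{\mathcal{M}}=\varphi_{\mathfrak{M}}\otimes \id$. Since $\varpi_{2}\colon\mathfrak{S}\to R$ is reduction modulo $u$, there is a tautological identification
\[
\mathcal{M}\otimes_{\mathfrak{S},\varpi_{2}}R=\varphi^{*}\mathfrak{M}\otimes_{\mathfrak{S},\varpi_{2}}R=\mathfrak{M}\otimes_{\mathfrak{S},\varphi}\mathfrak{S}/(u),
\]
and by the very definition of the third component $M$ of a Kim-Kisin module together with Kim's theorem (which is \emph{defined} so as to be compatible with crystalline Dieudonn\'e theory), the right-hand side is canonically $\mathbb{D}^{*}(H)(R)$ with its Frobenius. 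This yields (1) with almost no computation.

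For (2), I would go through Breuil's ring $S$. Base change of the Kim-Kisin window $\underline{\mathcal{M}}$ along the frame homomorphism $\underline{\mathfrak{S}}\to \underline{S}$ (induced by $\mathfrak{S}\hookrightarrow S$) produces an $\underline{S}$-window $\underline{\mathsf{M}}$, and by the compatibility of Kim's and Breuil's classifications this $\underline{\mathsf{M}}$ is the Breuil window of $H$. Under Theorem \ref{ClassificationbyBreuilmodules} and the crystalline interpretation $\mathsf{M}=\mathbb{D}^{*}(H_{0})(S)$, the projection $\pi_{1}\colon S\to R$ gives $\mathsf{M}\otimes_{S,\pi_{1}}R\cong \mathbb{D}^{*}(H)(R)$, and the image of $\mathrm{Fil}\mathsf{M}$ is the Hodge filtration by the standard unfolding of the construction (as already recorded in the discussion preceding Theorem \ref{ClassificationbyBreuilmodules}, where $\mathrm{Fil}\mathsf{M}$ is the preimage of $\mathrm{Fil}M$ under $\id_{M}\otimes\pi_{1}$). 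Since $\pi_{1}\circ(\mathfrak{S}\hookrightarrow S)=\varpi_{1}$ and the formation of $\mathrm{Fil}$ is compatible with this base change (i.e.\ $\mathrm{Fil}\mathsf{M}\otimes_{S,\pi_{1}}R$ equals the image of $\mathrm{Fil}\mathcal{M}$ in $\mathcal{M}\otimes_{\mathfrak{S},\varpi_{1}}R$), assertion (2) follows.

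For the diagram, I would check commutativity by running a $p$-divisible group $H/R$ through both paths. Along the top row, one forms $\underline{\mathfrak{M}}$ and then $\underline{\mathcal{M}}$; reducing modulo $u$ amounts to base-changing the window along the frame map $\underline{\mathfrak{S}}\to\underline{R}$. Along the bottom row, one first reduces $H$ modulo $p$ to obtain $H_{0}$ and then invokes Theorem \ref{ClassificationofBTs/R_0} to produce an $\underline{R}$-window whose underlying module is $\mathbb{D}^{*}(H_{0})(R)$, endowed with its Frobenius, Hodge filtration and connection. Parts (1) and (2), together with the horizontality already built into the definitions of $\mathbf{Win}(\mathfrak{S},\nabla^{0})$ and $\mathbf{Win}(R,\nabla)$, show that the two resulting $\underline{R}$-windows have the same underlying module, filtration, Frobenius and connection. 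The main obstacle, and the one requiring the most care, is the bookkeeping around the Frobenius twist $\mathcal{M}=\varphi^{*}\mathfrak{M}$: one has to check that $\varpi_{2}\circ\varphi=\varphi_{R}\circ\varpi_{2}$ threads correctly so that ``$\mathfrak{M}\otimes_{\mathfrak{S},\varphi}\mathfrak{S}/(u)$'' on the Kim-Kisin side really produces the \emph{Frobenius-twisted} Dieudonn\'e module that matches the crystalline $\mathbb{D}^{*}(H)(R)$, and similarly that the filtration comparison of (2) is compatible with the identification $\mathsf{M}\otimes_{S,\pi_{1}}R\cong\mathsf{M}\otimes_{S,\pi_{2}}R$ of \eqref{MysteriousIsomorphism}.
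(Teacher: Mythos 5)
Your proposal is correct and follows essentially the same route as the paper: both deduce (1), (2), and the commutativity of the diagram from the compatibilities built into Kim's classification — the paper simply cites the relevant passages of \cite{KimWansuRelative} together with its erratum \cite{Erratum2RelativeClassification}, while you unpack the same chain of compatibilities through Theorem \ref{Kim's theorem}, Proposition \ref{EquivalenceofModules&Windows}, Theorem \ref{ClassificationbyBreuilmodules}, and the canonical isomorphism \eqref{MysteriousIsomorphism}. Your closing flag about the Frobenius-twist bookkeeping ($\mathcal{M}=\varphi^{*}\mathfrak{M}$ versus $\mathfrak{M}$, and which of $\varpi_1,\varpi_2$ is used where) is well-placed; that is exactly the kind of subtlety that the erratum to Kim's paper corrects, and is why the paper cites it.
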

\begin{proof}
	This follows from the discussions between Remark 3.13 and Lemma 3.14 in \cite[Corollary 6.7]{ KimWansuRelative}, \cite[Corollary 6.7]{ KimWansuRelative} and \cite[Corollary 6.7]{ KimWansuRelative} together with the erratum \cite{Erratum2RelativeClassification}.  
\end{proof}

\subsection{A comparision of classification results}
In the remaining of this section, we compare the classification result given in Theorem \ref{ClassificationbyRmodules} and that given in Corollary \ref{TheoremCategoryEquivalence} by establishing an explicit functor from  $\mathbf{Win}(\mathfrak{S}, \nabla^0)$ to  $\mathbf{Win}^{0}(R, \nabla)$.

Let $ \varpi_2 : \mathfrak{S}\to R$ be the homomorphism of $ R $-algebras sending $ u $ to $ 0 $. Clearly we have a homomorphism of simple frames $ (\mathfrak{S}, \varphi) \to (S, \varphi)$ induced by the embedding $ \mathfrak{S}\to S $, and $  (\mathfrak{S}, \varphi) \to (S, \varphi)$ induced by $ \varpi_2 $. The compositions of the embedding $ \mathfrak{S}\to S $ with $ \pi_1 $ and $ \pi_2 $ are equal to $ \varpi_1 $ and $ \varpi_2 $ respectively.

We now define a functor from $ \mathbf{Win}(\mathfrak{S}, \nabla^0)$ to $\mathbf{Win}(S, \nabla^{0}) $ as follows. 
$$ W: \mathbf{Win}(\mathfrak{S}, \nabla^0)\longrightarrow  \mathbf{Win}(S, \nabla^{0}) $$
$$(\mathcal{M}, \mathrm{Fil}\mathcal{M}, \varphi_{\mathcal{M}}, \varphi_{\mathcal{M},1}, \nabla_{\mathcal{M}\otimes_{\mathfrak{S}, \varpi_2}R}) \longmapsto (\mathsf{M}:=\mathcal{M}\otimes_{\mathfrak{S}}S, \Fil \mathsf{M},\varphi_{\mathsf{M}}:= \varphi_{\mathcal{M}}\otimes \varphi_{S}, \frac{1}{p} \varphi_{\mathsf{M}}, \nabla_{\mathcal{M}\otimes_{\mathfrak{S}, \varpi_2}R}),$$
where the connection $\nabla_{\mathcal{M}\otimes_{\mathfrak{S}, \varpi_2}R}$ on the right hand side makes sense since by definition of $ \mathsf{M} $ we have $ \mathsf{M}\otimes_{S, \pi_2}R= \mathcal{M}\otimes_{\mathfrak{S}, \varpi_2}R $.
\begin{theorem}
	The functor $ W $ is well defined and is an equivalence of categories. Moreover, we have the following commutative diagram
	\begin{align}\label{CommutativediagramofFrakSwindowandSwindow}
		\xymatrixcolsep{4pc}\xymatrix{\big(\textbf{BT}/R\big)\ar[dr]^{Q}\ar[r]^{Z}&\mathbf{Win}(\mathfrak{S}, \nabla^0)\ar[d]^{W}\\
			& \mathbf{Win}(S, \nabla^{0})}
	\end{align}	
	
\end{theorem}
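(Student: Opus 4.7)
The argument splits into three steps: verifying that $W$ lands in $\mathbf{Win}(S,\nabla^{0})$; checking the triangle $W\circ Z\cong Q$; and deducing that $W$ is an equivalence.

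\medskip

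First, for well-definedness, given a Kim-Kisin window $(\mathcal{M},\Fil\mathcal{M},\varphi_{\mathcal{M}},\varphi_{\mathcal{M},1},\nabla_{M})$ and a normal decomposition $\mathcal{M}=\mathcal{N}\oplus\mathcal{L}$, set $\mathsf{N}=\mathcal{N}\otimes_{\mathfrak{S}}S$ and $\mathsf{L}=\mathcal{L}\otimes_{\mathfrak{S}}S$, so that $\mathsf{M}=\mathsf{N}\oplus\mathsf{L}$. Since $E\in\Fil S$, the image of $\Fil\mathcal{M}=\mathcal{N}\oplus E\mathcal{L}$ in $\mathsf{M}$ is absorbed into $\mathsf{N}+\Fil S\cdot\mathsf{M}$, giving the normal decomposition $\Fil\mathsf{M}=\mathsf{N}+\Fil S\cdot\mathsf{M}$. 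Axioms (2) and (3) of Definition \ref{DefinitionOfWindow} are immediate from $\varphi_{\mathsf{M},1}:=\tfrac{1}{p}\varphi_{\mathsf{M}}|_{\Fil\mathsf{M}}$ together with $\varphi_{1}=\tfrac{1}{p}\varphi$ on $\underline{S}$. For axiom (4), the relation $\varphi_{\mathcal{M}}|_{\mathcal{N}}=\varphi(E)\cdot\varphi_{\mathcal{M},1}|_{\mathcal{N}}$ yields $\varphi_{\mathsf{M},1}(n\otimes 1)=\tfrac{\varphi(E)}{p}\varphi_{\mathcal{M},1}(n)\otimes 1$; since $\tfrac{\varphi(E)}{p}\in S^{\times}$, surjectivity of $\varphi_{\mathcal{M},1}|_{\mathcal{N}}$ onto $\mathcal{M}$ propagates to surjectivity of $\varphi_{\mathsf{M},1}|_{\mathsf{N}}$ onto $\mathsf{M}$. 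The connection transfers verbatim via $\mathsf{M}\otimes_{S,\pi_{2}}R=\mathcal{M}\otimes_{\mathfrak{S},\varpi_{2}}R$ (since $\pi_{2}|_{\mathfrak{S}}=\varpi_{2}$), and horizontality of $\varphi_{\mathsf{M}}\otimes\varphi_{R}$ reduces to the horizontality of $F_{M}=\varphi_{\mathcal{M}}\otimes\varphi_{R}$ already built into the Kim-Kisin datum.

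\medskip

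Second, for commutativity $W\circ Z=Q$ on a $p$-divisible group $H$ with reduction $H_{0}=H\otimes_{R}R_{0}$, writing $Z(H)=(\mathcal{M},\ldots)$, the essential input is a canonical isomorphism
\begin{equation*}
\mathcal{M}\otimes_{\mathfrak{S}}S\;\cong\;\mathbb{D}^{*}(H_{0})(S)
\end{equation*}
compatible with Frobenius, divided Frobenius, Hodge filtration, and connection. This identification is intrinsic to Kim's construction (\cite[Theorem 6.7]{KimWansuRelative}): the Kim-Kisin module is obtained by $\varphi$-equivariant descent from the Breuil module $\mathbb{D}^{*}(H_{0})(S)$ along $\mathfrak{S}\hookrightarrow S$, so base change back up to $S$ recovers $\mathbb{D}^{*}(H_{0})(S)$ tautologically. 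Lemma \ref{propertiesofKimKisinwindows}(1) identifies the reduction via $\pi_{2}$ with $\mathbb{D}^{*}(H_{0})(R)$ as Dieudonn\'e modules (and hence with connection), while Lemma \ref{propertiesofKimKisinwindows}(2), together with the description of $\Fil\mathsf{M}$ as the preimage of the Hodge filtration under $\pi_{1}\otimes\id$, matches the filtration with the one appearing in $Q(H)=X(P(H))$. Assembling these compatibilities produces a natural isomorphism $W(Z(H))\cong Q(H)$ in $\mathbf{Win}(S,\nabla^{0})$.

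\medskip

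Third, the equivalence property of $W$ drops out formally: both $Z$ (Corollary \ref{TheoremCategoryEquivalence}) and $Q=X\circ P$ (composition of the equivalences from Theorems \ref{ClassificationbyRmodules} and \ref{ClassificationbyBreuilmodules}) are equivalences of categories, so the isomorphism $W\circ Z\cong Q$ forces $W\cong Q\circ Z^{-1}$, which is itself an equivalence. The main obstacle is step two: extracting from Kim's framework the \emph{canonical} (not merely \emph{abstract}) compatibility of all four pieces of structure under the base change $\mathfrak{S}\to S$. The frame-theoretic verifications of step one and the formal deduction of step three are, by contrast, routine.
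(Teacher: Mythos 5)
Your proof is correct, and the treatment of well-definedness (step one) essentially matches the paper's argument: the same normal decomposition $\mathcal{M}=\mathcal{N}\oplus\mathcal{L}$, the same use of $\varphi(\Fil S)\subset pS$ together with $\tfrac{\varphi(E)}{p}\in S^{\times}$, and the same identification of connections via $\pi_{2}|_{\mathfrak{S}}=\varpi_{2}$. Where you diverge from the paper is in the logical ordering of the last two claims. The paper establishes the equivalence of $W$ directly by citation — it invokes \cite[Proposition 6.6, Corollary 6.7]{KimWansuRelative}, observing that Kim's equivalence $\mathrm{Mod}_{\mathfrak{S}}(\varphi,\nabla)\to \mathrm{MF}_{S}(\varphi,\nabla)$ trivially restricts to $\nabla^{0}$-objects and is precisely $W$ after translating through Proposition \ref{EquivalenceofModules&Windows} — and then dispatches the commutativity of the triangle with a one-line appeal to how Corollary \ref{TheoremCategoryEquivalence} is deduced (see the remark immediately after the theorem). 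You instead prove the commutativity $W\circ Z\cong Q$ first, by tracing through Kim's construction of $\mathcal{M}$ by descent from $\mathbb{D}^{*}(H_{0})(S)$ and applying Lemma \ref{propertiesofKimKisinwindows}, and then recover the equivalence of $W$ formally as $W\cong Q\circ Z^{-1}$, a composition of established equivalences. Both routes ultimately rest on the same content from \cite{KimWansuRelative}; yours has the modest advantage of not needing a separate citation for the equivalence of $W$, while the paper's is terser because it defers all structural compatibilities to Kim. You also correctly identify the real delicacy — promoting the \emph{abstract} categorical equivalence to a \emph{canonical} isomorphism compatible with filtration, Frobenius, divided Frobenius, and connection simultaneously — which the paper handles no more explicitly than you do.
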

\begin{proof}
	To see that $ W $ is well defined, we need to check that $ \varphi_{M}:=\frac{1}{p}\varphi_{M} $ makes sense. To see this, let $ \mathcal{M}=\mathcal{N}\oplus \mathcal{L}  $ be a normal decomposition of $ \mathcal{M} $, then we have $ \Fil \mathsf{M}=\mathcal{N}\otimes_{\mathfrak{S}}S +\Fil S \cdot \mathcal{M}$. 
	Since $ \varphi_{\mathcal{M}}(\mathcal{N}) \subset \varphi(E)\mathcal{M}$, we have 
	\begin{align*}
		\varphi_{\mathsf{M}}(\mathcal{N}\otimes_{\mathfrak{S}}S)\subset \varphi(E)\mathsf{M}=\frac{\varphi(E)}{p}\cdot p\mathsf{M}=p\mathsf{M}.
	\end{align*}
	Here we use the fact that $ \frac{\varphi(E)}{p} $ is a unit in $ S $. Now it follows from the inclusion $ \varphi(\Fil S)\subset pS $ that the functor $ W $ is well defined.
	
	The fact that $ W $ is an equivalence of categories is due to \cite[Proposition 6.6]{KimWansuRelative}  and \cite[Corollary 6.7]{KimWansuRelative}. Indeed, using the notations in loc. cit., the equivalence between  $\mathrm{Mod}_{\mathfrak{S}}(\varphi,\nabla)$ and $\mathrm{MF}_{S}(\varphi,\nabla)$ induces trivially an equivalence  $W^{0}: \mathrm{Mod}_{\mathfrak{S}}(\varphi,\nabla^{0})\to \mathrm{MF}_{S}(\varphi,\nabla^{0})$), which translated into our language via the equivalence in Proposition \ref{EquivalenceofModules&Windows} is the $ W $ defined  above.
	
	The commutativity of \eqref{CommutativediagramofFrakSwindowandSwindow} is due to the way in which the equivalence in Corollary \ref{TheoremCategoryEquivalence} is deduced (see the remark below). 
\end{proof}
\begin{remark}
	In fact, Theorem \ref{Kim's theorem} is deduced in \cite{KimWansuRelative} by first proving Theorem \ref{ClassificationbyBreuilmodules} and then showing the equivalence 
	$$ \mathbf{Mod}(\mathfrak{S}, \nabla^0)\longrightarrow  \mathbf{Win}(\mathfrak{S}, \nabla^0)\xlongrightarrow{W}  \mathbf{Win}(S, \nabla^{0}). $$ 
\end{remark}

Now a direct computation gives the following corollary.

\begin{corollary}\label{Corrolaryconcerningdeformation}
	The composition of $ W $ and $ Y $ gives an equivalence of categories
	\begin{align}
		\Pr:	\mathbf{Win}(\mathfrak{S}, \nabla^0)\longrightarrow  \mathbf{Win}^{0}(R, \nabla)
	\end{align}
	$$(\mathcal{M}, \mathrm{Fil}\mathcal{M}, \varphi_{\mathcal{M}}, \varphi_{\mathcal{M},1}, \nabla_{\mathcal{M}\otimes_{S, \varpi_2}R}) \longmapsto (M:=\mathcal{M}\otimes_{\mathfrak{S}, \varpi_2}R, \Fil M, \varphi_{M}:=\varphi_{\mathcal{M}}\otimes \varphi_{R}, \frac{1}{p} \varphi_{M}, \nabla_{M} ),$$
	where $ \Fil M $ is the image of $ \Fil \mathcal{M} $ under the projection
	$$ \mathcal{M} \to \mathcal{M}\otimes_{\mathfrak{S}, \varpi_1}R\cong \mathcal{M}\otimes_{\mathfrak{S}, \varpi_2}R=M.$$
	Here the isomorphism  $\mathcal{M}\otimes_{\mathfrak{S}, \varpi_1}R\cong \mathcal{M}\otimes_{\mathfrak{S}, \varpi_2}R$ is due to \eqref{MysteriousIsomorphism} and is thus also canonical. 
\end{corollary}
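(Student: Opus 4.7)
The plan is to deduce this corollary as a direct consequence of the two equivalences established just before it, together with an elementary unpacking of the composition $Y \circ W$. Since the preceding theorem establishes that $W : \mathbf{Win}(\mathfrak{S}, \nabla^0) \to \mathbf{Win}(S, \nabla^0)$ is an equivalence, and since the discussion right after Theorem \ref{ClassificationbyBreuilmodules} explicitly constructs $Y$ as the inverse of the equivalence $X$, the composition $\Pr := Y \circ W$ is automatically an equivalence of categories. So the entire content of the corollary is the explicit formula for $\Pr$.

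To verify the formula, I would start from an object $(\mathcal{M}, \mathrm{Fil}\mathcal{M}, \varphi_{\mathcal{M}}, \varphi_{\mathcal{M},1}, \nabla_{\mathcal{M}\otimes_{\mathfrak{S}, \varpi_2} R})$ and apply $W$, obtaining $(\mathsf{M} := \mathcal{M} \otimes_{\mathfrak{S}} S, \mathrm{Fil}\mathsf{M}, \varphi_{\mathsf{M}} = \varphi_{\mathcal{M}} \otimes \varphi_S, \tfrac{1}{p}\varphi_{\mathsf{M}}, \nabla)$, then apply $Y$ to get $M := \mathsf{M} \otimes_{S, \pi_2} R$. The first observation is the identification $M = \mathcal{M} \otimes_{\mathfrak{S}, \varpi_2} R$: this holds because the composition $\mathfrak{S} \hookrightarrow S \xrightarrow{\pi_2} R$ sends $u \mapsto u \mapsto 0$, which is exactly $\varpi_2$. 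The Frobenius computes similarly: $\varphi_M = (\varphi_{\mathcal{M}} \otimes \varphi_S) \otimes_{S, \pi_2} \varphi_R$ collapses to $\varphi_{\mathcal{M}} \otimes \varphi_R$ because $\pi_2 \circ \varphi_S = \varphi_R \circ \pi_2$ (both send $u$ to $0$), and $\varphi_{M,1} = \tfrac{1}{p}\varphi_M$ is preserved throughout.

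The one step that requires a bit of unfolding is the identification of $\mathrm{Fil} M$. By the base-change formula for windows, $\mathrm{Fil}\mathsf{M}$ is generated by the image of $\mathrm{Fil}\mathcal{M}$ in $\mathsf{M}$ together with $\mathrm{Fil}(S) \cdot \mathsf{M}$. The functor $Y$ defines $\mathrm{Fil} M$ as the image of $\mathrm{Fil}\mathsf{M}$ under $\mathsf{M} \twoheadrightarrow \mathsf{M} \otimes_{S, \pi_1} R$, transported via the canonical isomorphism \eqref{MysteriousIsomorphism} to $M = \mathsf{M} \otimes_{S, \pi_2} R$. Since $\mathrm{Fil} S = \ker(\pi_1)$, the summand $\mathrm{Fil}(S) \cdot \mathsf{M}$ dies under $\pi_1$, leaving exactly the image of $\mathrm{Fil}\mathcal{M}$ in $\mathcal{M} \otimes_{\mathfrak{S}, \varpi_1} R$, which matches the stated description after transport via \eqref{MysteriousIsomorphism}. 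Finally, the connection $\nabla_{\mathcal{M} \otimes_{\mathfrak{S}, \varpi_2} R}$ is carried through $W$ and $Y$ untouched (both functors fix the data on the component $\mathsf{M} \otimes_{S, \pi_2} R = \mathcal{M} \otimes_{\mathfrak{S}, \varpi_2} R$).

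There is no real obstacle here — everything is bookkeeping, since the nontrivial content has already been absorbed into the equivalences $W$ and $Y$ and into the canonicity of \eqref{MysteriousIsomorphism}. The only point that warrants care is tracking the two sections $\pi_1, \pi_2 : S \to R$ of the embedding $R \hookrightarrow S$ and ensuring that the ``obvious'' identifications of Frobenius, filtration and connection under each of them are used consistently; this is exactly why the statement invokes \eqref{MysteriousIsomorphism}, which is the only nontrivial ingredient and has already been justified via the crystalline interpretation.
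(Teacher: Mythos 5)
Your proposal is correct and is precisely the ``direct computation'' the paper alludes to before stating the corollary: unfolding $Y \circ W$, noting that $\pi_2 \circ \iota = \varpi_2$ and $\pi_1 \circ \iota = \varpi_1$ (where $\iota\colon \mathfrak{S}\hookrightarrow S$), observing that $\mathrm{Fil}(S)\cdot\mathsf{M}$ is killed by $\pi_1$ so only the image of $\mathrm{Fil}\mathcal{M}$ survives, and carrying the connection through untouched. Your step-by-step bookkeeping matches the paper's intended argument exactly.
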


\begin{remark}
	It is not clear to us how to give a direct construction of the inverse functor of $ \Pr $, even when $ R $ is $ W(k) $ itself.
\end{remark}

	\section{Adapted deformations of $p$-divisible groups}\label{SectionofConstructionofKisinWindows}
We continue to follow notations in Sections \ref{SettingofRings},  \ref{SectionClassificationofBT/R_0}, and \ref{Kisin windows and Kism-Kisin modules}. Let $(R, \varphi)$ be a simple frame of a $k$-algebra $R_0$, which locally admits a finite $ p $-basis.
We fix a $ p $-divisible group $ H_0$ over $ R_0 $.

\subsection{Adapted deformations}
\label{ConstructionofKisinWindows}
We have seen in Theorem \ref{ClassificationofBTs/R_0} that $ H_0 $ corresponds to an $ \underline{R}$-window, together with a natural connection, namely 
$$\underline{M}= (M:=\mathbb{D}^{*}(H_0)(R),\  \mathrm{Fil}M, \ \varphi_{M}, \ \varphi_{M}/p,\  \ \nabla_{M}). $$
We let $ \lambda: \mathbb{G}_{m, R}\to \GL(M) $ be a cocharacter over $ R $ of weights $ 0 $ and $ 1 $, such that the reduction modulo $ p $ of $ \lambda $, denoted by $ \lambda_{0}: \mathbb{G}_{m, R_0}\to \GL(M_0) $, induces the Hodge filtration of $ M_0:=\mathbb{D}^{*}(H_0)(R_0) $ as the weight $ 1 $ submodule of $ M_0 $.  Let $ M=N\oplus L $ be the splitting induced by $ \lambda $, where $ N $ and $ L $ are submodules of $ M $ with weights $ 1 $ and $ 0 $ respectively. 

Write $ \mathfrak{S}=\mathfrak{S}(R) $. In the following we shall construct a Kim-Kisin window $ \underline{\mathcal{M}} $ over $ \underline{\mathfrak{S}} $. The construction is analogous to the construction of Kisin's ``$ G_{W} $-adapted deformations'' in \cite[Prop. (1.1.13)]{KisinModpPoints}. We first give each of the datum in $ \underline{\mathcal{M}} $ as below.

\begin{enumerate}[$\bullet$]
	\item $\mathcal{M}=M\otimes_R\mathfrak{S}$, $\mathcal{N}=N\otimes_R\mathfrak{S}$, $\mathcal{L}=L\otimes_R\mathfrak{S}$;
	\item	$\mathrm{Fil}\mathcal{M}=\mathcal{N}\oplus E\cdot \mathcal{L}$;
	
	\item $\varphi_{\mathcal{M}}: \mathcal{M}\to \mathcal{M}$ is the following composition 	\begin{align*}
		\mathcal{M}\to \mathcal{M}^{(\varphi)}&=\mathcal{N}^{(\varphi)}\oplus \mathcal{L}^{(\varphi)}\xrightarrow{\varphi(E)\cdot\id_{\mathcal{N}^{(\varphi)}}\oplus \id_{\mathcal{L}^{(\varphi)}}}   \mathcal{N}^{(\varphi)}\oplus \mathcal{L}^{(\varphi)}=M^{(\varphi)}\otimes_R\mathfrak{S}\\
		&M^{(\varphi)}\otimes_R\mathfrak{S}\xrightarrow{\Gamma^{\mathrm{lin}}\otimes \id_{\mathfrak{S}}}M\otimes_R\mathfrak{S}=\mathcal{M}.
	\end{align*} Here if we denote by $F^{\mathrm{lin}}: M^{(\varphi)}\to M$ the linearization of the Frobenius $F: M\to M$ then $ \Gamma^{\mathrm{lin}} $ is defined as \begin{equation}\label{DefinitionOfGammaLin}
	\Gamma^{\mathrm{lin}}=\frac{1}{p}F^{\mathrm{lin}}|_{N^{(\varphi)}}\oplus F^{\mathrm{lin}}|_{L^{(\varphi)}}: N^{(\varphi)}\oplus L^{(\varphi)}\to M.
	\end{equation}
	\item $\varphi_{\mathcal{M},1}=\frac{1}{\varphi(E)}\cdot\varphi_{\mathcal{M}}$;
	\item $\nabla_{M}: M\to M\otimes_R\hat{\Omega}_{R} $ is already given as part of the data in $ \underline{M} $.  
\end{enumerate}
\begin{lemma}
	The tuple $\underline{\mathcal{M}}=(\mathcal{M}, \mathrm{Fil}\mathcal{M}, \varphi_\mathcal{M}, \varphi_{\mathcal{M},1},  \nabla_{M})$ is indeed a Kim-Kisin window.
\end{lemma}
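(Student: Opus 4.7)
The plan is to verify in turn each of the four axioms defining a window over $\underline{\mathfrak{S}}$ (Definition \ref{DefinitionOfWindow}), and then the connection-compatibility required in the definition of a Kim-Kisin window. I will rely throughout on the facts that $\underline{M}$ is itself an $\underline{R}$-window (Theorem \ref{ClassificationofBTs/R_0}) and that $\underline{\mathfrak{S}}$ satisfies the surjectivity condition (since $\varphi_1(E)=1$), so that by Remark \ref{RemarkOfWindows}(c), axiom (3) follows from (2) and axiom (4) reduces to showing that $\varphi_{\mathcal{M},1}(\mathrm{Fil}\mathcal{M})$ alone generates $\mathcal{M}$.

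Conditions (1), (2), (3) are largely formal. The decomposition $\mathcal{M}=\mathcal{N}\oplus \mathcal{L}$ with $\mathrm{Fil}\mathcal{M}=\mathcal{N}\oplus E\mathcal{L}$ is a normal decomposition by construction, and $\mathcal{N},\mathcal{L}$ are finite projective $\mathfrak{S}$-modules because $N, L$ are finite projective over $R$. To make sense of $\varphi_{\mathcal{M},1}=\varphi_\mathcal{M}/\varphi(E)$, I first check that $\varphi_\mathcal{M}(\mathrm{Fil}\mathcal{M})\subset \varphi(E)\mathcal{M}$: on the $\mathcal{N}$ summand this is explicit from the factor $\varphi(E)\cdot\id_{\mathcal{N}^{(\varphi)}}$ in the construction, and on $E\mathcal{L}$ it follows from $\varphi$-semilinearity via $\varphi_\mathcal{M}(El)=\varphi(E)\varphi_\mathcal{M}(l)$. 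Axiom (3) is then built into the definition of $\varphi_{\mathcal{M},1}$, and (2) is a direct computation: for $s=Et\in E\mathfrak{S}$ and $m\in \mathcal{M}$ one has $\varphi_{\mathcal{M},1}(sm)=\varphi_\mathcal{M}(sm)/\varphi(E)=\varphi(Et)\varphi_\mathcal{M}(m)/\varphi(E)=\varphi(t)\varphi_\mathcal{M}(m)=\varphi_1(s)\varphi_\mathcal{M}(m)$.

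The main content is axiom (4). A computation from the definition shows that for $n\in \mathcal{N}$ one has $\varphi_{\mathcal{M},1}(n)=(\Gamma^{\mathrm{lin}}\otimes \id_\mathfrak{S})(n\otimes 1)$, and for $l\in \mathcal{L}$ one has $\varphi_{\mathcal{M},1}(El)=(\Gamma^{\mathrm{lin}}\otimes \id_\mathfrak{S})(l\otimes 1)$; taking $\mathfrak{S}$-spans, $\varphi_{\mathcal{M},1}(\mathrm{Fil}\mathcal{M})$ exhausts the image of $\Gamma^{\mathrm{lin}}\otimes \id_\mathfrak{S}\colon \mathcal{M}^{(\varphi)}\to \mathcal{M}$. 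The key input is that $\Gamma^{\mathrm{lin}}\colon M^{(\varphi)}\to M$ is an $R$-module isomorphism, which follows from Lemma \ref{UsefulLemma1} applied to the window $\underline{M}$ over $\underline{R}$ with normal decomposition $M=N\oplus L$: the associated $\varphi$-linear isomorphism $\Psi$ produced by that lemma is by definition precisely $\Gamma=\frac{1}{p}F|_N\oplus F|_L$ (cf.\ Remark \ref{RemarkFrobeniusDecomposition}(b)). Base change along $R\to \mathfrak{S}$ preserves being an isomorphism, so $\varphi_{\mathcal{M},1}(\mathrm{Fil}\mathcal{M})$ generates $\mathcal{M}$ and (4) holds.

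Finally, for the connection-compatibility, I need to check that $F_M:=\varphi_\mathcal{M}\otimes_\mathfrak{S}\varphi_R$ on $M=\mathcal{M}/u\mathcal{M}$ is horizontal with respect to the given $\nabla_M$. The observation is that $F_M$ coincides with the original Frobenius $F=\varphi_M$ on $M$: reducing modulo $u$ in $\mathfrak{S}$ kills $u^p$ in $\varphi(E)=u^p+p$, leaving the factor $p$ on $\mathcal{N}^{(\varphi)}$ and $1$ on $\mathcal{L}^{(\varphi)}$, which combined with $\Gamma^{\mathrm{lin}}|_{N^{(\varphi)}}=\frac{1}{p}F^{\mathrm{lin}}|_{N^{(\varphi)}}$ and $\Gamma^{\mathrm{lin}}|_{L^{(\varphi)}}=F^{\mathrm{lin}}|_{L^{(\varphi)}}$ recovers exactly $F^{\mathrm{lin}}$. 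Since $\underline{M}\in \mathbf{Win}(R,\nabla)$, the horizontality of $F$ with respect to $\nabla_M$ is part of the input data, and it transports to horizontality of $F_M$. The only mild obstacle in the whole argument is careful bookkeeping of the identifications $\mathcal{M}^{(\varphi)}\cong M^{(\varphi)}\otimes_R\mathfrak{S}$ and of the effect of reduction modulo $u$ on the $\varphi$-twists; there is no substantive difficulty once Lemma \ref{UsefulLemma1} is invoked.
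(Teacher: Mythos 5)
Your proof is correct, and the formal verifications of axioms (1)--(3), the well-definedness of $\varphi_{\mathcal{M},1}=\varphi_\mathcal{M}/\varphi(E)$, and the mod-$u$ computation for the connection compatibility all match what one should expect. The interesting divergence from the paper is in how you handle axiom (4), the crux of the lemma. The paper reduces the claim ``$\Gamma^{\mathrm{lin}}$ is an isomorphism'' to a separate statement, Lemma \ref{LemmaYaonie}, whose proof passes to the $\mathfrak{m}$-adic completions of $R$, uses Lemma \ref{Ausefullemma} to produce a frame homomorphism into $W(\overline{k(\mathfrak{m})})$, invokes the classical perfect-residue-field case \cite[Lemma 2.2.6]{ChaoZhangEOStratification}, and descends back via Nakayama. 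You instead observe that $\underline{M}=(M,\mathrm{Fil}M,F,F/p)$ is already a window over $\underline{R}$ (Theorem \ref{ClassificationofBTs/R_0}), that $M=N\oplus L$ is a normal decomposition for it (since $\lambda$ lifts $\lambda_0$, so $\mathrm{Fil}M=N+pM=N\oplus pL$), and that $\underline{R}$ satisfies the surjectivity condition; Lemma \ref{UsefulLemma1} then yields directly that $\Psi=\Gamma$ is a $\varphi$-linear isomorphism, i.e.\ $\Gamma^{\mathrm{lin}}$ is an $R$-isomorphism, without any reduction to the perfect case. Your route is cleaner and more conceptual, and it is entirely sufficient here. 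What it does \emph{not} give for free is the full generality of Lemma \ref{LemmaYaonie}, which allows an arbitrary decomposition $M^{(\varphi)}=P\oplus Q$ reducing mod $p$ to $N_0^{(\varphi)}\oplus L_0^{(\varphi)}$ (not necessarily arising as $N^{(\varphi)}\oplus L^{(\varphi)}$ from a normal decomposition of $\underline{M}$); the paper states that stronger version, though in the proof of the present lemma it only applies the special case $P=N^{(\varphi)}$, $Q=L^{(\varphi)}$, which is exactly what your argument covers.
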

\begin{proof}
	The nontrivial part is to verify condition (4) in Definition \ref{DefinitionOfWindow}, i.e., we need to check that the the linearization 
	\[ \varphi_{\mathcal{M},1}^{\mathrm{lin}}: \mathrm{Fil}\mathcal{M}^{(\varphi)}\to \mathcal{M}  \]
	is surjective (equivalently an isomorphism). A simple calculation shows that one only needs to show that the map $\Gamma^{\mathrm{lin}}$ in \eqref{DefinitionOfGammaLin} is an isomorphism. To see this we apply Lemma \ref{LemmaYaonie} below by letting $P=N^{(\varphi)}$ and $Q=L^{(\varphi)}$.
\end{proof}

The following (probably well-known) lemma will be used in the proof of Lemma  \ref{LemmaYaonie}. 
\begin{lemma}\label{Ausefullemma}
	Let $(A, \varphi)$ be a simple frame of a $k$-algebra $ A_0 $. Then for any homomorphism $ f: A_0 \to B_0 $ of $k$-algebras, there exists a (canonical) homomorphism of simple frames $(A, \varphi)\to (W(B_0), \varphi)  $ which lifts $ f $.  
\end{lemma}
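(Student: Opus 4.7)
My plan is to construct $\tilde{f}: A \to W(B_0)$ inductively modulo powers of $p$, using formal smoothness of $A$ over $W(k)$ to produce liftings at each stage, and the bijectivity of Frobenius on $B_0$ (which must be perfect in order that $(W(B_0), \varphi)$ genuinely be a simple frame of $B_0$) to arrange Frobenius-equivariance of the lift.

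Explicitly, I would construct a compatible system of $W(k)$-algebra homomorphisms $\tilde{f}_n: A/p^n \to W(B_0)/p^n$ lifting $f$ and satisfying $\varphi \circ \tilde{f}_n \equiv \tilde{f}_n \circ \varphi_A \pmod{p^n}$, starting from $\tilde{f}_1 = f$. For the inductive step, formal smoothness of $A$ over $W(k)$ in the $p$-adic topology (condition (1) of Lemma \ref{Frobenius lifts}) yields some $W(k)$-algebra lift $\tilde{f}_{n+1}^0: A/p^{n+1} \to W(B_0)/p^{n+1}$ of $\tilde{f}_n$. The obstruction
\[
d := \varphi \circ \tilde{f}_{n+1}^0 - \tilde{f}_{n+1}^0 \circ \varphi_A
\]
takes values in $p^n W(B_0)/p^{n+1} W(B_0) \cong B_0$, and being the difference of two ring homomorphisms agreeing modulo $p^n$, it is a $W(k)$-derivation from $A$ to $B_0$ (with $B_0$ regarded as an $A$-module via the common reduction $\varphi \circ f = f \circ \varphi_{A_0}$).

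Next I modify $\tilde{f}_{n+1}^0$ by a $W(k)$-derivation $\delta: A \to B_0$ (with $A$-module structure via $f$) in order to kill $d$. The crucial computation is that any such $\delta$ automatically satisfies $\delta \circ \varphi_A = 0$: writing $\varphi_A(a) = a^p + pc$, the Leibniz rule gives $\delta(a^p) = p\,f(a)^{p-1}\delta(a) = 0$ in characteristic $p$, while $W(k)$-linearity gives $\delta(pc) = 0$. Consequently, replacing $\tilde{f}_{n+1}^0$ by $\tilde{f}_{n+1}^0 + \delta$ changes the obstruction by $\varphi \circ \delta$ alone, and since $\varphi: B_0 \to B_0$ is bijective (as $B_0$ is perfect), setting $\delta := -\varphi^{-1} \circ d$ eliminates the obstruction. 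A brief check confirms that $-\varphi^{-1} \circ d$ is a $W(k)$-derivation with respect to the $f$-structure because $\varphi^{-1}$ is a ring automorphism of $B_0$. Passing to the inverse limit in $n$ yields the desired $\tilde{f}: A \to W(B_0)$, which is by construction a homomorphism of simple frames lifting $f$.

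For canonicity I would prove uniqueness by a parallel induction: if two Frobenius-equivariant lifts $\tilde{f}, \tilde{f}'$ agree modulo $p^n$, the difference modulo $p^{n+1}$ defines a $W(k)$-derivation $\delta_n: A \to B_0$ satisfying $\varphi \circ \delta_n - \delta_n \circ \varphi_A = 0$; by the same calculation $\delta_n \circ \varphi_A = 0$, so $\varphi \circ \delta_n = 0$, and bijectivity of $\varphi$ on $B_0$ forces $\delta_n = 0$; the $p$-adic separatedness of $W(B_0)$ then gives $\tilde{f} = \tilde{f}'$. The main obstacle will be the careful bookkeeping around which $A$-module structure on $B_0$ a given derivation is taken with respect to (namely, $f$ versus $\varphi \circ f$), and verifying that the correction $-\varphi^{-1} \circ d$ carries the right structure so that $\tilde{f}_{n+1}^0 + \delta$ remains a ring homomorphism.
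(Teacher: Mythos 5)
Your proof is correct (modulo the perfectness hypothesis on $B_0$, which you rightly flag and which is in force in the paper's applications), but it takes a genuinely different route from the paper's. The paper constructs the map in one shot using explicit Witt-vector machinery: by Bourbaki's description of the ghost map $\Phi_A: W(A) \hookrightarrow \prod_{i\ge 0} A$, the tuple $(a, \varphi(a), \varphi^2(a), \ldots)$ lies in the image of $\Phi_A$, so $a \mapsto \Phi_A^{-1}(a,\varphi(a),\ldots)$ defines a Frobenius-equivariant section $\beta: A \to W(A)$ of the first-component projection; composing with the functorial maps $W(A) \to W(A_0) \to W(B_0)$ (Witt vectors along $A \to A_0 \to B_0$) gives the desired lift. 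Your argument instead replaces this explicit formula by a $p$-adic deformation-theoretic induction, using formal smoothness of $A$ over $W(k)$ to produce ring-homomorphism lifts mod $p^{n+1}$ and then correcting the Frobenius-equivariance obstruction. The crucial simplification you exploit — that $\delta \circ \varphi_A = 0$ automatically for any $W(k)$-derivation $\delta$ into a characteristic-$p$ module, because $\delta(a^p + pc) = p f(\bar a)^{p-1}\delta(a) + p\delta(c) = 0$ — is exactly what makes the obstruction killable by inverting the absolute Frobenius of $B_0$. Trade-offs: the paper's ghost-map construction is coordinate-free, extremely short given the Bourbaki reference, and (as stated) does not depend on $B_0$ being perfect (the composition $W(A) \to W(B_0)$ is always defined); your approach needs $B_0$ perfect but is self-contained, avoids the Witt-vector ghost map entirely, generalizes readily to any $p$-adically complete, $p$-torsion-free target with a Frobenius lift that is bijective mod $p$, and — unlike the paper — actually proves the uniqueness implicit in the word ``canonical.''
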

\begin{proof}
	By Proposition 2, a) of \cite[Chap. IX, Sec. 1]{BourbakiComAlg}, the ring homomorphism (namely the ghost map) $$\Phi_{A}: W(A)\to \prod_{i=0}^{\infty}A$$ in loc. cit. is injective and by c) in the same proposition the image of the ring homomorphism $$\alpha:=\prod_{i=0}^{\infty}\varphi^{i}: A\longrightarrow \prod_{i=0}^{\infty}A, \ a\longmapsto (a, \varphi(a), \varphi(\varphi(a)), \cdots)$$ 
	lies in the image of $\Phi_A$, and hence the composition $\beta:= \Phi_A^{-1}|_{\Phi_A(W(A))}\circ \alpha$ gives a section (as a ring homomorphism) of the canonical projection $W(A)\to A$. Moreover, by formula (25) in loc. cit. $\beta$ is compatible with Frobenius lifts. Now the composition map $$(A, \varphi)\xrightarrow{\beta} (W(A), \varphi)\to (W(A_0), \varphi)\to( W(B_0), \varphi)$$ gives the desired homomorphism of simple frames.
\end{proof}
When $ R_0 $ is a perfect field, the following lemma is a classical result. In the general situation as below, it is also known but for lack of good references, we give a proof.
\begin{lemma}\label{LemmaYaonie}
	Let $ M_0=N_0\oplus L_0 $ be the normal decomposition induced by $ \lambda_{R_0} $ and let $ M^{(\varphi)}=P\oplus Q $ be any decomposition whose reduction modulo $ p $ is identified with the decomposition $ M_0^{(\varphi)}=N_0^{(\varphi)}\oplus L_0^{(\varphi)} $. Then the $ R $-linear homomorphism 
	\begin{equation}
	\Gamma^{\mathrm{lin}}:=	\frac{1}{p}F^{\mathrm{lin}}|_{P}\oplus F^{\mathrm{lin}}|_{Q}: M^{(\varphi)}\to M
	\end{equation}
	is an isomorphism.
\end{lemma}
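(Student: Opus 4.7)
The plan is to establish two claims: first, that $F^{\mathrm{lin}}(P)\subset pM$ so that $\tfrac{1}{p}F^{\mathrm{lin}}|_{P}$ is a well-defined $R$-linear map $P\to M$; and second, that the resulting $\Gamma^{\mathrm{lin}}: M^{(\varphi)}\to M$ is an isomorphism.

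For the first claim, I show $F^{\mathrm{lin}}(N^{(\varphi)})\subset pM$ for the ``canonical'' lift $N^{(\varphi)}=N\otimes_{R,\varphi}R$ and then deduce the same for any $P$ as in the hypothesis. By construction of the window $\underline{M}$ in Theorem \ref{ClassificationofBTs/R_0}, $\Fil M\subset M$ is the preimage of $\Fil M_{0}=N_{0}$ under $M\twoheadrightarrow M_{0}$; combined with the splitting $M=N\oplus L$ coming from $\lambda$, this forces $\Fil M=N\oplus pL$, so in particular $N\subset\Fil M$. The window condition $F_{M}(\Fil M)\subset pM$ (encoded in the very existence of $\varphi_{M,1}=F_{M}/p$) then yields $F(N)\subset pM$, and hence $F^{\mathrm{lin}}(N^{(\varphi)})\subset pM$ after linearization. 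For a general $P$ as in the lemma, the hypothesis that $P$ and $N^{(\varphi)}$ have the same image in $M_{0}^{(\varphi)}$ gives $P\subset N^{(\varphi)}+pM^{(\varphi)}$, whence $F^{\mathrm{lin}}(P)\subset F^{\mathrm{lin}}(N^{(\varphi)})+pF^{\mathrm{lin}}(M^{(\varphi)})\subset pM$. As $M$ is $p$-torsion free (being finite projective over the $\mathbb{Z}_{p}$-flat ring $R$), $\tfrac{1}{p}F^{\mathrm{lin}}|_{P}$ is well-defined.

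For the second claim, $\Gamma^{\mathrm{lin}}$ is an $R$-linear morphism between finite projective $R$-modules of the same rank, and $p$ lies in the Jacobson radical of the $p$-adically complete ring $R$; so by Nakayama it suffices to show that the reduction $\overline{\Gamma^{\mathrm{lin}}}: M_{0}^{(\varphi)}\to M_{0}$ is an isomorphism. This is a pointwise condition on $\Spec R_{0}$, and I check it after base change along an arbitrary geometric point $x:\Spec k'\to\Spec R_{0}$ with $k'$ perfect. By Lemma \ref{Ausefullemma}, $x$ lifts to a morphism of simple frames $(R,\varphi)\to(W(k'),\varphi)$, and by the compatibility of $\mathbb{D}^{*}$ with change of simple frames (Example \ref{ExampleofFrames}), $M\otimes_{R}W(k')$ is canonically identified with the Dieudonn\'e module of $H_{0}\otimes_{R_{0}}k'$ over $W(k')$, compatibly with Frobenius and with the cocharacter $\lambda$; the decomposition $P\oplus Q$ base-changes to a decomposition of the same type. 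This reduces the problem to the classical setting over $W(k')$: choosing bases of $M$ and $M^{(\varphi)}$ adapted to $P\oplus Q$, the first $r:=\mathrm{rank}(P)=\mathrm{rank}(\Fil M_{0})$ columns of the matrix of $F^{\mathrm{lin}}$ have entries in $pW(k')$ (by the first claim), and factoring out these $p$'s yields the matrix of $\Gamma^{\mathrm{lin}}$. Hence $\det(F^{\mathrm{lin}})=p^{r}\cdot\det(\Gamma^{\mathrm{lin}})$, and the standard fact that the $p$-adic valuation of $\det(F^{\mathrm{lin}})$ equals $r$ (equivalently, that $\Gamma^{\mathrm{lin}}$ is the structure map of Zink's display attached to the $p$-divisible group $H_{0}\otimes_{R_{0}}k'$) forces $\det(\Gamma^{\mathrm{lin}})\in W(k')^{\times}$.

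I anticipate the principal obstacle to be the bookkeeping of identifications in the reduction to $W(k')$: verifying that the decomposition $P\oplus Q$, the cocharacter $\lambda$, and the Frobenius on $M$ all base-change compatibly under Lemma \ref{Ausefullemma} so that $\overline{\Gamma^{\mathrm{lin}}}_{x}$ truly coincides with the mod-$p$ reduction of the classical $\Gamma^{\mathrm{lin}}$ over $W(k')$. Once this compatibility is set up cleanly, the determinant computation concludes the proof, and the conceptual content of the lemma reduces to the well-known fact that the classical Dieudonn\'e module of a $p$-divisible group over $W(k')$ carries a display structure.
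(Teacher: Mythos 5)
Your proof is correct and follows the same route as the paper's: reduce, via Lemma \ref{Ausefullemma} and the compatibility of $\mathbb{D}^*$ with change of simple frames, to the classical case over $W(k')$ for an algebraically closed field $k'$, and then conclude by Nakayama. The only differences are cosmetic elaborations: the paper simply cites \cite[Lemma 2.2.6]{ChaoZhangEOStratification} for the perfect-field case where you instead sketch the determinant-valuation argument, and you spell out the well-definedness of $\tfrac{1}{p}F^{\mathrm{lin}}|_P$, which the paper leaves implicit.
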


\begin{proof}

	To see $\Gamma^{\mathrm{lin}}$ is an isomorphism we need only to show that  $\Gamma^{\mathrm{lin}}\otimes\id_{R_{\mathfrak{m}}}$ is an isomorphism for each maximal ideal $\mathfrak{m}$ of $R$ ($\mathfrak{m}$ necessarily contains $ p $ since $ R $ is $ p $-adically complete).  
	Denote by $\widehat{R}_{\mathfrak{m}}, \widehat{R}_{0,\mathfrak{m}} $ the  $\mathfrak{m}$-adic completion of $R_{\mathfrak{m}}$, $R_{0,\mathfrak{m}}$ respectively, and $k(\mathfrak{m}) $ their residue field. If we let $\overline{k(\mathfrak{m}})$ be an algebraic closure of $k(\mathfrak{m})$, then by Lemma \ref{Ausefullemma}, there exists a homomorphism of simple frames
	$(\widehat{R}_\mathfrak{m}, \varphi)\rightarrow  (W(\overline{k(\mathfrak{m}})), \varphi)$, lifting the composition $\widehat{R}_{0,\mathfrak{m}} \to k(\mathfrak{m}) \to \overline{k(\mathfrak{m})}$. Since the Dieudonn\'e functor $\mathbb{D}^*$ commutes with homomorphisms of simple frames, using the fact that the assertion holds for the classical case where $R_0$ is a perfect field (see for example \cite[Lemma 2.2.6]{ChaoZhangEOStratification} for a proof), one sees that $\Gamma^{\mathrm{lin}}\otimes\id_{W(\overline{k(\mathfrak{m})})}$ is an isomorphism. In particular, $\Gamma^{\mathrm{lin}}\otimes\id_{\overline{k(\mathfrak{m})}}$ is an isomorphism, and hence by faithfully flat descent, so is $\Gamma^{\mathrm{lin}}\otimes\id_{k(\mathfrak{m})}$. Now by Nakayama's lemma, $\Gamma^{\mathrm{lin}}$ is an isomorphism. 
\end{proof}

It is easy to see that by taking the reduction modulo $ p $ of frames 
$ \underline{\mathfrak{S}} \to \underline{R}$ one recovers the associated window 
$ \underline{M} $ in \eqref{WindowofH_0} of $ H_0 $. Hence by the commutativity of diagram \eqref{DiagramofDeformations} the $ p $-divisible group corresponding to Kim-Kisin window $ \underline{\mathcal{M}} $ constructed above is a deformation over 
$ R $ of $ H_0 $. We call $ \underline{\mathcal{M}} $ the adapted deformation of $ H_0 $ with respect to the pair $ (\varphi_R, \lambda) $. Here we emphasize the Frobenius lift $ \varphi_{R} $ because later we will study the effect of Frobenius lifts on 
$ \underline{\mathcal{M}}$. To emphasize the pair $ (\varphi_{R}, \lambda) $ we may as well write 
\begin{align}\label{ConventionofFrobenius}
	\Phi(\varphi_{R}, \lambda):=\Phi_{\mathcal{M}}(\varphi_{R}, \lambda) :=	\varphi_{\mathcal{M}}^{\mathrm{lin}}.
\end{align}

\begin{remark}  We give a few remarks concerning the construction above, which will become useful in the sequel. 
	\begin{enumerate}[(a)]  \label{RemarkofKisinWindows}
		\item There is another way to see the construction of $\varphi$-linear maps $\varphi_\mathcal{M}$ and $\varphi_{\mathcal{M},1}$. Indeed the $ \varphi$-linear map $$\Gamma:=\frac{1}{p}F|_{N}\oplus F|_{L}: M\to M$$ is a $\varphi$-linear isomorphism since its linearization is \eqref{DefinitionOfGammaLin}.  Hence the base change $\Psi:=\Gamma\otimes_R\varphi_{\mathfrak{S}}$ is also a $\varphi$-linear isomorphism. It is direct to check that the pair $(\varphi_\mathcal{M}, \varphi_{\mathcal{M},1})$ constructed above is exactly the one induced by $\Psi$ via Lemma \ref{UsefulLemma1}.
		If we denote by $ \lambda^{(\varphi)}: \mathbb{G}_{m, R} \to \GL(M^{(\varphi)})$ the pull back along $ \varphi_R$ of $ \lambda $. Then it is clear that 
		\begin{align}
			\Phi(\varphi, \lambda) = \lambda^{(\varphi)}(E)\circ \Psi.
		\end{align} 				
		\item It is direct to check that we have 
		\begin{align}
			\Phi(\varphi, \lambda)=
			\big(\varphi^*\mathcal{N}\oplus \varphi^*\mathcal{L}&\xrightarrow{f} \varphi^*\mathcal{N} \oplus \varphi^*\mathcal{L}\xrightarrow{F^{\mathrm{lin}}\otimes\id_{\mathfrak{S}}}M\otimes_R \mathfrak{S}\big),\ \  \text{with}\\
			f=&\frac{\varphi(E)}{p}\id|_{\varphi^*\mathcal{N}}\oplus \id|_{\varphi^*\mathcal{L}}
		\end{align}
		Strictly speaking, such a decomposition only makes sense after base change to $ \mathfrak{S}[\frac{\varphi(E)}{p}] \subset \mathfrak{S}[\frac{1}{p}]$. 
		
	\end{enumerate}
	
\end{remark}

\subsection{Functoriality of adapted deformations}\label{FunctorialityofKim-Kisinwindows}
Let $ R'_0 $ be another $ k $-algebra, locally admitting a finite $ p $-basis and $ R_0\to R'_0 $ a homomorphism of $ k $-algebras. Let $ (R', \varphi_{R'} ) $ a simple frame of $ R'_0 $ and $\alpha: (R, \varphi_{R})\to (R', \varphi_{R'}) $ a homomorphism of simple frames over $ W(k) $, which induces the structure homomorphism $ R_0\to R'_0 $. By Example \ref{ExampleofFrames} (2) we have 
\begin{align*}
	\mathbb{D}^{*}(H_0\otimes R'_0)(R')\cong M\otimes_{R}R',
\end{align*}
and hence one checks easily that the adapted deformation of $ H_0\otimes R_0' $ w.r.t. the pair $ (\varphi_{R'}, \lambda_{R'}) $ is the base change of $ \underline{\mathcal{M}} $ along the homomorphism $ \underline{\mathfrak{S}}\to \underline{\mathfrak{S}(R')} $ induced from  $ \alpha $. In particular, we have 
\begin{align*}
	\Phi(\varphi_{R'}, \lambda_{R'})=\Phi(\varphi_{R}, \lambda)\otimes R'.
\end{align*}

\subsection{Comparisons of different Frobenii and different decompositions} \label{SectionGluing}
Notations are as in Section \ref{ConstructionofKisinWindows}. 
Denote by $ (\mathcal{M}_0,\varphi_{\mathcal{M}_0}^{\mathrm{lin}}) $ the reduction modulo $ p $ of the pair $ (\mathcal{M}, \varphi_{\mathcal{M}}^{\mathrm{lin}})$. Note that by our convention in \eqref{ConventionofFrobenius} we write $  \varphi_{\mathcal{M}}^{\mathrm{lin}}=\Phi(\varphi_{R}, \lambda) $. Similarly we write $  \varphi_{\mathcal{M}_0}^{\mathrm{lin}}=\Phi_0(\varphi_{R}, \lambda) $. We are mainly interested in the Frobenius $ \Phi_0(\varphi_{R}, \lambda) $ and for our purpose we shall study how the pair $(\varphi_{R}, \lambda)$ should affect $ \Phi_0(\varphi_{R}, \lambda) $.

\begin{lemma}\label{Gluinglemma2}
	Let $ \varphi_1, \varphi_2 $ be two Frobenius lifts of $ R $ and $ \lambda_1, \lambda_2 $ two cocharacters of $ \GL(M) $ over $ R $ of weights $ 0,1 $ such that the weight submodules of $ M $ are free $ R $-modules and that the reductions modulo $ p $ of $ \lambda_1, \lambda_2 $ induce the Hodge filtration of $ M_0 $ as weight $ 1 $-submodule. Suppose that 
	$\lambda_1=\lambda_2 \mod p$. Then there exists a unique automorphism $ \eta: \mathcal{M}_0^{(\varphi)}\to  \mathcal{M}_0^{(\varphi)}$ whose reduction modulo $ u $ is identity, such that 
	\begin{align*}
		\Phi_0(\varphi_1, \lambda_1)=\Phi_0(\varphi_2, \lambda_2)\circ \eta.
	\end{align*}
\end{lemma}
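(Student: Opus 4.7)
The plan is to reduce the claim to an explicit, finite-dimensional comparison by exploiting the factorization of $\Phi_0(\varphi_i, \lambda_i)$ coming from Remark \ref{RemarkofKisinWindows}(b). Since $\lambda_1 \equiv \lambda_2 \pmod p$ we have common reductions $\mathcal{N}_{i,0} = \mathcal{N}_0$ and $\mathcal{L}_{i,0} = \mathcal{L}_0$, and $\overline{\varphi_i(E)} = u^p$ independently of $i$. I would first write
\[\Phi_0(\varphi_i, \lambda_i) = \Gamma_{i,0}^{\mathrm{lin}} \circ \tilde{f}\qquad (\text{extended } \mathfrak{S}_0\text{-linearly}),\]
where $\tilde{f}\colon \mathcal{M}_0^{(\varphi)} \to \mathcal{M}_0^{(\varphi)}$ is multiplication by $u^p$ on $\mathcal{N}_0^{(\varphi)}$ and the identity on $\mathcal{L}_0^{(\varphi)}$, and $\Gamma_{i,0}^{\mathrm{lin}}\colon M_0^{(\varphi)} \to M_0$ is the mod-$p$ reduction of $\Gamma_i^{\mathrm{lin}}$ from \eqref{DefinitionOfGammaLin}, an $R_0$-isomorphism by a mod-$p$ version of Lemma \ref{LemmaYaonie}. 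The candidate for $\eta$ will then be $\tilde{f}^{-1} \circ \tau_0 \circ \tilde{f}$, with $\tau_0 := (\Gamma_{2,0}^{\mathrm{lin}})^{-1} \circ \Gamma_{1,0}^{\mathrm{lin}}$ extended $\mathfrak{S}_0$-linearly. A priori this has poles in $u$, and the entire force of the lemma amounts to ruling them out and to forcing the mod-$u$ reduction to be the identity.

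I would establish that $\tau_0$ has the lower-unipotent block form $\left(\begin{smallmatrix} I & 0 \\ \beta & I \end{smallmatrix}\right)$ in $M_0^{(\varphi)} = N_0^{(\varphi)} \oplus L_0^{(\varphi)}$ for some $R_0$-linear $\beta\colon N_0^{(\varphi)} \to L_0^{(\varphi)}$. Granted this, a short computation gives $\eta = \left(\begin{smallmatrix} I & 0 \\ u^p\beta & I \end{smallmatrix}\right)$, manifestly a unipotent $\mathfrak{S}_0$-automorphism with $\eta \mod u = \id$ and $\Phi_0(\varphi_1, \lambda_1) = \Phi_0(\varphi_2, \lambda_2)\circ\eta$ by telescoping. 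Triviality of $\tau_0$ on $L_0^{(\varphi)}$ is immediate, because both $\Gamma_{i,0}^{\mathrm{lin}}|_{L_0^{(\varphi)}}$ agree with the restriction of the intrinsic $F_0^{\mathrm{lin}}\colon M_0^{(\varphi)} \to M_0$, which depends only on $L_0$ and on the crystalline Frobenius of $H_0$. The heart of the argument is showing that for $n \in N_0^{(\varphi)}$, the difference $\Gamma_{1,0}^{\mathrm{lin}}(n) - \Gamma_{2,0}^{\mathrm{lin}}(n)$ lies in $F_0^{\mathrm{lin}}(L_0^{(\varphi)}) = \Gamma_{2,0}^{\mathrm{lin}}(L_0^{(\varphi)})$, i.e.\ that $\tau_0(n) \in n + L_0^{(\varphi)}$.

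For this I would pick $n_0 \in N_0$ representing $n$ and lifts $n'_i \in N_i$ of $n_0$, so $n'_1 - n'_2 \in pM$, then split
\[\Gamma_{1,0}^{\mathrm{lin}}(n) - \Gamma_{2,0}^{\mathrm{lin}}(n) = \overline{\tfrac{1}{p}\bigl(F_{\varphi_1}(n'_1) - F_{\varphi_2}(n'_1)\bigr)} \;+\; \overline{F_{\varphi_2}\bigl((n'_1-n'_2)/p\bigr)}.\]
The second summand equals $F_0^{\mathrm{lin}}(\bar{w}_L^{(\varphi)})$, where $\bar{w}_L$ is the $L_0$-component of $\overline{(n'_1-n'_2)/p}$, since $F_0$ annihilates $\Fil M_0 = N_0$. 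For the first summand I invoke the Taylor formula from (relative) crystalline Dieudonn\'e theory: with respect to a local $p$-basis $\{x_i\}$ of $R_0$ and $t_i := \varphi_1(x_i) - \varphi_2(x_i) \in pR$,
\[F_{\varphi_1}(m) - F_{\varphi_2}(m) = \sum_{|\alpha|\geq 1} \frac{t^\alpha}{\alpha!}\,F_{\varphi_2}(\nabla_M^{\alpha} m).\]
For $p \geq 3$ one has $|\alpha| - v_p(\alpha!) \geq 2$ whenever $|\alpha|\geq 2$, so those terms lie in $p^2 M$ and vanish after dividing by $p$ and reducing mod $p$. The remaining $|\alpha|=1$ contribution reduces to $\sum_i \overline{t_i/p}\,F_0(\overline{\nabla_{\partial_i} n'_1})$, which sits in $\Im F_0^{\mathrm{lin}} = F_0^{\mathrm{lin}}(L_0^{(\varphi)})$ because $F_0^{\mathrm{lin}}$ vanishes on $N_0^{(\varphi)} = \Fil M_0^{(\varphi)}$. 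The local nature of the argument is harmless since uniqueness of $\eta$ makes the local constructions glue.

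Uniqueness of $\eta$ is then automatic: reading off the factorization gives $\det \Phi_0(\varphi_2, \lambda_2) = u^{pa} \cdot (\text{unit in } R_0)$, with $a = \rank_R N$, a non-zero-divisor in $\mathfrak{S}_0 = R_0[[u]]$. Hence $\Phi_0(\varphi_2, \lambda_2)$ is injective on the free $\mathfrak{S}_0$-module $\mathcal{M}_0^{(\varphi)}$, and the defining equation pins down $\eta$ uniquely. The main obstacle is the Taylor-formula input: making rigorous the comparison of $F_{\varphi_1}$ and $F_{\varphi_2}$ via the connection $\nabla_M$ on $\mathbb{D}^*(H_0)(R)$, which is where the relative crystalline Dieudonn\'e theory of \cite{deJongCrystalineDieudonneRigidgeometry} enters in an essential way; the remainder of the argument is elementary linear algebra.
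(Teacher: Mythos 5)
Your proof is correct and takes a genuinely different route from the paper's. The paper splits the argument into two steps (Step~1: fix $\lambda$ and vary $\varphi$; Step~2: fix $\varphi$ and vary $\lambda$). In each step it treats the crystalline input as a black box --- the existence of a unique isomorphism $\chi\colon \varphi_1^*M\to\varphi_2^*M$ with $F_1^{\mathrm{lin}}=F_2^{\mathrm{lin}}\circ\chi$ and $\chi\equiv\id\bmod p$ --- writes the matrix of $\chi$ as $Y=\bigl(\begin{smallmatrix}I+pA&pB\\pC&I+pD\end{smallmatrix}\bigr)$, and checks by bare matrix arithmetic that the conjugate $Z=X^{-1}YX$, with $X=\mathrm{diag}(\tfrac{c}{p}\mathbf{I},\mathbf{I})$ and $c=\varphi(E)$, reduces modulo $p$ to the unipotent $\bigl(\begin{smallmatrix}I&0\\u^pC_0&I\end{smallmatrix}\bigr)\in\GL(\mathfrak{S}_0)$. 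You instead handle both variations at once, expand the crystalline comparison explicitly via the Taylor formula $F_{\varphi_1}(m)-F_{\varphi_2}(m)=\sum_{|\alpha|\geq 1}\tfrac{t^\alpha}{\alpha!}F_{\varphi_2}(\nabla_M^\alpha m)$, and isolate the actual mechanism that removes the $u$-pole: both discrepancy terms land in $\Im F_0^{\mathrm{lin}}=F_0^{\mathrm{lin}}(L_0^{(\varphi)})$ because $N_0^{(\varphi)}=\ker F_0^{\mathrm{lin}}$, so $\tau_0=(\Gamma_{2,0}^{\mathrm{lin}})^{-1}\Gamma_{1,0}^{\mathrm{lin}}$ is lower unipotent and conjugation by $\tilde f$ just multiplies the off-diagonal block by $u^p$. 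Your route buys a conceptual explanation of \emph{why} there are no poles (the discrepancy factors through the conjugate filtration) and collapses the two-step case analysis; the paper's route buys brevity and avoids the Taylor-formula bookkeeping on divided powers and $p$-integrality, where the hypothesis $p\geq 3$ enters (your estimate $|\alpha|-v_p(\alpha!)\geq 2$ for $|\alpha|\geq 2$ is exactly right). Two small remarks: the integral factorization you use, $\Phi_0=\Gamma_0^{\mathrm{lin}}\circ(u^p\id\oplus\id)$, is the one of Remark~\ref{RemarkFrobeniusDecomposition}(b), not Remark~\ref{RemarkofKisinWindows}(b), whose $f$ involves $\varphi(E)/p$ and only makes sense after localization; and the ``immediate'' triviality of $\tau_0$ on $L_0^{(\varphi)}$ already presupposes the $i$-independence of the mod-$p$ reduction of $F_i^{\mathrm{lin}}$, which is precisely the black-box crystalline uniqueness the paper cites --- so your Taylor expansion does not entirely replace it, it only makes explicit the variation on the $N$-block.
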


\begin{proof} The uniqueness of $\eta$ follows from the fact that $\Phi_0(\varphi_1, \lambda_1),\  \Phi_0(\varphi_2, \lambda_2)$ become isomorphisms after inverting $ u\in \mathfrak{S}(R_0) $. We show below the existence of $ \eta $. Write
	$$c=\varphi_1(E)=\varphi_2(E)=u^p+p.$$ 
	
	Since $ \Phi(\varphi_1, \lambda_1)$ and $\Phi(\varphi_2, \lambda_2) $ become isomorphisms after inverting $c$ there exists a unique isomorphism $$\tilde{\eta}: (\varphi_1^*\mathcal{M})[\frac{1}{c}]\to  (\varphi_2^*\mathcal{M})[\frac{1}{c}] \ \ \text{such that}\ \   \Phi(\varphi_1, \lambda_1)[\frac{1}{c}]= \Phi(\varphi_2, \lambda_2)[\frac{1}{c}]\circ \tilde{\eta}.$$ 
	
	To show the existence of $ \eta $ it is sufficient to show that the reduction modulo $ p $ of $\tilde{\eta}$, which is a priori only an automorphism of $\mathcal{M}^{(\varphi)}_0[\frac{1}{u}]$, comes from an automorphism of $\mathcal{M}_0^{(\varphi)}$. We show this in two steps: in the first step we let $ \lambda_1=\lambda_2 $ while in the second step we let $ \varphi_1=\varphi_2 $.
	
	\textbf{Step 1}: Assume $ \lambda=\lambda_1=\lambda_2 $. Let $ M=N\oplus L $ be the decomposition induced by $ \lambda $. 
	
	Recall that from Remark \ref{RemarkofKisinWindows}, (b), for each $ i=1,2 $ we have 
	\begin{align*}
		\Phi(\varphi_i, \lambda)= (F_i^{\mathrm{lin}}\otimes\id_{\mathfrak{S}})\circ f_i,
	\end{align*}
	where $F_i^{\mathrm{lin}}: \varphi^*M\to M $ is the linearization of the Frobenius $F_i: M\to M$ of $ M $ with respect to the Frobenius lift $ \varphi_i$, and $ f_i $ is defined as  
	$$f_i=\frac{c}{p}\id|_{\varphi_i^*\mathcal{N}}\oplus \id|_{\varphi_i^*\mathcal{L}}.$$
	By Dieudonn\'e theory there exists a unique isomorphism $\chi: \varphi_1^*M\to \varphi_2^*M$ of $ R $-modules such that $$F_1^{\mathrm{lin}}=F_2^{\mathrm{lin}}\circ \chi \ \ \text{with} \ \  \chi\otimes\id_{R_0} =\id_{M_0^{(\varphi)}}.$$ 	
	Hence we have the following commutative diagram, where we omit $ [\frac{1}{c}] $ everywhere.
	\begin{align}\label{DiagramofFabrizio'sMatrixComputation}
		\xymatrix{\varphi_1^*\mathcal{M}\ar[d] ^{\tilde{\eta}}\ar@{=}[r]&\varphi_1^*\mathcal{N}\oplus \varphi_1^* \mathcal{L}\ar[r]^{f_1}&\varphi_1^*\mathcal{N}\oplus \varphi_1^* \mathcal{L}\ar@{=}[r]&\varphi_1^*M\otimes_R\mathfrak{S}\ar[d]^{\chi\otimes\id_{\mathfrak{S}}}\\
			\varphi_2^*\mathcal{M}\ar@{=}[r]&\varphi_2^*\mathcal{N}\oplus \varphi_2^* \mathcal{L}\ar[r]^{f_2 }&\varphi_2^*\mathcal{N}\oplus \varphi_2^* \mathcal{L}\ar@{=}[r]&\varphi_2^*M\otimes_R\mathfrak{S}}
	\end{align}

	Let $m_1, \cdots, m_r\in N$ be an $ R $- basis of $N$, and $m_{r+1}, \cdots, m_{r+s}\in L$  an $ R $-basis of $L$. Let 
	\begin{align*}
		&\mathfrak{B}^{(\varphi_i)}:=(\varphi_i^{*}m_1, \ldots, \varphi_{i}^*m_{r+s}), \\
		&\mathfrak{B}^{(\varphi_i)}\otimes 1:=(\varphi_i^{*}m_1\otimes1, \ldots, \varphi_{i}^*m_{r+s}\otimes 1), 
	\end{align*}
	be the induced basis for $ \varphi_i^*M $ and for $ \varphi_i^*\mathcal{M} $ respectively. Then $f_1, f_2$ have the same matrix representation
	\[X=\left(	
	\begin{array}{cc}
	\frac{c}{p}\mathbf{I}_r&0\\
	0 & \mathbf{I}_s
	\end{array}
	\right)\]
	in the sense that $ f_i $ sends the basis $ \mathfrak{B}^{(\varphi_i)}\otimes 1 $ to $(\mathfrak{B}^{(\varphi_i)}\otimes 1)X$. Let 
	\begin{align}\label{Matrix Y}
		Y=\left(	
		\begin{array}{cc}
			\mathbf{I}_r+pA&pB\\
			pC & \mathbf{I}_s+pD
		\end{array}
		\right)
	\end{align}
	be the matrix representation of $ \chi $ under the basis $ \mathfrak{B}^{(\varphi_1)} $ and $ \mathfrak{B}^{(\varphi_2)}$, where $A, B, C, D$ are matrices with entries in $R$.
	A direct computation shows that $\tilde{\eta}$ has matrix representation
	\[Z=\left(	
	\begin{array}{cc}
	\mathbf{I}_r+pA&\frac{p^2}{c}B\\
	cC & \mathbf{I}_s+pD
	\end{array}\right)\]
	If we denote by $ Z_0 $ the reduction modulo $ p $ of $ Z $, then it is clear that $ Z_0 $ lies in the kernel of the reduction map $$\GL_{r+s}(\mathfrak{S}(R_0))\xrightarrow{\mod u} \GL_{r+s}(R_0).$$

	\textbf{Step 2}. Assume $ \varphi=\varphi_1=\varphi_2 $. Let $$ M=N_1\oplus L_1, \ \  M=N_2\oplus L_2 $$ be the normal decompositions of $ M $ induced by $ \lambda_1 $ and $ \lambda_2 $ respectively. Write $$ \mathcal{N}_i=N_i\otimes\mathfrak{S},\ \  \mathcal{L}_i=L_i\otimes\mathfrak{S}. $$ 
	Then we have the following commutative diagram 
	\begin{align}\label{DiagramofFabrizio2}
		\xymatrix{\varphi^*\mathcal{M}\ar[d] ^{\tilde{\eta}}\ar@{=}[r]&\varphi^*\mathcal{N}_1\oplus \varphi^* \mathcal{L}_1\ar[r]^{f_1}&\varphi^*\mathcal{N}_1\oplus \varphi^* \mathcal{L}_1\ar@{=}[r]&\varphi^*M\otimes_R\mathfrak{S}\ar@{=}[d]\\
			\varphi^*\mathcal{M}\ar@{=}[r]&\varphi^*\mathcal{N}_2\oplus \varphi^* \mathcal{L}_2\ar[r]^{f_2 }&\varphi^*\mathcal{N}_2\oplus \varphi^* \mathcal{L}_2\ar@{=}[r]&\varphi^*M\otimes_R\mathfrak{S}}
	\end{align}	
	Here $ f_i $ by definition is given by 
	$$f_i=\frac{c}{p}\id|_{\varphi^*\mathcal{N}_i}\oplus \id|_{\varphi^*\mathcal{L}_i}.$$
	Let $m_{i,1}, \cdots, m_{i,r}\in N_i$ be an $ R $- basis of $N_i$, and $m_{i,r+1}, \cdots, m_{i, r+s}\in L_i$  an $ R $-basis of $L_i$ such that 
	\begin{align*}
		m_{1,j}=m_{2,j} \mod p, \ \ \ j=1,\ldots, r+s.
	\end{align*} Here we use the assumption $ \lambda_1=\lambda_2 \mod p $. We let  
	\begin{align*}
		&\mathfrak{B}^{(\varphi)}_1:=(\varphi^{*}m_{1,1}, \ldots, \varphi^*m_{1, r+s}), \\
		&\mathfrak{B}_1^{(\varphi)}\otimes 1:=(\varphi^{*}m_{1,1}\otimes1, \ldots, \varphi^*m_{1,r+s}\otimes 1),
	\end{align*}
	be the induced basis for $ \varphi^*M $ and for $ \varphi^*\mathcal{M} $ which are on the top of diagram \eqref{DiagramofFabrizio2}. In a same way we can define $\mathfrak{B}_2^{(\varphi)}, \ \mathfrak{B}_2^{(\varphi)}\otimes 1 $, and take them as the basis for $ \varphi^*M $ and for $ \varphi^*\mathcal{M} $ which are on the bottom of diagram \eqref{DiagramofFabrizio2}, respectively. Now the right vertical identity homomorphism has the same matrix representation as $ Y $ in \eqref{Matrix Y}. Then we finish the proof by following a similar matrix computation as in Step 1.

\end{proof}

\section{ Shimura varieties of Hodge type}
\subsection{Shimura data and Shimura varieties over $\mathbb{C}$}

\begin{definition}\label{DefinitionofShimuradatum}
	Let $\textbf{G}$ be a connected reductive group over $\mathbb{Q}$ and $\textbf{X}$ a conjugacy class of homomorphism of algebraic groups over $\mathbb{R}$
	$$h: \mathbb{S}=\Res_{\mathbb{C}/\mathbb{R}}\mathbb{G}_m\to \textbf{G}_{\mathbb{R}}.$$
	
	\begin{enumerate}[(1)]
		\item 	A pair $(\textbf{G, X})$ as above is called a \textbf{Shimura datum} if it satisfies the following conditions:
		\begin{enumerate}[(a)]
			\item Let $\mathfrak{g}$ denote the Lie algebra of $\textbf{G}_{\mathbb{R}}$. Then the composite
			$$\mathbb{S}\to \textbf{G}_{\mathbb{R}}\to \textbf{G}_{\mathbb{R}}^{\ad}\to \GL(\mathfrak{g})$$ defines a Hodge structure of type $(-1, 1), (0,0), (1, -1)$. 
			
			\item $h(i)$ is a Cartan involution of $\textbf{G}_{\mathbb{R}}^{\ad}$. This means that we require the real form of $\textbf{G}^{\ad}$ defined by the involution $g\mapsto h(i){\bar{g}}h(i)^{-1}$ to be compact.
			\item $\textbf{G}$ has no factor defined over $\mathbb{Q}$ whose real points form a compact group.
			
		\end{enumerate}
		
		\item 	A morphism $i: \textbf{(G}_1, \textbf{X}_1)\to (\textbf{G}_2, \textbf{X}_2)$ of Shimura data is a map of groups $\textbf{G}_1\to \textbf{G}_2$, which induces a map $\textbf{X}_1\to \textbf{X}_2$.		
	\end{enumerate}
	
\end{definition}	

The requirement (a) in Definition \ref{DefinitionofShimuradatum} means that under the action of $\mathbb{C}^{\times}$ on $\mathfrak{g}_{\mathbb{C}}=\mathfrak{g}\otimes_{\mathbb{R}}\mathbb{C}$ by conjugation through $ h_{0} $, we have a decomposition
$$\mathfrak{g}_{\mathbb{C}}= V^{-1,1}\oplus V^{0,0}\oplus V^{1,-1},$$
where $z\in \mathbb{C}^{\times}$ on $V^{p, q}$ via multiplication by  $z^{-p}\bar{z}^{-q}$. This implies that for any $h_0\in \textbf{X}$ the stabilizer $\mathsf{K}_{\infty}\subset \textbf{G}(\mathbb{R})$ of $h_0$ is compact modulo its center, and $\textbf{G}(\mathbb{R})/ \mathsf{K}_{\infty}\cong \textbf{X}$ has a complex structure.

Let $\mathbb{A}_{f}$ denote the finite adeles over $\mathbb{Q}$, and $\mathbb{A}_f^{p}\subset\mathbb{A}_f$ the subgroup of adeles with trivial component at a prime $p$. Let
$\mathsf{K}=\mathsf{K}_p\mathsf{K}^p\subset \textbf{G}(\mathbb{A}_f)$
be a compact open subgroup, where $\mathsf{K}_p\subset \textbf{G}(\mathbb{Q}_p)$, and $\mathsf{K}^p\subset \textbf{G}(\mathbb{A}_f^p)$ are compact open subgroups. A theorem of Baily-Borel asserts that when $\mathsf{K}^{p}$ is small enough 
\begin{align}\label{DoublequotientofcomplexShimuravariety}	\Sh_\mathsf{K}(\textbf{G}, \textbf{X})_{\mathbb{C}}:=\textbf{G}(\mathbb{Q})\backslash \textbf{X}\times \textbf{G}(\mathbb{A}_f)/\mathsf{K}
\end{align}
has a natural structure of an algebraic variety over $\mathbb{C}$.
We will always assume in the following that $\mathsf{K}^p$ is small enough. 
\subsection{Shimura varieties over number fields}	
Let $ (\textbf{G}, \textbf{X}) $ be a Shimura datum and $E=E(\textbf{G}, \textbf{X})$ the reflex field of $ (\textbf{G}, \textbf{X}) $ (see \cite[Definition 12.2]{Milne_IntroShimura}). 
\begin{remark}
	Any subfield $F$ of $\bar{\mathbb{Q}}$ over which the group $\textbf{G}$ splits contains $E$. This is because if $T$ is a split maximal torus of $\textbf{G}$ over $F$, then the set $W\backslash\Hom(\mathbb{G}_m, T)$ does not change if we pass from $k$ to $\bar{\mathbb{Q}}$. It follows that $E$ is a finite field extension of $\mathbb{Q}$; i.e., $E$ is a number field. 
\end{remark}	
Results of Shimura, Deligne, Milne and others imply that, up to isomorphism, $\Sh_\mathsf{K}(\textbf{G},\textbf{ X})_{\mathbb{C}}$ has a unique quasi-projective smooth model $\Sh_{\mathsf{K}}(\textbf{G}, \textbf{X})$  over the number field $E$. 

A morphism $j: (\textbf{G}_1, \textbf{X}_1)\to (\textbf{G}_2, \textbf{X}_2)$ of Shimura data induces a morphism of schemes
\begin{equation}\label{Morphism of Shimura Data}
\Sh_{\mathsf{K}_1}(\textbf{G}_1, \textbf{X}_1)\to \Sh_{\mathsf{K}_2}(\textbf{G}_2, \textbf{X}_2),
\end{equation}
provided that the compact open subgroups are chosen so that $\mathsf{K}_1$ maps into $\mathsf{K}_2$.
This map is defined over the composite of the reflex fields $E(\textbf{G}_1, \textbf{X}_1)$ and $E(\textbf{G}_2, \textbf{X}_2)$. If the morphism $j$ is an embedding, i.e., the homomorphism $\textbf{G}_1\to \textbf{G}_2$ associated to $j$ is a closed embedding, then for any $\mathsf{K}_1$, the subgroup $\mathsf{K}_2$ can always be chosen so that  (\ref{Morphism of Shimura Data}) is a closed immersion of schemes.

Sometimes people also consider the pro-varieties
$$\Sh_{\mathsf{K}_p}(\textbf{G}, \textbf{X}):= \varprojlim\Sh_\mathsf{K}(\textbf{G}, \textbf{X}),$$
where $\mathsf{K}$ runs through compact open subgroups as above with a fixed factor $\mathsf{K}_p$ at $p$, and 
$$\Sh(\textbf{G}, \textbf{X}):= \varprojlim\Sh_{\mathsf{K}}(\textbf{G}, \textbf{X}),$$ where $\mathsf{K}$ runs through all compact open subgroup of $\textbf{G}(\mathbb{A}_f)$.	But we shall not consider such towers by fixing soon a Shimura datum $ (\textbf{G}, \textbf{X}) $ and a compact open subgroup $ \mathsf{K}\subset \mathbb{A}_{f} $. 

\subsection{Shimura datum of Hodge type} 

Suppose that  $V$ is a finite-dimensional $\mathbb{Q}$-vector space with a perfect alternating pairing $\psi$ and write $\textbf{GSp}=\textbf{GSp}(V, \psi)$ the corresponding group of symplectic similitudes. Then we get a Shimura datum $(\mathbf{GSp}, \textbf{S}^{\pm})$ with $\textbf{S}^{\pm}$ the Siegel double space, which is defined to be the set of maps $\mathbb{S}\to \textbf{G}_{\mathbb{R}}$ such that
\begin{enumerate}[(1)]
	\item The $\mathbb{C}^{\times}$ action on $V_{\mathbb{R}}$ gives rise to a Hodge structure of type $(-1, 0)$ and $(0, -1)$.
	\item $(x, y)\mapsto \psi(x, h(i)y)$ is (positive or negative) definite on $V_{\mathbb{R}}$.
\end{enumerate}
\begin{definition}
	A Shimura datum $(\textbf{G}, \textbf{X})$ is said to be \textbf{of Hodge type} if there is an embedding of Shimura data $i: (\textbf{G} , \textbf{X})\hookrightarrow (\textbf{GSp}, \textbf{S}^{\pm})$ for some $(\textbf{GSp}, \textbf{S}^{\pm})$ as above. 
\end{definition}
The reflex field of  $(\mathbf{GSp}, \textbf{S}^{\pm})$ is just $\mathbb{Q}$. We fix a $\mathbb{Z}$-lattice $V_{\mathbb{Z}}$ of $V$  and write $V_{\hat{\mathbb{Z}}}= V_{\mathbb{Z}}\otimes \hat{\mathbb{Z}}$ with $\hat{\mathbb{Z}}$ the profinite completion of $\mathbb{Z}$.  If $V_{\hat{\mathbb{Z}}}$ is stable by $\mathsf{K}\subset \textbf{G}(\mathbb{A}_f)$, then for $\mathsf{K}^p$ small enough $\Sh_\mathsf{K}(\mathbf{GSp}, \textbf{S}^{\pm})$ can be seen as the moduli space of abelian varieties.

\subsection{Good reduction of Shimura varieties of Hodge type}\label{GoodreductionofShimuravariety}

We will mainly study the special fibre of the integral model of some Shimura variety of Hodge type. Let us fix some notations and some basic assumptions, following \cite[4.1]{WedhornGeneralizedHass} and \cite[(1.3.3)]{KisinModpPoints}.  

We fix a Shimura datum $(\textbf{G} , \textbf{X})$ of Hodge type and let $i: (\textbf{G} , \textbf{X})\hookrightarrow (\textbf{GSp}, \textbf{S}^{\pm})$ be an embedding of Shimura data (we do not fix this embedding for the moment). Fix a prime $ p>3 $ and let $ \mathsf{K}=\mathsf{K}_p\mathsf{K}^{p}\subset \textbf{G}(\mathbb{A}_f) $ be an open compact subgroup such that $ \mathsf{K}_p \subset  \mathbf{G}(\mathbb{Q}_p)$ is a hyperspecial subgroup and such that $ \mathsf{K}^{p} \subset \mathbf{G}(\mathbb{A}_{f}^{p})$ is sufficiently small. The condition that $ \mathsf{K}_p $ is hyperspecial means that there is a reductive group $ \mathcal{G} $ over $\mathbb{Z}_{(p)} $, which we fix from now on, such that $ \mathsf{K}_p=\mathcal{G}(\mathbb{Z}_p) $. Then by \cite[(2.3.1), (2.3.2)]{KisinIntegralModels} there is an $ \mathbb{Z}_{(p)} $-lattice $ \Lambda $ of $ V $ such that the embedding $ i $ is induced by some embedding $ i: \mathcal{G}\to \GL(\Lambda) $. By Zarhin's trick we may assume after changing $ (V, \psi ) $ and $ \lambda $ that $ \psi $ induces a perfect $ \mathbb{Z}_{(p)} $-pairing on $ \Lambda $ (cf. \cite[(1.3.3)]{KisinIntegralModels}), still denoted by $ \psi $. Now we have an embedding $ \iota: \mathcal{G} \to \textbf{GSp}(\Lambda, \psi)$ of reductive group schemes over $ \mathbb{Z}_{(p)} $. The condition that $ \mathsf{K}^{p}$ is sufficiently small guarantees that the double quotient in \eqref{DoublequotientofcomplexShimuravariety} has a structure of smooth quasi-projective complex variety and hence admits a canonical model  $\Sh_{\mathsf{K}}(\textbf{G}, \textbf{X})$ over the reflex field $ E $ of $(\textbf{G}, \textbf{X}) $. 

From now on we fix this embedding $ \iota$ (and hence $ i $). By \cite[Proposition (1.3.2)]{KisinIntegralModels}, $ \mathcal{G} $ is then the schematic stabilizer of a set of tensors $ s\subset \Lambda^{\otimes} $. From the discussion in Section \ref{Tensorsandcontragredient}, we may identify $ \Lambda^{\otimes}$ with $(\Lambda^{*})^{\otimes} $, and hence we have
\begin{align}
	\mathcal{G}=\{g\in \GL(\Lambda)\big| g^{\vee}(s)=s \text{ pointwise}\}.
\end{align}

Set $ \tilde{\mathsf{K}}_{p}=\textbf{GSp}(\mathbb{Z}_p) $. By \cite[(2.1.2)]{KisinIntegralModels} there exists an open compact subgroup $ \tilde{\mathsf{K}}^{p}\subset \textbf{GSp}(\mathbb{A}_f) $ containing $\mathsf{K}^{p}  $ such that $ \iota $ induces an embedding of Shimura varieties over $ E $
\[\varepsilon^{0}:  \Sh_{\mathsf{K}}(\textbf{G}, \textbf{X})\hookrightarrow \Sh_{\tilde{\mathsf{K}}}(\textbf{GSp}, \textbf{S}^{\pm}).\] Moreover, if $\tilde{\mathsf{K}}^{p} $ is sufficiently small, $ \Sh_{\tilde{\mathsf{K}}}(\textbf{GSp}, \textbf{S}^{\pm}) $ has a quasi-projective smooth integral canonical model over $ \mathbb{Z}_{(p)} $, denoted by
\[ \tilde{\mathcal{S}}: = \tilde{\mathcal{S}}_{\tilde{\mathsf{K}}}(\textbf{GSp}, \textbf{S}^{\pm}), \] which has an explicit moduli interpretation 
(\cite[(1.3.4)]{KisinIntegralModels}) and admits a universal abelian scheme $ \mathcal{A}\to \tilde{\mathcal{S}} $. We always assume that $\tilde{\mathsf{K}}^{p} $ is sufficiently small for what follows.

Fix a place $ v $ of the reflex field $ E $ of $( \textbf{G}, \textbf{X}) $ above $ p $. Denote by $ \mathcal{O}_E $ the ring of integers of $ E $ and $ \mathcal{O}_{E, (v)} $ its localization at $ v $. Write $ \kappa:=k(v) $ the residue field of $ \mathcal{O}_{E, (v)} $ and $ \mathcal{O}_{E,v} $ the completion of $ \mathcal{O}_E $ at $ v $. The existence of the hyperspecial subgroup $ \mathsf{K}_{p} $ implies that $ E $ is unramified at $ p $ (\cite[Corollary 4.7]{ShimuraVarietiesandMotives}), and hence we have 
\begin{align*}
	\mathcal{O}_{E, v}=W(\kappa), 
\end{align*}
we shall use these two notations interchangeably. Kisin showed the existence of the integral canonical model $$ \mathcal{S}: =\mathcal{S}_{\mathsf{K}}(\textbf{G}, \textbf{X}) $$ of $\Sh_{\mathsf{K}}(\textbf{G}, \textbf{X})$ over $ \mathcal{O}_{E, (v)} $ by taking $ \mathcal{S} $ to be the normalization of the closure of $ \Sh_{\mathsf{K}}(\textbf{G}, \textbf{X}) $ in $ \tilde{\mathcal{S}}\otimes_{\mathbb{Z}_{(p)}}\mathcal{O}_{E, (v)} $ and proving that it is smooth over $ \mathcal{O}_{E, (v)} $ (\cite[(1.3.4), (1.3.5)]{KisinModpPoints}). In particular, we obtain a finite morphism of schemes over $ \mathcal{O}_{E, (v)} $
\begin{align}
	\varepsilon: \mathcal{S}\to \tilde{\mathcal{S}}. 
\end{align}
We call the pull-back to $ \mathcal{S} $ of the universal abelian scheme $ \mathcal{A}$ the \textbf{universal abelian scheme} of $ \mathcal{S} $, still denoted by $ \mathcal{A} $. Denote by $$ S:=S_{\mathsf{K}},\ \  \tilde{S}:=\tilde{S}_{\tilde{\mathsf{K}}}, \  \ \varepsilon_{0}: S\to \tilde{S},\ \  \mathcal{A}_0$$  
the pull-back to  $ \Spec \kappa $ of
$ \mathcal{S}, \  \tilde{\mathcal{S}}, \ \varepsilon $ and $ \mathcal{A} $ respectively. We also call $ \mathcal{A}_0 $ the \textbf{universal abelian scheme} of $ S $. In particular, $ S $ is a quasi-projective, smooth scheme over $ \kappa$. 
\subsection{Reduction of Hodge cocharacters}\label{Sectionofcocharacters}

We retain notations from Section \ref{GoodreductionofShimuravariety}. Then for any $h\in \textbf{X}$, there is an associated Hodge cocharacter $$\nu_h: \mathbb{G}_m\to \textbf{G}_{\mathbb{C}}$$ by the formula $\nu_h(z):=h_{\mathbb{C}}(z, 1)$. More precisely, for any $\mathbb{C}$-algebra $R$, we have $R\otimes_{\mathbb{R}}\mathbb{C}=R\times c^*(R)$ where $c$ denotes complex conjugation.  Then on $R$-points the cocharacter $\nu_h$ is given by
$$R^{\times}\hookrightarrow (R\times c^*(R))^{\times}= (R\otimes_{\mathbb{R}}\mathbb{C})^{\times}=\mathbb{S}(R)\xrightarrow{h}\textbf{G}_{\mathbb{C}}(R).$$
Denote by $[\chi]_{\mathbb{C}}$ the unique $\textbf{G}(\mathbb{C})$-conjugacy class in $\Hom_{\mathbb{C}}(\mathbb{G}_{m, \mathbb{C}}, \textbf{G}_{\mathbb{C}})$ which contains all the $\nu_h$. Every element $h\in \textbf{X}$ defines a Hodge decomposition $V_{\mathbb{C}}=V^{(-1, 0)}\oplus V^{(0,-1)}$ via the embedding $\textbf{X}\hookrightarrow \textbf{S}^{\pm}$. By definition $\nu_{h}(z)$ acts on $V^{(-1, 0)}$ through multiplication by $z$ and on $V^{(0, -1)}$ as the identity. Consequently $V_{\mathbb{C}}$ has only weight $0$ and $-1$ with respect to any cocharacter $\lambda\in [\chi]_{\mathbb{C}}$.

Denote by $ G $ the special fibre of $ \mathcal{G} $, i.e., \begin{align}\label{DefofGSpecialfibre}
	G=\mathcal{G}_{\mathbb{F}_p}=\{g\in \GL(\Lambda_{\mathbb{F}_p})| g^{\vee}(s_{\mathbb{F}_p})=s_{\mathbb{F}_p} \text{ pointwise}\}.
\end{align}
It is a reductive group over $ \mathbb{F}_p $. We describe in the following how to obtain a $ G(\kappa) $-conjugacy class of cocharacters  of $ G_{\kappa} $ over $ \kappa $ from the Shimura datum $ (\textbf{G}, \textbf{X}) $, as it plays an important role for the study of reductions of Shimura varieties.	

As a preparation for what follows, we let $\mathsf{Z}: = \textbf{Hom}_{\mathbb{Z}_{(p)}} (\mathbb{G}_{m,\mathbb{Z}_{(p)} }, \mathcal{G})$ be the fppf sheaf of cocharacters, and $ \mathsf{Ch}: =\mathcal{G}\backslash\mathsf{Z} $ the quotient sheaf of $ \mathsf{Z} $ by the adjoint action of $ \mathcal{G} $. Then by \cite[Chapter XI, Corollary 4.2]{SGA3}, the sheaf $ \mathsf{Z}$ is represented by a smooth separated scheme over $ \mathbb{Z}_{(p)} $, and it is shown in \cite[Proposition 2.2.2]{ChaoZhangEOStratification} that $ \mathsf{Ch} $  is represented by a disjoint union of finite  \'etale (and hence proper) scheme over $ \mathbb{Z}_{(p)} $. 

Note that the reductive group $\mathcal{G}_{\mathbb{Z}_p}$ is quasi-split over $\mathbb{Z}_p$ and splits over a finite \'etale extension of $\mathbb{Z}_p$, as is the case for any reductive group over $\mathbb{Z}_p$ (\cite[Corollary 5.2.14]{ConradReductiveGroupSchemes}). Identifying the conjugacy class $[\chi]_{\bar{\mathbb{Q}}}$ with a point in $\mathsf{Ch}(\bar{\mathbb{Q}})$, then this point has a unique lift $c$ in $\mathsf{Ch}(E)$. Let $c_v$ be the pull back of $c$ in $\mathsf{Ch}(E_v)$. Then by the properness of $ \mathsf{Ch} $, $ c_v $ lifts uniquely to a point $\tilde{c}_v\in \mathsf{Ch}(\mathcal{O}_{E, v})$. Let $\bar{c}_v$ be the reduction of $\tilde{c}_v$ in $\mathsf{Ch}(k)$.
As $G_{\kappa}$ is quasi-split over $\kappa$, it follows from \cite[Lemma 1.1.3]{Kottwitz1TwistedOrbitalIntegrals} that there exists a cocharacter $\chi: \mathbb{G}_m\to G_{\kappa}$ such that  the $G(\kappa)$-conjugacy class of $\chi $, denoted by $[\chi]_{\kappa}$, is identified with the point $\bar{c}_v\in \mathsf{Ch}(\kappa)$. It is clear that $ [\chi]_{\kappa} $ is uniquely determined by the Shimura datum $ (\textbf{G}, \textbf{X}) $.

For any $\lambda\in [\chi]_{\kappa}$, $\Lie G_{\kappa}$ has only weights $0$ and $1$ with respect to the adjoint action of $\mathbb{G}_m$ through $\lambda$. If $\lambda$, via the embedding $ i: G\to \textbf{GSp}(\Lambda_{\kappa}) $,  induces a weight decomposition $\Lambda_{\kappa}=\Lambda_{\kappa}^{-1}\oplus \Lambda_{\kappa}^{0}$, then the cocharacter $(\ )^{\vee}\circ \lambda$  induces a weight decomposition
\begin{equation}\label{weightdecomposition}
\Lambda_{\kappa}^*=\Lambda_{\kappa}^{*,1}\oplus \Lambda_{\kappa}^{*,0}, \ \text{with} \ \ \Lambda_{\kappa}^{*,1}=(\Lambda_{\kappa}^{-1})^* \ \text{and}\  \Lambda_{\kappa}^{*,0}=(\Lambda_{\kappa}^0)^*,
\end{equation}
through the contragredient representation $(\cdot)^{\vee}: \GL(M)\cong \GL(M^*)$ (cf. Section \ref{Tensorsandcontragredient}).

We fix a cocharacter $ \chi=\chi_{\kappa}: \mathbb{G}_{m, \kappa}\to G_{\kappa} $ over $ \kappa  $ in $ [\chi]_{\kappa} $ and define the cocharacter 
\begin{align}\label{Defofmu}
	\mu: = \Frob_{G_{\kappa}/\kappa} \circ \chi: \mathbb{G}_{m,\kappa}\to G_{\kappa},
\end{align}
where $\Frob_{G_{\kappa}/\kappa}: G_{\kappa}\to G_{\kappa}^{(p)}= G_{\kappa}$ is the relative Frobenius of $ G_{\kappa} $ over $ \kappa $, and where we identify $ G_{\kappa} $ with $G_{\kappa}^{(p)}$ since it is defined over $ \mathbb{F}_p $. The cocharacter $ \chi $ determines a couple of subgroups of $ G_{\kappa} $. We introduce here all these subgroups though not all of them will be used immediately. 
\label{Parabolics induced by a cocharacter}

Denote by $P_+$ (resp.$P_{-}$) the parabolic subgroup of $G_{\kappa}$ which is characterized by the property that $\Lie(P_+)$ (resp. $\Lie(P_-)$) is the sum of the non-negative (resp. non-positive) weight spaces with respect to the adjoint operation of $\chi$ on $\Lie(G_{\kappa})$. Let $U_+$ (resp. $U_-$) be the unipotent radical of $P_+$ (resp. $P_-$) and let $M$ be the common Levi subgroup of $P_+$ and $P_-$, which is also the centralizer of $\chi$ in $G_{\kappa}$. 	

Since the scheme $ \mathsf{Z} $ is smooth, $ \chi_{\kappa} $ lifts to a cocharacter
\begin{align}
	\tilde{\chi}: \mathbb{G}_{m, W(\kappa)} \to \mathcal{G}_{W(\kappa)}
\end{align}  of $ \mathcal{G}_{W(\kappa)} $, which we fix from now on. We define a cocharacter $ \tilde{\mu}$ over $ W(\kappa) $ of $\mathcal{G}_{W(\kappa)}$ as 
\begin{align} \label{DefofTildeMu}
	\tilde{\mu}:={\tilde{\chi}}^{(p)}:  \mathbb{G}_{m, W(\kappa)}\cong \mathbb{G}_{m, W(\kappa)}^{(p)}\to  \mathcal{G}_{W(\kappa)}^{(p)}\cong \mathcal{G}_{W(\kappa)},
\end{align}
where we identify $ \mathcal{G}_{W(\kappa)}^{(p)} $ with $ \mathcal{G}_{W(\kappa)} $ since the latter is defined over $ \mathbb{Z}_p $ and similarly for $ \mathbb{G}_{m, W(\kappa)} $.

As in practice it is often convenient to work over the completion $ \mathcal{O}_{E,v}=W(\kappa) $, the ``integral model $ \mathcal{S} $'' appearing in the sequel will always mean the pull back $ \mathcal{S}\otimes_{\mathcal{O}_{(v)}}\mathcal{O}_v $ unless otherwise stated.

	\section{Construction of a morphism from $\mathrm{I}_+$ to  $\mathcal{D}_1$ }

\subsection{Recall of notations}\label{specification of notations}
We retain notations from Section \ref{GoodreductionofShimuravariety} and Section \ref{Sectionofcocharacters}. 
We specify from now on the field $ k $ and the group $ G $ in Section \ref{sectionLoopSchemes} to be $ \kappa $ and  $\mathcal{G}_{\kappa} $ respectively. Now we can form the ind-scheme $ \mathcal{L}G $ over $ \kappa $ and the following infinite dimensional group schemes, all over $ \kappa $:  $$  \mathcal{K}=\mathcal{L}^{+}G, \ \mathcal{K}_1,\  \mathcal{K}^{+}, \ \mathcal{K}^{\diamond}. $$ Also we can form the schemes 
\begin{align*}
	\mathcal{D}:=\mathcal{D}(G, \mu(u)),\ \ \  \mathcal{D}_1:=\mathcal{D}_1(G, \mu(u)),
\end{align*}
where $ \mathcal{D} $ is an infinite dimensional subscheme of $ \mathcal{L}G $, $ \mathcal{D}_1 $ is a smooth scheme of finite type over $ \kappa $, and $ \mathcal{D} $ is a $ \mathcal{K}_1 $-torsor over $ \mathcal{D}_1 $. 

For any $ \kappa $-algebra $ A $, we continue to denote by $ \varphi : \mathfrak{S}(A)\to \mathfrak{S}(A)$ the absolute Frobenius of $ \mathfrak{S}(A) $ while we use $ \sigma: \mathfrak{S}(A)\to \mathfrak{S}(A) $ to denote the ring endomorphism of $ \mathfrak{S}(A) $, which is the absolute Frobenius on $ A $ and sends the variable $ u $ to itself. 
We have seen in Section \ref{TwoFrobenii} that there are two natural Frobenii $ \sigma $ and $ \varphi $ on $ \mathcal{L}G $ and on $ \mathcal{L}^{+} G$. Seeing $ G $ as a subgroup of $ \GL(\Lambda_{\kappa}) $, here we write down explicitly $ \sigma $ and $ \varphi $. For any $ \kappa $-algebra $ A $, and any element $ g\in \mathcal{L}^+G(A) $, seeing as an isomorphism $ g: \Lambda_{\mathfrak{S}(A)} \cong \Lambda_{\mathfrak{S}(A)}$ preserving tensors $ s_{\mathfrak{S}(A)} $, we have 
\begin{align}\label{ExpliteFromular4TwoFrobinii}
	\sigma(g)=\varepsilon_1\circ \sigma^{*}g \circ \varepsilon_1,\ \ \  \varphi(g)=\varepsilon_2\circ \varphi^{*}g \circ \varepsilon_2, 
\end{align}
where $ \sigma^*g $ is the pull back along $ \sigma: \mathfrak{S}(A) \to \mathfrak{S}(A)$ of $ g $ and similarly for $ \varphi^*g $, and where 
\begin{align*}
	\varepsilon_1:	\Lambda_{\mathfrak{S}(A)}^{(\sigma)}\cong \Lambda_{\mathfrak{S}(A)}, \ \ \  \varepsilon_2:	\Lambda_{\mathfrak{S}(A)}^{(\varphi)}\cong \Lambda_{\mathfrak{S}(A)} 
\end{align*} are canonical isomorphisms since $ \Lambda $ is defined over $ \mathbb{Z}_p $. We have similar descriptions for those $ g\in \mathcal{L}G(A) $. 

Note that we use $\sigma, \varphi: G(A)\to G(A)$ interchangeably for the relative (= absolute) Frobenius of $ G(A) $. 

\subsection{Some torsors over  $ \mathcal{S} $ and $ S $}\label{TorsorsoverSandS}
Write $\mathcal{V}=\HH^1_{\dR}(\mathcal{A}/\mathcal{S})$ and $ \mathcal{C}\subset \mathcal{V} $ the Hodge filtration of $ \mathcal{V} $. Recall that $ \mathcal{V} $ comes with a finite set of tensors $ s_{\dR}\subset \mathcal{V}^{\otimes} $. 
Recall also that we have fixed a cocharacter $ \tilde{\chi}: \mathbb{G}_{m, W(\kappa)}\to \mathcal{G}_{W(\kappa)} $ over $ W(\kappa) $ in Section \ref{Sectionofcocharacters} which  is a lift of $ \chi_{\kappa} $.
The cocharacter $()^{\vee}\circ \tilde{\chi}$ induces a $\mathbb{Z}$-grading of $ W(\kappa) $-modules:
$\Lambda^*_{W(\kappa)}=(\Lambda^*)_{\tilde{\chi}}^0\oplus(\Lambda^*)^1_{\tilde{\chi}}$.   From this grading we obtain a descending filtration
\begin{align}\label{Filtration}
	&\Fil_{\tilde{\chi}}^0:=\Lambda^*_{W(\kappa)}\supset \Fil_{\tilde{\chi}}^1:=(\Lambda^*)^1_{\tilde{\chi}}\supset\Fil_{\tilde{\chi}}^2:=0.
\end{align}

We can then define two schemes over $ \mathcal{S} $ from this filtration,
\begin{align*}
	&\mathbb{I}:=\Isom_{\mathcal{S}}\big([\Lambda^*_{W(\kappa)}, s_{W(\kappa)}]\otimes O_{\mathcal{S}}, [\mathcal{V}, s_{\dR}]\big),\\
	&\mathbb{I}_+:=\Isom_{\mathcal{S}}\big([\Lambda^*_{W(\kappa)}, s_{W(\kappa)}, \Fil_{\tilde{\chi}}^{\bullet} ]\otimes O_{\mathcal{S}}, [\mathcal{V}, s_{\dR}, \mathcal{V}\supset\mathcal{C}]\big).
\end{align*}
Let $\mathcal{P}_+$ be the stabilizer of $\Fil_{\chi}^{\bullet}$ in $\mathcal{G}$. It is a parabolic subgroup of $ \mathcal{G} $ over $ W(\kappa) $, and has a similar description as $ P_+ $ (cf. Section \ref{Sectionofcocharacters}).

\begin{lemma} \label{Torsoroverintegralmodel}
	Let $A$ be a $ p $-adic $W(\kappa)$-algebra and write $X=\Spec A$. Then for any morphism of schemes $ X\to \mathcal{S}$ over $W(\kappa)$, the fullback $\mathbb{I}_{+, X}$ is a $\mathcal{P}_+$-torsor over $X$. Moreover, for any closed point $x$ of $X$, if we denote by $\hat{x}: \Spec \widehat{\mathcal{O}}_{X, x}\to \mathcal{S}$ the induced morphism, then the pullback to $\Spec \widehat{\mathcal{O}}_{X, x}$ of $ \mathbb{I}_+ $, denoted by $\mathbb{I}_{+, \hat{x}}$, is a trivial $\mathcal{P}_+$-torsor over $\Spec \widehat{\mathcal{O}}_{X, x}$. 
\end{lemma}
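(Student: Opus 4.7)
The plan is to break the lemma into two essentially independent assertions: (i) $\mathbb{I}_{+,X}$ is a $\mathcal{P}_+$-torsor, and (ii) when we restrict further to $\Spec\widehat{\mathcal{O}}_{X,x}$, that torsor trivializes. The starting input (proved in Kisin's construction of the integral canonical model) is that $\mathbb{I}$ itself is a $\mathcal{G}$-torsor over $\mathcal{S}$: the tensors $s_{\dR}\subset\mathcal{V}^{\otimes}$ are obtained from the absolute Hodge tensors $s\subset\Lambda^{\otimes}$ via the universal abelian scheme, and the comparison shows that locally for the fppf (in fact \'etale) topology on $\mathcal{S}$ there exists an isomorphism $[\Lambda^{*}_{W(\kappa)},s_{W(\kappa)}]\otimes \mathcal{O}_{\mathcal{S}}\cong[\mathcal{V},s_{\dR}]$.

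For (i), I would pull back along $X\to \mathcal{S}$ and work over a small enough fppf cover $U\to X$ that trivializes $\mathbb{I}_X$. Over $U$ the functor cut out by $\mathbb{I}_+$ becomes the scheme of elements $g\in \mathcal{G}_U$ sending $\mathrm{Fil}^{\bullet}_{\tilde{\chi}}\otimes\mathcal{O}_U$ to the pull-back of the Hodge filtration $\mathcal{C}$. The subgroup stabilizing $\mathrm{Fil}^{\bullet}_{\tilde{\chi}}$ is $\mathcal{P}_+$, so the torsor claim reduces to showing that the two filtrations are locally $\mathcal{G}$-conjugate, i.e.\ define the same section of $\mathcal{G}/\mathcal{P}_+\to \Spec W(\kappa)$. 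Since $\mathcal{G}/\mathcal{P}_+$ is proper and smooth, it is enough to check this on geometric points, which is exactly the defining property of the conjugacy class $[\chi]_{\kappa}$ recalled in Section \ref{Sectionofcocharacters} (the Hodge filtration of $\mathcal{A}_{x}$ at a geometric point $x$ is of type $\chi$ by construction of the integral canonical model from the Hodge-type datum).

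For (ii), once we know $\mathbb{I}_{+,\hat{x}}$ is a $\mathcal{P}_+$-torsor, its triviality follows from a Hensel / formal smoothness argument: $\mathcal{P}_+$ is smooth over $W(\kappa)$ (being a parabolic in a reductive group scheme), hence $\mathbb{I}_{+,\hat{x}}$ is smooth over the complete local ring $\widehat{\mathcal{O}}_{X,x}$, and any $k(x)$-section lifts to a $\widehat{\mathcal{O}}_{X,x}$-section by successive approximation along the powers of the maximal ideal. It therefore suffices to produce one $k(x)$-point of $\mathbb{I}_{+,x}$. Such a point is provided by the comparison with the (crystalline) Dieudonn\'e module of $\mathcal{A}_x[p^{\infty}]$: the tensors $s_{\dR}$ on $\mathbb{D}^{*}(\mathcal{A}_x[p^{\infty}])(W(k(x)))$ match $s\subset\Lambda^{*,\otimes}$ under some $\mathcal{G}(W(k(x)))$-conjugate, and the Hodge filtration on $\mathbb{D}^{*}(\mathcal{A}_x[p^{\infty}])(k(x))$ appears as $\mathrm{Fil}^{\bullet}_{\tilde{\chi}}\otimes k(x)$ up to $\mathcal{P}_{+}(k(x))$-translation by the type compatibility established in part~(i). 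Reducing mod $p$ gives the required point of $\mathbb{I}_{+,x}(k(x))$.

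The main obstacle is the type-matching step inside (i): one must ensure that globally, and not just pointwise, the Hodge filtration $\mathcal{C}\subset\mathcal{V}$ defines a section of the flag bundle $\mathbb{I}\times^{\mathcal{G}}(\mathcal{G}/\mathcal{P}_+)$ over $\mathcal{S}$. For this one uses the smoothness of $\mathcal{G}/\mathcal{P}_+$ together with the fact that $\mathcal{S}$ is smooth over $W(\kappa)$ and flat over $\mathcal{O}_{E,v}$, so that the geometric fibral check at all points of $\mathcal{S}$ actually upgrades to the desired section. Once this is in hand, (ii) is formal.
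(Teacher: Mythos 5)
Your proof takes a genuinely different route from the paper, and it has a gap in part (ii) that the paper's route is specifically designed to avoid.

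The paper does not assume the global fact that $\mathbb{I}$ (or $\mathbb{I}_+$) is a torsor over $\mathcal{S}$. Indeed, the remark immediately following the lemma says explicitly that this stronger statement is believed to hold and "follows readily" from results of Zhang and Kim--Rapoport, but that the authors "hope to give a whole proof of this stronger version in the future" — the lemma is, in the authors' own words, "ad hoc" precisely so that it does not need this input. The paper's proof instead reduces the torsor claim to faithful flatness, localizes at a closed point $x$ of $X$, notes its image $s\in\mathcal{S}$ lies in the special fibre, picks a closed point $s'$ of $\mathcal{S}$ specializing $s$ (so $k(s')$ is finite), applies Zhang's Lemma 2.3.2(2) to get a trivial $\mathcal{P}_+$-torsor over $\widehat{\mathcal{O}}_{\mathcal{S},s'}$, and then pulls back triviality along $\Spec\widehat{\mathcal{O}}_{\mathcal{S},s}\to\Spec\widehat{\mathcal{O}}_{\mathcal{S},s'}$ and $\hat{x}:\Spec\widehat{\mathcal{O}}_{X,x}\to\Spec\widehat{\mathcal{O}}_{\mathcal{S},s}$, using fppf descent from $\widehat{\mathcal{O}}_{\mathcal{S},s}$ to $\mathcal{O}_{\mathcal{S},s}$ for the flatness part. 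Your approach, by contrast, starts from the global $\mathcal{G}$-torsor statement for $\mathbb{I}$ over $\mathcal{S}$ and then checks that the Hodge filtration gives a section of the flag bundle $\mathbb{I}\times^{\mathcal{G}}(\mathcal{G}/\mathcal{P}_+)$; that is a cleaner and more conceptual argument, but it buys you a statement the paper deliberately does not want to rely on here, and it leans on the reducedness of $\mathcal{S}$ to upgrade a geometric-point check to a scheme-theoretic section — a point you flag but do not fully settle.

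The genuine gap is in your step (ii). You reduce triviality of $\mathbb{I}_{+,\hat{x}}$ to producing a $k(x)$-point of $\mathbb{I}_{+,x}$, and you propose to produce it from the crystalline Dieudonn\'e module $\mathbb{D}^*(\mathcal{A}_x[p^\infty])(W(k(x)))$. But $A$ is an arbitrary $p$-adic $W(\kappa)$-algebra, so the residue field $k(x)$ at a closed point $x$ of $X$ need not be perfect (let alone finite); $W(k(x))$ and the tensor-matching input from Kisin's crystalline comparison are then not available, and there is also no general reason that $H^1(k(x),\mathcal{P}_+)$ vanishes, so a $\mathcal{P}_+$-torsor over $k(x)$ need not be trivial. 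This is exactly the issue the paper sidesteps: by factoring through a \emph{closed} point $s'$ of $\mathcal{S}$ — which has finite residue field because $\mathcal{S}$ is locally of finite type over $W(\kappa)$ — one is in the situation where Zhang's lemma (and, behind it, Lang's theorem and crystalline Dieudonn\'e theory over $W(\mathbb{F}_q)$) actually applies, and then one pulls triviality back rather than trying to reprove it over $k(x)$. Your Hensel/formal-smoothness lifting step is fine once the $k(x)$-point exists; the problem is producing it. To repair the argument you would need to insert the same specialization-to-a-closed-point-of-$\mathcal{S}$ step that the paper uses, at which point the two proofs essentially converge.
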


\begin{proof}
	Note that if $G$ is a smooth group scheme over a scheme $S$, then a scheme $Y$ of finite presentation over $S$ is an fppf $G$-torsor over $ S $ if and only if the action of $G$ on $Y$ is free and transitive and $Y$ is faithfully flat over $S$. Indeed if $Y\to S$ is faithfully flat, then it is an fppf cover of $S$ and the scheme $Y\times_SY$ is a trivial $G\times_SY$-torsor over $Y$ since $Y\times_SY$ has a $Y$-point given by the diagonal map $Y\to Y\times_SY$ and the action of $G\times_SY$ on $Y\times_SY$ is free and transitive.
	
	For a closed point $ x $ of $ X $, we also denote by $x:\Spec \mathcal{O}_{X, x}\to \mathcal{S}$ the induced morphism. By definition of $\mathbb{I}_+$, the action of $\mathcal{P}_+$ on $ \mathbb{I}_+ $ is free and transitive. Hence due to the discussion above,  for the first assertion in the lemma it suffices to show that for each closed point $ x $ of $ X $, the pullback $\mathbb{I}_{+, x}$ is a $\mathcal{P}_+$-torsor over $ \Spec  \mathcal{O}_{X, x}$. Since $A$ is $ p $-adically complete, the image  in $ \mathcal{S} $ of $ x $, denoted by $ s $,  necessarily lies in the special fibre $ S $.
	
	On the other hand, by \cite[Lemma 2.3.2,  2)]{ChaoZhangEOStratification}, if $s'$ is any closed point of $ \mathcal{S} $ which is a specialization of $ s $ ($s'$ exists since $ \mathcal{S} $ is (locally) noetherian, see \cite[Tag 01OU]{stacks-project}), the fibre $ \mathbb{I}_{ +,\widehat{s'}}$ is a trivial $ \mathcal{P}_+ $-torsor over $ \widehat{\mathcal{O}}_{\mathcal{S}, s'} $. But since the canonical morphism $ \Spec \widehat{\mathcal{O}}_{\mathcal{S}, s}  \to \mathcal{S}$ factors through $\Spec \widehat{\mathcal{O}}_{\mathcal{S}, s'}\to \mathcal{S}$,  the fibre $ \mathbb{I}_{ +,\hat{s}}$ is also a trivial $ \mathcal{P}_+ $-torsor over $ \widehat{\mathcal{O}}_{\mathcal{S}, s} $. In particular, $\mathbb{I}_{ +,\hat{s}}$ is faithfully flat over $ \widehat{\mathcal{O}}_{\mathcal{S}, s} $. Now by faithfully flat descent for $\Spec  \widehat{\mathcal{O}}_{\mathcal{S}, s} \to \Spec \mathcal{O}_{\mathcal{S}, s}$, the fibre $ \mathbb{I}_{+,s}$ is also faithfully flat over $ \Spec \mathcal{O}_{\mathcal{S}, s} $. Now the first assertion follows from the fact that $x:\Spec \mathcal{O}_{X, x}\to \mathcal{S}$ factors through the canonical embedding $\Spec \mathcal{O}_{\mathcal{S}, s} \to \mathcal{S}$ while the second assertion follows from the fact that $ \hat{x}: \Spec \widehat{\mathcal{O}}_{X, x}\to \mathcal{S}  $ factors through $\Spec \widehat{\mathcal{O}}_{\mathcal{S}, s}\to \mathcal{S}$. 
\end{proof}	

\begin{remark}
	A similar statement holds true for $\mathbb{I}  $ (with respect to $\mathcal{G}$). The lemma above is in a sense an ad hoc one, since we believe that the following slightly stronger statement holds true: $\mathbb{I}_+$ is a $\mathcal{P}_+$-torsor over $ \mathcal{S} $ and $\mathbb{I}  $ is a $\mathcal{G}$-torsor over $\mathcal{S}$. In fact, the stronger assertion here concerning $\mathbb{I}$ follows readily from the combination of \cite[Lemma 2.3.2,  1)]{ChaoZhangEOStratification} and (3.2.1) in \cite{KimRapoportUniformisation}. We hope to give a whole proof of this stronger version in the future. 
\end{remark}



Write $ \mathcal{V}_0=\HH^1_{\dR}(\mathcal{A}_0/S)$ and $\mathcal{C}_0 \subset \mathcal{V}_0$ the Hodge filtrations of $ \mathcal{V}_0 $. By functoriality of the de Rham cohomology one knows that $  \mathcal{C}_0 \subset \mathcal{V}_0$ is the pullback to $ S $ of $ \mathcal{C}\subset\mathcal{V}  $. Denote by $\bar{s}_{\dR}\subset \mathcal{V}_{0}^{\otimes}$ the reduction of the set of tensors $s_{\dR}\subset \mathcal{V}^{\otimes}$, and by $\bar{s}$ the base change to $(\Lambda^*_{\kappa})^{\otimes}$ of $s\subset (\Lambda^*)^{\otimes}$. Now we define  the following schemes over $ S $,
\begin{align}\label{Torsor}
	&\mathrm{I}:=\Isom_S\big([\Lambda_{\kappa}^*, \bar{s}]\otimes O_{S}, [\mathcal{V}_0, \bar{s}_{\dR})\big],\\
	&\mathrm{I}_+:=\Isom_S\big([\Lambda_{\kappa}^*, \bar{s}, \Fil_{\chi, \kappa}^{\bullet} ]\otimes O_{S}, [\mathcal{V}_0, \bar{s}_{\dR}, \mathcal{V}_0\supset\mathcal{C}_0]\big),
\end{align}
where the $\Fil_{\chi, \kappa}^{\bullet}$ denotes the base change to $ S $ of the filtration $\Fil_{\tilde{\chi}}^{\bullet}$ in \eqref{Filtration}. Clearly $ \mathrm{I}, \mathrm{I}_{+} $ are base changes to $ S $ of $ \mathbb{I}, \mathbb{I}_{+}$ respectively. By Theorem 2.4.1, 2) in \cite{ChaoZhangEOStratification}, $ \mathrm{I} $ is a $ G $-torsor over $ S $ and $ \mathrm{I}_{+} $ is a $ P_{+} $-torsor over $ S $.

\subsection{Frobenius invariance of tensors $s_{\dR}$}\label{SectionFrobeniusInvariance}
Let $ R_0 $ be a $ \kappa $-algebra as in Lemma \ref{Frobenius lifts}, i.e., $ R_0 $ locally admits a finite $ p $-basis (e.g., $ R_0 $ is a smooth $ \kappa $-algebra of finite type), and $ (R, \varphi) $ a simple frame of $ R_0 $. Assume from now on that $ R $ is noetherian. Let $ x: \Spec R\to \mathcal{S} $ be a morphism of $ W(\kappa) $-schemes. Write $H=\mathcal{A}_{x}[p^{\infty}]$ and  $ H_{0}= H\otimes_RR_0$.

By Theorem \ref{ClassificationbyRmodules}, associated to the $ p $-divisible group $ H $ is a tuple $ (M, N, F=F_M, \nabla) $, where \begin{enumerate}[$\bullet$]
	\item $M=\mathbb{D}^{*}(H_0) (R)\cong \HH^{1}_{\dR}(\mathcal{A}_{x}/R)=\mathcal{V}_{x}$;
	\item $ F=F_M:M\to M $ is the Frobenius endomorphism of $ M $ (c.f. Section \ref{ClassificationofBTs/R_0});
	\item $ N\subset M $ is the Hodge filtration of $ M $, i.e., $ N $ is the preimage of $ \mathcal{C}_x $ in $ M $ under the canonical isomorphism $ M \cong \mathcal{V}_{x}$;
	\item  $ \nabla=\nabla_{M}: M\to M\otimes_{R}\hat{\Omega}_{R} $ is an integrable topologically nilpotent connection over $ d_{R} : R\to \hat{\Omega}_{R}$, whose existence is stated in Theorem \ref{ClassificationofBTs/R_0},  and with respect to which $ F $ is horizontal. 
\end{enumerate} 


By identifying the filtered modules $ M\supset N $ with $ \mathcal{V}_{x} \supset \mathcal{C}_{x}$ we also obtain a set of tensors $ s_{\dR, R}\subset M^{\otimes}$ over $ M$. 	
\begin{lemma}\label{Frobenius invariance of tensors}
	The Frobenius endomorphism $F_M: M\to M$, after inverting $ p $, preserves the tensors $s_{\dR,R}$ pointwise.
\end{lemma}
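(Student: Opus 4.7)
The plan is to show the difference $\delta := F_M^{\otimes}(s_{\dR,R}) - s_{\dR,R} \in M^{\otimes}[1/p]$ vanishes by combining horizontality with a reduction to a single geometric point. First I would observe that the tensors $s_{\dR,R}$ are parallel with respect to $\nabla$: by Kisin's construction (\cite{KisinIntegralModels}), the tensors $s_{\dR} \subset \mathcal{V}^{\otimes}$ on the integral model are parallel sections of the Gauss--Manin connection, a property that passes through the pullback by $x$ and the canonical identification $\HH^{1}_{\dR}(\mathcal{A}_x/R) \cong M = \mathbb{D}^{*}(H_0)(R)$. Combined with the horizontality of $F_M$ with respect to $\nabla$, which is built into the Dieudonn\'e crystal structure (Theorem \ref{ClassificationofBTs/R_0}), it follows that $\delta$ is itself a horizontal section of $M^{\otimes}[1/p]$.

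Next I would reduce to a pointwise check. Since $M^{\otimes}$ is finite projective over the noetherian ring $R$ and the canonical map $R \hookrightarrow \prod_{\mathfrak{m}} \widehat{R}_{\mathfrak{m}}$ into the product of completions at maximal ideals is injective, it suffices to verify $\delta \otimes_R \widehat{R}_{\mathfrak{m}}[1/p] = 0$ for each $\mathfrak{m}$. On the complete local ring $\widehat{R}_{\mathfrak{m}}$, which is formally smooth over $W(\kappa)$, a horizontal section of a finite free module with respect to a topologically quasi-nilpotent connection is uniquely determined by its value at the closed point, by a straightforward induction on the degree of the Taylor expansion in formal coordinates (the connection equation $\nabla(s)=0$ translates into a system of linear recursions that admits a unique solution for any prescribed initial value). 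Hence it is enough to show $\delta$ vanishes modulo the maximal ideal.

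For this last step I invoke Lemma \ref{Ausefullemma}, applied to the composition $R_{0} \twoheadrightarrow R/\mathfrak{m} = k_0 \hookrightarrow k$ with $k$ an algebraic closure of $k_0$, to obtain a homomorphism of simple frames $(R, \varphi) \to (W(k), \varphi)$. Base change of $M$, of $s_{\dR,R}$ and of $F_M$ along this map is functorial (Example \ref{ExampleofFrames}, (2)), and by faithful flatness of $W(k_0)[1/p] \to W(k)[1/p]$ the vanishing of $\delta$ modulo $\mathfrak{m}$ is implied by its vanishing after base change to $W(k)[1/p]$. This reduces the problem to the case $R = W(k)$, $k$ algebraically closed (where $\nabla$ is trivial).

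In this case the statement is a consequence of Kisin's construction of the integral canonical model $\mathcal{S}$ in \cite{KisinIntegralModels}: the point $x \in \mathcal{S}(W(k))$ gives rise to an $F$-crystal with $\mathcal{G}$-structure, namely an isomorphism $M \cong \Lambda^{*}_{W(k)}$ matching $s_{\dR,R}$ with $s$ under which $F_M$, after inverting $p$, takes the form $b \cdot (\id \otimes \sigma)$ for some $b \in \mathcal{G}(W(k)[1/p])$. Since $\mathcal{G}$ stabilizes $s$ pointwise by its very definition (via the contragredient representation of Section \ref{Tensorsandcontragredient}, cf.\ \eqref{DefofGSpecialfibre}) and $\sigma$ fixes $s$ because $s$ is defined over $\mathbb{Z}_p$, we conclude that $F_M^{\otimes}(s_{\dR,R}) = s_{\dR,R}$ in $M^{\otimes}[1/p]$. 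The main obstacle in the plan is the invocation of Kisin's result at the $W(k)$-point, which supplies the $\mathcal{G}$-structure on the $F$-crystal; the remaining reduction steps are essentially formal descent arguments.
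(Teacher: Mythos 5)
Your proposal is correct and takes a genuinely different route from the paper's own argument. The paper factors the morphism $x:\Spec R\to\mathcal{S}$ through the complete local ring $\widehat{\mathcal{O}}_{\mathcal{S},s}$ at a closed point $s$ of the special fibre, identifies this ring with Kisin's deformation ring $R_{G_W}$, realizes it as a quotient of Faltings's versal ring $B$ with an explicit Frobenius lift $\varphi_B$ sending formal variables to their $p$-th powers, and then pulls back the tensor-invariance proven at the versal level (\cite[1.5.4]{KisinIntegralModels}, cf.\ \cite[Corollary 4.8]{Wortmann}). You instead exploit the two horizontality facts already built into the formalism: the tensors $s_{\dR,R}$ are $\nabla$-parallel (by Kisin's construction on the generic fibre plus density) and $F_M$ is $\nabla$-horizontal (part of the Dieudonn\'e crystal structure, Theorem \ref{ClassificationofBTs/R_0}). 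Hence the defect $\delta = F_M^{\otimes}(s_{\dR,R})-s_{\dR,R}\in M^{\otimes}[\tfrac{1}{p}]$ is a horizontal section, and the question localizes to the vanishing of $\delta$ at a single geometric point, which Lemma \ref{Ausefullemma} supplies in the form of a homomorphism of simple frames $(\widehat{R}_{\mathfrak{m}},\varphi)\to(W(k),\varphi)$; there the assertion is Kisin's $\mathcal{G}$-structure on the $F$-isocrystal of a $W(k)$-point of $\mathcal{S}$. This is more conceptual and bypasses the explicit comparison of Frobenius lifts on the versal ring; what one gives up is that the argument now leans on the theory of horizontal sections of integrable quasi-nilpotent connections over complete local rings, whereas the paper's route is a direct functoriality chase.

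Two small points of care. First, the sentence ``Hence it is enough to show $\delta$ vanishes modulo the maximal ideal'' is imprecise: $\delta$ lives in $M^{\otimes}[\tfrac{1}{p}]$ and $p\in\mathfrak{m}$, so reduction modulo $\mathfrak{m}$ is vacuous. What you actually need (and then correctly do in the next sentence) is vanishing of $\delta$ after the specialization $\widehat{R}_{\mathfrak{m}}\to W(k)$ furnished by Lemma \ref{Ausefullemma}; the uniqueness of horizontal sections is with respect to evaluation along such a $W(k)$-valued point of the formal disk, not reduction to the residue field. Second, the uniqueness step is cleanest when $\widehat{R}_{\mathfrak{m}}$ is an honest power series ring $W(k_0)[[y_1,\dots,y_d]]$, i.e.\ when $R_0$ is smooth over $\kappa$ rather than merely admitting a finite $p$-basis; in the application ($R_0=A_{i,0}$ an affine open of the torsor $\mathrm{I}_+$ over the smooth scheme $S$) this is automatic, so the restriction is harmless here, but it is worth flagging that your argument in this form is slightly less general than the lemma as stated.
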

\begin{proof}
	This essentially follows from the discussion in \cite[1.5.4]{KisinIntegralModels}. Indeed, for any maximal idea $ \mathfrak{m} $ of $ R $,	by Lemma \ref{Frobenius lifts}, (4) the Frobenius lift $ \varphi: R\to R $ induces a simple frame ($ \hat{R}_{\mathfrak{m}}, \varphi )$ of $  \hat{R}_{\mathfrak{m}} $, compatible with $ (R, \varphi_R) $. Note that $\hat{R}_{\mathfrak{m}}$ is necessarily $ p $-adically complete ( since $  \mathfrak{m}  $ contains $ p $). Hence it suffices to show the lemma after base change to $\hat{R}_{\mathfrak{m}}$ for all $ \mathfrak{m} $. In particular, we may assume that $ R $ is a local ring. 
	
	Let $ s'\in \mathcal{S} $ be the image of the closed point of $ \Spec R $, which necessarily lies in the special fibre $ S\subset \mathcal{S} $. As in the proof of Lemma \ref{Torsoroverintegralmodel}, there exists a closed point $s\in  S $ such that $ s $ is a specialization in $ \mathcal{S} $ of $ s' $. Then the morphism $ x: \Spec R\to \mathcal{S} $ facts through the canonical embedding $ s: \Spec\hat{ \mathcal{O}}_{\mathcal{S}, s} \to \mathcal{S}$.  We know from the proof of Corollary 2.3.9 in \cite{KisinIntegralModels} (the second paragraph) that $ \hat{ \mathcal{O}}_{\mathcal{S}, s} $ is isomorphic to the deformation ring $ R_{G_W} $ of the $ p $-divisible group $H_{0,s}:= \mathcal{A}_s[p^{\infty}]\otimes_{\hat{ \mathcal{O}}_{\mathcal{S}, s}} k(s)$ (equipped with Tate tensors) over $ k(s) $ induced by the embedding $ s $. The ring $ R_{G_W} $ is defined in the the proof of Proposition 3.5 in loc. cit. but note that there is a notation difference since this ring is irrelevant of our $ R $ in question: in order to avoid confusion we will denote it by $ A $. 
	
	We may identify $\hat{ \mathcal{O}}_{\mathcal{S}, s}$ with $ A $. The ring $ A $ is in fact a quotient of Faltings's versal deformation ring $ B $ of $ H_{0,s} $, which is isomorphic to a ring of power series over $W(k(s))$ (Section 1.5 of loc. cit.). One may arrange so that $ A=B/I $, with $ I\subset B $ an ideal generated by some formal variables of $ B $. Then if we denote by $ \varphi_B: B\to B $ the Frobenius lift of $ B $ obtained by sending all formal variables of $ B $ to their $ p $-th powers, $ \varphi_{B} $ induces a Frobenius lift $ \varphi_{A}: A\to A $, i.e., we obtain a homomorphism of simple frames $ (B, \varphi_B) \to (A, \varphi_A)$.  Write $ p: B\to A $ for the canonical projection. Let $ (N, F_N, \nabla_{N}) $ be the Dieudonn\'e module of the tautological $ p $-divisible group $ H_{B} $ over $ B $ as in \cite[Section 1.5]{KisinIntegralModels} (denoted by $ (M, \varphi, \nabla) $ in loc.cit.). Then the triple
	\begin{align*}
		(L:=N\otimes_{B}A, F_L: =F_N\otimes \varphi_{A}, \nabla_{N}\otimes\id_A)
	\end{align*}
	corresponds to the Dieudonn\'e module of $ p^*H_{B} $ over $ A $. Let $ g: A\to R $ be the induced homomorphism and write $ f =g\circ p $. Since $ \varphi_R\circ f $ and $ f\circ \varphi_{B} $ become the same after modulo $ p $, we obtain from the definition of Dieudonn\'e crystals a canonical isomorphism 
	\begin{align*}
		\varepsilon: \varphi_{R}^* f^* N\cong \mathbb{D}^*(\varphi_{R}^* f^* H_{B}\otimes_{R}R_0)= \mathbb{D}^*(f^*\varphi_{B}^*  H_{B}\otimes_{R}R_0)\cong f^*\varphi_{B}^*  N.
	\end{align*}
	
	Now we have the following identifications: 
	\begin{align*}
		M=N\otimes_{B}R,\ \  \nabla_{M}=\nabla_{N}\otimes_{B}\id_R,\ \  F_M^{\mathrm{lin}}=(F_N^{\mathrm{lin}} \otimes\id_R)\circ \varepsilon=(F_L^{\mathrm{lin}} \otimes\id_R)\circ \varepsilon.
	\end{align*}

	Note that the tensors $ s_{\dR, R} $ are obtained from the tensors $ s_{\dR} $ over $ N $ by base change, i.e., $  s_{\dR, R}= s_{\dR}\otimes_B1$. By \cite[1.5.4]{KisinIntegralModels}, one finds
	\begin{align*}
		\varepsilon \big( s_{\dR}\otimes_B1\otimes_{R, \varphi_{R}} 1\big)=  s_{\dR}\otimes_{B, \varphi_{B}}1\otimes_{B} 1=  s_{\dR, A}\otimes_{A, \varphi_{A}}1\otimes_{A} 1\in \big( \varphi_{A}^{*}L\otimes_{A}R\big)^{\otimes}
	\end{align*}
	On the other hand, we know that $F_L^{\mathrm{lin}} \otimes\id_R  $ sends the tensors $ s_{\dR, A}\otimes_{A, \varphi_{A}}1 $ to $s_{\dR, A}$ (see  \cite[Corollary 4.8]{Wortmann} for a clear statement though the proof is essentially in \cite{KisinIntegralModels}). Hence $ F_M ^{\mathrm{lin}}$ sends the tensors $ s_{\dR, R} \otimes_{R,\varphi_{R}}1$ to $ s_{\dR, R} $. In other words, $ s_{\dR, R} $ is $ F_M $-invariant.     
\end{proof}

Each element $ x\in \mathbb{I}(R)$ by definition gives an isomorphism $\beta_{x}: \Lambda^{*}_{R}\cong M$, sending the tensors $ s_{R} $ to the tensors $ s_{\dR, R} $ pointwise. 
We can transform the Frobenius map $F$ to $\Lambda_R^*$ via $\beta_{R}$ by defining $$F_{x}:=\beta_{x}^{-1}\circ F\circ \beta_{x}.$$
Note that we have a canonical isomorphism $\varepsilon: (\varphi^* \Lambda_{R}^*, \ \varphi^{*}s_{R})\longrightarrow (\Lambda_{R}^*, s_{R}).$ Define
\begin{align}\label{DefoflinearFrob}
	F_{x}^{\mathrm{lin}}:=\big(V_R^{*}\xrightarrow{\varepsilon^{-1}} \varphi^*V_R^{*}=V_R^{*}\otimes_{R, \varphi} R\xrightarrow{F_x\otimes \id} V_R^{*}\big),
\end{align}
which by Lemma  \ref{Frobenius invariance of tensors} preserves the tensors $ s_{R} $, i.e., we have $ (F_{x}^{\mathrm{lin}})^{\vee}\in \mathcal{G}(R[\frac{1}{p}]).$  
For the next lemma, one may recall the definition of the cocharacter $ \tilde{\mu}: \mathbb{G}_{m, W(\kappa)}\to \mathcal{G}_{W(\kappa)} $  in \eqref{DefofTildeMu}. 
\begin{lemma}\label{FrobeniuselementLocus}
	For any $x\in \mathbb{I}_+(R)$ we have 
	$(F_{x}^{\mathrm{lin}})^{\vee}\in\mathcal{G}(R)\tilde{\mu}(p)$ and for any $x\in \mathbb{I}(R)$ we have 
	$(F_{x}^{\mathrm{lin}})^{\vee}\in\mathcal{G}(R)\tilde{\mu}(p)\mathcal{G}(R)$. 
\end{lemma}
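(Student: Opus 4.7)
The plan is to write $F_x^{\mathrm{lin}} = \Gamma_x^{\mathrm{lin}} \circ \tilde{\mu}(p)^{\vee}$, where $\Gamma_x^{\mathrm{lin}}$ is an integral $R$-linear automorphism of $\Lambda_R^*$ preserving the tensors $s_R$, so that $(\Gamma_x^{\mathrm{lin}})^{\vee}\in \mathcal{G}(R)$; applying the contragredient will then give the first assertion, and the second will follow by fppf descent from the first. The key input is the classical fact from Dieudonn\'e theory (cf.\ Lemma \ref{propertiesofKimKisinwindows}) that the crystalline Frobenius $F$ on the Dieudonn\'e module $M$ carries the Hodge filtration $N$ into $pM$.

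For the first statement, let $x \in \mathbb{I}_+(R)$. Since $\beta_x$ respects both tensors and filtrations, $F(N) \subset pM$ transports to $F_x(\mathrm{Fil}_{\tilde\chi, R}^1) \subset p\Lambda_R^*$. Under the canonical identification $\varepsilon\colon \varphi^*\Lambda^*_R \cong \Lambda^*_R$ (arising from the fact that $\Lambda$ is defined over $\mathbb{Z}_{(p)}$), the subspace $\varphi^*\mathrm{Fil}_{\tilde\chi,R}^1$ corresponds to $(\Lambda^*)^1_{\tilde\mu, R}$, since pulling back the cocharacter $(\cdot)^\vee \circ \tilde\chi$ along $\varphi_{W(\kappa)}$ produces $(\cdot)^\vee \circ \tilde{\chi}^{(p)} = (\cdot)^\vee \circ \tilde\mu$ by the very definition $\tilde\mu = \tilde\chi^{(p)}$. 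Hence $F_x^{\mathrm{lin}}$ sends $(\Lambda^*)^1_{\tilde\mu, R}$ into $p\Lambda_R^*$, and the formula
\begin{equation*}
\Gamma_x^{\mathrm{lin}} := \tfrac{1}{p}\, F_x^{\mathrm{lin}}\big|_{(\Lambda^*)^1_{\tilde\mu, R}} \oplus F_x^{\mathrm{lin}}\big|_{(\Lambda^*)^0_{\tilde\mu, R}}
\end{equation*}
defines an $R$-linear endomorphism of $\Lambda^*_R$ satisfying $F_x^{\mathrm{lin}} = \Gamma_x^{\mathrm{lin}} \circ \tilde{\mu}(p)^{\vee}$, because $\tilde\mu(p)^\vee$ acts as $p$ on $(\Lambda^*)^1_{\tilde\mu,R}$ and as the identity on $(\Lambda^*)^0_{\tilde\mu,R}$. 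The adapted-deformation argument of Lemma \ref{LemmaYaonie} (passing via $p$-adic completions and the Witt rings of residue-field closures, as in its proof) shows that $\Gamma_x^{\mathrm{lin}}$ is an isomorphism; and since $F_x^{\mathrm{lin}}$ preserves tensors after inverting $p$ (Lemma \ref{Frobenius invariance of tensors}) while $\tilde\mu(p)^\vee \in \mathcal{G}(R[\tfrac{1}{p}])$ does, the identity $\Gamma_x^{\mathrm{lin}} = F_x^{\mathrm{lin}}\circ (\tilde\mu(p)^\vee)^{-1}$ shows $\Gamma_x^{\mathrm{lin}}$ preserves them too. Since $(\cdot)^\vee\colon \GL(\Lambda^*) \to \GL(\Lambda)$ is a group homomorphism (the two order-reversals from inversion and transpose cancel), applying it yields $(F_x^{\mathrm{lin}})^\vee = (\Gamma_x^{\mathrm{lin}})^\vee \cdot \tilde\mu(p) \in \mathcal{G}(R)\tilde\mu(p)$.

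For the second statement, let $x \in \mathbb{I}(R)$. Since $\mathbb{I}_+$ is a $\mathcal{P}_+$-subtorsor of the $\mathcal{G}$-torsor $\mathbb{I}$ and $\mathcal{G}/\mathcal{P}_+$ is smooth, fppf-locally on $R$ we can find $g \in \mathcal{G}(R')$ with $x_{R'}\cdot g \in \mathbb{I}_+(R')$ over some fppf cover $R\to R'$. The twist $\beta_{xg} = \beta_x \circ g$ yields $F_{xg}^{\mathrm{lin}} = g^{-1} \circ F_x^{\mathrm{lin}} \circ \varphi(g)$ with $\varphi(g) \in \mathcal{G}(R')$, and applying the first statement to $xg$ over $R'$ puts $(F_x^{\mathrm{lin}})^\vee$ in $\mathcal{G}(R')\tilde\mu(p)\mathcal{G}(R')$. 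Since $(F_x^{\mathrm{lin}})^\vee$ itself lies in $\mathcal{G}(R[\tfrac{1}{p}])$ and its image in $\mathcal{G}(R'[\tfrac{1}{p}])$ admits such a factorization, fpqc descent yields $(F_x^{\mathrm{lin}})^\vee \in \mathcal{G}(R)\tilde\mu(p)\mathcal{G}(R)$. The main subtlety will be the precise identification $\varepsilon(\varphi^*\mathrm{Fil}^1_{\tilde\chi,R}) = (\Lambda^*)^1_{\tilde\mu,R}$ via $\tilde\mu = \tilde\chi^{(p)}$; once that is in place, the remainder is straightforward bookkeeping with tensors and cocharacters.
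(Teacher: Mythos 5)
Your proof is essentially the same as the paper's. Both proofs hinge on the identity $F_x^{\mathrm{lin}} = \Gamma_x^{\mathrm{lin}}\circ\tilde\mu(p)^\vee$; the paper establishes it via a commutative diagram relating $\tilde\mu(p)^\vee$ on $\Lambda^*_R$ to $\varphi^*(\tilde\chi(p)^\vee)$ on $\varphi^*\Lambda^*_R$ via $\varepsilon$, while you obtain the same identity by directly observing that $\varepsilon$ carries the $\tilde\mu$-grading of $\Lambda^*_R$ to the pullback of the $\tilde\chi$-grading and writing $\Gamma_x^{\mathrm{lin}} = \tfrac1p F_x^{\mathrm{lin}}\big|_{(\Lambda^*)^1_{\tilde\mu}}\oplus F_x^{\mathrm{lin}}\big|_{(\Lambda^*)^0_{\tilde\mu}}$. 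Unfolding the paper's $\Gamma_x^{\mathrm{lin}}:=\beta_x^{-1}\circ\Gamma^{\mathrm{lin}}\circ\varphi^*(\beta_x)\circ\varepsilon$ using \eqref{DefoflinearFrob} shows these agree; your appeal to $F(N)\subset pM$ and to Lemma \ref{LemmaYaonie} for isomorphy is exactly the content hidden in the paper's diagram, and your check that $\Gamma_x^{\mathrm{lin}}$ preserves tensors (hence $(\Gamma_x^{\mathrm{lin}})^\vee\in\mathcal{G}(R)$) matches the paper's implicit use of Lemma \ref{Frobenius invariance of tensors}.

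One caution on the second assertion: the paper dispatches it with ``the first part implies the second,'' and you make the mechanism explicit via an fppf cover $R\to R'$ trivializing the $\mathcal{P}_+$-torsor $\mathbb{I}_{+,\bar x}$ and the transformation rule $(F_{xg}^{\mathrm{lin}})^\vee = g^{-1}(F_x^{\mathrm{lin}})^\vee\varphi(g)$ — that computation is correct. But the final step, concluding $(F_x^{\mathrm{lin}})^\vee\in\mathcal{G}(R)\tilde\mu(p)\mathcal{G}(R)$ by ``fpqc descent,'' is not automatic: membership in the double coset $\mathcal{G}(R)\tilde\mu(p)\mathcal{G}(R)\subset\mathcal{G}(R[\tfrac1p])$ is not on its face a sheaf condition, so descent requires justification (e.g.\ by reformulating the condition as one on the relative position of the lattices $\Lambda_R$ and $g\Lambda_R$, which \emph{is} local). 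The paper leaves this equally unaddressed, and since only the $\mathbb{I}_+$ case is used in the subsequent construction of $\theta$, this is a minor point — but you should not present ``fpqc descent'' as though it were a complete argument.
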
 
\begin{proof}
	We show only the first part of the claim since it implies the second part. 
	
	For any $x\in \mathbb{I}_+(R)$, the weight decomposition $\Lambda_R^*=\Lambda_R^{*,1}\oplus \Lambda_R^{*,0}$ (cf. \eqref{weightdecomposition}) induced by the cocharacter $\tilde{\chi}_R: \mathbb{G}_{m, R}\to \mathcal{G}_R$ induces via $\beta_{x}$ a decomposition $M=N\oplus L$. It suffices to show the following:
	\begin{align}\label{equation decompostion}
		F_{x}^{\mathrm{lin}}=  \Gamma^{\mathrm{lin}}_{x} \circ \big(\tilde{\mu}(p)\big)^{\vee}, \ \ \ \text{with}\ \  \Gamma^{\mathrm{lin}}_{x}:=\beta_{x}^{-1}\circ \Gamma^{\mathrm{lin}}\circ \varphi^{*}(\beta_{x}) \circ \varepsilon,
	\end{align}
	where $\Gamma^{\mathrm{lin}}$ is defined as in \eqref{DefinitionOfGammaLin}. Indeed this can been seen from the following commutative diagram 
	\begin{align*}
		\xymatrixcolsep{4pc}\xymatrix{V_R^{*}\ar[d]_{\varepsilon}\ar[rr]^{\big(\tilde{\mu}(p)\big)^{\vee}}&&V_R^*\ar[d]^{\varepsilon}\ar[rr]^{\Gamma_{x}^{\mathrm{lin}}}&&V_R^*\ar[dd]^{\beta_{x}}\\	\varphi^* V_R^*\ar[rr]^{\varphi^{*}\big(\tilde{\chi}(p)^{\vee}\big)}\ar[d]_{\varphi^*(\beta)}&&\varphi^*V_R^*\ar[d]_{\varphi^*(\beta)}&&\\
			N^{(\varphi)}\oplus L^{(\varphi)}\ar[rr]^{p\cdot\id \oplus \id }&&N^{(\varphi)}\oplus L^{(\varphi)}\ar[rr]^{\Gamma^{\mathrm{lin}}}&&M}
	\end{align*}
\end{proof}

\subsection{The morphism $ \theta:\mathrm{I}_+\to \mathcal{D}_1$}
As suggested by the title, in this subsection we construct a morphism of $ \kappa $-schemes from $ \mathrm{I}_{+}$ to $ \mathcal{D}_{1}$. We also give an explicit description of $ \theta $ on geometric points, where we skip the notion of Kim-Kisin windows and adapted deformations.

\subsubsection{Main Construction $ \theta:\mathrm{I}_+\to \mathcal{D}_1$ }\label{Maintheorem1}

\textbf{Affine open coverings for $\mathbb{I}_+$.}

Take an affine open covering $ \{\Spec\tilde{R}_i\to \mathcal{S}\}_{i\in I} $ of the integral model $ \mathcal{S} $ over $W(\kappa) $ such that 
\begin{align}
	\text{	$\mathcal{C}_{\tilde{R}_i}\subset \mathcal{V}_{\tilde{R}_i} $ are free modules over $ \tilde{R}_{i} $ for each $ i $}.  
\end{align}\label{Covering}
Since $ \mathcal{S} $ is quasi-projective (hence separated), the intersection of 	$\Spec \tilde{R}_i$ and $\Spec \tilde{R}_j$ is again affine, and hence is the spectrum of some $ W(\kappa)$-algebra, say $ \tilde{R}_{ij} $. For each $ i, j $, since the torsors $ \mathbb{I}_{+, \tilde{R}_i}, \mathbb{I}_{+, \tilde{R}_{ij}} $ are affine over $ \tilde{R}_i $ and over $ \tilde{R}_{ij} $ respectively, we can write
\begin{align}
	\mathbb{I}_{+, \tilde{R}_i} =\Spec \tilde{A_i}, \ \mathbb{I}_{+, \tilde{R}_{ij}} =\Spec \tilde{A}_{ij}
\end{align} 
for some $ W(\kappa) $-algebras $ \tilde{A}_i $ and $ \tilde{A}_{ij} $. Denote by $ R_{i,0}, A_{i,0}, R_{ij,0}, A_{ij,0} $ the reduction modulo $ p $ of $ \tilde{R}_{i}, \tilde{A}_{i}, \tilde{R}_{ij}, \tilde{A}_{ij} $ respectively. 
Clearly, we obtain affine open coverings 
\begin{align}
	\{\Spec R_{i,0} \to S\}_{i\in I},\ \  \{\Spec A_{i,0} \to \mathrm{I}_+\}_{i\in I}
\end{align} for $ S $ and for $ \mathrm{I}_+ $ respectively, with 
\begin{align*}
	\Spec R_{i,0}\cap\Spec R_{j,0}=\Spec R_{ij,0}, \ \ \ \Spec A_{i,0}\cap\Spec A_{j,0}=\Spec A_{ij,0}.
\end{align*}

In the following we do the construction in four steps.

\textbf{Step 1: Preparations}

Write $ H_{i,0}:=\mathcal{A}[p^{\infty}]_{A_{i,0}} $ and let $ A_i $, $ A_{ij} $ be the $ p $-adic completions of $ \tilde{A}_i, \tilde{A}_{ij} $ respectively. The induced morphism $ x_{i}:\Spec A_{i} \to \mathbb{I}_+ $ gives an isomorphism of free  $ A_i $-modules, 
$$ \beta_{x_i}: \Lambda^*_{A_i}\cong M_i,$$
which respects the filtrations and sends the tensors $ s_{A_i} $ to the tensors $ s_{\dR, A_i} $ pointwise. Here as usual $ M_i $ is defined as
\begin{align*}
	M_i:=\HH^1_{\dR}(\mathcal{A}_{A_i}/A_i)\cong \mathbb{D}^{*}(H_{i,0})(A_i).
\end{align*} 
We write 
$$\mathfrak{S}_i=\mathfrak{S}(A_i),\ \ \  \mathfrak{S}_{ij}=\mathfrak{S}(A_{ij}).$$
Denote by $ M_{i,0}, \mathfrak{S}_{i,0}, \mathfrak{S}_{ij,0} $, $ x_{i,0} $ and $ \beta_{x_{i,0}} $ the reduction modulo $ p $ of $ M_i,  \mathfrak{S}_i, \mathfrak{S}_{ij}, x_{i}$ and $ \beta_{x_i} $ respectively.
For each $ i\in I $ we choose a Frobenius lift $ \varphi_{i}$ for $ A_i $ and denote by  $ \chi_{x_{i}}: \mathbb{G}_{m, A_i}\to \GL(M_i) $ the cocharacter of  $\GL(M_i)$ over $ A_i $ induced by $\chi_{A_i}: \mathbb{G}_{m,A_i}\to \mathcal{G}_{A_i} $ via $ \beta_{x_i}$.  Consider the adapted deformation $ \underline{\mathcal{M}_i}=(\mathcal{M}_i, \Phi(\varphi_i, \chi_{x_i}), \ldots) $ of $H_{i,0} $ w.r.t. the pair $ (\varphi_{i}, \chi_{x_i}) $\footnote{Here in the Kim-Kisin window $ \underline{\mathcal{M}_i} $ we omit all terms except the underlying $ \mathfrak{S}(A_i) $-module $ \mathcal{M}_i $ and its Frobenius $ \Phi(\varphi_i, \chi_{x_{i}}) : \mathcal{M}_i^{(\varphi_i)}\to \mathcal{M}_i$.}. We refer to Section \ref{SectionofConstructionofKisinWindows} for details on the Kim-Kisin window $\underline{ \mathcal{M}_i} $ but recall that we use 
\begin{align}\label{Phi_0}
	\Phi_0(\varphi_i, \chi_{x_{i}}): \mathcal{M}_{i,0}^{(\varphi)}\to \mathcal{M}_{i,0} 
\end{align} 
to denote the reduction modulo $ p $ of the Frobenius $ \Phi(\varphi_i, \chi_{x_{i}}) : \mathcal{M}_i^{(\varphi_i)}\to \mathcal{M}_i$, where 
\begin{align*}
	\mathcal{M}_i:= M_i\otimes_{A_i}\mathfrak{S}_i,\ \ \  \mathcal{M}_{i,0}:= M_{i,0}\otimes_{A_{i,0}}\mathfrak{S}_{i,0}.
\end{align*}   

\textbf{Step 2: Construction of local morphisms $ \theta_i : \Spec A_{i,0}\to \mathcal{D}_{1}$.}

For each $ i\in I $, we construct a morphism of $ \kappa $-schemes $ \theta_{i}: \Spec A_{i,0}\to \mathcal{D}_1 $, equivalently, an element $ \theta_{i}(x_{i,0}) \in \mathcal{D}_{1}(A_{i,0})$, in the following way. Define 
\begin{align}\label{longformula}
	\Phi_0(\varphi_i, x_{i}): =\big((\beta_{x_{i,0}}\otimes\id_{\mathfrak{S}_{i,0}})^{-1}\circ \Phi_0(\varphi_{i}, \chi_{x_i})\circ \varphi(\beta_{x_{i,0}}\otimes\id_{\mathfrak{S}_{i,0}})\big)^{\vee}
\end{align}
which after inverting $ u\in  \mathfrak{S}_{i,0}$ lies in $\GL\big(\Lambda_{\mathfrak{S}_{i,0}}[\frac{1}{u}]\big)$, where we use the contragredient representation $$(\cdot)^{\vee}: \GL\big(\Lambda^{*}_{\mathfrak{S}_{i,0}}[\frac{1}{u}]\big)\longrightarrow \GL\big(\Lambda_{\mathfrak{S}_{i,0}}[\frac{1}{u}]\big).$$
By Lemma \ref{Varphipreservestensors} below, 	$\Phi_0(\varphi_i, x_{i})$ lies in $ \mathcal{D}(A_{i,0}) $. Now we define 
\begin{align}\label{The morphism theta}
	\theta_{i}(x_{i,0}): =  \overline{\Phi_0(\varphi_i, x_{i})},
\end{align}
where $ \overline{\Phi_0(\varphi_i, x_{i})} $ denotes the image in $ \mathcal{D}_1(A_{i,0}) $ of $\Phi_0(\varphi_i, x_{i})\in \mathcal{D}(A_{i,0})$. By Lemma \ref{Gluinglemma2}, the element $ \theta_{i}(x_{i,0}) $ is independent of the choice of the Frobenius lift $ \varphi_i $ of $ A_i $.

\begin{lemma}\label{Varphipreservestensors}
	$\Phi_0(\varphi_i, x_{i})$ lies in $ \mathcal{D}(A_{i,0}) $.
\end{lemma}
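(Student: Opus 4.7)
The goal is to show that the element $\Phi_0(\varphi_i, x_i)$, which a priori only lives in $\GL(\Lambda_{\mathfrak{S}_{i,0}}[\tfrac{1}{u}])$, in fact lies in the subset $\mathcal{K}(A_{i,0})\,\mu(u)\,\mathcal{K}(A_{i,0})$ of $\mathcal{L}G(A_{i,0})$, which by Proposition \ref{Representability of D} is the value at $A_{i,0}$ of the presheaf underlying $\mathcal{D}$. This reduces to two claims: (a) $\Phi_0(\varphi_i, x_i)$ preserves the tensors $s_{\mathfrak{S}_{i,0}}$ (so it lies in $\mathcal{L}G(A_{i,0})$), and (b) it lies in the correct $(\mathcal{K},\mathcal{K})$-double coset.

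For claim (a), I would use three inputs: $\beta_{x_i}$ sends $s_{A_i}$ to $s_{\dR,A_i}$ pointwise by definition of $\mathbb{I}$; the Frobenius $F_{M_i}$ preserves $s_{\dR,A_i}$ after inverting $p$ (Lemma \ref{Frobenius invariance of tensors}); and $\chi_{x_i}$ is the transport of $\chi_{A_i}: \mathbb{G}_{m,A_i} \to \mathcal{G}_{A_i}$ along $\beta_{x_i}$, hence its values lie in $\mathcal{G}(A_i)$. By Remark \ref{RemarkofKisinWindows}(a), the Frobenius of the adapted deformation factors as $\Phi(\varphi_i, \chi_{x_i}) = \chi_{x_i}^{(\varphi)}(E) \circ \Psi$ where $\Psi = \Gamma \otimes_{A_i}\varphi_{\mathfrak{S}_i}$ is a $\varphi$-linear isomorphism whose linearization is (a variant of) $F_{x_i}^{\mathrm{lin}}$ in the sense of \eqref{equation decompostion}. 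Tracking the contragredient through these identifications, one sees that each factor preserves tensors, hence so does the product $\Phi_0(\varphi_i, x_i)$, placing it in $\mathcal{L}G(A_{i,0})$.

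For claim (b), I would leverage the same factorization. After transport via $\beta_{x_i}$ and the contragredient, the first piece $\bigl(\chi_{x_i}^{(\varphi)}(E)\bigr)^\vee$ equals $\bigl(\tilde{\chi}^{(\varphi)}(E)\bigr)^\vee \in \mathcal{G}(\mathfrak{S}_i)$, while the piece arising from $\Psi$ reduces modulo $p$ to an element of $\mathcal{K}(A_{i,0})$ (since $\Psi$ is a $\varphi$-linear isomorphism, its matrix in any basis adapted to the normal decomposition is congruent to an element of $G(A_{i,0})$ modulo $u$). Reducing modulo $p$, $E \bmod p = u$ and $\varphi(E)\bmod p = u^p$, so $\bigl(\tilde{\chi}^{(\varphi)}(E)\bigr)^\vee$ becomes $\chi^{(\varphi)}(u)^\vee$. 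Under the canonical identifications of Section \ref{Section2Frobeni}, combined with $\mu = \Frob_{G_\kappa/\kappa}\circ \chi$ and formula \eqref{ExpliteFromular4TwoFrobinii}, this element is exactly $\mu(u)$. Combining with Lemma \ref{FrobeniuselementLocus} to absorb the ``$\tilde{\mu}(p)$'' factor into the $\mathcal{K}$-part, we conclude $\Phi_0(\varphi_i, x_i) \in \mathcal{K}(A_{i,0})\mu(u)\mathcal{K}(A_{i,0})$.

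The main obstacle will be the bookkeeping in step (b): one must simultaneously track the contragredient $(\cdot)^\vee$, the two Frobenii $\sigma$ and $\varphi$ on $\mathcal{L}G$ (cf.\ Section \ref{Section2Frobeni}), the distinction between $\chi$ and $\mu$, and the reductions $E\bmod p = u$ versus $\varphi(E)\bmod p = u^p$. In particular, the delicate point is verifying that the sandwiching elements have no $u$-pole and thus genuinely lie in $\mathcal{K}(A_{i,0})$; this uses crucially that the linearization $\Gamma^{\mathrm{lin}}$ is an isomorphism (Lemma \ref{LemmaYaonie}) so that the ``bad'' contribution $\tfrac{\varphi(E)}{p}$ on the weight-$1$ part is absorbed entirely into the single factor $\mu(u)$.
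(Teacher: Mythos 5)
Your approach is essentially the one the paper takes, but you have made it more roundabout than necessary, and a couple of the details are stated in a slightly misleading way. The paper's own proof is a one-liner: it invokes \eqref{equation decompostion} to define $\alpha(x_i) := (\Gamma_{x_i}^{\mathrm{lin}}\bmod p)^{\vee}\in G(A_{i,0})$ and then reads off from Remark \ref{RemarkofKisinWindows}(a) the exact identity
\[
\Phi_0(\varphi_i, x_{i})=\alpha(x_i)\,\mu(u),
\]
which is recorded as \eqref{KeyFormula}. Since $\alpha(x_i)$ lies in $G(A_{i,0})\subset \mathcal{K}(A_{i,0})$ and $\mu(u)\in\mathcal{L}G(\kappa)$, this single formula simultaneously establishes both of your claims (a) (tensor-preservation, hence membership in $\mathcal{L}G(A_{i,0})$) and (b) (membership in $\mathcal{K}(A_{i,0})\mu(u)\mathcal{K}(A_{i,0})\subset\mathcal{D}(A_{i,0})$). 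There is no need to verify (a) separately by tracking tensors through each factor; it falls out of the formula.

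Two places where your write-up is imprecise: first, you say the piece coming from $\Psi$ ``is congruent to an element of $G(A_{i,0})$ modulo $u$,'' but the correct and stronger statement is that it literally \emph{is} an element of $G(A_{i,0})$, with no $u$-dependence at all, because $\Gamma$ (and hence $\Gamma^{\mathrm{lin}}$) is defined over $A_i$ rather than over $\mathfrak{S}_i$. The constant part of $\Phi_0$ does not need to be trimmed by $\mathcal{K}_1$; it is already constant. Second, your use of Lemma \ref{FrobeniuselementLocus} ``to absorb the $\tilde{\mu}(p)$ factor into the $\mathcal{K}$-part'' is backwards: the $\tilde{\mu}(p)$ factor never appears in $\Phi_0$. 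It was already stripped off at the level of $W(k)$ (respectively $A_i$) by the $\frac{1}{p}$-normalization built into $\Gamma^{\mathrm{lin}}=\frac{1}{p}F^{\mathrm{lin}}|_N\oplus F^{\mathrm{lin}}|_L$ (Lemma \ref{LemmaYaonie}). What you actually want from Lemma \ref{FrobeniuselementLocus} (through \eqref{equation decompostion}) is the conclusion that $(\Gamma_{x_i}^{\mathrm{lin}})^{\vee}\in\mathcal{G}(A_i)$ integrally — i.e.\ that the constant factor preserves tensors before inverting $p$ — so that its reduction $\alpha(x_i)$ lands in $G(A_{i,0})$.

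Your concern at the end about ``sandwiching elements with no $u$-pole'' is well placed in spirit but is again a symptom of overcomplicating: once you know $\Phi_0=\alpha(x_i)\mu(u)$ with $\alpha(x_i)$ a genuine $A_{i,0}$-point of $G$, there is only one factor carrying any $u$-dependence, namely $\mu(u)$ itself, and it is manifestly the center of the desired double coset.
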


\begin{proof} We show this by giving a formula for $\Phi_0(\varphi_i, x_{i})$. We use the  notation $ \Gamma_{x_i}^{\mathrm{lin}} $ as in \eqref{equation decompostion}. Define
	\begin{align*}
		\alpha(x_i): = (\Gamma_{x_i}^{\mathrm{lin}}\mod p)^{\vee}\in G(A_{i,0}).
	\end{align*}
	Then by Remark \ref{RemarkofKisinWindows}. (a) we have 
	\begin{align}\label{KeyFormula}
		\Phi_0(\varphi_i, x_{i})=\alpha(x_i)\mu(u)\in \mathcal{D}(A_{i,0}).
	\end{align}
\end{proof}
\textbf{Step 3: Gluing of the local morphisms}

We shall show that for all $ i, j\in I $, we have
\begin{align*}
	\theta_i(x_{i,0})|_{A_{ij,0}}=\theta_j(x_{j,0})|_{A_{ij,0}}.
\end{align*}
By the way we construct the morphisms $ \theta_i $ and $ \theta_j $, we need only to show that the two homomorphisms 
$$\Phi_0(\varphi_{i}, \chi_{x_{i}})\otimes_{\mathfrak{S}_{i,0}} \id_{\mathfrak{S}_{ij,0}},\  \Phi_0(\varphi_{j}, \chi_{x_{j}})\otimes_{\mathfrak{S}_{j,0}} \id_{\mathfrak{S}_{ij,0}}:\mathcal{M}_{ij,0}^{(\varphi)}\to \mathcal{M}_{ij,0} $$
differ by an automorphism of $ \mathcal{M}_{ij,0}^{(\varphi)} $ whose reduction modulo $ u $ is the identity map, where  
\begin{align*}
	\mathcal{M}_{ij,0}:=\mathcal{M}_{i,0}\otimes_{\mathfrak{S}_{i,0}}{\mathfrak{S}_{ij,0}}=\mathcal{M}_{j,0}\otimes_{\mathfrak{S}_{j,0}} {\mathfrak{S}_{ij,0}}.
\end{align*}
This is in fact clear from the functoriality of adapted deformations. Indeed, if we still denote by $ \varphi_i $ the Frobenius lift of $ A_{ij} $ induced by $ \varphi_i $. Then by Section \ref{FunctorialityofKim-Kisinwindows}
we have \begin{align*}
	\Phi(\varphi_{i}, \chi_{x_{i}})\otimes_{\mathfrak{S}_{i}} \id_{\mathfrak{S}_{ij}}:&=\Phi(\varphi_{i}, \chi_{x_i,A_{ij}})\\
	\Phi(\varphi_{j}, \chi_{x_{j}})\otimes_{\mathfrak{S}_{j}} \id_{\mathfrak{S}_{ij}}:&=\Phi(\varphi_{j}, \chi_{x_j,A_{ij}})
\end{align*}
But since we have $  \beta_{x_i,A_{ij}}= \beta_{x_j,A_{ij}}$, and hence $  \chi_{x_i,A_{ij}}= \chi_{x_j,A_{ij}}$, the conclusion follows from Lemma \ref{Gluinglemma2}. Finally we obtain a morphism of $ \kappa $-schemes 
$$\theta: \mathrm{I}_+\longrightarrow \mathcal{D}_1.$$ 
\textbf{Step 4: $ \theta $ is independent of the affine open covering of $ \mathcal{S}$}

This follows from a standard argument. 
Given another affine open covering of $ \mathcal{S} $ satisfying the requirements \eqref{Covering}, which by the construction above, gives a morphism $ \theta_1:  \mathrm{I}_+\rightarrow \mathcal{D}_1$. Note that the union of these two affine open coverings is again an affine open covering of $ \mathcal{S} $, satisfying $ (1), (2) $ in Section \ref{Covering}, and hence induces another morphism $ \theta_2:  \mathrm{I}_+\rightarrow \mathcal{D}_1$. But we have then 
$ \theta=\theta_2,  \theta_1=\theta_2, $
and therefore $ \theta=\theta_1 $. This finishes the proof.


\begin{remark}
	\begin{enumerate}[(1)]
		\item 	We remark here that to give the morphism $ \theta: \mathrm{I}_{+}\to \mathcal{D}_{1} $ one may simply use abstract formulas for $ \Phi_0(\varphi_i, x_i)$ as in \eqref{KeyFormula}, without relating to adapted deformations (or Kim-Kisin windows), as it seems to be much more direct. In fact, one shall see this more clearly in Section \ref{DescriptionofThetaongemetricpoints} below. Nonetheless, the connection with  adapted deformations provides a  conceptual interpretation of the abstract objects $ \Phi_0(\varphi_i, x_i) $, and hence of the morphism $\theta: \mathrm{I}_{+}\to \mathcal{D}_{1}$. 
		\item Consider the morphism $x: \Spec A\to \mathbb{I}_{+}$ as in Step 1 (here we omit the subscript $ i $), that gives a $ p $-divisible group $ H=\mathcal{A}_x[p^{\infty}] $ over $ A $. A natural question one may ask is: is the adapted deformation $ \underline{\mathcal{M}} $ ``the" Kim-Kisin window corresponding to $ H $? The answer is ``yes'', up to an isomorphism in $ \mathbf{Win}(\mathfrak{S}, \nabla^0) $. This follows from Corollary \ref{CorollaryBaseChangeofKisinWindow}. But note that the construction of $ \underline{\mathcal{M}} $ really depends on the point $ x\in \mathbb{I}_+(A) $ (not only the induced point in $\mathcal{S}(A)$), and hence is not canonical. As a complement of the remark in (1), it is necessary to point out that it is the explicit constructions of the adapted deformations that allows us to do the computation in Lemma \ref{Gluinglemma2} so that we can eventually  construct the map $ \theta: \mathrm{I}_{+}\to \mathcal{D}_{1} $.
	\end{enumerate}
\end{remark}

\subsubsection{Description of $ \theta: \mathrm{I}_{+}\to \mathcal{D}_{1} $ on geometric points}\label{DescriptionofThetaongemetricpoints}
Let $ k $ be an algebraically closed field extension of $ \kappa $. Write $ \mathfrak{S}=\mathfrak{S}(W(k))$ and $\mathfrak{S}_0$ for its reduction modulo $ p $. Given an element $ x\in \mathrm{I}_{+} (k)$, a lift $ \tilde{x}\in \mathbb{I}_{+}(W(k)) $ of $ x $ gives an isomorphism 
\begin{align}\label{Tildeofx}
	\beta_{\tilde{x}}: \Lambda^{*}_{W(k)}\cong M:=\mathbb{D}^{*}(\mathcal{A}_{x}[p^{\infty}])(W(k))\cong \HH^{1}_{\cris}(\mathcal{A}_{\tilde{x}}/W(k))
\end{align}
compatible with torsors and filtrations. Similarly we have $ \beta_{x} = \beta_{\tilde{x}}\otimes\id_{k}$. As in \eqref{longformula} we can then form
\begin{align}\label{longformulaongeometricpoints}
	\Phi_0(\varphi, \tilde{x}): =\big((\beta_x\otimes\id_{\mathfrak{S}_{0}})^{-1}\circ \Phi_0(\varphi, \chi_{\tilde{x}})\circ \varphi(\beta_x\otimes\id_{\mathfrak{S}_0})\big)^{\vee}\in \mathcal{D}(k),
\end{align}
where $ \varphi : W(k) \to W(k)$ is the unique Frobenius lift of $ W(k) $. 
\begin{lemma}
	The following holds: 
	\begin{align}\label{MapThetaonpoints}
		\theta(k)(x):=\overline{\Phi_{0}(\varphi, \tilde{x})},
	\end{align}
	where $\overline{\Phi_{0}(\varphi, \tilde{x})}$ denotes the image in $ \mathcal{D}_{1}(k) $ of $ \Phi_{0}(\varphi, \tilde{x}) $. 
\end{lemma}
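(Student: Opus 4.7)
The plan is to reduce the comparison to a point where functoriality of the adapted-deformation construction applies directly. Since $\mathrm{I}_+$ is covered by the affine opens $\Spec A_{i,0}$ from Section \ref{Maintheorem1}, I would first choose an index $i$ so that the map $\tilde{x}\colon \Spec W(k)\to \mathbb{I}_+$ factors through $\Spec \tilde{A}_i$. By $p$-adic completeness of $W(k)$, the induced homomorphism $\tilde{A}_i\to W(k)$ extends uniquely to $f\colon A_i\to W(k)$, whose reduction modulo $p$ is the map $f_0\colon A_{i,0}\to k$ corresponding to $x$. By the construction of $\theta$ and the independence assertion in Step 4 of Section \ref{Maintheorem1}, $\theta(k)(x)$ is the pullback of $\theta_i(x_{i,0})=\overline{\Phi_0(\varphi_i, x_i)}\in \mathcal{D}_1(A_{i,0})$ along $f_0$, for any chosen Frobenius lift $\varphi_i$ on $A_i$.

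The next step is to arrange a matching of Frobenius lifts on the two sides. Using formal smoothness of $A_i$ over $W(\kappa)$ (Lemma \ref{Frobenius lifts}), one can perform an inductive lifting, level by level modulo $p^n$, to produce a Frobenius lift $\varphi_i$ on $A_i$ satisfying $f\circ \varphi_i=\varphi\circ f$. Then $f$ becomes a homomorphism of simple frames $(A_i,\varphi_i)\to (W(k),\varphi)$, and therefore induces a homomorphism $\underline{\mathfrak{S}}(A_i)\to \underline{\mathfrak{S}}(W(k))$ of the associated lifting frames. The functoriality of adapted deformations established in Section \ref{FunctorialityofKim-Kisinwindows} now gives that the adapted deformation of $H_{i,0}$ with respect to $(\varphi_i,\chi_{x_i})$ base changes under $f$ to the adapted deformation of $\mathcal{A}_x[p^\infty]$ with respect to $(\varphi,\chi_{\tilde{x}})$, using the compatibility $\beta_{x_i}\otimes_{A_i}W(k)=\beta_{\tilde{x}}$, which in turn gives $\chi_{x_i}\otimes_{A_i}W(k)=\chi_{\tilde{x}}$. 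In particular $\Phi(\varphi_i,\chi_{x_i})\otimes_{A_i}W(k)=\Phi(\varphi,\chi_{\tilde{x}})$. Reducing modulo $p$ and transporting via $\beta$ through formula \eqref{longformula} then produces the equality $\Phi_0(\varphi_i,x_i)|_k=\Phi_0(\varphi,\tilde{x})$ already in $\mathcal{D}(k)$, and hence the claim in $\mathcal{D}_1(k)$.

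The hard part of this plan is the formal-smoothness step that produces a compatible $\varphi_i$: the obstructions at each stage lie in a derivation/$\mathrm{Ext}^1$-module, and one has to check their vanishing by shrinking $\Spec \tilde{R}_i$ if necessary and using that $A_i$ is formally smooth over $W(\kappa)$. A fallback avoiding this technicality is to work directly modulo $p$ and invoke Lemma \ref{Gluinglemma2}: the only obstruction to $\alpha(x_i)|_k$ and $\alpha(\tilde{x})$ agreeing as elements of $G(k)$ comes from the $\frac{1}{p}F^{\mathrm{lin}}|_{N^{(\varphi)}}$-term, which is controlled by the Taylor expansion of the crystal against the difference $f\circ\varphi_i-\varphi\circ f\in pW(k)$; exactly as in the matrix computation of Lemma \ref{Gluinglemma2}, this contributes only an $\eta\in \mathcal{K}_1(k)$, which vanishes upon passing to $\mathcal{D}_1(k)=\mathcal{D}/\mathcal{K}_1$. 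Either route closes the proof; I expect the formal-smoothness version to be the cleaner one to write down, since it lets the functoriality result quoted above do essentially all the work.
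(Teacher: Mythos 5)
Your primary route has a real gap. You want to pick a Frobenius lift $\varphi_i$ on $A_i$ so that the given evaluation $f\colon A_i\to W(k)$ becomes a morphism of simple frames, but the inductive lifting reduces, at each level, to solving $\bar f\circ D=\epsilon$ for a derivation $D\in \Hom_{A_{i,0}}(\Omega_{A_{i,0}},A_{i,0})$ with a prescribed $\epsilon\in \Hom_{A_{i,0}}(\Omega_{A_{i,0}},k)$. Since $\bar f\colon A_{i,0}\to k$ is in general far from surjective (its image is merely a subring of $k$, not all of $k$), the map $\Hom_{A_{i,0}}(\Omega_{A_{i,0}},A_{i,0})\to\Hom_{A_{i,0}}(\Omega_{A_{i,0}},k)$ need not be onto, so the obstruction can fail to vanish; shrinking $\Spec\tilde R_i$ does not help, because the non-surjectivity is intrinsic to the chosen geometric point, not to the cover. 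Moreover, once $\varphi_i$ is fixed, the morphism of simple frames $(A_i,\varphi_i)\to (W(k),\varphi)$ lifting $\bar f$ is \emph{unique} (a short induction modulo $p^n$ using $\varphi_i\equiv(\,\cdot\,)^p\bmod p$ and reducedness of $k$ shows this), so you do not get to choose which lift of $x$ over $W(k)$ it produces — in general it will not be your given $\tilde{x}$.

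The paper runs the comparison in the opposite direction, and this is exactly what Lemma \ref{Ausefullemma} is for. One fixes an arbitrary Frobenius lift $\varphi_A$ on $A=A_i$, and Lemma \ref{Ausefullemma} furnishes the canonical morphism of simple frames $(A,\varphi_A)\to(W(k),\varphi)$ lifting $A_0\to k$, hence a lift $\breve{x}\in \mathbb{I}_+(W(k))$ of $x$, which is in general \emph{not} the given $\tilde{x}$. Functoriality (Section \ref{FunctorialityofKim-Kisinwindows}) then identifies $\Phi_0(\varphi_A,\mathbf{\tilde{x}})\otimes\mathfrak{S}(k)$ with $\Phi_0(\varphi,\breve{x})$, and the remaining discrepancy — two lifts $\breve{x}$, $\tilde{x}$ of $x$ over $W(k)$ with the \emph{same} Frobenius lift $\varphi$ but different induced decompositions $\chi_{\breve{x}}$, $\chi_{\tilde{x}}$ — is exactly what Lemma \ref{Gluinglemma2} is designed to kill after passing to $\mathcal{D}_1(k)$. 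Your fallback is in spirit the right idea, but it invokes Lemma \ref{Gluinglemma2} for a comparison it does not state: that lemma only compares pairs $(\varphi_j,\lambda_j)$ over a fixed simple frame $(R,\varphi_R)$; it does not directly control the failure of $f$ to be a morphism of simple frames, and one still needs the intermediate $\breve{x}$ to bring both sides over $W(k)$ with the same Frobenius before the lemma applies.
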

\begin{proof}
	The corresponding morphism $ x: \Spec k\to \mathrm{I}_{+} $ factors through some $ \Spec A_{i,0} $ in the covering $\{\Spec A_{i,0} \to \mathrm{I}_{+} \}$ of $ \mathrm{I}_{+} $ (cf. \eqref{Covering}).  We may drop the subscript $ i $ in $ A_{i,0} $ and write $ A_0=A_{i,0} $. We fix a Frobenius lift $ \varphi_{A}: A\to A $ of $ A $. Then there is a unique homomorphism of simple frames $ (A, \varphi_{A})\to (W(k), \varphi) $ over $ W(\kappa) $ lifting the ring homomorphism $ A_{0}\to k $ corresponding to $ x $. Denote by $ \breve{x}: \Spec W(k)\to \mathbb{I}_{+} $ the corresponding lift of $ x $. Let $ \mathbf{\tilde{x}}: \Spec A\to \mathbb{I}_{+}  $ be the structure morphism. By our construction of $ \theta $ in Theorem \ref{Maintheorem1}, $ \theta (x)$ is defined to be the image in $  \mathcal{D}_{1}(k)$ of 
	$$ \Phi_{0}(\varphi_{A}, \mathbf{\tilde{x}}) \otimes_{\mathfrak{S}(A_{0})}\mathfrak{S}(k)\in \mathcal{D}(k).$$ 
	
	But by the functoriality of the formation of $ \Phi_{0}(\varphi_{A}, \chi_{\mathbf{\tilde{x}}}) $ we have 
	\begin{align*}
		\Phi_{0}(\varphi_{A}, \mathbf{\tilde{x}}) \otimes_{\mathfrak{S}(A_{0})}\mathfrak{S}(k)&=\Phi_{0}(\varphi, \breve{x}).
	\end{align*}
	Now by Lemma \ref{Gluinglemma2}, the two elements in $ \mathcal{D}(k) $, namely $\Phi_{0}(\varphi, \breve{x})$ and $\Phi_{0}(\varphi, \tilde{x})$, have the same image in $ \mathcal{D}_1(k) $. 
\end{proof}

We can also define $ \Gamma_{\tilde{x}}^{\mathrm{lin}} $ and $ \alpha(\tilde{x}) $ as in Lemma \ref{Varphipreservestensors} and have the following formula
\begin{align*}
	\Phi_{0}(\varphi, \tilde{x})=\alpha(\tilde{x})\mu(u)\in \mathcal{D}(k). 
\end{align*}

\subsection{The morphism $ \eta: S \to \mathcal{D}_1 /\mathcal{K}^{\diamond}$ } \label{ConstructionofMorphismeta}

Our main aim in this subsection is to show that the composition morphism of fpqc sheaves $ \mathrm{I}_{+}\xrightarrow{\theta} \mathcal{D}_1 \to \mathcal{D}_{1}/\mathcal{K}^{\diamond}$ is $ P_{+} $-invariant, and hence induces a morphism of fpqc sheaves $\eta: S\to \mathcal{D}_{1}/\mathcal{K}^{\diamond}$. To do this, we first investigate the behavior of $  \theta(k): \mathrm{I}_{+}(k)\to \mathcal{D}_{1}(k) $ under the $ P_{+}(k) $-action on $ \mathrm{I}_{+}(k) $, where as in Section \ref{DescriptionofThetaongemetricpoints}, $ k $ is an algebraically closed field extension of $ \kappa $.  

The following is a continuation of our discussion in Section \ref{DescriptionofThetaongemetricpoints}. We will need some group theoretic results first. 

Let $ \mathcal{U}_{+} $ be the unipotent radical of $ \mathcal{P}_{+} $ and fix  a maximal torus $ \mathcal{T} $ of $ \mathcal{G}_{W(k)} $ such that the cocharacter $ \tilde{\chi}_{W(k)}: \mathbb{G}_{m, W(k)} \to \mathcal{G}_{W(k)}$ factors through $ \mathcal{T} $. Let $ \mathcal{M} $ be the centralizer of $ \tilde{\chi}_{W(k)} $ in $ \mathcal{P}_+ $, which is a Levi subgroup of $ \mathcal{P}_{+} $. Note that $ \mathcal{T} $ necessarily splits over $ W(k) $ since $ W(k) $ is strictly Henselian.  
Let $\Phi$ be the set of roots of $\mathcal{G}_{W(k)}$ with respect to $\mathcal{T}$.  Recall that for every root $\alpha\in \Phi,$ there is an embedding of algebraic groups
$\mathcal{U}_{\alpha}: \mathbb{G}_{a, W(k)}\to G_{W(k)}$, unique up to an element in $W(k)^{\times}$ (here $W(k)^{\times}$ acts on $\mathbb{G}_{a, W(k)}$ by multiplication), such that if $A$ is a $W(k)$-algebra we have
$t\mathcal{U}_{\alpha}(x)t^{-1}=\mathcal{U}_{\alpha}\big(\alpha(t)x\big)$ for all $x\in \mathbb{G}_a(A)=A$ and all $t\in \mathcal{T}(A)$. We will also use the notation $ \mathcal{U}_{\alpha} $ for the image of $  \mathcal{U}_{\alpha}  $. 
Then we have the following isomorphisms of closed subschemes of $\mathcal{G}_{W(k)}$,
\begin{equation}\label{EqrootsIsomorphism}
\prod_{\langle\alpha, \chi\rangle>0}\mathcal{U}_{\alpha}\xlongrightarrow{\sim} \mathcal{U}_{+, W(k)}, \ \ \ \  \ \ \ \  \prod_{\langle\alpha, \chi\rangle<0}\mathcal{U}_{\alpha}\xlongrightarrow{\sim} \mathcal{U}_{-, W(k)},
\end{equation}
sending an element $ (u_{\alpha})_{\alpha}$ in $\prod_{\langle\alpha, \chi\rangle>0 }\mathcal{U}_{\alpha}  $ to the product $\prod_{\langle\alpha, \chi\rangle>0}u_{\alpha}\in \mathcal{U}_{+, W(k)}$, where $\alpha\in \Phi$ runs over all possible roots of $\mathcal{G}_{W(k)}$, and where the products can be taken in any fixed order; the second isomorphism in \eqref{EqrootsIsomorphism} is defined in a similar way. In particular, for each $\alpha_1, \alpha_2\in \Phi$ with $\langle \alpha_1, \tilde{\chi}_{W(k)} \rangle>0$, $\langle \alpha_2, \tilde{\chi}_{W(k)} \rangle<0$ we have
\begin{align}\label{Eqk}
	\tilde{\chi}(p)\mathcal{U}_{\alpha_1}(W(k))\tilde{\chi}(p)^{-1}\subset K_1(k), \ \ \ \ 	\tilde{\chi}(p)^{-1}\mathcal{U}_{\alpha_2}(W(k))\tilde{\chi}(p)\subset K_1(k),
\end{align}
where $ K_1(k) $ denotes the kernel of the reduction modulo $ p $ map $ \mathcal{G}(W(k))\xrightarrow{\mod p} G(k) $. 

Now a combination of \eqref{EqrootsIsomorphism} and \eqref{Eqk} gives the inclusions
\begin{align}\label{Eqxiaoyue}
	\tilde{\chi}(p)\mathcal{U}_{+}(W(k))\tilde{\chi}(p)^{-1}\subset K_1(k), \ \ \ \ 	\tilde{\chi}(p)^{-1}\mathcal{U}_{-}(W(k))\tilde{\chi}(p)\subset K_1(k).
\end{align}

\begin{lemma}\label{Computationfor c}
	Let $ p_{+}\in  P_+(k) $ be such that $ (x, p_{+})\in (\mathrm{I}_{+}\times_{S}P_{+})(k) $.  Then there exists an element $ c\in \mathcal{K}^{\diamond}(k) $ such that $ \theta(x\cdot p_{+}) =  \theta(x)\cdot c$. \footnote{We intentionally mention the element $ c $ as it will serve for Theorem \ref{MainTheorem2} below.} In particular, the map $  \mathrm{I}_{+}(k)\xrightarrow{\theta(k)} \mathcal{D}_{1}(k) \to  \mathcal{D}_{1}(k)/\mathcal{K}^{\diamond}(k) $ is $ P_{+} $-invariant, and hence induces a map
	\begin{align*}
		\eta(k): S(k)\to \mathcal{D}_{1}(k)/\mathcal{K}^{\diamond}(k). 
	\end{align*}
\end{lemma}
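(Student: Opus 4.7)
The plan is to directly compute $\Phi_0(\varphi, \widetilde{x \cdot p_+})$ via the formula $\Phi_0(\varphi, \tilde{x}) = \alpha(\tilde{x}) \mu(u)$ from Lemma \ref{Varphipreservestensors}, and to repackage the result into the form $\alpha^{-1} \gamma^{-1} \Phi_0(\varphi, \tilde{x}) \varphi(\gamma)$ required by the right action of $\mathcal{K}^{\diamond}(k) = \mathcal{K}_1(k) \rtimes \mathcal{K}(k)$. First I would lift $p_+$ to $\tilde{p}_+ \in \mathcal{P}_+(W(k))$ using smoothness of $\mathcal{P}_+$, and set $\tilde{x}' := \tilde{x} \cdot \tilde{p}_+$ as a lift of $x \cdot p_+$. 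Since $\tilde{x}$ and $\tilde{x}'$ have the same image in $\mathcal{S}(W(k))$, the crystalline Dieudonn\'e module $M = \HH^1_{\cris}(\mathcal{A}_{\tilde{x}}/W(k))$ and its Frobenius $F$ are unchanged; only the trivialization changes, with $\beta_{\tilde{x}'} = \beta_{\tilde{x}} \circ \tilde{p}_+^{\vee}$.

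Next I would use the Levi decomposition $\tilde{p}_+ = \tilde{m}\tilde{u}$ with $\tilde{m} \in \mathcal{M}(W(k))$ and $\tilde{u} \in \mathcal{U}_+(W(k))$, and track how $\Gamma^{\mathrm{lin}}_{\tilde{x}}$ (defined in Lemma \ref{FrobeniuselementLocus}) transforms. Starting from the identity $F^{\mathrm{lin}}_{\tilde{x}'} = (\tilde{p}_+^{\vee})^{-1} \circ F^{\mathrm{lin}}_{\tilde{x}} \circ \sigma(\tilde{p}_+^{\vee})$ (obtained by transporting the fixed Frobenius of $M$ through the new trivialization) and splitting by the weight decomposition of $\Lambda^*_{W(k)}$, the Levi part $\tilde{m}^{\vee}$ preserves that decomposition and gives $\Gamma^{\mathrm{lin}} \mapsto (\tilde{m}^{\vee})^{-1} \Gamma^{\mathrm{lin}} \sigma(\tilde{m}^{\vee})$; the unipotent $\tilde{u}^{\vee}$ preserves only the filtration, but its effect on $\Gamma^{\mathrm{lin}}_{\tilde{x}}$ beyond the simple relation $\Gamma^{\mathrm{lin}}_{\tilde{x}'} = (\tilde{u}^{\vee})^{-1}\Gamma^{\mathrm{lin}}_{\tilde{x}} + p\cdot(\cdots)$ is a correction divisible by $p$ (arising from the fact that $F^{\mathrm{lin}}_{\tilde{x}}$ sends $\Lambda^{*,1}$ into $p\Lambda^*_{W(k)}$). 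Combining, reducing modulo $p$, taking contragredients, and using that $\sigma(\bar{m})$ commutes with $\mu(u)$ (the Levi centralizes $\chi$, hence $\mu$), I obtain the key identity
\[
\Phi_0(\varphi, \tilde{x}') \;=\; \bar{p}_+^{-1} \cdot \Phi_0(\varphi, \tilde{x}) \cdot \sigma(\bar{m}),
\]
where $\bar{p}_+ = \bar{m}\bar{u} \in G(k)$ denotes the reduction of $\tilde{p}_+$.

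Finally, I would take $\gamma := \bar{p}_+ \in G(k) \subset \mathcal{K}(k)$, so that $\varphi(\gamma) = \sigma(\bar{p}_+) = \sigma(\bar{m})\sigma(\bar{u})$, and solve for $\alpha \in \mathcal{K}_1(k)$ in the equation $\Phi_0(\varphi, \tilde{x}') = \alpha^{-1}\gamma^{-1}\Phi_0(\varphi, \tilde{x})\varphi(\gamma)$. Writing $T := \bar{p}_+^{-1}\alpha(\tilde{x})\sigma(\bar{m})\cdot\mu(u) \in \mathcal{K}(k)$, this equation forces $\alpha = T \cdot \bigl(\mu(u)\sigma(\bar{u})\mu(u)^{-1}\bigr) \cdot T^{-1}$. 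The essential input is the loop-group analogue of \eqref{Eqxiaoyue}: since $\mu(u) = \chi(u^p)$ acts as multiplication by $u^p$ on $\Lambda^{*,1}$, conjugation by $\mu(u)$ sends $U_+(k) \subset G(k) \subset \mathcal{K}(k)$ into $\mathcal{K}_1(k)$ (in matrix form, $\begin{pmatrix} I & 0 \\ \bar{c} & I \end{pmatrix} \mapsto \begin{pmatrix} I & 0 \\ u^p \bar{c} & I \end{pmatrix}$, which reduces to $I$ modulo $u$). Since $T$ lies in $\mathcal{K}(k)$ and $\mathcal{K}_1$ is normal in $\mathcal{K}$, conjugation by $T$ preserves $\mathcal{K}_1$, giving $\alpha \in \mathcal{K}_1(k)$; setting $c := (\alpha, \gamma) \in \mathcal{K}^{\diamond}(k)$ completes the proof.

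The main obstacle is the explicit computation in the second paragraph: verifying that when the trivialization changes by a unipotent element $\tilde{u}^{\vee}$, the induced change in $\Gamma^{\mathrm{lin}}$ is congruent modulo $p$ to left-multiplication by $(\bar{u}^{\vee})^{-1}$, i.e., that the extra term arising from $F^{\mathrm{lin}}_{\tilde{x}}$ acting on the $\Lambda^{*,1}$-component of $\sigma(\tilde{u}^{\vee})(\Lambda^{*,0})$ is divisible by $p$. All subsequent steps --- the Levi contribution, the commutativity of $\sigma(\bar{m})$ with $\mu(u)$, and the absorption of the unipotent part into $\mathcal{K}_1(k)$ via conjugation by $\mu(u)$ --- are essentially formal once this key computation is in place.
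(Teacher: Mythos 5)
Your proposal follows essentially the same route as the paper's proof: lift $p_+$ to $\tilde p_+ \in \mathcal{P}_+(W(k))$, split by the Levi decomposition of $\mathcal{P}_+$, compute $\Phi_0(\varphi,\tilde x\tilde p_+)$ in terms of $\Phi_0(\varphi,\tilde x)$ by using the mixed-characteristic inclusions \eqref{Eqxiaoyue} to show the unipotent contribution lies in $K_1(k)=\ker(\mathcal{G}(W(k))\to G(k))$, and then repackage the result in the form of the $\mathcal{K}^\diamond$-action using the equicharacteristic analogue $(*)$. The ``key identity'' $\Phi_0(\varphi,\tilde x')=\bar p_+^{-1}\Phi_0(\varphi,\tilde x)\sigma(\bar m)$ you derive is exactly what the paper obtains (reading $\theta(xp_+)=p_+^{-1}\alpha(\tilde x)\sigma(m)\mu(u)$), and the ``main obstacle'' you flag --- that the unipotent correction to $\Gamma^{\mathrm{lin}}$ is $p$-divisible --- is precisely what the paper encodes by the element $\Omega\in K_1(k)$, a direct consequence of \eqref{Eqxiaoyue} which is set up root-group-by-root-group just before the lemma. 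So the obstacle you identify is real but already fully handled by the surrounding material.

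One correction to your last paragraph: your $T := \bar p_+^{-1}\alpha(\tilde x)\sigma(\bar m)\cdot\mu(u)$ does \emph{not} lie in $\mathcal{K}(k)$, since $\mu(u)\in\mathcal{L}G(k)\setminus\mathcal{K}(k)$; moreover, with this $T$ the formula $\alpha = T\bigl(\mu(u)\sigma(\bar u)\mu(u)^{-1}\bigr)T^{-1}$ produces a stray extra factor $\mu(u)$ and is off. Solving $\Phi_0(\varphi,\tilde x')=\alpha^{-1}\gamma^{-1}\Phi_0(\varphi,\tilde x)\varphi(\gamma)$ with $\gamma=\bar p_+=\bar m\bar u$ actually gives $\alpha = T_0\bigl(\mu(u)\,w\,\mu(u)^{-1}\bigr)T_0^{-1}$ with $T_0 := \bar p_+^{-1}\alpha(\tilde x)\in G(k)\subset\mathcal{K}(k)$ and $w:=\sigma(\bar m)\sigma(\bar u)\sigma(\bar m)^{-1}\in U_+^{(p)}(k)$ (equivalently $T=\bar p_+^{-1}\alpha(\tilde x)\sigma(\bar m)$, \emph{without} the trailing $\mu(u)$, conjugating $\mu(u)\sigma(\bar u)\mu(u)^{-1}$). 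With this fix, $\mu(u)w\mu(u)^{-1}\in\mathcal{K}_1(k)$ by $(*)$, and conjugation by $T_0\in\mathcal{K}(k)$ preserves the normal subgroup $\mathcal{K}_1$, so $\alpha\in\mathcal{K}_1(k)$ and $c=(\alpha,\bar p_+)\in\mathcal{K}^\diamond(k)$ as required. Everything else in your argument --- the relation $F^{\mathrm{lin}}_{\tilde x'}=(\tilde p_+^\vee)^{-1}F^{\mathrm{lin}}_{\tilde x}\sigma(\tilde p_+^\vee)$, the Levi/unipotent split, the commutativity of $\sigma(\bar m)$ with $\mu(u)$, and the role of $\eqref{Eqxiaoyue}$ and its equal-characteristic analogue --- agrees with the paper.
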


\begin{proof}
	Note that we have a decomposition of algebraic groups over $ \kappa $, $ P_{+}=U_{+}\rtimes M$, and hence we may uniquely write $p_{+}=u_{+}m$, with $ u_+\in U_+(k) $, $ m\in M(k) $.  We shall find such a $ c $ in the statement of the lemma by calculation.  By \ref{equation decompostion}, if we write $g_{\tilde{x}}=\big(F_{\tilde{x}}^{\mathrm{lin}}\big)^{\vee}$ and  $\Delta_{\tilde{x}}=\big(\Gamma^{\mathrm{lin}}_{\tilde{x}}\big)^{\vee}$, then we have $g_{\tilde{x}}= \Delta_{\tilde{x}}\tilde{\mu}(p)\in \mathcal{G}(W(k)[\frac{1}{p}])$, where $ \Delta_{\tilde{x}} $ lies in $ \mathcal{G}(W(k)) $ and the reduction modulo $ p $ of $  \Delta_{\tilde{x}} $ is $ \alpha(\tilde{x}) $.

	Let $ \tilde{p}_{+}= \tilde{u}_+\tilde{m}\in (\mathcal{U}_{+}\rtimes \mathcal{M})(W(k))$ be a lift of $ p_{+}$, such that $ (\tilde{x}, \tilde{p}_{+}) $ lies in $ (\mathbb{I}_{+}\times_{\mathcal{S}}\mathcal{P}_{+})(W(k)) $.
	Then we have 
	\begin{align*}
		g_{\tilde{x}\tilde{p}_{+}}&=(\tilde{p}_{+})^{-1}g_{\tilde{x}}\sigma(\tilde{p}_{+})&\\
		&=(\tilde{p}_{+})^{-1}\Delta_{\tilde{x}}\tilde{\mu}(p)\sigma(\tilde{u}_{+})\sigma(\tilde{m})&\\
		&=(\tilde{p}_{+})^{-1}\Delta_{\tilde{x}}\sigma(\tilde{m})\Omega\tilde{\mu}(p), \ \ \ \text{where}\\
		\Omega: &= \tilde{\mu}(p)\sigma(\tilde{m})^{-1}\sigma(\tilde{u}_{+})\sigma(\tilde{m})\tilde{\mu}(p)^{-1}.
	\end{align*}

	It follows from \eqref{Eqxiaoyue} that $ \Omega $ actually lies in $ K_1(k) $. Hence we have 
	\begin{align*}
		\theta(xp_{+})=p_{+}^{-1}\alpha(\tilde{x})\sigma(m)\mu(u) \in \mathcal{D}_{1}(k).
	\end{align*} 
	To continue our calculation, we need to use the equicharacteristic analogue of \eqref{Eqxiaoyue}. To be precise, we have 
	\begin{align*}
		\mu(u)u_{+}\mu(u)^{-1}\in \mathcal{K}_{1}(k) \ \text{for every element} \  u_{+} \in U_{+}(k) \ \ \ \ (*).
	\end{align*}
	
	We have then
	\begin{align*}
		\theta(xp_{+})&=p_{+}^{-1}\alpha(\tilde{x})\sigma(m)\mu(u)\\
		&=\alpha(\tilde{x})\mu(u)\sigma(u_{+})^{-1}\cdot(1, \ p_{+})\\
		&= \alpha(\tilde{x})y\alpha(\tilde{x})^{-1}\theta(x)\cdot(1, \ p_{+})\\
		&=\theta(x)\cdot \big(\alpha(\tilde{x})y^{-1}\alpha(\tilde{x})^{-1}, \ p_{+}\big),
	\end{align*}
	where we let $y:=\mu(u)\sigma(u_{+})^{-1}\mu(u)^{-1}$ and  ($*$) is used to deduce that the element $\alpha(\tilde{x})y^{-1}\alpha(\tilde{x})^{-1}$ lies in $ \mathcal{K}_{1}(k) $. We write 
	\begin{align*}
		d({\tilde{x}}, p_{+}):=\alpha(\tilde{x})y\alpha(\tilde{x})^{-1}=\Phi_0(\varphi, \tilde{x})\sigma(u_{+})\Phi_0(\varphi, \tilde{x})^{-1}.
	\end{align*}
	Finally, we may take 
	\begin{align}\label{Formulafor c} 
		c= \big(d({\tilde{x}}, p_{+}), \ p_{+}\big)\in \mathcal{K}^{\diamond} (k).
	\end{align}
\end{proof}

\begin{theorem}\label{MainTheorem2}
	The composition of morphisms of fpqc sheaves $ \mathrm{I}_{+}\to \mathcal{D}_{1}\to \mathcal{D}_{1}/\mathcal{K}^{\diamond} $ is $ P_{+} $-invariant, and hence induces a morphism of fpqc sheaves, $ \eta: S\to \mathcal{D}_1/\mathcal{K}^{\diamond} $, fitting the following commutative diagram
	\begin{align*}
		\xymatrixcolsep{5pc}\xymatrix{	\mathrm{I}_{+}\ar[r]^{\theta}\ar[d]&\mathcal{D}_{1}\ar[d]\\
			S\ar[r]^{\eta}&\mathcal{D}_{1}/\mathcal{K}^{\diamond}.}
	\end{align*}
\end{theorem}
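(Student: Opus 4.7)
The plan is to show that, as a morphism of fpqc sheaves, the composition of $\theta$ with the quotient map $\mathcal{D}_1 \to \mathcal{D}_1/\mathcal{K}^{\diamond}$ is invariant under the $P_+$-action on $\mathrm{I}_+$. Once this is established, the universal property of the fpqc quotient $\mathrm{I}_+/P_+ = S$ produces the desired $\eta$ uniquely, and the commutativity of the displayed square is automatic. The strategy is to globalise the geometric-point computation in Lemma \ref{Computationfor c} to an fpqc (in fact Zariski) cover of $\mathrm{I}_+ \times_{\kappa} P_+$ on which adapted deformations are available.

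First I would construct a suitable cover. Keep the affine open cover $\{\Spec A_{i,0}\}$ of $\mathrm{I}_+$ from the construction of $\theta$ in Section \ref{Maintheorem1}, with its simple frames $(A_i, \varphi_{A_i})$. Cover $\mathcal{P}_+$ by affine opens $\{\Spec \tilde{B}_j\}$ that are smooth over $W(\kappa)$ and admit a Frobenius lift $\varphi_{B_j}$ on the $p$-adic completion $B_j$; this is possible because $\mathcal{P}_+$ is smooth over $W(\kappa)$, so locally admits a finite $p$-basis (Lemma \ref{Frobenius lifts}). The $p$-adic completions of $C := A_i \widehat{\otimes}_{W(\kappa)} B_j$, endowed with $\varphi_C := \varphi_{A_i} \widehat{\otimes} \varphi_{B_j}$, are then simple frames for $C_0 := A_{i,0} \otimes_{\kappa} B_{j,0}$, and $\{\Spec C_0\}$ forms a Zariski open cover of $\mathrm{I}_+ \times_{\kappa} P_+$.

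Next I would perform the local computation on each $\Spec C_0$. Let $\tilde{x} \in \mathbb{I}_+(C)$ be the pullback of the tautological $A_i$-point along $\Spec C \to \Spec A_i$, and let $\tilde{p}_+ \in \mathcal{P}_+(C)$ be the pullback of the tautological $B_j$-point; these lift the tautological $C_0$-points $x$ and $p_+$. Applying the formula $\theta = \alpha(\tilde{x})\,\mu(u)$ of Lemma \ref{Varphipreservestensors} (which makes sense over $C$ by the functoriality of adapted deformations in Section \ref{FunctorialityofKim-Kisinwindows}) to both $x$ and $x \cdot p_+$, the calculation of Lemma \ref{Computationfor c} goes through verbatim. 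Indeed, decomposing $\tilde{p}_+ = \tilde{u}_+ \tilde{m}$ via $\mathcal{P}_+ = \mathcal{U}_+ \rtimes \mathcal{M}$ and substituting into $g_{\tilde{x}\tilde{p}_+} = \tilde{p}_+^{-1} g_{\tilde{x}}\sigma(\tilde{p}_+)$ yields, after reduction mod $p$, the identity
$$\theta(x \cdot p_+) = \theta(x) \cdot \bigl(d(\tilde{x}, p_+),\, p_+\bigr) \in \mathcal{D}_1(C_0),$$
with $d(\tilde{x}, p_+)$ given by the same formula as in \eqref{Formulafor c}. Gluing these local identities over the cover shows that the two composites $\mathrm{I}_+ \times_{\kappa} P_+ \rightrightarrows \mathrm{I}_+ \to \mathcal{D}_1 \to \mathcal{D}_1/\mathcal{K}^{\diamond}$ agree as morphisms of fpqc sheaves.

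The main obstacle, and the step that needs the most care, is justifying the inclusions \eqref{Eqxiaoyue} and their mod-$p$ analogue $(\ast)$ used in Lemma \ref{Computationfor c} over an arbitrary base such as $C$, not just over $W(k)$ with $k$ algebraically closed. These are inherently statements about subgroup schemes, whereas the root-space proof requires $\mathcal{G}$ to be split. I would resolve this by rephrasing them as scheme-theoretic closed immersions
$$\tilde{\mu}(p)\,\mathcal{U}_+\,\tilde{\mu}(p)^{-1} \hookrightarrow \ker\!\bigl(\mathcal{G}_{W(\kappa)} \to G\bigr), \qquad \mu(u)\,U_+\,\mu(u)^{-1} \hookrightarrow \mathcal{K}_1,$$
and checking them after a faithfully flat base change to $W(\bar{\kappa})$ where $\mathcal{G}$ splits; faithfully flat descent of closed immersions then gives the statements globally. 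With this upgrade in hand, the formula for $c(\tilde{x},\tilde{p}_+)$ lies in $\mathcal{K}^{\diamond}(C_0)$ automatically, and the conclusion of Lemma \ref{Computationfor c} holds over $C_0$. The $P_+$-invariance of $\mathrm{I}_+ \to \mathcal{D}_1/\mathcal{K}^{\diamond}$ as fpqc sheaves then yields the unique factorisation $\eta: S \to \mathcal{D}_1/\mathcal{K}^{\diamond}$ making the square commute.
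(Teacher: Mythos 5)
Your proposal is correct, but its technical core is genuinely different from the paper's. The paper does not cover $\mathrm{I}_+\times_\kappa P_+$ by affines carrying simple frames, nor does it try to promote the inclusions \eqref{Eqxiaoyue} and $(*)$ to an arbitrary base. Instead it stays on a single $P_+$-stable affine $U=\Spec A_{i,0}$ of the original cover of $\mathrm{I}_+$, forms $d=\Phi_0(\varphi_A,\mathbf{\tilde{x}})\,\sigma(u_+)\,\Phi_0(\varphi_A,\mathbf{\tilde{x}})^{-1}$ as a morphism $U\times_\kappa P_+\to\mathcal{L}G$ purely by pullback, and then reduces both substantive claims --- that $d$ factors through $\mathcal{K}_1$, and that the translation-by-$c$ square commutes --- to geometric points: $U\times_\kappa P_+$ is geometrically reduced, $\mathcal{K}_1$ is closed in $\mathcal{L}G$, and the relevant schemes are of finite type, so Lemma \ref{Computationfor c}, proved only over $W(k)$ with $k$ algebraically closed (where the torus is automatically split), does all the work. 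Your route instead re-runs the computation of Lemma \ref{Computationfor c} over the lift $C$ of each $C_0=A_{i,0}\otimes_\kappa B_{j,0}$, and you correctly identify that this forces a base-change-robust version of \eqref{Eqxiaoyue} and $(*)$. Your descent fix is sound in spirit, but the mixed-characteristic inclusion cannot literally be a closed immersion over $W(\kappa)$, since $\tilde\mu(p)$ is only a section over $W(\kappa)[1/p]$; it is better phrased as the statement that the conjugation morphism $\mathcal{U}_+\to\mathcal{G}_{W(\kappa)[1/p]}$ extends through $\mathcal{G}_{W(\kappa)}$ and reduces to the identity mod $p$, a condition that does descend along a $p$-torsion-free faithfully flat extension $W(\kappa)\to W(\kappa')$ splitting $\mathcal{G}$. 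The trade-off is clear: the paper's argument buys itself a free pass on the split-torus issue by appealing to reducedness and finite type, at the cost of needing $\mathcal{K}_1\subset\mathcal{L}G$ to be a reduced closed subscheme; yours is a more explicit, self-contained computation at the price of having to globalize the root-group inclusions.
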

\begin{proof}
	To keep coherence with notations we write $x: \mathrm{I}_{+}\times_{\kappa}P_{+}\to \mathrm{I}_{+} $ for the first projection and $ p_{+}: \mathrm{I}_{+}\times_{\kappa}P_{+}\to P_{+}  $ for the second projection and $ x\cdot p_{+}: \mathrm{I}_{+}\times_{\kappa}P_{+}\to \mathrm{I}_{+} $ for the $ P_{+} $-action on $ \mathrm{I}_{+} $. We need to show the commutativity of the diagram
	\begin{align}\label{P_+invariantcommutativediagram1}
		\xymatrixcolsep{5pc}\xymatrix{	\mathrm{I}_{+}\times_{\kappa}P_{+}\ar[r]^{\theta\circ (x\cdot p_{+})}\ar[d]^{\theta \circ x}&\mathcal{D}_{1}\ar[d]\\
			\mathcal{D}_{1}\ar[r]&\mathcal{D}_{1}/\mathcal{K}^{\diamond}.}
	\end{align}
	Since each $ \Spec A_{i,0} $ in the covering $ \{\Spec A_{i,0}\to \mathrm{I}_{+}\}_{i\in I} $ is $ P_{+} $-invariant, we may assume that $ \mathrm{I}_{+}=U $, where  $ U$ runs over all $\Spec A_{i,0} $ in the covering.  For the discussion below we may suppress the subscript $ i $ in $ A_{i,0}, \theta_{i} $, etc.  We fix a Frobenius lift $ \varphi_{A}: A\to A $ of $ A $ and write $ \mathbf{\tilde{x}} : \Spec A\to \mathbb{I}_{+}$ for the structure morphism.
	
	We define an element in $ \mathcal{L}G(U\times_{\kappa}P_{+}) $ as follows
	\begin{align*}
		d=\Phi_0(\varphi_{A}, \mathbf{\tilde{x}})\sigma(u_{+})\Phi_0(\varphi_{A}, \mathbf{\tilde{x}})^{-1}.
	\end{align*}
	where $ \sigma(u_{+}) $ is the morphism 
	\[ U\times_{\kappa}P_{+}\xrightarrow{p_{+}} P_{+}\to U_{+}\to U_{+}^{(p)}, \]
	and where $ \Phi_0(\varphi_{A}, \mathbf{\tilde{x}}) \in \mathcal{D}(U)$ is defined in 
	\eqref{longformula}, but here we see it as an element in $\mathcal{D}(U\times_{\kappa}P_{+})$ by precomposing the first projection $ x: U\times_{\kappa}P_{+}\to U $. 
	
	\textbf{Claim}: $ d $ lies in $ \mathcal{K}_{1}(U\times_{\kappa}P_{+}) $. 
	
	Indeed, since $ U\times_{\kappa}P_{+}$ is reduced and $ \mathcal{K}_{1} $ is a closed subscheme of $ \mathcal{L}G $, the morphism $ d: U\times_{\kappa}P_{+}\to \mathcal{L}G $ lies in $ \mathcal{K}_{1} $ if and only if its set theoretic image lies in $ \mathcal{K}_{1} $. But we have seen from the proof of Lemma \ref{Computationfor c} that the image of $ d $ on geometric points do lie in $ \mathcal{K}_{1} $. Hence the claim is proved.
	
	Denote by 
	$$ \Gamma_{\theta\circ (x\cdot p_{+})}, \ \Gamma_{\theta\circ x}: U\times_{\kappa}P_{+}\to \mathcal{D}_{1}\times_{\kappa}(U\times_{\kappa}P_{+}) $$ 
	the graph of $ \theta\circ (x\cdot p_{+}),\  \theta\circ x:  U\times_{\kappa}P_{+}\to \mathcal{D}_{1} $ respectively.  We shall show the commutativity of the following diagram
	\begin{align}\label{P_+invariantcommutativediagram1}
		\xymatrixcolsep{5pc}\xymatrix{& \mathcal{D}_{1}\times_{\kappa}(U\times_{\kappa}P_{+}) \ar[dd]^{\cdot c}\\
			U\times_{\kappa}P_{+}\ar[ur]^{\Gamma_{\theta\circ (x\cdot p_{+})}}\ar[dr]^{\Gamma_{\theta\circ x}}&\\
			& \mathcal{D}_{1}\times_{\kappa}(U\times_{\kappa}P_{+}), }
	\end{align}
	where  the right vertical morphism $ \cdot c $ is the translation by
	$$ c=(d, p_{+})\in \mathcal{K}^{\diamond}(U\times_{\kappa}P_{+}), $$ 
	which is an automorphism of $ \mathcal{D}_{1}\times_{\kappa}(U\times_{\kappa}P_{+}) $ over  $ U\times_{\kappa}P_{+} $. Note that the commutativity of \eqref{P_+invariantcommutativediagram1} implies that $ \theta\circ (x\cdot p_+) = \theta \circ x$ since we also have the commutative diagram
	\begin{align*}
		\xymatrixcolsep{4pc}\xymatrix{\mathcal{D}_{1}\times_{\kappa}(U\times_{\kappa}P_{+})\ar[r]^{\mathrm{pr}_1}\ar[dd]^{\cdot c}&\mathcal{D}_{1}\ar[dr]&\\
			&&\mathcal{D}_{1}/\mathcal{K}^{\diamond}.\\
			\mathcal{D}_{1}\times_{\kappa}(U\times_{\kappa}P_{+})\ar[r]^{\mathrm{pr}_1}&	\mathcal{D}_{1}\ar[ur]
		}
	\end{align*}
	But as  $ U\times_{\kappa} P_{+} $ and $ \mathcal{D}_{1}\times_{\kappa}(U\times_{\kappa}P_{+}) $ are of finite type over $ \kappa $ and $U\times_{\kappa} P_{+}$ is smooth, hence geometrically reduced, it is enough to check the commutativity of \eqref{P_+invariantcommutativediagram1} on geometric points (see for example Exercise 14.15 of \cite{GoertzWedhornBook1}). Now the conclusion follows from Lemma \ref{Computationfor c} and the formula for the $ c $ in \eqref{Formulafor c} 
\end{proof}

\section{Ekedahl-Oort stratification}

The definition of the Ekedahl-Oort stratification for a general Shimura variety of Hodge type is based on the theory of $ G $-zips developed in \cite{PinkWedhornZigler1} and \cite{PinkWedhornZiegler2}. We review first the definition and basic properties of $G$-zips. Notations are as in Section \ref{GoodreductionofShimuravariety} and Section \ref{Sectionofcocharacters}. 

\subsection{$G$-zips of a certain type $\chi$} 
\begin{definition}[{\cite[3.1]{PinkWedhornZiegler2}}] Let $ T $ be a scheme over $ \kappa $. 
	\begin{enumerate}[(1)] 
		\item 	 \textbf{A $G$-zip of type} $\chi$ over $T$ is a quadruple $\underline{\mathrm{I}}= (\mathrm{I}, \mathrm{I}_+, \mathrm{I}_-, \iota)$ consisting of a right $G$-torsor $\mathrm{I}$ over $T$, a $P_+$-torsor $\mathrm{I}_+\subset \mathrm{I}$, and $P_-^{(p)}$-torsor $\mathrm{I}_-\subset \mathrm{I}$, and an isomorphism  of $M^{(p)}$-torsors: 
		$$\iota: \mathrm{I}_+^{(p)}/U_{+}^{(p)}\cong \mathrm{I}_-/U_-^{(p)}.$$
		\item A morphism $\underline{\mathrm{I}} \to \underline{\mathrm{I}}'=(\mathrm{I}', \mathrm{I}_+', \mathrm{I}_-', \iota')$ of $G$-zips of type $\chi$
		over $T$ consists of an equivariant morphism $\mathrm{I}\to \mathrm{I}'$ which sends $\mathrm{I}_+$ to $\mathrm{I}_+'$ and $\mathrm{I}_-$ to $\mathrm{I}_-'$,  and which is compatible with the isomorphisms $\iota $ and $\iota'$. 
	\end{enumerate}
	
\end{definition}

With the natural notion of pullback, the $G$-zips of type $\chi$, as $T$ varies, form a category fibred in groupoids  over the category of ${\kappa}$-schemes. We denote this fibred category by $G\textsf{-Zip}^{\chi}$.  It is shown in \cite[Proposition 3.1]{PinkWedhornZiegler2} that $G\textsf{-Zip}^{\chi}$ is a stack over $ {\kappa} $.   

The stack $G\textsf{-Zip}^{\chi}$ has an interpretation as an algebraic stack as below. To $G$ and $\chi$ one can also associate a natural \textbf{algebraic zip datum} which we shall not specify (see \cite[Definition ]{PinkWedhornZiegler2}), and to such an algebraic datum there is an associated \textbf{zip group} $E_{\chi}:=E_{G, \chi}$, given on points of a $\kappa$-scheme $T$ by
\begin{align*}
	E_{\chi}(T):=\{(mu_+, m^{(p)}u_-)\big |m\in M(T),  u_+\in U_+(T), u_-\in U_-^{(p)}(T)\} \subset P_+(T)\times P_-^{(p)}(T).
\end{align*}
The group $E_{\chi}$ is a linear algebraic group over $\kappa$ and it has a left action  on $G_{\kappa}$ by $(p_+, p_-)\cdot g: = p_+g p_-^{-1}$. With respect to this action one can form the quotient stack $[E_{\chi}\backslash G_{\kappa}]$. 

\begin{theorem}[{\cite[Proposition 3.11, Corollary 3.12]{PinkWedhornZiegler2}}] The stacks $G\textsf{-Zip}^{\chi}$ and $[E_{\chi}\backslash G_{\kappa}]$ are naturally equivalent. They are smooth algebraic stacks of dimension $0$ over $\kappa$. In particular, the isomorphism class of $G$-zips over any algebraically closed field extension $k$ of $\kappa$ are in bijection with the $E_{\chi}(k)$-orbits in $G(k)$. 
\end{theorem}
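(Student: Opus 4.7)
The plan is to construct an explicit morphism $G_\kappa \to G\textsf{-Zip}^\chi$ of fibred categories, show that it factors through the quotient stack $[E_\chi \backslash G_\kappa]$ and induces an equivalence there, and finally read off the algebraic and dimensional properties from this presentation. For a $\kappa$-scheme $T$ and $g \in G(T)$, I define the associated $G$-zip $\underline{\mathrm{I}}(g) = (\mathrm{I}, \mathrm{I}_+, \mathrm{I}_-, \iota)$ by taking $\mathrm{I} := G_T$ (the trivial right $G_T$-torsor), $\mathrm{I}_+ := P_{+,T} \subset \mathrm{I}$ (viewed as the right $P_+$-orbit through the identity section), $\mathrm{I}_- := g \cdot P^{(p)}_{-,T} \subset \mathrm{I}$, and $\iota$ the unique $M^{(p)}$-equivariant isomorphism $\mathrm{I}_+^{(p)}/U_+^{(p)} \to \mathrm{I}_-/U_-^{(p)}$ sending the class of the identity to the class of $g$. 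This assignment is visibly functorial in $T$ and defines a morphism of stacks $\rho : G_\kappa \to G\textsf{-Zip}^\chi$.

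Next I would compare two elements $g, g' \in G(T)$ and check that $\underline{\mathrm{I}}(g) \cong \underline{\mathrm{I}}(g')$ if and only if $g' = p_+\, g\, p_-^{-1}$ for some $(p_+, p_-) \in E_\chi(T)$. An isomorphism of the trivial $G$-torsors is left multiplication by some $h \in G(T)$; compatibility with $\mathrm{I}_+$ forces $h \in P_+(T)$, say $h = m u_+$; compatibility with $\mathrm{I}_-$ forces $g' = h\, g\, k^{-1}$ for some $k \in P_-^{(p)}(T)$; and compatibility of $\iota$ with the Frobenius pullback forces the Levi component of $k$ to equal $m^{(p)}$, i.e. $k = m^{(p)} u_-$ with $u_- \in U_-^{(p)}(T)$. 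This is precisely the condition $(h,k) \in E_\chi(T)$, so $\rho$ descends to a morphism $\bar\rho : [E_\chi \backslash G_\kappa] \to G\textsf{-Zip}^\chi$, and the same calculation identifies automorphisms of $\underline{\mathrm{I}}(g)$ with $\mathrm{Stab}_{E_\chi}(g)$, giving full faithfulness.

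For essential surjectivity, given an arbitrary $G$-zip $\underline{\mathrm{J}} = (\mathrm{J}, \mathrm{J}_+, \mathrm{J}_-, \iota_{\mathrm{J}})$ over $T$, all three torsors are trivializable \'etale-locally since $G$, $P_+$, $P_-^{(p)}$ are smooth affine. Working \'etale-locally, I first trivialize $\mathrm{J} \cong G_T$; then $\mathrm{J}_+$ corresponds to a $T$-section of $G/P_+$ and $\mathrm{J}_-$ to a section of $G/P_-^{(p)}$, both of which lift (again \'etale-locally, using smoothness of $G \to G/P_{\pm}$) so that after a further trivialization $\mathrm{J}_+ = P_{+,T}$ and $\mathrm{J}_- = g\, P^{(p)}_{-,T}$ for some $g \in G(T)$, whereupon $\iota_{\mathrm{J}}$ is forced (up to $E_\chi$-ambiguity) to be the one in $\underline{\mathrm{I}}(g)$. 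Finally, $E_\chi$ is a smooth connected affine algebraic group over $\kappa$ with $\dim E_\chi = \dim M + \dim U_+ + \dim U_-^{(p)} = \dim G$ by the Bruhat-type decomposition, so $[E_\chi \backslash G_\kappa]$ is a smooth algebraic stack of dimension $0$; the bijection between isomorphism classes of $G$-zips over an algebraically closed $k/\kappa$ and $E_\chi(k)$-orbits in $G(k)$ is then immediate from the equivalence.

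The main obstacle I expect is the coherent \'etale-local trivialization required for essential surjectivity: one needs trivializations of $\mathrm{J}$, $\mathrm{J}_+$, $\mathrm{J}_-$ chosen simultaneously so that $\mathrm{J}_+$ becomes the standard $P_+$-orbit \emph{and} $\mathrm{J}_-$ becomes a single translate $g\, P_-^{(p)}$, which is the technical heart of the argument in \cite{PinkWedhornZiegler2}. The subsequent verification that the zip-isomorphism $\iota_{\mathrm{J}}$ is pinned down (up to $E_\chi$-action) by the element $g$ is then a routine unraveling of the $M^{(p)}$-equivariance condition.
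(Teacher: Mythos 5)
The paper cites this theorem directly from Pink--Wedhorn--Ziegler (\cite[Proposition 3.11, Corollary 3.12]{PinkWedhornZiegler2}) without reproducing a proof, and your sketch accurately reconstructs the standard argument from that source: assign to $g \in G(T)$ the standard zip $\underline{\mathrm{I}}(g) = (G_T, P_{+,T}, g\,P_{-,T}^{(p)}, \iota_g)$, check that morphisms of such zips are exactly translations by elements of $E_{\chi}(T)$, obtain essential surjectivity by \'etale-local trivialization of the three torsors, and conclude the dimension statement from $\dim E_{\chi} = \dim M + \dim U_+ + \dim U_- = \dim G$. Your computations are correct and the approach is the same as the cited reference.
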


\begin{remark}
	This realizes $G\textsf{-Zip}^{\chi}$ as an algebraic quotient stacks. While the language of $G$-zips is more ``motivic'' (see \cite[Chapter 0, 0.3]{ChaoZhangEOStratification}), the realization as a quotient stack will help to give a combinatory description of the  topological space of $G\textsf{-Zip}^{\chi}$ (see the next section).
\end{remark} 

\subsection{Combinatory description of the topological space $G$-$\mathsf{Zip}^{\chi}$}\label{CombinatoryWeylgroups}
Let $ k	 $ be an algebraically closed field extension of $ \kappa $. We shall see that the topological space of $ [E_{\chi}\backslash G_{\kappa}] $ is identified with a subset 
$ {}^JW $ of the Weyl group $W$, which will later play the part of ``index set'' of the Ekedahl-Oort stratification. 

As a preparation for what to follow, we let $T\subset B$ be a maximal torus, respectively a Borel subgroup of $ G_k $. Let $ W=N_{G}T(k)/T(k) $ be the finite Weyl group of $ G_k $ with respect to $ T_k $, and let $ I:=I(B, T) \subset W$ be the set of simple reflections associated to the pair $ (B, T) $. As the pair is unique up to a conjugation by $ G(k) $, the Coxeter system $ (W, I) $ is, up to unique isomorphism, independent of the choice of the pair $ (B, T) $ (see \cite[Section 2.3]{PinkWedhornZigler1} for a detailed explanation). Moreover, for any field extension $ k\to k' $, the canonical map $$ \big(W(G_k, B, T), I(G_k, B, T)\big)\to \big(W(G_{k'}, B_{k'}, T_{k'}), I(G_{k'}, B_{k'}, T_{k'})\big)$$ is an isomorphism. 

Recall that the \textbf{length} of an element $ w\in W $ is the smallest number $ l(w) $ such that $ w $ can be written as a product of $ l(w) $ simple reflections. By definition of the \textbf{Bruhat order }$ \leq $ on $ W $, we have $ w'\leq w $ if and only if for some (equivalently, for any) reduced expression of $ w $ as a product of simple reflections, by leaving out certain factors one can get an expression of $ w' $.

For any subset $ J$ $\subset I$ we denote by $ W_J $ the subgroup of $ W $ generated by $ J $. For any $ w\in W $, there is a unique element in the left coset $ W_Jw $ (resp. right coset $wW_J$) which is of minimal length; if $ K\subset I $ is another subset, then there is a unique element in the double coset $ W_JwW_K $ which is of minimal length. We denote by $ {}^JW$ (resp. $W^K$, resp. ${}^JW^K$) the subset of $ W $ consisting of $ w\in W $ which are of minimal length in $ W_Jw $ (resp. $ wW_K $, resp. $ W_JwW_K $). Then we have $ {}^JW^K= {}^JW\cap W^K$ and $ {}^JW $ (resp. $W^K$, resp. ${}^JW^K$) is a set of representatives of $W_J \backslash W $ (resp. $W/W^K$, resp. $W_J\backslash W/W_K$). Denote by $ w_0, w_{0, J} $ the unique element of maximal length in $ W $ and in $ W_J $ respectively, and let $ x_J:=w_0w_{0, J} $. For any $ w\in W $, we denote by $ \dot w$ a lift of $ w $ in $ N_GT(k) $. 

\begin{definition}
	For any two elements $ w, w' \in {}^JW $, we write $w'\preceq w $ if there exists a $y\in W_J $, such that $y^{-1}w'\sigma (x_Jyx_J^{-1})\leq w$. 
\end{definition}The relation $ \preceq $ defines a partial order on $ {}^JW $  (\cite[Corollary 6.3]{PinkWedhornZigler1}). This partial order ``$ \preceq $'' defined above induces a unique topology on $ {}^{J}W $.  

For any standard parabolic subgroup $ P $ of $ G_k $ (the word ``standard'' means $ P$ contains $ B $),  there is an associated subset $ J:=J(P) \subset I$, defined as $J=\{g\in I\big| gT(k)\subset P(k)\}$, called the \textbf{type} of $ P $.  If we view $ I $ as the set of simple roots of $ G $ with respect to the pair $ (B, T) $, then the type $ J $ of $ P $ is simply the set of simple roots whose inverse are roots of $ P $. It is a basic fact that two parabolic subgroups of $ G_k $ are conjugate if and only if they have the same type. For a cocharacter $ \lambda: \mathbb{G}_{m,k}\to G_k $ over $ k $, the type of $ \lambda $ is defined to be the type of the parabolic subgroup $ P=P_{+}(\lambda) $ (see Section \ref{Parabolics induced by a cocharacter} for the formation of $ P_{+}(\lambda) $). It only depends on the $ G(k) $-conjugacy class of $ \lambda $. 

We let $J:=J(\chi_k) $ be the type of the cocharacter $ \chi_k: \mathbb{G}_{m,k}\to G_{k} $. Recall that the finite set $E_{\chi}(k)\backslash G(k)$ as orbit spaces is naturally equipped with the quotient topology of $ G(k) $. Following \cite[Section 3.5]{PinkWedhornZiegler2} (see also \cite[Section 6.4]{Wortmann} for a clear statement) one can define a map
\begin{align} \label{MapfromJWtoGzips}
	\pi: {}^{J}W\longrightarrow E_{\chi}(k)\backslash G(k)\cong \big|[E_{\chi}\backslash G_{\kappa}]_k\big|, \ \ \  w\longmapsto E_{\chi}(k)\cdot \dot w\dot w_0\sigma(\dot w_{0, J}).
\end{align}
\begin{lemma}\label{TopologicalIso}
	The map $ \pi $ in \eqref{MapfromJWtoGzips} is a bijection preserving partial order relations and hence is a homeomorphism of topological spaces. Hence there is a homeomorphism between $ {}^{J}W $ and $ \big|G\textsf{-Zip}^{\chi}_{k}\big|. $
\end{lemma}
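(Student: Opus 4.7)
The plan is to reduce the statement to known results on the stack $G\textsf{-Zip}^{\chi}$ established by Pink--Wedhorn--Ziegler, and then to verify that the explicit formula for $\pi$ given in \eqref{MapfromJWtoGzips} agrees with their parametrization. First I would recall from \cite[Theorem 1.2]{PinkWedhornZiegler2} (and the more explicit statement in \cite[Section 5--6]{PinkWedhornZigler1}) that the $E_{\chi}(k)$-orbits on $G(k)$ are in natural bijection with ${}^{J}W$, where $J = J(\chi_{k})$ is the type of $\chi_{k}$, and that this bijection translates the partial order $\preceq$ on ${}^{J}W$ into the closure order on orbits. This immediately gives that the topological space $\bigl|[E_{\chi}\backslash G_{\kappa}]_{k}\bigr|$ is, as a poset, isomorphic to $({}^{J}W, \preceq)$.

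Next I would identify the map $\pi$ defined by $w\mapsto E_{\chi}(k)\cdot \dot w\dot w_{0}\sigma(\dot w_{0,J})$ with the bijection coming from loc.\ cit. The recipe in PWZ produces, for each $w \in {}^{J}W$, a representative of the corresponding orbit which, in the language of zip data associated to the triple $(G_{k},\chi_{k},\sigma)$, is exactly of the form $\dot w \dot g$ for a specific $g$ built out of the longest elements $w_{0}$ and $w_{0,J}$; the precise element $g = w_{0}\sigma(w_{0,J})$ is what appears in Wortmann's reformulation \cite[Section 6.4]{Wortmann}. So this step is largely bookkeeping: I would check that the formula \eqref{MapfromJWtoGzips} is the one used in \cite[Section 3.5]{PinkWedhornZiegler2}, up to the conventions fixed at the beginning of Section \ref{CombinatoryWeylgroups}, and conclude that $\pi$ is indeed the bijection from PWZ.

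Having identified $\pi$ with the PWZ bijection, the preservation of partial orders is immediate from \cite[Corollary 6.3, Theorem 6.10]{PinkWedhornZigler1}, where exactly the relation $\preceq$ on ${}^{J}W$ is shown to match the specialization order of orbits. Since the topology on ${}^{J}W$ is, by definition, the Alexandrov topology attached to $\preceq$, and the topology on $E_{\chi}(k)\backslash G(k)$ is the quotient topology (which coincides with the orbit-closure topology, the orbits being locally closed by the algebraicity of $[E_{\chi}\backslash G_{\kappa}]$), the order-preserving bijection is automatically a homeomorphism. The final assertion then follows from the equivalence $G\textsf{-Zip}^{\chi}\cong [E_{\chi}\backslash G_{\kappa}]$ recalled just before the lemma.

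The main obstacle is purely notational rather than mathematical: PWZ and Wortmann work with slightly different conventions for the zip datum (left vs.\ right actions, choices of $\chi$ versus its conjugate, and the precise role of $\sigma$), so I would need to be careful to verify that the particular representative $\dot w \dot w_{0} \sigma(\dot w_{0,J})$ matches the one produced in \cite[Section 3.5]{PinkWedhornZiegler2} under our conventions; once this dictionary is fixed, no genuine new argument is required.
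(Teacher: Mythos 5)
Your proposal is correct and matches the paper's treatment: the paper itself states the lemma as an immediate consequence of the parametrization of $E_{\chi}$-orbits in \cite[Section 3.5]{PinkWedhornZiegler2} (with \cite[Corollary 6.3, Theorem 6.10]{PinkWedhornZigler1} supplying the compatibility of $\preceq$ with the closure order) as reformulated in \cite[Section 6.4]{Wortmann}, and gives no further argument. Your remark that the only real work is the bookkeeping to match conventions between PWZ, Wortmann, and the present paper is exactly the point.
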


\subsection{Definition of Ekedahl-Oort stratification}\label{SectionDefinitonofEO}

Recall that to give a morphism of stacks over $ \kappa $ from $S$ to $G\textsf{-Zip}^{\chi}$ is equivalent to  give an object in $G\textsf{-Zip}^{\chi}(S)$, namely to give a $G$-zip of type $\chi$ over $S$. C. Zhang defines a classification map of Ekedahl-Oort strata $\zeta: S\to G\textsf{-Zip}^{\chi}$ by constructing a universal $G$-zip over $S$, from which he gives the definition of Ekedahl-Oort stratification of $S$ by taking geometric fibres of $\zeta$ (see Definition \ref{Definition of Ekedahl-Oort Stratification} below). Wortmann gives in \cite{Wortmann} a slightly different construction (replacing the $\mathbb{Z}_p$ lattice $\Lambda$ by its dual $\Lambda^*$).  We recall Wortmann's construction in the following.

Recall that the de Rham cohomology $ \mathcal{V}_0 $ comes with  a Hodge filtration $ \mathcal{C}_0 $. Denote by $\mathcal{D}_{0}\subset \mathcal{V}_0$ the conjugate filtration of $ \mathcal{V}_0 $. Recall that we have fixed an embedding (cf. Section \ref{GoodreductionofShimuravariety})
$$\iota: G\hookrightarrow \GL(\Lambda_{\mathbb{F}_p})\cong \GL(\Lambda_{\mathbb{F}_p}^*).$$

The cocharacters $()^{\vee}\circ \chi$ and $()^{\vee}\circ \chi^{(p)}$ induce $\mathbb{Z}$-gradings (cf. Section \ref{Sectionofcocharacters})
$$\Lambda_{\kappa}^*=(\Lambda_{\kappa}^*)_{\chi}^0\oplus(\Lambda_{\kappa}^*)^1_{\chi}, \ \ \ \ 
\ \ \Lambda_{\kappa}^*=(\Lambda_{\kappa}^*)_{\chi^{(p)}}^0\oplus(\Lambda_{\kappa}^*)^1_{\chi^{(p)}}.$$ From this we get a descending and an ascending filtration
\begin{align*}
	&\Fil_{\chi}^0:=\Lambda_{\kappa}^*\supset \Fil_{\chi}^1:=(\Lambda_{\kappa}^*)^1_{\chi}\supset\Fil_{\chi}^2:=0,\\
	&\Fil_{-1}^{\chi^{(p)}}:=0\subset\Fil_{0}^{\chi^{(p)}}:=(\Lambda_{\kappa}^*)^0\subset \Fil_1^{\chi^{(p)}}:=\Lambda_{\kappa}^*.
\end{align*}
Then $P_+$ (resp. $P_{-}^{(p)}$) is the stabilizer of $\Fil_{\chi}^{\bullet}$ (resp. $\Fil^{\chi^{(p)}}_{\bullet}$) in $G_{\kappa}$. Denote by $\bar{s}_{\dR}\subset \bar{\mathcal{V}}^{\otimes}$ the reduction of the set of tensors $s_{\dR}\subset \mathcal{V}^{\otimes}$, and by $\bar{s}$ the base change to $(\Lambda^*_{\chi})^{\otimes}$ of $s\subset (\Lambda^*)^{\otimes}$. Now we define
\begin{align*}
	&\mathrm{I}:=\Isom_S\big([\Lambda_{\kappa}^*, \bar{s}]\otimes O_{S}, [\mathcal{V}_{0}, \bar{s}_{\dR}]\big),\\
	&\mathrm{I}_+:=\Isom_S\big([\Lambda_{\kappa}^*, \bar{s}, \Fil_{\chi}^{\bullet} ]\otimes O_{S}, [\mathcal{V}_{0}, \bar{s}_{\dR}, \mathcal{V}_{0}\supset\mathcal{C}_{0}]\big),\\
	&\mathrm{I}_-: =\Isom_S\big([\Lambda_{\kappa}^*, \bar{s}, \Fil^{\chi^{(p)}}_{\bullet} ]\otimes O_{S}, [\mathcal{V},\bar{s}_{\dR} , \mathcal{D}_{0}\subset \mathcal{V}_{0}]\big),
\end{align*}
where $ \mathrm{I} $ and $ \mathrm{I}_{+} $ were defined already in Section \ref{TorsorsoverSandS} and here we just repeat the definitions for the convenience of readers.
The group $G_{\kappa}$ acts on $\mathrm{I}$ from the right by $t\cdot g:= t\circ g^{\vee}$ for any $S$-scheme $T$ and any sections $g\in G(T)$ and $t\in \mathrm{I}(T)$. This action induces the action of $P_{+}$ (resp. $P_{-}^{(p)}$) on $\mathrm{I}_{+}$ (resp. $\mathrm{I}_{-}$). The Cartier isomorphism on $\mathcal{V}_0$ induces an isomorphism $\iota: \mathrm{I}_+^{(p)}/U_+^{(p)}\cong \mathrm{I}_{-}/U^{(p)}_{-}$ and it is shown in \cite[Theorem 2.4.1]{ChaoZhangEOStratification} (see \cite[5.14]{Wortmann} for necessary changes of the proof) that $\underline{\mathrm{I}}:=(\mathrm{I}, \mathrm{I}_{+}, \mathrm{I}_{-}, \iota)$ is a $G$-zip of type $\chi$ over $S$, which induces a morphism of stacks $$\zeta: S\longrightarrow G\textsf{-Zip}^{\chi}\cong [E_{\chi}\backslash G_{\kappa}].$$
Moreover, Zhang shows that the morphism $\zeta$ is a smooth morphism of stacks over $ \kappa $ (\cite[Theorem 3.1.2]{ChaoZhangEOStratification}). 

\begin{definition}\label{Definition of Ekedahl-Oort Stratification}
	For any geometric point $ w: \Spec k\to G\textsf{-Zip}^{\chi}$, the \textbf{Ekedahl-Oort stratum} of $S_k$ associated to $ w $, denoted by $ S_k^{w} $, is defined to be the fibre of $ w $ under the classifying morphism $\zeta_k: S_k\to [E_{\chi}\backslash G_{\kappa}]$.
	
	The \textbf{Ekedahl-Oort stratification} of $ S_k $ is by definition the disjoint union 
	\begin{align}\label{Ekedahl-Oort Stratification}
		S_{k}=\bigsqcup_{w\in {}^{J}W} S_{k}^{w} 
	\end{align}
	by taking the fibre of the stratification $$ [E_\chi\backslash G_{\kappa}]_k= \bigsqcup_{w\in {}^{J}W}[E_{\chi}\backslash O^{w}],$$ where each $ O^{w} $ is the $ E_{\chi} $-orbit in $ S_k $ corresponding to $ w\in W $ (cf. Lemma \ref{TopologicalIso}), and where each $ S_k^{w} $ is the fibre  along $ \zeta_k$ of $[E_{\chi}\backslash O^{w}]$. Each $ S_{k}^{w} $ is a locally closed subscheme of $ S_{k} $. 
\end{definition}

\begin{remark} 
	As remarked in \cite[Remark 5.16]{Wortmann} (the proof is actually already given there), though the definition of $\zeta$ depends on the choice of a cocharacter $\chi\in [\chi]_{\kappa}$, the resulting map $\zeta(k): S (k)\to {}^JW $ is in fact independent of the choice of $\chi$. 
\end{remark}
\begin{remark}  \label{basic properties of E-O strata}
	We list here some properties of Ekedahl-Oort stratification following \cite{Wortmann} and \cite{WedhornGeneralizedHass}.
	\begin{enumerate}[(1)]
		\item Each Ekedahl-Oort stratum $S_k^w$ is smooth and hence reduced, since the classifying map $\zeta: S\to [E_{\chi}\backslash G_{\kappa}]$ is smooth. 
		\item Each stratum $S_k^{w}$ is either empty or equidimensional of dimension $l(w)$ (\cite[Proposition 3.1.6]{ChaoZhangEOStratification}).

		\item The closure relationship between  Ekedahl-Oort strata is given by $$\overline{S_k^{w}}= \bigcup_{w'\preceq w}S_k^{w'},$$ where $ \preceq$ is the Bruhat order (\cite[Proposition 3.1.6]{ChaoZhangEOStratification}). This follows from the topological structure of ${}^JW$ and the fact that taking inverse under $\big|\zeta\big|: |S_k|\to \big|[E_{\chi}\backslash G_{\kappa}]_k\big|$ commutes with taking closure as the map $\zeta: S\to [E_{\chi}\backslash G_{\kappa}]$ is smooth and hence is open. 
		
		\item The set $^JW$ contains, with respect to the partial order $\preceq$, a unique maximal element $w_{\mathrm{max}}:=w_{0,J}w_0$, and a unique minimal element $w_{\mathrm{min}}:=1$. By (3), the associated stratum $S_k^{w_{\mathrm{max}}}$ is the unique open Ekedahl-Oort stratum and is dense in $S_k$, and $S_k^{w_{\mathrm{min}}}$ is closed and contained in the closure of each stratum $S_k^{w}$. 
		
		\item For Shimura variety of PEL type  each Ekedahl-Oort stratum is nonempty (\cite[Theorem 10.1]{ViehmannWedhornEOPELtype}). For general Hodge type Shimura varieties, the non-emptiness is claimed by Chia-Fu Yu (the preprint is not available yet). 
	\end{enumerate}
\end{remark}

\subsection{Comparison of $ \mathcal{D}_{1}(k)/\mathcal{K}^{\diamond}(k) $ and $ E_{\chi}(k)\backslash G_{\kappa}(k) $}\label{ComparisonofpointsofTwostacks}

We let $ k $ be a perfect field extension of $ \kappa $.  Denote by $[[g]]$ the $\mathcal{K}^+(k)$-orbit of $g$; that is, we have 
\begin{align*}
	[[g]]&= \{\mathcal{K}_1(k)h^{-1}g\varphi(h) \mathcal{K}_1(k)| h\in \mathcal{K}(k)\}\\
	&=\{\mathcal{K}_1(k)h^{-1}g\sigma(h) \mathcal{K}_1(k)| h\in \mathcal{K}(k)\},
\end{align*}
where the second ``='' is due to Lemma \ref{TwoFrobenii}.

Recall that the functor $\mathcal{C}(G, \mu(u))$, following the notations of Section \ref{Sectionsubfunctor}, is the subfunctor of $\mathcal{L}G$ sending a $\kappa$-algebra $ R $ to the set $\mathcal{K}(R)\mu(u)\mathcal{K}(R)$. For simplicity of notations, we write
$$\mathcal{C}:=\mathcal{C}(G, \mu(u)).$$
We want to define such a map of sets
\begin{equation}\label{MapofEO}
\begin{array}{rcl}
\omega: \mathcal{C}(k)& \longrightarrow & E_{\chi}(k)\backslash G(k)\\
h_1\mu(u)h_2& \longmapsto & E_{\chi}\cdot (\sigma^{-1}(\bar{h}_2)\bar{h}_1),
\end{array}
\end{equation}
where for an element $h\in \mathcal{K}(k)$, we write $\bar{h}= h\mod u$.

The following proposition is an equicharacteristic analogue of \cite[Proposition 6.7]{Wortmann}. We follow his strategy but use a slightly different method to prove Proposition \ref{PropositionofEO}. (1). 

\begin{proposition} \label{PropositionofEO} For any perfect field extension  $ k $ of $ \kappa $, we have the following:
	\begin{enumerate}[(1)]
		\item The map $\omega$ in \eqref{MapofEO} is a well-defined map.
		\item For any $g_1, g_2\in \mathcal{C}(k)$, we have $\rho(g_1)= \rho (g_2 )$ if and only if $[[g_1]]=[[g_2]]$; in other words, the map $ \omega $ induces a bijection 
		\begin{align}\label{Comparision of 2maps on points}
			\omega: \mathcal{C}(k)/\mathcal{K}^{+}(k)= \mathcal{D}_{1}(k)/\mathcal{K}^{\diamond}(k)\to E_{\chi}(k) \backslash G(k). 
		\end{align} 
		\item We have the following commutative diagram 
		\begin{align}\label{TheFinalCommutativediagram}
			\xymatrix{&&\mathcal{D}_{1}(k)/\mathcal{K}^{\diamond}(k)\ar[dd]^{\omega}\\
				S(k)\ar[urr]^{\eta(k)}\ar[drr]_{\zeta(k)}&&\\
				&&E_{\chi}(k)\backslash G(k)}
		\end{align}
		
	\end{enumerate}
\end{proposition}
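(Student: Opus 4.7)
The plan is to prove the three parts in sequence, in each case reducing to a careful analysis of how $\mu(u)$-conjugation on $\mathcal{K}(k)$ interacts with the parabolic structure on $G_\kappa$ associated to $\chi$, together with the key relation $\overline{\varphi(\gamma)} = \sigma(\bar\gamma)$ of Lemma~\ref{TwoFrobenii}. For Part (1), I would analyze the ``stabilizer'' $\tilde S(k) := \{(\alpha,\beta)\in\mathcal{K}(k)^2 : \alpha\mu(u)\beta = \mu(u)\}$, which encodes the ambiguity in writing $g = h_1\mu(u)h_2$. Writing $\beta = \mu(u)^{-1}\alpha^{-1}\mu(u)$ and decomposing $\alpha^{-1}$ along the root subgroups of $G_\kappa$ relative to $\chi$, the requirement $\beta\in\mathcal{K}(k)$ imposes vanishing conditions to appropriate $u$-orders on the root components of $\alpha^{-1}$ associated to the positive $\chi$-roots, together with a Levi compatibility. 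Unpacking these constraints and inserting the Frobenius twist, one verifies that for any $(h_1',h_2') \in \mathcal{K}(k)^2$ and $(\alpha,\beta) \in \tilde S(k)$, the two elements $\sigma^{-1}(\overline{\beta h_2'})\cdot\overline{h_1'\alpha} = \sigma^{-1}(\bar\beta)\sigma^{-1}(\bar h_2')\bar h_1'\bar\alpha$ and $\sigma^{-1}(\bar h_2')\bar h_1'$ lie in the same $E_\chi(k)$-orbit, giving well-definedness.

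For Part (2), the forward direction is a direct computation using $\overline{\varphi(\gamma)} = \sigma(\bar\gamma)$ and $\bar\alpha = \bar\beta = 1$ for $\alpha,\beta \in \mathcal{K}_1(k)$. For the backward direction, I would first produce a normal form: applying the $\mathcal{K}^+$-action to $g = h_1\mu(u)h_2$ with $\gamma := \sigma^{-1}(\bar h_2)^{-1}\in G(k)\subset \mathcal{K}(k)$ viewed as a constant loop (so that $\varphi(\gamma) = \bar h_2^{-1}$) and absorbing the residual $\mathcal{K}_1$-factors via the splitting $\mathcal{K} = G\ltimes \mathcal{K}_1$ of the reduction map, one reduces $g$ to the canonical representative $g_0\mu(u)$ with $g_0 := \sigma^{-1}(\bar h_2)\bar h_1 \in G(k)$. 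Then for $g_0' = p_+g_0p_-^{-1}$ with $(p_+,p_-) \in E_\chi(k)$, one constructs explicit $\gamma \in \mathcal{K}(k)$ and $\alpha,\beta \in \mathcal{K}_1(k)$ witnessing $g_0'\mu(u) = \alpha^{-1}\gamma^{-1}g_0\mu(u)\varphi(\gamma)\beta$, using the key fact $(*)$ from the proof of Lemma~\ref{Computationfor c} on the $\mu(u)$-conjugation of $U_\pm^{(p)}$.

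Part (3) reduces to matching formulas: $\eta(s)$ is the class of $\Phi_0(\varphi, \tilde x) = \alpha(\tilde x)\mu(u)$ (formula~\eqref{defofthetaongeometricpoints}) for any lift $\tilde x \in \mathbb{I}_+(W(k))$ of $x \in \mathrm{I}_+(k)$ over $s$, giving $\omega(\eta(s)) = E_\chi(k)\cdot \alpha(\tilde x)$. In the universal $G$-zip construction of Section~\ref{SectionDefinitonofEO}, following the method of \cite[Proposition 6.7]{Wortmann}, $\zeta(s)$ is likewise represented by $\alpha(\tilde x) = (\Gamma_{\tilde x}^{\mathrm{lin}}\bmod p)^\vee$: both constructions extract this element from the divided Frobenius on the trivialized crystalline cohomology at $\tilde x$ via the Cartier isomorphism. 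The main obstacle is the backward direction of Part (2), where the bookkeeping of $\mathcal{K}_1$-factors produced by $\mu(u)$-conjugation is intricate and relies essentially on the fact $(*)$, which substitutes for the minusculeness argument of \cite[Proposition 6.7]{Wortmann} (our $\mu$ being non-minuscule).
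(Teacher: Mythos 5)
Your proposal follows essentially the same strategy as the paper's proof. The minor differences --- analyzing the stabilizer of $\mu(u)$ via $\alpha^{-1}$ rather than the paper's element $c_2=\sigma^{-1}(\beta)$ in Part (1), and reducing to the stronger normal form $g_0\mu(u)$ with $g_0\in G(k)$ a constant loop in Part (2) --- are cosmetic, and the core ingredients (the big-cell decomposition of a $\mathcal{K}(k)$-element applied after passing to a split form, the $\chi(u^{p})$-conjugation inclusions into $\mathcal{K}_1$, and the identification of $\alpha(\tilde{x})$ with the $G$-zip invariant for Part (3)) coincide with the paper's.
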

\begin{proof} 
	
	\begin{enumerate}[(1)]
		
		\item  To show $\rho$ is well defined, it is enough to show that the orbit $E_{\chi}\cdot (\sigma^{-1}(\bar{h}_2)\bar{h}_1)$ is independent of the choice of $h_1$ and $h_2$. Suppose that $$h_1\mu(u)h_2=h_1'\mu(u)h_2', \ \ \ \ h_1, h_2, h_1', h_2'\in \mathcal{K}(k).$$ Write
		$c_1=h_1^{-1}h_1'$ and $c_2=\sigma^{-1}(h_2h_2'^{-1})$, 
		then we have \begin{equation}\label{Eqrelationc1c2}
		c_2(\sigma^{-1}(h_2')h_1')c_1^{-1}= \sigma^{-1}(h_2)h_1, \ \ \ \  \  c_2=y^{-1}\sigma^{-1}(c_1)y,
		\end{equation} where we set $y:=\sigma^{-1}(\mu(u))=\chi(u^{p})$ (the latter equality here follows from Lemma \ref{TwoFrobniiLemma1}). The first part of \eqref{Eqrelationc1c2} implies that we need only to show that $(\bar{c}_2, \bar{c}_1)\in E_{\chi}(k)$.
		But since $E_{\chi}$ is a subscheme of 
		$G\times_{\kappa}G$ and the formation of $P_{\pm}, U_{\pm}$ and $M$ commute with base change of $\chi$, it suffices to show that \'etale locally the point $(\bar{c}_2, \bar{c}_1)\in (G\times_{\kappa}G)(k)$ lies in $E_{\chi}(k)$. Hence by passing to a finite (separable) extension of $\kappa$ we may suppose that the cocharacter $\chi$ factors through a split maximal torus $T$ of $G$. Then a similar discussion as in the beginning of Section \ref{ConstructionofMorphismeta} implies the following inclusions of schemes (cf. \eqref{Eqxiaoyue}) 
		\begin{equation} \label{Keyrelation}
		y\mathcal{L}^{+}U_{+}y^{-1}\subset \mathcal{K}_1, \ \ \ \ \ \ \ \ 	y^{-1}\mathcal{L}^{+}U_{-}y\subset \mathcal{K}_1.
		\end{equation}
		
		By \cite[Expos\'e XXVI]{SGA3} Theorem 5.1, as a scheme over $ \kappa $, $ G_{\kappa} $ is a union of open subschemes $ s\cdot P_{-}P_{+}=s\cdot U_{-}U_{+}M $, where $ s $ runs over $ s\in U_+(k) $.  In particular $c_2\in \mathcal{L}^+G(k)=G(k[[u]])$ as a $ k[[u]] $-point of $ G $ must lie in some $s\cdot U_{-}U_{+}M$. And hence $ c_2 $ is of the form $c_2=u_1vu_2m$, with $$u_1\in U_+(k)\subset \mathcal{L}^{+}U_+(k),\  u_2\in\mathcal{L}^+U_{+}(k), \ v\in \mathcal{L}^+U_{-}(k),\  m\in \mathcal{L}^+M(k).$$ Given such a decomposition, by the second part of \eqref{Eqrelationc1c2} we have 
		\begin{equation}\label{Valuec1}
		yu_1vu_2y^{-1}m=yu_1vu_2my^{-1}=\sigma^{-1}(c_1)\in \mathcal{L}^+G(k)
		\end{equation}
		
		By the first part of \eqref{Keyrelation}, $yu_1y^{-1}, yu_2y^{-1}$ lies in $\mathcal{K}_1(k)$, and hence we have $ yvy^{-1}\in \mathcal{L}^+G(k) $, which implies that the element $ yvy^{-1}\in \mathcal{L}U_{-}(k) $ actually lies in $\mathcal{L}^{+}U_{-}(k)$ since $U_{-}$ is a closed subscheme of $G$. Then we have $v\in \mathcal{K}_1(k)$ by the second part of \eqref{Keyrelation}. Now it is clear that $\bar{c}_2=\bar{u}_1\bar{u}_2\bar{m}$ lies in $P_{+}(k)$ with Levi component $\bar{m}\in M(k)$ and 
		$$\bar{c}_1=\sigma(\overline{yvy^{-1}})\sigma(m) $$ lies in $ \sigma\big(U_{-}(k)M(k)\big)=P_{-}^{(p)}(k) $, with Levi component $\sigma(m)$. 
		
		\item  The ``only if" part is clear from (1). For the ``if'' part,  suppose $\rho(g_1)= \rho (g_2 )$ and we write $x=\mu(u)$. Thanks to (1) and Lemma \ref{TwoFrobenii} we may assume $g_1=h_1x$ and $g_2=h_2x$. Choose $ p_{+}= u_{+}m,\  p_{-}=u_{-}\sigma(m)$ with
		$m\in M(k),\;  u_{+}\in U_{+}(k), \; u_{-}\in U^{(p)}_{-}(k)$
		such that $  \bar{h}_2=p_{+}\bar{h}_1p_{-}^{-1}$, i.e., $ \mathcal{K}_1(k) h_2=\mathcal{K}_1(k)p_{+}h_1p_{-}^{-1} $.
		We have seen from \eqref{Keyrelation} that \[ x^{-1}u_{-}^{-1}x=\sigma(x_1\sigma^{-1}(u_-^{-1})x_1^{-1})\in \mathcal{K}_{1}(k), \;\;\; x\sigma(u_{+})x^{-1}=\sigma(x_1u_+x_1^{-1})\in \mathcal{K}_{1}(k),  \]
		where $ x_1:=\chi(u^{p}). $
		Hence, together with the fact that $\sigma(m)^{-1}$ commutes with $ x $ and $\mathcal{K}_1\subset \mathcal{K}$ is normal, we find 
		\[ \begin{array}{cclcl} 
		[[h_2x]]&=& [[u_{+}mh_1\sigma(m)^{-1}u^{-1}_{-}x]]&=&[[u_{+}mh_1\sigma(m)^{-1}x]]\\
		&=&[[mh_1\sigma(m)^{-1}x\sigma(u_{+})]]&=&[[mh_1\sigma(m)^{-1}x]]\\
		&=&[[mh_1x\sigma(m)^{-1}]]&=& [[h_1x]].
		\end{array}\]

		\item The commutativity of the diagram \eqref{TheFinalCommutativediagram} follows from the construction of $ \eta: S\to \mathcal{D}_{1}/\mathcal{K}^{\diamond} $ and $ \zeta: S\to |E_{\chi}\backslash G_{\kappa}| $.
	\end{enumerate}

\end{proof}

\begin{remark}\label{Comparisionofmus}
	Recall that the cocharacter ``$ \mu $'' in Wortmann is our $ \chi^{(p)} $.  Let $ k $ be an algebraically closed field extension of $ \kappa $ and let $ (B, T) $ be a Borel pair of $ G_{k} $ such that $ \chi_{k} $ factors through $ T_k $ and is dominant. Since  $ T_k $ is necessarily split, the action  
	$$\sigma: X_*(G_k):=\Hom_k(\mathbb{G}_{m,k}, G_{k})\to X_*(G_k) $$
\end{remark}
sending $ \lambda \in X_*(G_k)$ to $ \lambda^{(p)} $ (see \eqref{Actionofsigmaoncocharacters}) is trivial, and hence we  have 
\begin{align}
	\sigma^{-1}(\mu_k)=\mu_k=(\Frob_{G_{\kappa}/\kappa})_k\circ\chi_{k}=\Frob_{G_{k}/k}\circ\chi_{k}=p\chi_{k},
\end{align}
where the last equation is easy to see since $ T_k $ is split over $ k $.    
It follows that  $\sigma^{-1}(\mu_k)$ and $\sigma^{-1}(\chi^{(p)}_k)=\chi_k$ have the same type $ J $ (the type of a cocharacter is insensitive to multiplications).  Recall that $\mathcal{D}_{1}(k)/\mathcal{K}^{\diamond}(k)$ is by definition the $C(G, \mu_k)$ defined in the Introduction.  Now by \cite[Theorem 1.1]{ViehmannTrucation1}, both $C(G, \mu_k)$ and $C(G, \chi^{(p)}_k)$ can be identified with the subset $ {}^JW $ of $ W $. And hence we can also identify $C(G, \chi^{(p)}_k)$ with $C(G, \mu_k)$. This shows that our variation of $ \mu $ is essentially harmless.


\begin{thebibliography}{{Wor}13}
	
	\bibitem[BBM82]{BBMTheoriedeDieudonnecristalline}
	P.~Berthelot, L.~Breen, and W.~Messing.
	\newblock {\em Th\'eorie de {D}ieudonn\'e cristalline. {II}}, volume 930 of
	{\em Lecture Notes in Mathematics}.
	\newblock Springer-Verlag, Berlin, 1982.
	\newblock URL: \url{http://dx.doi.org/10.1007/BFb0093025}, \href
	{http://dx.doi.org/10.1007/BFb0093025} {\path{doi:10.1007/BFb0093025}}.
	\bibitem[Bre98]{BreuilConjecture}
	C.~Breuil.
	\newblock Schemas en groupes et corps des normes.
	\newblock 1998.
	\newblock Non published article.
	\newblock URL:
	\url{https://www.math.u-psud.fr/~breuil/PUBLICATIONS/groupesnormes.pdf}.
	
	
	\bibitem[BC09]{ConradP-adicHodgenotes}
	O. Brinon and B. Conrad.
	\newblock CMI summer school notes on $p$-adic Hodge theory (preliminary version).
	\newblock 2009.
	
	\bibitem[BL94]{Beauville1994}
	A. Beauville and Y. Laszlo.
	\newblock Conformal blocks and generalized Theta functions.
	\newblock {\em Communications in Mathematical Physics}, 164(2):385--419, 1994.
	\newblock URL: \url{http://dx.doi.org/10.1007/BF02101707}, \href
	{http://dx.doi.org/10.1007/BF02101707} {\path{doi:10.1007/BF02101707}}.
	
	\bibitem[BM07]{BWDieudonneCrystallineIII}
	P. Berthelot and W. Messing.
	\newblock {\em Th{\'e}orie de Dieudonn{\'e} cristalline III: th{\'e}or{\`e}mes
		d'{\'e}quivalence et de pleine fid{\'e}lit{\'e}}, pages 173--247.
	\newblock Birkh{\"a}user Boston, Boston, MA, 2007.
	\newblock URL: \url{http://dx.doi.org/10.1007/978-0-8176-4574-8_7}, \href
	{http://dx.doi.org/10.1007/978-0-8176-4574-8_7}
	{\path{doi:10.1007/978-0-8176-4574-8_7}}.
	
	\bibitem[Bou06]{BourbakiComAlg}
	N.~Bourbaki.
	\newblock {\em \'El\'ements de math\'ematique. {A}lg\`ebre commutative.
		{C}hapitres 8 et 9}.
	\newblock Springer, Berlin, 2006.
	\newblock Reprint of the 1983 original.
	
	\bibitem[BT08]{BrinonTrihanReprese}
	O.~Brinon and F.~Trihan.
	\newblock Repr\'esentations cristallines et {$F$}-cristaux: le cas d'un corps
	r\'esiduel imparfait.
	\newblock {\em Rend. Semin. Mat. Univ. Padova}, 119:141--171, 2008.
	\newblock URL: \url{http://dx.doi.org/10.4171/RSMUP/119-4}, \href
	{http://dx.doi.org/10.4171/RSMUP/119-4} {\path{doi:10.4171/RSMUP/119-4}}.
	
	\bibitem[CL09]{CarusoLiuQuasi-Semistable}
	X.~Caruso and T.~Liu.
	\newblock Quasi-semi-stable representations.
	\newblock {\em Bull. Soc. Math. France}, 137(2):185--223, 2009.
	
	
	\bibitem[CLA17]{DieudonneCrystalsandWachModules}
	B. Cais and E. Lau.
	\newblock Dieudonn\'e crystals and {W}ach modules for $p$-divisible groups.
	\newblock {\em Proceedings of the London Mathematical Society}, 
	2017.
	\newblock URL: \url{http://dx.doi.org/10.1112/plms.12021}, \href
	{http://dx.doi.org/10.1112/plms.12021} {\path{doi:10.1112/plms.12021}}.
	
	\bibitem[Con11]{ConradReductiveGroupSchemes}
	B.~Conrad.
	\newblock Reductive group schemes ({SGA}3 summer school), 2011.
	
	\bibitem[DG]{SGA3}
	M.~Demazure and A.~Grothendieck.
	\newblock {\em S\'{e}minaire de G\'{e}om\'{e}trie Alg\'{e}brique du Bois Marie
		- 1962-64 - Sch\'{e}mas en groupes - (SGA 3)}.
	\newblock Soci\'{e}t\'{e} Math\'{e}matique de France.
	
	\bibitem[dJ95]{deJongCrystalineDieudonneRigidgeometry}
	A.~J. de~Jong.
	\newblock Crystalline {D}ieudonn\'e module theory via formal and rigid
	geometry.
	\newblock {\em Inst. Hautes \'Etudes Sci. Publ. Math.}, (82):5--96 (1996),
	1995.
	\newblock URL: \url{http://www.numdam.org/item?id=PMIHES_1995__82__5_0}.
	
	\bibitem[Fal03]{LoopFaltings}
	G.~Faltings.
	\newblock Algebraic loop groups and moduli spaces of bundles.
	\newblock {\em J. Eur. Math. Soc. 5}, 2003.
	
	\bibitem[GK15]{GoldringKoskivirtaStrataHassinvariants}
	W.~{Goldring} and J.-S. {Koskivirta}.
	\newblock {Strata Hasse invariants, Hecke algebras and Galois representations}.
	\newblock {\em ArXiv e-prints}, July 2015.
	\newblock \href {http://arxiv.org/abs/1507.05032} {\path{arXiv:1507.05032}}.
	
	\bibitem[GW10]{GoertzWedhornBook1}
	U.~G{\"o}rtz and T.~Wedhorn.
	\newblock {\em Algebraic geometry {I}}.
	\newblock Advanced Lectures in Mathematics. Vieweg + Teubner, Wiesbaden, 2010.
	\newblock Schemes with examples and exercises.
	\newblock URL: \url{http://dx.doi.org/10.1007/978-3-8348-9722-0}, \href
	{http://dx.doi.org/10.1007/978-3-8348-9722-0}
	{\path{doi:10.1007/978-3-8348-9722-0}}.
	
	\bibitem[Ill71]{IllusieCotangentI}
	L. Illusie.
	\newblock {\em Complexe cotangent et d\'eformations. {I}}.
	\newblock Lecture Notes in Mathematics, Vol. 239. Springer-Verlag, Berlin-New
	York, 1971.
	
	\bibitem[Kim13]{KimRapoportUniformisation}
	W.~Kim.
	\newblock Rapoport-Zink uniformisation of Hodge-type Shimura varieties.
	\newblock 2013.
	\newblock Reprint.
	
	\bibitem[Kim15]{KimWansuRelative}
	W.~Kim.
	\newblock The relative {B}reuil-{K}isin classification of {$p$}-divisible
	groups and finite flat group schemes.
	\newblock {\em Int. Math. Res. Not. IMRN}, (17):8152--8232, 2015.
	\newblock URL: \url{http://dx.doi.org/10.1093/imrn/rnu193}, \href
	{http://dx.doi.org/10.1093/imrn/rnu193} {\path{doi:10.1093/imrn/rnu193}}.
	
	\bibitem[Kim15b]{Erratum2RelativeClassification}
	W. Kim.
	\newblock Erratum to ``the relative Breuil-Kisin classification of $p$-divisible
	groups and finite flat group schemes''.
	\newblock {\em International Mathematics Research Notices}, 2015(17):8353,
	2015.
	\newblock \href {http://dx.doi.org/10.1093/imrn/rnv028}
	{\path{doi:10.1093/imrn/rnv028}}.
	
	
	\bibitem[Kis06]{KisinCrystalineRepresentations}
	M.~Kisin.
	\newblock Crystalline representations and {$F$}-crystals.
	\newblock In {\em Algebraic geometry and number theory}, volume 253 of {\em
		Progr. Math.}, pages 459--496. Birkh\"auser Boston, Boston, MA, 2006.
	\newblock URL: \url{http://dx.doi.org/10.1007/978-0-8176-4532-8_7}, \href
	{http://dx.doi.org/10.1007/978-0-8176-4532-8_7}
	{\path{doi:10.1007/978-0-8176-4532-8_7}}.
	
	\bibitem[Kis10]{KisinIntegralModels}
	M.~Kisin.
	\newblock Integral models for {S}himura varieties of abelian type.
	\newblock {\em J. Amer. Math. Soc.}, 23(4):967--1012, 2010.
	\newblock URL: \url{http://dx.doi.org/10.1090/S0894-0347-10-00667-3}, \href
	{http://dx.doi.org/10.1090/S0894-0347-10-00667-3}
	{\path{doi:10.1090/S0894-0347-10-00667-3}}.
	
	\bibitem[Kis13]{KisinModpPoints}
	M.~Kisin.
	\newblock Mod $p$  points on {S}himura varieties of abelian type.
	\newblock {\em to appear in J.A.M.S.}, 2013.
	\newblock URL: \url{http://www.math.harvard.edu/~kisin/dvifiles/lr.pdf}.
	
	\bibitem[Kot84]{Kottwitz1TwistedOrbitalIntegrals}
	R.~Kottwitz.
	\newblock Shimura varieties and twisted orbital integrals.
	\newblock {\em Mathematische Annalen}, 269:287--300, 1984.
	\newblock URL: \url{http://eudml.org/doc/163936}.
	
	\bibitem[KW14]{WedhornGeneralizedHass}
	J.-S. {Koskivirta} and T.~{Wedhorn}.
	\newblock {Generalized Hasse invariants for Shimura varieties of Hodge type}.
	\newblock {\em ArXiv e-prints}, June 2014.
	\newblock \href {http://arxiv.org/abs/1406.2178} {\path{arXiv:1406.2178}}.
	
	\bibitem[Lau10]{LauFramesandFiniteGroupSchemes}
	E.~Lau.
	\newblock Frames and finite group schemes over complete regular local rings.
	\newblock {\em Doc. Math.}, 15:545--569, 2010.
	
	\bibitem[Lau14]{LauRelations}
	E.~Lau.
	\newblock Relations between {D}ieudonn\'e displays and crystalline
	{D}ieudonn\'e theory.
	\newblock {\em Algebra Number Theory}, 8(9):2201--2262, 2014.
	\newblock URL: \url{http://dx.doi.org/10.2140/ant.2014.8.2201}, \href
	{http://dx.doi.org/10.2140/ant.2014.8.2201}
	{\path{doi:10.2140/ant.2014.8.2201}}.
	
	\bibitem[Lev13]{LevinThesisGvaluedKisinModules}
	B.~Levin.
	\newblock {$G$}-valued flat deformations and local models, 2013.
	\newblock URL: \url{http://math.uchicago.edu/~bwlevin/LevinThesis.pdf}.
	
	\bibitem[Mil04]{Milne_IntroShimura}
	J.~Milne.
	\newblock Introduction to Shimura varieties.
	\newblock 2004.
	\newblock URL: \url{http://www.jmilne.org/math/xnotes/svi.pdf}.
	
	
	\bibitem[Mil94]{ShimuraVarietiesandMotives}
	J.~Milne.
	\newblock Shimura varieties and motives.
	\newblock In {\em Motives ({S}eattle, {WA}, 1991)}, volume~55 of {\em Proc.
		Sympos. Pure Math.}, pages 447--523. Amer. Math. Soc., Providence, RI, 1994.
	
	\bibitem[Mil15]{MilneAG}
	J.~Milne.
	\newblock Algebraic groups (v2.00), 2015.
	\newblock Available at www.jmilne.org/math/.
	
	\bibitem[Moo01]{MoonenGroupschemeswithadditionalstructure}
	B.~Moonen.
	\newblock Group schemes with additional structures and {W}eyl group cosets.
	\newblock In {\em Moduli of abelian varieties ({T}exel {I}sland, 1999)}, volume
	195 of {\em Progr. Math.}, pages 255--298. Birkh\"auser, Basel, 2001.
	
	\bibitem[MW04]{Moonen&WedhornDiscreteinvariants}
	B.~Moonen and T.~Wedhorn.
	\newblock Discrete invariants of varieties in positive characteristic.
	\newblock {\em Int. Math. Res. Not.}, (72):3855--3903, 2004.
	\newblock URL: \url{http://dx.doi.org/10.1155/S1073792804141263}, \href
	{http://dx.doi.org/10.1155/S1073792804141263}
	{\path{doi:10.1155/S1073792804141263}}.
	
	\bibitem[Oor01]{OortAstratificationofModuliSpaces}
	F.~Oort.
	\newblock A stratification of a moduli space of abelian varieties.
	\newblock In {\em Moduli of abelian varieties ({T}exel {I}sland, 1999)}, volume
	195 of {\em Progr. Math.}, pages 345--416. Birkh\"auser, Basel, 2001.
	\newblock URL: \url{http://dx.doi.org/10.1007/978-3-0348-8303-0_13}, \href
	{http://dx.doi.org/10.1007/978-3-0348-8303-0_13}
	{\path{doi:10.1007/978-3-0348-8303-0_13}}.
	
	\bibitem[PR08]{TwistedLoopGroupsRapoport&Pappas}
	G.~Pappas and M.~Rapoport.
	\newblock Twisted loop groups and their affine flag varieties.
	\newblock {\em Adv. Math.}, 219(1):118--198, 2008.
	\newblock With an appendix by T. Haines and Rapoport.
	\newblock URL: \url{http://dx.doi.org/10.1016/j.aim.2008.04.006}, \href
	{http://dx.doi.org/10.1016/j.aim.2008.04.006}
	{\path{doi:10.1016/j.aim.2008.04.006}}.
	
	\bibitem[PWZ11]{PinkWedhornZigler1}
	R.~Pink, T.~Wedhorn, and P.~Ziegler.
	\newblock {A}lgebraic {Z}ip {D}ata.
	\newblock {\em Documenta {M}athematica}, 16:253--300, 2011.
	
	\bibitem[PWZ15]{PinkWedhornZiegler2}
	R.~Pink, T.~Wedhorn, and P.~Ziegler.
	\newblock {$F$}-zips with additional structure.
	\newblock {\em Pacific Journal of Mathematics}, 274(1):183--236, 2015.
	
	\bibitem[{Sta}]{stacks-project}
	The {Stacks Project Authors}.
	\newblock  stacks project.
	\newblock \url{http://stacks.math.columbia.edu}.
	
	\bibitem[Vas99]{VasiuIntegralCanonicalmodels}
	A.~Vasiu.
	\newblock Integral canonical models of {S}himura varieties of preabelian type.
	\newblock {\em Asian J. Math.}, 3(2):401--518, 1999.
	\newblock URL: \url{http://dx.doi.org/10.4310/AJM.1999.v3.n2.a8}, \href
	{http://dx.doi.org/10.4310/AJM.1999.v3.n2.a8}
	{\path{doi:10.4310/AJM.1999.v3.n2.a8}}.
	
	\bibitem[Vie14]{ViehmannTrucation1}
	E.~Viehmann.
	\newblock Truncations of level 1 of elements in the loop group of a reductive
	group.
	\newblock {\em Ann. of Math. (2)}, 179(3):1009--1040, 2014.
	\newblock URL: \url{http://dx.doi.org/10.4007/annals.2014.179.3.3}, \href
	{http://dx.doi.org/10.4007/annals.2014.179.3.3}
	{\path{doi:10.4007/annals.2014.179.3.3}}.
	
	\bibitem[VW13]{ViehmannWedhornEOPELtype}
	E.~Viehmann and T.~Wedhorn.
	\newblock Ekedahl--Oort and Newton strata for Shimura varieties of PEL type.
	\newblock {\em Mathematische Annalen}, 356(4):1493--1550, 2013.
	\newblock URL: \url{http://dx.doi.org/10.1007/s00208-012-0892-z}, \href
	{http://dx.doi.org/10.1007/s00208-012-0892-z}
	{\path{doi:10.1007/s00208-012-0892-z}}.
	
	\bibitem[VZ10]{VasiuZinkBreuil'scalssificationof}
	A.~Vasiu and T.~Zink.
	\newblock Breuil's classification of {$p$}-divisible groups over regular local
	rings of arbitrary dimension.
	\newblock In {\em Algebraic and arithmetic structures of moduli spaces
		({S}apporo 2007)}, volume~58 of {\em Adv. Stud. Pure Math.}, pages 461--479.
	Math. Soc. Japan, Tokyo, 2010.
	
	\bibitem[Wed01]{WedhornDimensionofOortStrata}
	T.~Wedhorn.
	\newblock {\em The Dimension of Oort Strata of Shimura Varieties of {PEL}-Type},
	pages 441--471.
	\newblock Birkh{\"a}user Basel, Basel, 2001.
	\newblock URL: \url{http://dx.doi.org/10.1007/978-3-0348-8303-0_15}, \href
	{http://dx.doi.org/10.1007/978-3-0348-8303-0_15}
	{\path{doi:10.1007/978-3-0348-8303-0_15}}.
	
	\bibitem[{Wor}13]{Wortmann}
	D.~{Wortmann}.
	\newblock {The {$\mu$}-ordinary locus for Shimura varieties of Hodge type}.
	\newblock {\em ArXiv e-prints}, October 2013.
	\newblock \href {http://arxiv.org/abs/1310.6444} {\path{arXiv:1310.6444}}.
	
	\bibitem[{Zha}13]{ChaoZhangEOStratification}
	C.~{Zhang}.
	\newblock {Ekedahl-Oort strata for good reductions of Shimura varieties of
		Hodge type}.
	\newblock {\em ArXiv e-prints}, December 2013.
	\newblock \href {http://arxiv.org/abs/1312.4869} {\path{arXiv:1312.4869}}.
	
	\bibitem[Zin01]{ZinkWindowsforP-divisiblegroups}
	T.~Zink.
	\newblock Windows for displays of {$p$}-divisible groups.
	\newblock In {\em Moduli of abelian varieties ({T}exel {I}sland, 1999)}, volume
	195 of {\em Progr. Math.}, pages 491--518. Birkh\"auser, Basel, 2001.
	
	
		
\end{thebibliography}

\phantomsection\addcontentsline{toc}{section}{References}

\end{document}